\newcommand{\cref}[1]{\autoref{#1}}
\definecolor{BrilliantRose}{rgb}{1.0, 0.33, 0.64}
\newcommand{\rev}[1]{}
\newcommand{\ans}[1]{}
\newcommand{\remove}[1]{}
\newcounter{func}
\newcommand{\funref}[1]{\hyperref[#1]{f_{\ref*{#1}}}} 
\tikzset{black node/.style={draw, circle, fill = black, minimum size = 5pt, inner sep = 0pt}}
\tikzset{white node/.style={draw, circlternary_treese, fill = white, minimum size = 5pt, inner sep = 0pt}}
\tikzset{normal/.style = {draw=none, fill = none}}
\tikzset{lean/.style = {draw=none, rectangle, fill = none, minimum size = 0pt, inner sep = 0pt}}
\tikzset{diam/.style={draw, diamond, fill = black, minimum size = 7pt, inner sep = 0pt}}
\newcommand{\Acal}{\mathcal{A}}
\newcommand{\Ccal}{\mathcal{C}}
\newcommand{\Hcal}{\mathcal{H}}
\newcommand{\Lcal}{\mathcal{L}}
\newcommand{\Ocal}{\mathcal{O}}
\newcommand{\Scal}{\mathcal{S}}
\newcommand{\Tcal}{\mathcal{T}}
\newcommand{\Xcal}{\mathcal{X}}
\newcommand{\Nbbb}{\mathbb{N}}
\newcommand{\Rbbb}{\mathbb{R}}
\newcommand{\Zbbb}{\mathbb{Z}}
\newcommand{\eqdef}{\stackrel{{\scriptsize\rm def}}{=}}
\definecolor{DarkTangerine}{rgb}{1.0, 0.66, 0.07}
\definecolor{darkyellow}{rgb}{.7, .6, 0.0}
\definecolor{CornflowerBlue}{rgb}{0.39, 0.58, 0.93}
\definecolor{DarkGoldenrod}{rgb}{0.72, 0.53, 0.04}
\definecolor{BritishRacingGreen}{rgb}{0.0, 0.26, 0.15}
\definecolor{AO}{rgb}{0.0, 0.5, 0.0}
\definecolor{MidnightBlack}{rgb}{0.1,0.1,.34}
\definecolor{MidnightBlue}{rgb}{0.1,0.1,0.43}
\definecolor{Black}{rgb}{0,0, 0}
\definecolor{Blue}{rgb}{0, 0 ,1}
\definecolor{Red}{rgb}{1, 0 ,0}
\definecolor{White}{rgb}{1, 1, 1}
\definecolor{DeepMagenta}{rgb}{0.8, 0.0, 0.8}
\definecolor{grey}{rgb}{.6, .6, .6}
\definecolor{darkgrey}{rgb}{.33, .33, .33}
\definecolor{Mygreen}{rgb}{.0, .7, .0}
\definecolor{Yellow}{rgb}{.55,.55,0}
\definecolor{Mustard}{rgb}{1.0, 0.86, 0.35}
\definecolor{applegreen}{rgb}{0.55, 0.71, 0.0}
\definecolor{darkturquoise}{rgb}{0.0, 0.81, 0.82}
\definecolor{celestialblue}{rgb}{0.29, 0.59, 0.82}
\definecolor{green_yellow}{rgb}{0.68, 1.0, 0.18}
\definecolor{crimsonglory}{rgb}{0.75, 0.0, 0.2}
\definecolor{darkmagenta}{rgb}{0.30, 0.0, 0.30}
\definecolor{magenta}{rgb}{0.50, 0.0, 0.50}
\definecolor{internationalorange}{rgb}{1.0, 0.31, 0.0}
\definecolor{darkorange}{rgb}{1.0, 0.55, 0.0}
\definecolor{ao}{rgb}{0.0, 0.5, 0.0}
\definecolor{awesome}{rgb}{1.0, 0.13, 0.32}
\definecolor{darkcyan}{rgb}{0.0, 0.50, 0.50}
\definecolor{violet}{rgb}{0.93, 0.51, 0.93}
\definecolor{brown}{rgb}{0.65, 0.16, 0.16}
\definecolor{orange}{rgb}{1.0, 0.65, 0.0}
\definecolor{DarkGreen}{rgb}{0,.5,0}
\definecolor{BostonUniversityRed}{rgb}{0.8, 0.0, 0.0}
\definecolor{BrightLavender}{rgb}{0.75, 0.58, 0.89}
\definecolor{DarkLavender}{rgb}{0.45, 0.31, 0.59}
\definecolor{ChromeYellow}{rgb}{1.0, 0.65, 0.0}
\definecolor{DarkGreen}{rgb}{0.547,.5,0}
\newcommand{\red}[1]{#1}
\newcommand{\green}[1]{{\color{Mygreen}#1}}
\definecolor{Red}{rgb}{1, 0 ,0}
\definecolor{Blue}{rgb}{0, 0 ,1}
\newtheorem{theorem}{Theorem}[section]
\newaliascnt{question}{theorem}
\newaliascnt{lemma}{theorem}
\newtheorem{lemma}[lemma]{Lemma}
\newaliascnt{claim}{theorem}
\newaliascnt{invariant}{theorem}
\newaliascnt{proposition}{theorem}
\newtheorem{proposition}[proposition]{Proposition}
\newaliascnt{observation}{theorem}
\newtheorem{observation}[observation]{Observation}
\newaliascnt{corollary}{theorem}
\newtheorem{corollary}[corollary]{Corollary}
\newaliascnt{definition}{theorem}
\newtheorem{definition}[definition]{Definition}
\newaliascnt{conjecture}{theorem}
\newaliascnt{counterexample}{theorem}
\newcommand{\hh}{\end{document}}
\newcommand{\tw}{{\sf tw}\xspace}
\newcommand{\poly}{\text{$\mathsf{poly}$}\xspace}
\newcommand{\Perimeter}{{\sf Perimeter}\xspace}
\newcommand{\Boundary}{{\sf Boundary}\xspace}
\newcommand{\abs}[1]{\ensuremath{\lvert #1 \rvert}}
\newcommand{\FFcal}{\mathcal{M}}
\newcommand*{\Floor}[1]{\left\lfloor #1 \right\rfloor}
\newcommand*{\Choose}[2]{{ #1 \choose #2}}
\title{Excluding surfaces as minors in graphs\thanks{Emails of authors: 
  \texttt{sedthilk@thilikos.info}, \texttt{wiederrecht@kaist.ac.kr}}}
\author{\bigskip\large Dimitrios M. Thilikos\thanks{LIRMM, Univ Montpellier, CNRS, Montpellier, France.}~$^{,}$\thanks{Supported by the French-German Collaboration ANR/DFG Project UTMA (ANR-20-CE92-0027), the ANR project GODASse ANR-24-CE48-4377,  and by the Franco-Norwegian project PHC AURORA 2024 (Projet n°\! 51260WL).}\and\large 
 \and\large 
 Sebastian Wiederrecht\thanks{School of Computing, KAIST, Daejeon, South Korea.}}
\date{}
\begin{document}

\maketitle

\begin{abstract}
\noindent The Graph Minors Structure Theorem (GMST) of Robertson and Seymour states that for every graph $H,$ any $H$-minor-free graph $G$ has a tree-decomposition of bounded adhesion such that the torso of every bag embeds in a surface $\Sigma$ where $H$ does not embed after removing a small number of \textsl{apex vertices} and confining some vertices into a bounded number of \textsl{bounded depth} vortices.
However, the functions involved in the original form of this statement were not explicit.
In an enormous effort Kawarabayashi, Thomas, and Wollan proved a similar statement with explicit (and single-exponential in $|V(H)|$) bounds.
However, their proof replaces the statement ``\textsl{a surface where $H$ does not embed}'' with ``\textsl{a surface of Euler-genus in $\mathcal{O}(|H|^2)$}''.

In this paper we close this gap and prove that the bounds of Kawarabayashi, Thomas, and Wollan can be achieved with a tight bound on the Euler-genus.
Moreover, we provide a more refined version of the GMST focussed exclusively on excluding, instead of a single graph, grid-like graphs that are minor-universal for a given set of surfaces.
This allows us to give a description, in the style of Robertson and Seymour, of graphs excluding a graph of fixed Euler-genus as a minor, rather than focussing on the size of the graph. 
\end{abstract}
\medskip

\noindent \textbf{Keywords:} Graph minor, Surface embedding, Euler Genus, Graph structure theorem.

\thispagestyle{empty}
\newpage

\pagenumbering{arabic}

\section{Introduction}\label{mentalization}

A graph $H$ is a \emph{minor} of a graph $G$ if it can be obtained from $G$ by a sequence of vertex deletions, edge deletions, and edge contractions.
In their seminal series of papers on graph minors, Robertson and Seymour (R\&S)~\cite{robertson2003graph} proved a general structure theorem for graphs that exclude a fixed graph $H$ as a minor.
To state this theorem, we need a couple of definitions.
Let $a$ and $b$ be non-negative integers.
We say that a graph $G$ has an \emph{$a$-almost embedding}
\rev{1. Where $a$ is used in an a-almost embedding?}
\ans{Fixed.}
in a surface $\Sigma$ of \emph{breadth $b$} if there exists a set $A\subseteq V(G),$ called the \emph{apex set}, \red{with $|A|\leq a$} such that
\rev{2. P1, L7: “$G$” should be “$G-A$”.}
\ans{Fixed.} 
$G-A = G_0 \cup G_1 \cup \dots \cup G_b$ where
\begin{enumerate}
  \item $G_0$ can be drawn on $\Sigma$ without crossings such that this embedding has  $b$ distinct faces\footnote{These faces are called the \emph{vortices} of $G.$ See \cref{subsec_sigmadec} for a formal definition and \cref{prefascist} specifically for a definition of the path decomposition (we call it a ``linear decomposition'' later on) for vortices.}
  \rev{3. P1, L9: Formally, a face is a region of the surface after deleting the drawing of the graph. So every face is an open set and cannot contain any vertex. Moreover, you have to assume that no vertices other than the ones in $V(G_0)\cap V(G_i)$ are on the boundary of $F_i$.}
  \ans{We now make clear that we refer to the boundary of the faces.
We also specify that only the vertices of $V(G_0)\cap V(G_i)$  lie on the  boundary of $F_i$.
}
  $F_1,\dots,F_b$ and for every $i\in[b]$ the vertices of $V(G_0)\cap V(G_i)$, \red{and only those},  all lie on \red{the boundary of} $F_i,$ and
  \item for each $i\in\{ 1,\dots,b\},$ $G_i$ is a graph with a path decomposition\footnote{For the definition of tree decompositions see \cref{inphasized}. A \emph{path decomposition} of a graph $G$ is a tree decomposition $(T,\beta)$ for $G$ where $T$ is a path.}
  of width at most $b$ such that the vertices of $V(G_0)\cap V(G_i)$ appear in distinct bags of this decomposition exactly in the order as they appear on \red{the boundary of} $F_i.$
\rev{4. P1, L11: You have to assume the boundary of $F_i$ is a cycle, for otherwise some vertex can appear more than once and “the order as they appear on $F_i$” does not make sense.}
\ans{We now explain that the vertices of $V(G_0)\cap V(G_i)$  lie on the  boundary of $F_i$ in the order they appear in this boundary.
}
\end{enumerate}
A graph $G$ is a \emph{clique sum} of two graphs $G_1$ and $G_2$ if there exist cliques $S_i\subseteq G_i$ for both $i$ such that $|S_1|=|S_2|$ and $G$ can be obtained from $G_1$ and $G_2$ by identifying the vertices of $S_1$ with those of $S_2$ into a clique $S$ and possibly removing some of the edges with both endpoints in $S.$

With these definition, the Graph Minor Structure Theorem of Robertson and Seymour reads as follows.
\rev{9. P1, L27: This result should be proved in somewhere much earlier than [23].}
\ans{To our knowledge, the first time the \textsl{Graph Minor Structure Theorem} is proved in \green{\sf [Neil Robertson and P. D. Seymour. Graph minors. XVI. Excluding
a non-planar graph. J. Combin. Theory Ser. B, 89(1):43–76, 2003.]}.}
\begin{proposition}[Graph Minor Structure Theorem (GMST), \cite{robertson2003graph}]
\label{thm_GMST1}
There exists a function $f\colon\mathbb{N}\to\mathbb{N}$ such that for all graphs $H$ and $G$ either
\begin{enumerate}
  \item $H$ is a minor of $G,$ or
  \item $G$ can be obtained by means of clique sums from graphs that are $f(|V(H)|)$-almost embeddable with breadth at most $|V(H)|^2$ in surfaces \textbf{where $H$ cannot be embedded}. 
 \rev{5.  P1, L21: “in surfaces where $H$ does not embed” should be “in surfaces where $H$ cannot be embedded”.}
 \ans{Fixed.} 
\end{enumerate}
\end{proposition}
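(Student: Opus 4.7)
The plan is to prove the statement by induction on $|V(G)|$, organized around the tangle--tree machinery of Robertson and Seymour. In the base case $G$ has treewidth bounded by some function of $|V(H)|$, and each torso of a width-optimal tree decomposition is trivially almost-embeddable (put every vertex into the apex set, draw nothing in the sphere, and use no vortex). For the inductive step, assuming $G$ does not contain $H$ as a minor, I would look for a \emph{local structure theorem}: identify a subgraph $G_1 \subseteq G$ which is $f(|V(H)|)$-almost-embeddable in a surface $\Sigma$ where $H$ cannot be embedded, and which is attached to the rest of $G$ through a small clique $S$; then apply induction to the smaller graph $G_2 = (G - V(G_1)) \cup S$ and glue the two decompositions via a clique sum at $S$.

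The local structure is extracted from a maximal tangle $\mathcal{T}$ of sufficiently large order in $G$. First, the Excluded Grid Theorem applied inside $\mathcal{T}$ yields a wall $W$ of large height controlled by $\mathcal{T}$. Next, the Flat Wall Theorem gives either a clique minor $K_t$ for $t=|V(H)|$ (which would entail $H$ as a minor, a contradiction) or a large \emph{flat} subwall $W'$ whose compass embeds in a disc up to a small apex set. Attempting to extend the flat embedding outward across the rest of $\mathcal{T}$, one encounters at most $|V(H)|^2$ many localized obstructions, which are precisely the vortices; the complementary piece closes up into an embedding of $G_0$ into some surface $\Sigma$. The key claim is that $\Sigma$ itself must be a surface where $H$ does not embed: if $H$ embedded in $\Sigma$, one could pull this embedding back through the almost-embedding to realize $H$ as a minor of $G$, using apex vertices to route through missing edges and the linear decomposition inside each vortex to route across vortex faces, contradicting the hypothesis.

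It remains to decouple the almost-embedded region from the rest. The tangle $\mathcal{T}$ provides a small separator $S$ of size bounded by the order of $\mathcal{T}$; by standard manipulations one ensures that $S$ becomes a clique in the torso, giving the clique-sum decomposition $G = G_1 \oplus_S G_2$ with $|V(G_2)| < |V(G)|$. Inductive application of the theorem to $G_2$ and concatenation of the two decompositions along $S$ completes the argument. The main obstacle is the local structure theorem itself---specifically, controlling the apex set within $f(|V(H)|)$ and the breadth within $|V(H)|^2$ while simultaneously certifying that $\Sigma$ genuinely precludes an embedding of $H$. This is the technical heart of the argument and relies on the Robertson--Seymour theory of societies and vortex linear decompositions, together with a delicate transfer principle that converts any hypothetical embedding of $H$ into $\Sigma$ into an actual $H$-minor in $G$.
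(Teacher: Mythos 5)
The paper does not prove \cref{thm_GMST1}; it is cited verbatim from Robertson and Seymour~\cite{robertson2003graph}, so there is no internal proof to compare against. Your sketch is a reasonable first-pass outline of the R\&S programme (tangles, Grid Theorem, Flat Wall Theorem, vortices, clique-sum decoupling), but it contains a genuine gap at the very step that is hardest -- and that the rest of this paper is actually about.

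The problem is the ``key claim'' and the accompanying ``transfer principle'': you assert that if $H$ embeds in $\Sigma$, then a hypothetical embedding of $H$ into $\Sigma$ can be pulled back through the almost-embedding of $G$ to produce an $H$-minor in $G$, by routing through apex vertices and vortex linear decompositions. This is false in general. The almost-embeddable piece $G_0$ is just some graph drawn in $\Sigma$; it could be tiny, sparse, or embedded with low representativity, and in those cases it has no reason to contain $H$ as a minor merely because $H$ and $G_0$ share a surface. (A single vertex embeds in the sphere; so does $K_4$; the former does not contain the latter.) What Robertson and Seymour actually prove is a universality statement: for every $h$ there is $g(h)$ such that any graph embedded in $\Sigma$ with \emph{representativity} at least $g(h)$ contains every graph on $h$ vertices embeddable in $\Sigma$ as a minor (their Theorem~4.3, referenced in \cref{mentalization} here). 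So the mechanism is: if the almost-embedding has high representativity, this universality theorem (combined with the large wall/tangle) yields $H$ as a minor; otherwise one cuts along a short non-contractible curve and descends to a lower-genus surface, and repeats. This representativity argument is nonconstructive, which is exactly why Kawarabayashi--Thomas--Wollan (\cref{thm_GMST2}) settle for the weaker guarantee ``Euler genus $\mathcal{O}(|V(H)|^2)$'' rather than ``$H$ does not embed,'' and why the present paper's contribution (via \cref{territorial}, \cref{territorial2}, and the Dyck-grid machinery) is to recover the tight genus condition with explicit bounds. Your outline therefore glosses over precisely the point that makes this statement delicate, and as written the step would fail.
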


\red{The arguments of Robertson and Seymour leading to \cref{thm_GMST1} essentially split all $H$-minor-free graphs into two cases: Those of \textsl{small} treewidth and those of \textsl{large} treewidth.
This is, because they require a large grid minor as the starting point of their construction of an almost-embedding.
That is, an implicit lower bound on $f(|V(H)|)$ can be derived from the minimum order of a grid minor that is required for an application of, for example, the Flat Wall Theorem \cite{Robertson1995GMXIII,Kawarabayashi2018NewProof}.
Note that such lower bounds should be treated as lower bounds to the general \textsl{method} rather than a lower bound on the -- theoretically -- best possible function $f$.}
\rev{6. This sentence does not make sense because the treewidth of H-minor-free graphs can be any
integer, as long as H is non-planar.}
\ans{We reformulated the sentence and explain more directly where lower bound bottlenecks come from. We also stress that we are talking about the best the method can achieve, it might be possible to get better bounds on $f$ in other ways.}
Moreover, a particularly difficult to handle contributions to its upper bound stems from the fact that, in order to prove the ``\textsl{where $H$ cannot be embedded}''-part of \cref{thm_GMST1},
\rev{7. It is incorrect to write “ H does not embed” or “graph that embeds on a surface”. They
should be “a graph is embedded in/on a surface” or “a surface embeds a graph”. Similar errors appear
numerous times in this paper.}
\ans{We adopt the terminology ``a graph is embedded in a surface'' all over the paper.} 
one needs to show that there is some function $g$ such that for every $h,$ any graph that embeds on a surface $\Sigma$ with \textsl{representativity}\footnote{Roughly: the minimum length of a non-contractible cycle.} at least $g(h)$ will contain all graphs on ${h}$ vertices that embed in $\Sigma$ as a minor (see Theorem 4.3 in \cite{robertson2003graph}).
This proof, however, is non-constructive.

In recent years, many steps at making the graph minors theory of R\&S constructive have been made.
Arguably the biggest step here is a new, and fully constructive, proof of a variant of \cref{thm_GMST1} by Kawarabayashi, Thomas, and Wollan.
Their version of the GMST reads as follows.

\begin{proposition}[\!\! \cite{KawarabayashiTW20Quicklyexcluding}]
\label{thm_GMST2}
There exists a function $f\colon\mathbb{N}\to\mathbb{N}$ with $f(k)\in 2^{\mathsf{poly}(k)}$ such that for all graphs $H$ and $G$ either
\begin{enumerate}
  \item $H$ is a minor of $G,$ or
  \item $G$ can be obtained by means of clique sums from graphs that are $f(|V(H)|)$-almost embeddable with breadth at most $|V(H)|^2$ in surfaces \textbf{of Euler genus $\mathcal{O}(|V(H)|^2)$}. 
\end{enumerate}
\end{proposition}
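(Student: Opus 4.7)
The plan is to follow the quantitative strategy of Kawarabayashi, Thomas, and Wollan, replacing each non-constructive step of the original Robertson-Seymour proof of \cref{thm_GMST1} with an explicit quantitative counterpart. The overall shape of the argument mirrors the original one: induct on a suitable complexity parameter, find a large wall minor in the high-treewidth case, apply a Flat Wall Theorem, and grow an almost-embedding outwards from the flat wall; the novelty is to keep every intermediate bound single-exponential in $|V(H)|$ and to make the geometric accounting of handles/crosscaps explicit.

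First, I would reduce to the high-treewidth case. If $\tw(G) \le t(|V(H)|)$ for some $t(k) \in 2^{\poly(k)}$, then $G$ admits a tree decomposition whose bags serve as trivial $0$-almost embeddings in the sphere and the conclusion holds immediately. Otherwise, invoke a polynomial grid-minor theorem (of the kind established by Chekuri and Chuzhoy, whose guarantee is polynomial in $|V(H)|$) to extract a wall $W$ of order polynomial in $|V(H)|$ as a minor of $G$.

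Next, apply a quantitative Flat Wall Theorem to $W$: either a $K_{|V(H)|}$-minor (and hence an $H$-minor) is found in $G$, or there is an apex set $A \subseteq V(G)$ of size $2^{\poly(|V(H)|)}$ and a flat subwall $W'$ in $G - A$ whose order is still polynomial in $|V(H)|$. The core induction uses $W'$ as a seed and iteratively grows a partial almost-embedding in some surface $\Sigma$: at each step one analyzes the boundary between the embedded part and the rest, and, using the quantitative Two Disjoint Paths Theorem, either attaches new vertices into the drawing, routes a piece into an existing bounded-depth vortex, spawns a new vortex, adds a handle or crosscap to $\Sigma$, or moves a vertex into $A$. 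The breadth bound $|V(H)|^{2}$ comes from pigeonholing the number of distinct vortex configurations before two of them can be combined into part of an $H$-minor model.

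The main obstacle is controlling the final Euler genus at $\mathcal{O}(|V(H)|^{2})$. Each time a handle or crosscap is added to $\Sigma$, one must simultaneously extract from $G$ a disjoint topological obstruction witnessing this addition: two linked cycles for a handle, an odd crossing pair for a crosscap. Once such disjoint obstructions accumulate beyond a constant times $|V(H)|^{2}$, they can be combined with the underlying flat wall to assemble a minor model of $H$ directly, using the fact that $H$ itself embeds in some surface of Euler genus at most $|V(H)|^{2}$ and that its minor model only needs that many independent topological features. Guaranteeing that the obstructions extracted across iterations remain pairwise disjoint and simultaneously routable is significantly more delicate than in the original non-constructive argument and is the technical heart of the proof; the final clique-sum decomposition is then obtained by recursing on the unresolved pieces with a strictly decreasing parameter.
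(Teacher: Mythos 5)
Note first that \cref{thm_GMST2} is cited from Kawarabayashi, Thomas, and Wollan and is not proved in this paper; the paper's own contribution is to sharpen its Euler-genus bound. Your sketch does roughly follow the KTW template --- pass to high treewidth, extract a wall, apply a Flat Wall Theorem, and iteratively grow a $\Sigma$-decomposition --- and the clique-sum recursion at the end is in the right spirit.

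Two of the steps, as written, would fail. The wall you extract must have order $2^{\mathsf{poly}(|V(H)|)}$, not polynomial in $|V(H)|$: the $\Sigma$-configuration machinery (cf.\ \cref{assemblies}, which is Theorem~11.4 of KTW) already requires $R$-walls with $R$ on the order of $t^{10^7 t^{26}}$ for $t=|V(H)|$, so both the treewidth threshold and the wall order in your reduction must be exponential; the polynomial grid theorem is then applied at that exponential scale, which still yields $f(k)\in 2^{\mathsf{poly}(k)}$ but is not what you wrote. More fundamentally, the argument you offer for the $\mathcal{O}(|V(H)|^2)$ Euler-genus bound --- extract enough independent handles and crosscaps and then assemble an $H$-minor using the fact that $H$ embeds on a surface of Euler genus $\mathcal{O}(|V(H)|^2)$ --- is precisely the non-constructive universal-graph step of Robertson and Seymour, which is exactly what KTW set out to avoid. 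What KTW actually prove is that once the number of handles plus crosscaps exceeds $\binom{t+1}{2}$, one can extract a $K_t$-minor grasped by the wall, and $K_{|V(H)|}$ trivially contains $H$; the genus bound is a by-product of that clique-minor extraction. Making your universal-graph route explicit and quantitative is the gap this paper closes (\cref{thm_GMST3}, via \cref{territorial2} and the Dyck-grid machinery of \cref{sec_dycks}), so you cannot invoke it inside a purported proof of \cref{thm_GMST2} without circularity. Finally, a small point: the torsos in the low-treewidth case are not ``$0$-almost embeddings in the sphere'' --- they may be cliques --- but they are $f(|V(H)|)$-almost-embeddable once the entire torso is placed in the apex set.
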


Notice that the bound on the Euler genus of the surfaces involved in \cref{thm_GMST2} \red{is  worse as the two quantities may differ arbitrarily, depending on $H$}.
\rev{10.  P1, L-9: It is not fair to claim that the bound is “signiﬁcantly worse”. The Euler genus in Proposition 1.2 is at the correct order when $E(H) = \Omega (|V(H)|^2)$.}  
\ans{We now write ``is  worse as the two quantities may differ arbitrarily, depending on $H$".}
The reason for this worse bound is the non-constructiveness in the original proof by R\&S \cite{robertson2003graph} and the determination of Kawarabayashi, Thomas, and Wollan to avoid these arguments in order to make their theorem constructive.

\paragraph{Local structure theorems.}
Both \cref{thm_GMST1} and \cref{thm_GMST2}
\rev{11. P2, L1: “Both, Proposition ...” should be “Both Proposition ...”}
\ans{Fixed.}
 are theorems that describe the structure of $H$-minor-free graphs \textsl{globally}.
However, both theorems, as most results in this area, are based on much more technical \textsl{local} versions.
The world ``local'' refers to the structure in an $H$-minor-free graph that is governed by a large \textsl{wall}\footnote{See \cref{subsec_wallsandtangles} for related definitions of walls and tangles} or, in other words, the structure that is controlled by a large-order \textsl{tangle}.
Let $W$ be an $r$-wall for some $r\geq 3.$
A \emph{simplification of $W$} is an $r$-wall $W'$ which is obtained by contracting some edges of $W.$
In this language one can restate \cref{thm_GMST2} in its local form as follows.

\begin{proposition}\label{thm_GMST2local}
There exists functions $f,g\colon\mathbb{N}\to\mathbb{N}$ with $f(k),g(k)\in 2^{\mathsf{poly}(k)}$ such that for all graphs $H$ and all graphs $G$ containing a $g(|V(H)|)$-wall $W$ either
\begin{enumerate}
  \item there is a minor-model of $H$ which is highly connected\footnote{The term ``highly connected'' here is slightly informal. Formally this means that there exists a minor model $\{ X_v\}_{v\in V(H)}$ of $H$ such that each $X_v$ intersects many intersections of rows and columns of $W.$ (See \cref{inphasized} for the definitions of a minor model.)} to $W,$ or
  \item $G$ can be obtained by means of clique sums from graphs that are $H$-minor-free and a graph $G_W$ which is $f(|V(H)|)$-almost embeddable with breadth at most $|V(H)|^2$ in a surface \textbf{of Euler genus $\mathcal{O}(|V(H)|^2)$} such that there is a simplification $W'$ of $W$ with $W'\subseteq G_W.$ 
\end{enumerate}
\end{proposition}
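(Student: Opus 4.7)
The plan is to prove \cref{thm_GMST2local} by isolating the almost-embedded piece directly around $W$ rather than decomposing $G$ globally. I would assume we are not in the first outcome (no highly connected minor-model of $H$ attached to $W$) and build the almost-embedding around $W$ from the inside out, following the general strategy of Kawarabayashi, Thomas, and Wollan.

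\textbf{Step 1: Reduce to a flat wall.} First I would invoke a quantitative Flat Wall Theorem on $W$, choosing $g(|V(H)|)$ to be the single-exponential bound required by that theorem. This yields either a minor model of $H$ attached to many bricks of $W$ (which, via standard rerouting through the wall, upgrades to a highly connected model as in outcome~(1)), or an apex set $A$ of size $\mathcal{O}(\operatorname{poly}(|V(H)|))$ and a large flat subwall $W_1\subseteq W-A$ together with a partial planar embedding of a subgraph around $W_1$.

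\textbf{Step 2: Iteratively extend the embedding.} Starting from the flat embedding around $W_1$, I would iteratively try to attach additional portions of $G-A$ to the already embedded region. At each step three local moves are available: extend planarly, add a handle/crosscap (raising the Euler-genus by $1$ or $2$), or create/enlarge a vortex. If none of these moves can be performed without producing a crossing configuration, the obstruction itself can be used to extract an $H$-minor attached to $W$, contradicting the assumption. The key quantitative ingredient for the Euler-genus bound is that any graph on $|V(H)|$ vertices embeds in a surface of Euler-genus at most $\binom{|V(H)|}{2}\in\mathcal{O}(|V(H)|^2)$; if the surface we are building ever exceeds this genus, we can route $H$ into it through the wall and again derive outcome~(1). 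The same kind of counting, applied to the potential ``faces'' in which vortices could be placed, yields the bound $|V(H)|^2$ on the breadth, while the single-exponential bound on $|A|$ is inherited from the Flat Wall Theorem plus a bounded number of apex additions per iteration.

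\textbf{Step 3: Extract the clique-sum structure.} Let $G_W$ denote the maximal almost-embedded subgraph produced in Step~2. The contractions performed during the extension on the edges of $W$ (identifying endpoints of small separators into cliques in the torso) leave us with a simplification $W'$ of $W$ still contained in $G_W$. The remainder $G-V(G_W)$ decomposes into pieces each attached to $G_W$ through a separator of size bounded by the adhesion; in the torso these separators become cliques, which gives exactly the clique-sum decomposition. Finally, I would argue that every such outside piece $G'$ is $H$-minor-free: otherwise the flatness of $W_1$ provides many vertex-disjoint paths from the small separator $V(G')\cap V(G_W)$ to distinct bricks of $W'$, and combining these paths with an $H$-minor inside $G'$ produces a highly connected minor model of $H$ attached to $W$, again yielding outcome~(1).

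The main obstacle is Step~2: ensuring that the iterative process terminates with all four quantitative bounds (apex size, vortex breadth, vortex depth, Euler-genus) controlled simultaneously, and that whenever no extension move is available the obstruction converts into an $H$-minor attached to the wall rather than being lost. A delicate inductive invariant must be maintained relating the current surface, the current wall simplification, and the connectivity still available through $W$. A secondary obstacle is the $H$-minor-freeness of the outside pieces in Step~3, which is precisely the reason the lower bound on $g$ cannot be smaller than single-exponential in $|V(H)|$: one needs enough unused ``room'' in $W$ to absorb any minor smuggled in from outside through a small separator.
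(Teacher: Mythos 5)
The paper does not prove this proposition itself; it is explicitly introduced as a restatement of Theorem 1.3 of Kawarabayashi, Thomas, and Wollan~\cite{KawarabayashiTW20Quicklyexcluding}, cited without proof (``The above is a restatement of Theorem 1.3 of \cite{KawarabayashiTW20Quicklyexcluding}''). So the comparison is really against the proof in \cite{KawarabayashiTW20Quicklyexcluding}, and your high-level scaffold --- Flat Wall Theorem, iterative handle/crosscap/vortex extension, clique-sum extraction --- does track their strategy. However, there is a genuine conceptual gap in your Step~2, precisely at the point that distinguishes \cref{thm_GMST2local} from the harder \cref{thm_GMST3}.

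You propose to bound the Euler genus by arguing that once the constructed surface has genus above $\binom{|V(H)|}{2}$ (a genus in which $H$ certainly embeds), one can ``route $H$ into it through the wall'' and fall into outcome~(1). That routing step --- ``a sufficiently representative embedding in $\Sigma$ contains every small $H$ embeddable in $\Sigma$ as a minor'' --- is exactly the Robertson--Seymour argument (Theorem~4.3 of~\cite{robertson2003graph}) that Kawarabayashi, Thomas, and Wollan \emph{deliberately avoid} because it is non-constructive, and it is the very ingredient that the present paper supplies constructively via Dyck-grids and the Gavoille--Hilaire minor-universal graphs (\cref{territorial}, \cref{diskussionen}, \cref{pindarischen}). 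Invoking it in a proof of \cref{thm_GMST2local} is therefore circular in the context of this paper, and simply unavailable in the KTW framework. The bound $\mathcal{O}(|V(H)|^2)$ in \cref{thm_GMST2local} is obtained in \cite{KawarabayashiTW20Quicklyexcluding} by a direct, constructive count: in their Theorem~11.4 (quoted as \cref{assemblies} here), each extracted handle or crosscap is traded off against two prospective branch sets of a $K_t$-minor grasped by~$W$, so $\mathsf{h}+\mathsf{c}\leq\binom{t+1}{2}$ and no appeal to where $H$ embeds is ever made. Your sketch conflates the two routes.

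A secondary issue sits in Step~3. Your argument that the outside pieces are $H$-minor-free --- route disjoint paths from the separator $V(G')\cap V(G_W)$ to bricks of $W'$ and splice them with an $H$-minor in $G'$ --- cannot work as stated. Pieces cut away behind non-vortex cells of the $\Sigma$-decomposition attach to $G_W$ through separators of order at most~$3$, so at most three disjoint paths leave such a piece; you can therefore touch at most three bricks, whereas a minor-model ``highly connected to~$W$'' requires every one of the $|V(H)|$ branch sets to meet many brick intersections. (Indeed, a referee of this paper explicitly flagged that the $H$-minor-freeness of the outside summands is not claimed in~\cite{KawarabayashiTW20Quicklyexcluding}.) Either this part of the statement needs a more careful reading, or the argument for it must go through the bounded-size pieces differently; your present sketch does not close this.
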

\rev{12. P2, L10: I do not see where Proposition 1.3 is stated in [17]. Similar results do exist in [17], but they
do not include the H-minor-freeness.}
\ans{We now explain that this is a restatement of Theorem 1.3 of \green{\textsf{[Ken-ichi Kawarabayashi, Robin Thomas, and Paul Wollan. Quickly
excluding a non-planar graph. CoRR, abs/2010.12397, 2020]}}.}

\red{The above is a restatement of Theorem 1.3 of \cite{KawarabayashiTW20Quicklyexcluding}).}
The local structure theorem describes a single summand in the clique-sum structure provided by statements like \cref{thm_GMST1} and \cref{thm_GMST2}.
While such a local structure theorem only describes those summands of large treewidth, this is usually sufficient as all summands of small treewidth can be further split into summands of small \textsl{size} (which is the essence of the definition of treewidth).
A simple greedy procedure, first introduced by R\&S \cite{robertson1991graph}, then allows to immediately go from a local structure theorem to a global one\footnote{In case of this paper, such a proof can be found in \cref{locglob}.} while increasing the involved function at most by some polynomial.

\subsection{Our contribution}\label{subsec_contribution}
Recently, Gavoille and Hilaire \cite{gavoille2023minor} proved the existence of a minor-universal graph for all graphs that embed in a fixed surface $\Sigma.$
In particular, the bounds they achieve are polynomial in the Euler genus of $\Sigma,$ that is, they prove that for every integer $\mathsf{h},$ their universal graph of \textsl{order} $\mathsf{poly}(g\cdot h)$ contains every graph on at most $h$ vertices that embeds in $\Sigma$ as a minor.
Here $g$ denotes the Euler genus of $\Sigma.$
We leverage this result to prove the following local version of the GMST.

\begin{theorem}\label{thm_GMST3}
There exists functions $f,g\colon\mathbb{N}\to\mathbb{N}$ with $f(k),g(k)\in 2^{\mathsf{poly}(k)}$ such that for all graphs $H$ and all graphs $G$ containing a $g(|V(H)|)$-wall $W,$ either
\begin{enumerate}
  \item $G$ contains a minor-model of $H$ which is highly connected to $W,$ or
  \item $G$ can be obtained by means of clique sums from graphs that are $H$-minor-free and a graph $G_W$ which is $f(|V(H)|)$-almost embeddable with breadth at most $|V(H)|^2$ in a surface \textbf{where $H$ does not embed} such that there is a simplification $W'$ of $W$ with $W'\subseteq G_W.$ 
\end{enumerate}
\end{theorem}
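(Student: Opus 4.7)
The plan is to derive \cref{thm_GMST3} from the local structure theorem of Kawarabayashi--Thomas--Wollan (\cref{thm_GMST2local}) by upgrading the surface condition using the minor-universality result of Gavoille and Hilaire \cite{gavoille2023minor}. Writing $k=|V(H)|$, I will denote by $f_0,g_0$ the functions from \cref{thm_GMST2local}, and by $u(k)$ the order of the Gavoille--Hilaire universal graph for $k$-vertex graphs embeddable in any surface of Euler genus $\mathcal{O}(k^2)$; by \cite{gavoille2023minor} we have $u(k)=\mathsf{poly}(k)$. I will choose $g(k)$ to be a polynomial inflation of $g_0(k)$, large enough that every $g(k)$-wall, after deletion of at most $f_0(k)$ apex vertices and rerouting around the at most $k^2$ bounded-depth vortices of any almost-embedding, still contains a simplification of size at least $\mathsf{poly}(u(k))$. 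Since $g_0(k)\in 2^{\mathsf{poly}(k)}$, this $g$ also lies in $2^{\mathsf{poly}(k)}$, and I will set $f(k)=f_0(k)$.

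The first step is to apply \cref{thm_GMST2local} to $G$ and $W$. If its first outcome holds, case~(1) of \cref{thm_GMST3} follows immediately. Otherwise one obtains a clique-sum decomposition into $H$-minor-free graphs and a distinguished summand $G_W$ that is $f(k)$-almost embeddable with breadth at most $k^2$ in some surface $\Sigma'$ of Euler genus $\mathcal{O}(k^2)$, together with a simplification $W'$ of $W$ contained in $G_W$. If $H$ does not embed in $\Sigma'$, then case~(2) already holds with $\Sigma:=\Sigma'$ and we are done.

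The central case is when $H$ does embed in $\Sigma'$. I would argue that then $H$ is a minor of $G_W$ with a model highly connected to $W$, giving case~(1). Using the bound on the apex set and the bounded depth of the $\leq k^2$ vortices, a standard sub-wall extraction argument, as in \cite{KawarabayashiTW20Quicklyexcluding}, yields a simplification $W''$ of $W'$ that is entirely contained in the embedded part $G_0$ of the almost-embedding, inherits a crossing-free drawing in $\Sigma'$, and has size at least $\mathsf{poly}(u(k))$. Letting $U$ be the Gavoille--Hilaire universal graph for $k$-vertex graphs embeddable in $\Sigma'$, the fact that $U$ itself embeds in $\Sigma'$, combined with the large size of the wall $W''$ drawn in $\Sigma'$, allows one to realize $U$ as a minor of $G_0$ with a model hitting many row/column intersections of $W''$, and therefore of $W$. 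Since $H$ embeds in $\Sigma'$, $H$ is a minor of $U$, and this produces a minor-model of $H$ in $G_W$ that is highly connected to $W$.

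The hard part of the plan is this last step: showing that a sufficiently large wall embedded in $\Sigma'$, together with the surrounding embedded subgraph in $G_0$, contains every graph of bounded order that embeds in $\Sigma'$ as a minor, and does so with a model anchored to the wall. This is a quantitative, representativity-style statement -- essentially an explicit version of Theorem~4.3 of \cite{robertson2003graph} -- for which the polynomial order of the Gavoille--Hilaire universal graph is crucial; it is precisely this polynomial dependence that matches the $2^{\mathsf{poly}(k)}$ regime of $g$ and enables us to replace ``Euler genus $\mathcal{O}(k^2)$'' in \cref{thm_GMST2local} by ``a surface where $H$ does not embed''. Finally, the global form (without the wall hypothesis) follows from this local statement by the standard greedy clique-sum procedure mentioned after \cref{thm_GMST2local}.
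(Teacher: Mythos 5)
There is a genuine gap at the step you yourself flag as ``the hard part of the plan''. You claim that, given a large wall $W''$ drawn flat in $\Sigma'$ inside the embedded part $G_0$ of the almost-embedding returned by \cref{thm_GMST2local}, one can realize the Gavoille--Hilaire universal graph $U$ (and hence $H$) as a minor of $G_0$ hitting many row/column intersections of $W''$. This does not follow from the information \cref{thm_GMST2local} provides. A wall is a planar graph, and ``flat'' means it is drawn in a disk of $\Sigma'$; a large wall in a disk contains no non-planar minor whatsoever, and the almost-embedding guaranteed by \cref{thm_GMST2local} carries no representativity bound that would force $G_0$ to actually ``use'' the handles and crosscaps of $\Sigma'$. (Indeed $G_0$ could be drawn entirely in a single disk, in which case $\Sigma'$ is genus-irrelevant.) What would close this gap is a quantitative, constructive version of Theorem~4.3 of \cite{robertson2003graph} (high representativity implies all bounded-size $\Sigma'$-embeddable graphs appear as minors) -- but as the introduction of the paper explains, that theorem's proof is non-constructive, and avoiding it is precisely the point of this work. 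So your plan reduces the theorem to a statement that is at least as hard as what you are trying to prove.

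The paper's route is genuinely different and sidesteps representativity entirely. Instead of starting from the cleaned-up almost-embedding of \cref{thm_GMST2local}, it works at the level of the $\Sigma$-configurations of Kawarabayashi--Thomas--Wollan (\cref{assemblies}), which explicitly carry a nest $\mathcal{C}_2$ together with handle and crosscap transactions $\mathcal{L}_1,\dots,\mathcal{L}_{\mathsf{h}+\mathsf{c}}$ of large thickness. From these transactions one can directly assemble a mixed surface grid, and hence (via \cref{diskussionen}) a Dyck-grid of the right genus, grounded in the decomposition -- this is \cref{pindarischen}. The Dyck-grid is minor-universal for its surface by \cref{territorial2}, which is where Gavoille--Hilaire enters; so when $\Sigma^{(\mathsf{h},\mathsf{c})}$ admits an embedding of $H$, one finds $H$ as a minor grasped by $W$ without any representativity argument. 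When it does not, the $\Sigma$-configuration is upgraded to the desired $W$-central $\Sigma$-decomposition (\cref{efficiency}, \cref{transposition}), and the choice of parameters in the paragraph ``Deriving \cref{thm_GMST3}'' ensures the surface in that outcome is one where $H$ does not embed. If you want to repair your argument you should replace \cref{thm_GMST2local} with \cref{assemblies} as the starting point, so that the explicit transaction structure is available; at that point the argument becomes the one in the paper.
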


With this, we achieve a first unification of the tight bound on the Euler genus from \cref{thm_GMST1} and the constructive bound on the function $f$ in \cref{thm_GMST2}.

This is done in two steps. 
First, we define several families of grid-like graphs representing a fixed surface $\Sigma,$ then we show in \cref{sec_dycks} that, up to a function depending linearly on the order of the graph and exponentially on the Euler genus of $\Sigma,$ all of these graphs are equivalent.
See \cref{supplanted} for some examples of such graphs.

\begin{figure}[ht]
  \begin{center}
  \scalebox{.9}{\includegraphics{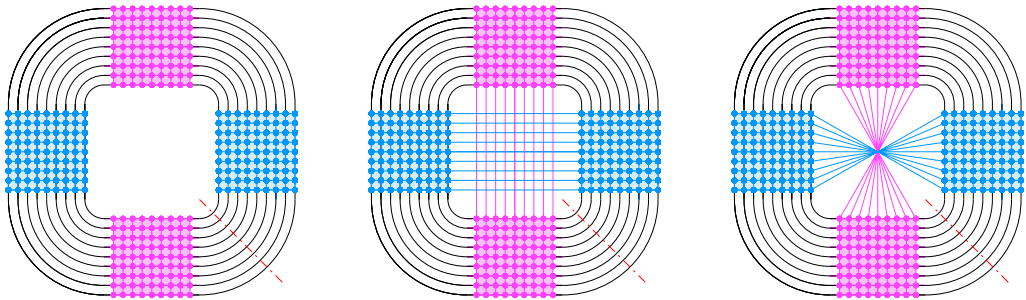}}
  \end{center}
  \caption{The annulus grid $\mathscr{A}_{9},$ the handle grid $\mathscr{H}_{9}$ and the crosscap grid $\mathscr{C}_{9}$ in order from left to right. {Notice that both $\mathscr{H}_{9}$ and $\mathscr{C}_{9}$ contain two 
  $(18\times 9)$ grids as vertex-disjoint subgraphs (depicted in different colors). Moreover, $\mathscr{C}_{9}$ contains a $(18\times 18)$-grid as a spanning subgraph}.}
  \label{supplanted}
\end{figure}

This allows us to apply a recent  result of Gavoille and Hilaire \cite{gavoille2023minor} to show that there is some universal constant $c_{\Sigma}$ such that any of these grid-like graphs of order $c_{\Sigma}t^2$ contains every graph on at most $t$ vertices that embeds in $\Sigma$ as a minor.
This means, we leverage the theorem of Gavoille and Hilaire to prove that their minor-universal graph exists for any given ``signature'' (see \cite{gavoille2023minor} for a definition).
This is important for the following reason.
We know that any surface $\Sigma$ can be obtained from the sphere by adding $\mathsf{h}$ handles and $\mathsf{c}$ crosscaps where
\rev{13. P2, L22-23: The same $h$ is written in diﬀerent fonts.}
\ans{Fixed everywhere.}
\rev{8. P1, L26: The two $h$ in this line have diﬀerent fonts. }
\ans{Fixed everywhere.}
 the number $2\mathsf{h}+\mathsf{c}$ is precisely the Euler genus of $\Sigma.$
By the Classification Theorem of Surfaces, also known as Dyck's Theorem, \cite{Dyck1888Beitrage,Francis99ConwayZIP}, we know that every surface is homeomorphic to a surface that is obtained from the sphere by adding $\mathsf{h}$ handles and at \textsl{at most two} crosscaps.
Gavoille and Hilaire proved that there exists a minor-universal graph for orientable surfaces that is obtained by only adding handles to the sphere, and one for non-orientable surfaces obtained by only adding crosscaps to the sphere.
However, the proof of \cref{thm_GMST2local} by Kawarabayashi, Thomas, and Wollan, while guaranteeing a bound on their total number, extracts some number of handles and crosscaps in arbitrary order.

Let us give a quick definition of these grid-like graphs.
The  \emph{annulus grid} $\mathscr{A}_{k}$ is the $(4k,k)$-cylindrical grid\footnote{An $(n \times  m)$-cylindrical grid is a Cartesian product of a cycle on $n$ vertices and a path on $m$ vertices.} depicted in the left of \cref{supplanted}.
The \emph{handle grid} $\mathscr{H}_{k}$ (resp. \emph{crosscap grid} $\mathscr{C}_{k}$) is obtained adding in $\mathscr{A}_{k}$ edges as indicated in the middle (resp. right) part of \cref{supplanted}. We refer to the added edges as \emph{transactions} of the handle grid $\mathscr{H}_{k}$ or the crosscap grid $\mathscr{C}_{k}.$

Let now $\mathsf{h}\in\mathbb{N}$ and $\mathsf{c}\in[0,2].$
\rev{14. P3, L14: I do not understand the notion $[0,2]$ used here. It is a closed interval and contains real numbers. How can you have $c\in [0,2]$ copies if $c$ is not an integer? If $[0,2]$ refers something nonstandard, it should be deﬁned before it is used.}
\ans{We use $\{0,1,2\}$ here. Later in the the Definitions section we define  $[x,y]\coloneqq\{x,x+1,\ldots,y-1,y\}$.}
We define the graph $\mathscr{D}_{k}^{(\mathsf{h},\mathsf{c})}$ by taking one copy of $\mathscr{A}_{k},$ $\mathsf{h}$ copies of $\mathscr{H}_{k},$ and  $\mathsf{c}\in\red{\{0,1,2\}}$ copies of $\mathscr{C}_{k},$ then ``cut'' them along the dotted \textcolor{BostonUniversityRed}{red}
\rev{15. P3, L15: The word “red” should be written in black.}
\ans{We disagree. We prefer to use colors for better visualization.}
line, as in \cref{supplanted}, and join them together in the cyclic order $\mathscr{A}_{k},\mathscr{H}_{k},\ldots,\mathscr{H}_{k},\mathscr{C}_{k},\ldots,\mathscr{C}_{k},$ as \red{visualized} in \cref{perniciously} \red{(see \cref{sec_dycks} for the formal definitions)}.
\rev{16. P3, L16: A formal deﬁnition of $D^{(h,c)}_{k}$ $k$ should be included. Showing a ﬁgure for the case $k= 8$ is not suﬃcient.} 
\ans{We now explain that the  formal definition is given in Section 3. 
The full definition is technical, therefore for the purposes of the intro, we believe that Figure 2 provides good intuition on the definition of $\mathscr{D}^{({\sf h},{\sf c})}_{k}$.}
We call the graph $\mathscr{D}_{k}^{(\mathsf{h},\mathsf{c})}$ the \emph{Dyck-grid} of \emph{order} $k$ \emph{with} $\mathsf{h}$ \emph{handles and} $\mathsf{c}$ \emph{crosscaps}.
For technical reasons, we make the convention that $\mathscr{D}^{(-1,2)}_k=\mathscr{D}^{(0,0)}_k.$

Notice that, in a similar way, we may take any pair of non-negative integers $\mathsf{h}$ and $\mathsf{c}$ as input and concatenate one copy of $\mathscr{A}_{k}$ together with $\mathsf{h}$ copies of $\mathscr{H}_{k}$ and $\mathsf{c}$ copies of $\mathscr{C}_{k}$ in \textsl{any order}.
Any such graph yields a (parametric) representation of the surface obtained from the sphere by adding $\mathsf{h}$ handles and $\mathsf{c}$ crosscaps.

\begin{figure}[ht]
  \begin{center}
  \scalebox{0.85}{\includegraphics{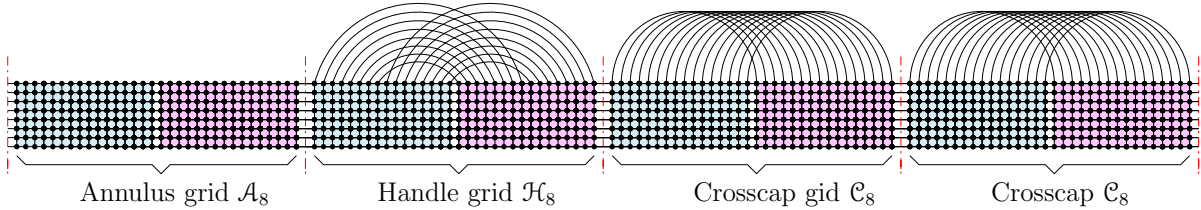}}
  \end{center}
  \caption{The Dyck-grid  of order $8$ with one handle and two crosscaps, i.e., the graph $\mathscr{D}_{8}^{1,2}.$ \red{The \textcolor{red}{red} dashed lines indicates that  ``same hight'' lefmost and rightmost vertices are adjacent (as it is the case in \cref{supplanted}).}}
  \label{perniciously}
\end{figure}%
\rev{17. P3, L17: Does $\Nbbb$ contains $0$? You let ${\sf h}\in \Nbbb$ but also allow ${\sf h}=0$ in $D^{(0,0)}$.}
\ans{We define $\Nbbb$ is the set of all non-negative integers. We added this definition in the beginning of the preliminaries section, together with the ones of $\Zbbb$ and $\Rbbb$.}

We prove that any such object with $\mathsf{h}$ handles and $\mathsf{c}$ crosscaps can be \red{transformed}
\rev{18. L-2 - P4, L2: First, it is unclear for me what “translate” means. Second, you wrote “We prove ...”, so you should indicate what theorem contains this statement and where the proof can be found.
} 
\ans{We now use the more adequate term ``transform'' for this. 
We also explain that we refer to  the \cref{diskussionen} where these 
transformations are explained.} 
 into any other such object with $\mathsf{h}'$ handles and $\mathsf{c}'$ crosscaps while preserving $2\mathsf{h}+\mathsf{c}=2\mathsf{h}'+\mathsf{c}'$ where $\mathsf{c}'\neq 0$ if and only if $\mathsf{c}\neq 0$ and $\mathsf{c}'$ is even if and only of $\mathsf{c}$ is even.
\red{This result is proven in \cref{sec_dycks} (\cref{diskussionen}) and it can be seen as the graph-theoretical analogue of Dyck's Theorem.}
\smallskip

Second, we use an intermediate structure from \cite{KawarabayashiTW20Quicklyexcluding} called a \emph{$\Sigma$-configuration}\footnote{See \cref{configuration} for the definition.} to prove that \cref{thm_GMST2local} may be extended to provide, as a third outcome, one of the Dyck-grids of large order representing some predetermined orientable surface $\Sigma_1$ or some predetermined non-orientable surface $\Sigma_2.$
This further allows us to adjust the second outcome in a way that only allows surfaces whose Euler genus is strictly smaller than the Euler genus of $\Sigma_1$ and $\Sigma_2.$

Using this result, we do not only prove \cref{thm_GMST3}, but we also prove a variant of \cref{thm_GMST2} which, instead of a clique, excludes exclusively two Dyck-grids, one for an orientable surface and one for a non-orientable one.
This theorem reads as follows.

\begin{proposition}\label{thm_GMST4}
There exists a function $f\colon\mathbb{N}^2\to\mathbb{N}$ with $f(k,\mathsf{h}_{1})\in 2^{2^{\mathcal{O}({\mathsf{h}_{1}})}\mathsf{poly}(k)}$ such that for every choice of non-negative integers $k,$ $\mathsf{h}_1,$ $\mathsf{h}_2,$ and $\mathsf{c}_2$ where $\mathsf{h}_2 < \mathsf{h}_1$ and $\mathsf{c}_2\in\{ 1,2\}$
\begin{enumerate}
  \item  $G$ contains $\mathscr{D}^{(\mathsf{h}_1,0)}_k$ or $\mathscr{D}^{(\mathsf{h}_2,\mathsf{c}_2)}_k$ as  a minor, 
\rev{19. P4, L13: The usage of “among” seems incorrect, and I do not understand this sentence. Do you mean both graphs are minors of $G$, or do you mean one of them is a minor of $G$?}
\ans{We rephrased the sentence to so make clear that  $G$ contains $\mathscr{D}^{(\mathsf{h}_1,0)}_k$ or $\mathscr{D}^{(\mathsf{h}_2,\mathsf{c}_2)}_k$ as  a minor. We now avoid the imprecise use of ``among''.}
  or
  \item $G$ can be obtained by means of clique sums from graphs that are $f(k,\mathsf{h}_{1})$-almost embeddable with breadth at most $f(k,\mathsf{h}_{1})$ in surfaces where neither $\mathscr{D}^{(\mathsf{h}_1,0)}_i$ nor $\mathscr{D}^{(\mathsf{h}_2,\mathsf{c}_2)}_i$ embeds for all $i\in\mathbb{N}.$ 
\end{enumerate}
\end{proposition}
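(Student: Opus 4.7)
The plan is to derive this global statement from a three-outcome local structure theorem via the local-to-global procedure (\cref{locglob}), in the same spirit that \cref{thm_GMST1} follows from \cref{thm_GMST2local}. The local version asserts that whenever $G$ contains a sufficiently large wall $W$, either $G$ has a highly connected minor model of a large clique $K_t$ relative to $W$, or $G$ contains one of $\mathscr{D}^{(\mathsf{h}_1, 0)}_k$ or $\mathscr{D}^{(\mathsf{h}_2, \mathsf{c}_2)}_k$ as a minor, or $G$ admits a clique-sum decomposition whose central summand (containing a simplification of $W$) is almost embeddable in a surface where neither Dyck-grid embeds, with the stated bounds on apex size and breadth. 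Once this local statement is in place, the clique outcome is absorbed into the clique-sum structure by recursing on the minor model, and the global conclusion follows in the standard way.

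To prove the three-outcome local statement, begin by invoking the local GMST of Kawarabayashi, Thomas, and Wollan (\cref{thm_GMST2local}) with $H = K_t$, where $t = t(k, \mathsf{h}_1)$ is chosen large enough that $K_t$ does not embed in any surface of Euler genus less than $2\mathsf{h}_1$. Either we obtain a highly connected $K_t$-minor, in which case we recurse, or we get an almost embedding in a surface $\Sigma$ of Euler genus $\mathcal{O}(t^2)$ with a simplification of $W$ inside it. In the latter case, apply the $\Sigma$-configuration machinery of \cite{KawarabayashiTW20Quicklyexcluding} (\cref{configuration}) to refine the local structure, producing a grid-like substructure of $G$ whose topology faithfully reflects that of $\Sigma$ along $W$.

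The main technical step is then a case analysis on the topology of $\Sigma$. If $\Sigma$ is orientable of genus at least $\mathsf{h}_1$, then the grid-like substructure together with the Dyck-grid equivalences from \cref{sec_dycks} (in particular \cref{diskussionen}, the graph-theoretic analogue of Dyck's theorem) allows us to realize $\mathscr{D}^{(\mathsf{h}_1, 0)}_k$ as a minor. If $\Sigma$ is non-orientable with Euler genus at least $2\mathsf{h}_2 + \mathsf{c}_2$, the same machinery lets us convert handles into crosscap pairs in the presence of at least one crosscap to produce $\mathscr{D}^{(\mathsf{h}_2, \mathsf{c}_2)}_k$. Otherwise, $\Sigma$ does not embed either Dyck-grid; since the embeddability of $\mathscr{D}^{(\mathsf{h}',\mathsf{c}')}_i$ in a surface depends only on the topology of $\Sigma$ relative to $(\mathsf{h}',\mathsf{c}')$ and not on $i$, the same conclusion holds for every $i \in \mathbb{N}$, giving the third outcome directly. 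The main obstacle is the topological bookkeeping in extracting the precise handle/crosscap configuration from the generic $\Sigma$-embedding; this is exactly what \cref{diskussionen} supplies, since it allows us to interchange handles and crosscap pairs while preserving the Euler genus whenever the representation contains at least one crosscap, matching the parity constraint $\mathsf{c}_2 \in \{1,2\}$ that appears in the hypothesis.
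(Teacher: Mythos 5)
Your overall plan matches the paper's strategy: invoke the Kawarabayashi--Thomas--Wollan $\Sigma$-configuration machinery, extract a Dyck-wall from the resulting configuration, perform a case analysis on the genus and orientability of $\Sigma$ using the graph-minor Dyck's theorem (\cref{diskussionen}), and then run the standard local-to-global recursion. This is exactly what the paper does via \cref{intermingling}, \cref{efficiency}, \cref{transposition}, and \cref{thm_globalstructure}. Two details, however, are stated incorrectly and would sink the argument if carried out as written.

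First, your choice of $t$ is too small. You propose picking $t$ so that $K_t$ fails to embed in any surface of Euler genus below $2\mathsf{h}_1$; by Euler's formula this only requires $t = \Theta(\sqrt{\mathsf{h}_1})$. But in that regime, a $K_t$-minor grasped by $W$ tells you nothing about the presence of $\mathscr{D}^{(\mathsf{h}_1,0)}_k$ or $\mathscr{D}^{(\mathsf{h}_2,\mathsf{c}_2)}_k$ as minors, so the clique outcome is a dead end. The paper instead chooses $t \coloneqq (162^{2g}+4)k$ (with $g = \max\{g_1,g_2\}$), a value large enough that $K_t$ contains every graph on at most $t$ vertices as a minor, and hence contains both target Dyck-grids. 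This choice is also what makes the $\Sigma$-configuration transactions thick enough for \cref{pindarischen} together with \cref{diskussionen} to extract a Dyck-wall of order $k$ once the genus is large; the $162^{2g}$ factor there is where the doubly-exponential dependence on $\mathsf{h}_1$ in $f$ comes from.

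Second, and relatedly, the sentence ``the clique outcome is absorbed into the clique-sum structure by recursing on the minor model'' does not describe a valid step. You cannot place a large $K_t$ into a clique-sum summand that is required to be $f(k,\mathsf{h}_1)$-almost embeddable in a surface of bounded genus; that defeats the purpose of the decomposition. The correct reduction is the one made possible by the larger $t$: if the local statement returns a $K_t$-minor grasped by $W$, this minor contains $\mathscr{D}^{(\mathsf{h}_1,0)}_k$ and $\mathscr{D}^{(\mathsf{h}_2,\mathsf{c}_2)}_k$ as minors, so outcome~1 of \cref{thm_GMST4} holds and the clique case collapses into the Dyck-grid case before the local-to-global recursion ever sees it. A smaller organizational point: the paper starts from the $\Sigma$-configuration theorem directly (\cref{assemblies}) rather than from \cref{thm_GMST2local}, since the latter is itself derived from the former and, on its own, only supplies a clique-sum decomposition rather than the explicit nest/transaction structure needed for the Dyck-wall extraction.
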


This paper is part of a two-paper series where \cref{thm_GMST4} is the key tool towards the main theorem of a second paper in \cite{surfex2} providing a tight decomposition theorem 
for the exclusion of Dyck grids.

\paragraph{An algorithmic side of our results.}
In order to prove both \cref{thm_GMST3} and \cref{thm_GMST4} we prove a much more technical intermediate lemma in the form of \cref{efficiency}.
We believe that \cref{efficiency} will play a pivotal role in future applications.
Moreover, we leverage the constructive nature of the results of \cite{KawarabayashiTW20Quicklyexcluding} to provide an algorithmic version of \cref{efficiency}.
That is, we prove that any of the results in this paper are accompanied by polynomial-time algorithms.

An important part towards the algorithmic side of graph minors is a way to compute the clique-sum structure as guaranteed by \cref{thm_GMST1}, \cref{thm_GMST2}, and \cref{thm_GMST4}.
To make this possible one needs a way, given a witness of large treewidth in form of a so-called \textsl{well-liked set} $X,$ to find in polynomial time a large wall $W$ whose tangle agrees with the tangle of $X.$
While a lot of the necessary theoretical background towards this is already provided by \cite{KawarabayashiTW20Quicklyexcluding}, no explicit algorithm is given in their paper.
For this reason we present in \cref{biographies} a polynomial-time algorithm (given that the size of the desired wall is considered a constant) for this problem.

\section{Preliminaries}
\label{inphasized}

We denote by $\mathbb{Z}$ the set of integers. Also we use  {$\Nbbb$ for  the set of all non-negative integers},  and by $\mathbb{R}$ the set of reals.
Given two integers $a,b\in\mathbb{Z}$ we denote the set $\{z\in\mathbb{Z} \mid a\leq  z\leq  b\}$ by $[a,b].$
In case $a>b$ the set $[a,b]$ is empty. For an integer $p\geq  1,$ we set $[p]=[1,p]$ and $\mathbb{N}_{\geq  p}=\mathbb{N}\setminus [0,p-1].$
\smallskip

All graphs considered in this paper are undirected, finite, and without loops or multiple edges.
We use standard graph-theoretic notation, and we refer the reader to~\cite{diestel2016graph} for any undefined terminology. 
An \emph{annotated graph} is a pair $(G,X),$ where $X\subseteq V(G).$
Also we say hat $X\subseteq V(G)$ is \emph{connected} in $G$ if $G[X]$ is a connected graph.

\red{
\paragraph{Minors and minor models.} A \emph{minor model} of a graph $H$ in $G$ is a collection
$\Xcal=\{X_{v}\mid v\in V(H)\}$ is pairwise disjoint connected subsets of 
$G$ such that for every edge $vu\in E(H)$, $X_{v}\cup X_{u}$ is connected in $G$. A graph $H$ is a \emph{minor} of a graph $G$, denoted by $H≤G$,  if $G$ contains 
a minor model of $H$.
}

\paragraph{Tree-decompositions}
  Let $G$ be a graph. 
  A \emph{tree-decomposition} of $G$ is a tuple $\mathcal{T}=(T,\beta)$ where $T$ is a tree and $\beta\colon V(T)\to 2^{V(G)}$ is a function, whose images are called the \emph{bags} of $\mathcal{T},$ such that 
  \medskip
  \begin{enumerate}
  \item $\bigcup_{t\in V(T)} \beta (t)= V (G),$ 
\item for every $e\in E(G)$ there exists $t\in V(T)$ with $e\subseteq \beta(t),$ and 
\item for every $v\in V(G)$ the set $\{t\in V(T) \mid v\in \beta(t)\}$ induces a subtree of $T.$
\end{enumerate} 
\medskip
We refer to the vertices of $T$ as the \emph{nodes} of the tree-decomposition $\mathcal{T}.$
  
For each $t\in V(T),$ we define the \emph{\red{adhesion sets of} $t$}
\rev{20. P5, L4-6: I do not understand the deﬁnition of the adhesions of $t$. In L4, you deﬁne an adhesion of tto be set. In L6, you deﬁne the adhesion of $t$ to be an integer.}
 \ans{We now use the term  ``adhesion sets'' instead of ``adhesions''.}
as the sets in $$\{ \beta(t)\cap\beta(d) \mid d\text{ adjacent with } t\}$$ and the maximum size of them is called the  \emph{adhesion of $t$}.
  The \emph{adhesion} of $\mathcal{T}$ is the maximum adhesion of a node of $\mathcal{T}.$
  The \emph{torso} of $\mathcal{T}$ on a node $t$ is the graph, denoted by $G_{t},$ obtained by adding edges between every pair of vertices of $\beta(t)$ which belongs to the \red{same  adhesion set} of $t.$  
  The \emph{width} of a $(T,\beta)$ is the value $\max_{t\in V(T)}|\beta(t)|-1.$
  The \emph{treewidth} of $G,$ denoted by $\mathsf{tw}(G),$ is the minimum width over all tree-decompositions of $G.$ 
 
\section{Dyck-walls: Graphs representing surfaces} 
\label{sec_dycks}
In the following we introduce several classes of grid-like graphs that capture the behaviour of surfaces.
These graphs will acts as patterns that allow us to formalise what it means to exclude a surface from a graph.
 
Let $m,n$ be positive integers where $m,n\geq 3.$
Given an $(m,n)$-cylindrical grid, let $C_1,\dots,C_m$ be the $m$ disjoint cycles numbered in such a way that $C_{i}$ separates $C_{i-1}$ from $C_{i+1}$ for all $i\in[2,m-1].$
Moreover, let us number the vertices of $C_i$ as $v^i_1,\dots,v^i_{n}$ such that $v^1_jv^2_j\dots v^{m-1}_jv^m_j$ induces is a path for every $j\in[n].$
  
Consider the $(m,n\cdot 4m)$-cylindrical grid.
By \emph{adding a handle at position $i\in[n]$} we mean adding the edges 
\begin{align*}
  \{ v^1_{4m(i-1)+j}v^1_{4m(i-1)+3m-j+1} \mid j\in[m] \}\cup\{ v^1_{4m(i-1)+m+j}v^1_{4m(i-1)+4m\red{-j+1}} \mid j\in[m] \}.
\end{align*}
We call the paths \red{consisting of these edges} \emph{handle paths}.

By \emph{adding a crosscap at position $i\in[n]$} we mean adding the edges
\rev{21. P5, L18: I guess that $v_{4m(i-1)+4 m+j-1}$ should be $v_{4m(i-1)+4 m-j+1}$.}
\ans{Indeed, we changed it.}
\begin{align*}
  \{ v^1_{4m(i-1)+j}v^1_{4m(i-1)+2m+j} \mid j\in[2m] \}.
\end{align*}
We call the paths 
\red{consisting of these edges} \emph{crosscap paths}.

\paragraph{Mixed surface grids.}
A \emph{mixed surface grid of order $k$ with $\mathsf{h}$ handles}  and $\mathsf{c}$ crosscaps is a graph $H^k_{I_{\text{crscp}},I_{\text{hndl}}}$ constructed as follows.
Let $I_{\text{hndl}},I_{\text{crscp}},$
\rev{24. P5, L-11 - L-8: You mentioned that “a mixed surface grid ... is a graph constructed as follows”. But this paragraph seems no conclusion. Do you mean that $H_{I_c,I_h}$ is a mixed surface grid?}
\ans{Fixed.}
 be a partition of the set $[2,\mathsf{h}+\mathsf{c}+1]$ into two sets where $|I_{\text{hndl}}|=\mathsf{h}$ and $|I_{\text{crscp}}|=\mathsf{c}.$
 \rev{22. P5, L-10: The notation $I_c$ and $I_h$ are problematic. $c$ and $h$ are integers and they can be equal. In that case $I_c$ and $I_h$ denote the same set.}
 \ans{We changed the names of these sets to $I_{\text{hndl}}$ and $I_{\text{crscp}}$ to avoid the issue.}
Consider the $(k,4(\mathsf{h}+\mathsf{c}+1)\cdot k)$-cylindrical grid $H$ and let $H^k_{I_{\text{hndl}},I_{\text{crscp}}}$
\rev{23. P5, L-9: $k$ should appear as a part of $H_{I_c,I_h}$ because it is a variable, especially you use it in Definition 3.1 to define $D^{h,c}_{k}$.}
\ans{Added the $k$.}  
be obtained from $H$ by adding a handle at every position $i\in I_{\text{hndl}}$  and adding a crosscap at every position $i\in I_{\text{crscp}}$.
\rev{25. P5, L-6: When you define “adding a handle” or “adding a crosscap”, you allow subdividing added edges arbitrarily many times. So there are inﬁnitely many possible $H_{[2,h+1],[h+2,c+h+1]}$ based on the number of times you subdivide those edges. But $D^{h,c}_k$ must be defined to be a unique graph.}
\ans{We removed the option to subdivide the additional edges in the definition of mixed surface grids. This should take care of the issue.}  

\paragraph{Parametric graphs.}

\red{A \emph{parametric graph} is a sequence of graphs $\mathscr{G} = \langle \mathscr{G}_t \rangle_{t \in \mathbb{N}}$. In this paper we always additionally assume that, for every $i \leq j$, $\mathscr{G}_i$ is a minor of $\mathscr{G}_j$. Let  $\mathscr{G}=\langle \mathscr{G}_t\rangle_{t\in\mathbb{N}}$ and $\mathscr{G}'=\langle \mathscr{G}_t'\rangle_{t\in\mathbb{N}}$ be two parametric graphs.
We write $\mathscr{G} \lesssim \mathscr{G}',$ if there is some function $f\colon \Nbbb\to\Nbbb$ such that $\mathscr{G}_t≤ \mathscr{G}_{f(t)}'$ for all $t\in\mathbb{N}.$
If $\mathscr{G} \lesssim \mathscr{G}'$ and $\mathscr{G}' \lesssim \mathscr{G}$ we say that $\mathscr{G}$ and $\mathscr{G}'$ are \emph{equivalent}, denoted by $\mathscr{G}\equiv \mathscr{G}'.$ Also if the both functions in the definition of $\equiv$ are both linear, then we say that $\mathscr{G}$ and $\mathscr{G}'$ are \emph{linearly equivalent}.
}
\rev{26. P5, L-5: “Parametric graph” should be deﬁned.}
\ans{A paragraph about parametric graphs has been added before Definition 3.1. }

\begin{definition}[Dyck-grid]  
\label{def_dgrid}
Let $\mathsf{h},\mathsf{c}\in \mathbb{N}.$
The \emph{$(\red{\mathsf{h},\mathsf{c}})$-Dyck-grid} of order $k\in \mathbb{N}$  is the \red{parametric graph 
$\mathscr{D}^{(\mathsf{h},\mathsf{c})}=\langle \mathscr{D}_{k}^{(\mathsf{h},\mathsf{c})}\rangle_{\in\mathbb{N}}$ where}

$$\mathscr{D}_{k}^{(\mathsf{h},\mathsf{c})}=H^k_{[2,\mathsf{h}+1],[\mathsf{h}+2,\mathsf{h}+\mathsf{c}+1]}.$$
We denote the corresponding parametric graph by $\mathscr{D}^{(\mathsf{h},\mathsf{c})}\coloneqq\langle \mathscr{D}^{(\mathsf{h},\mathsf{c})}_k\rangle_{k\in\mathbb{N}}.$ 
A Dyck-grid $\mathscr{D}^{(\mathsf{h},\mathsf{c})}_{k}$ is said to be \emph{orientable} if $c=0$ and \emph{non-orientable} otherwise. We also make the convention that  $\mathscr{D}^{(-1,2)}=\mathscr{D}^{(0,0)}.$
\rev{27. P5, L-4: You deﬁne $D^{-1,2}=D^{0,0}$, However, based on your deﬁnition, the left side is non-orientable but the right side is orientable.}
\ans{There was some confusion on the order handles and crosscaps appear. In the new version we make the convention that handles appear first.}

The \emph{Euler-genus} of the Dyck-grid $\mathscr{D}^{(\mathsf{h},\mathsf{c})}_k$ is $2\mathsf{h}+\mathsf{c}.$
We refer to the cycles $C_{k-b-1},\ldots,C_{k}$ as the $b$ \emph{outermost} cycles of $\mathscr{D}_{k}^{(\mathsf{h},\mathsf{c})}.$
\rev{29. P5, L-1: I do not see why Figure 2 is a drawing of $\mathscr{D}^{1,2}_{8}$ The rows in Figure 2 are paths. 
They should be cycles based on your definition of $D^{1,2}_{8}$}
\ans{Notice that handles and crosscaps are created by adding ``parallel'' edges on the top of a $(m,n\cdot 4m)$-cylindrical grid. In  \cref{perniciously} we present this cylindrical grid ``unfolded'' and we  add in its leftmost and rightmost sides a red dashed line to indicate that the corresponding vertices are connected by edges as it is already indicated by  \cref{supplanted} 
where these edges are drawn (again crossing the dashed red line). We added some more explanation on this in the caption of \cref{perniciously}.}
\end{definition}

Notice that $\mathscr{D}^{0,0}\equiv \mathscr{A}$
\rev{28. P5, L-1: What are $\Acal,\Hcal$ and  $\Ccal$? What does \~{}\  mean?}
\ans{We added a paragraph before \cref{def_dgrid} defining the basic concepts of parametric graphs. We now define and use $\equiv$ instead of $\sim$.}  
and, also, 
$\mathscr{H}\equiv \mathscr{D}^{1,0}$ and  $\mathscr{C}\equiv\mathscr{D}^{0,1}$ \red{(the parametric graphs $\mathscr{A}$, $\mathscr{H}$, and $\mathscr{C}$ are defined in \cref{subsec_contribution}, see \cref{supplanted})}. For a drawing of $\mathscr{D}_{8}^{1,2}$ see of \cref{perniciously}. 
We say that a surface $\Sigma$ \emph{corresponds} to a Dyck-grid $\mathscr{D}^{(\mathsf{h},\mathsf{c})}_k,$ and vice versa, if $\Sigma$ is homeomorphic to a surface that can be obtained by adding $\mathsf{h}$ handles and $\mathsf{c}$ crosscaps to a sphere.
\red{In such cases we sometimes denote $\mathscr{D}^{(\mathsf{h},\mathsf{c})}$ as $\mathscr{D}^{\Sigma}.$}
\medskip
\rev{30. P6, L2: What is $D^{\Sigma}$?}
\ans{We now explain the notation $D^{\Sigma}$.}

The main reason most of the structural theory talks about walls rather than grids is that walls are sub-cubic graphs and as such, if they are contained as minors in a graph, one can find a subdivision as a subgraph.
For a similar reason we define Dyck-walls.
Let $c\in[0,2],$ $\mathsf{h},$ and $t$ be non-negative integers.
The \emph{elementary $(\red{\mathsf{h},\mathsf{c}};t)$-Dyck-wall} is obtained from $\mathscr{D}^{(\mathsf{h},\mathsf{c})}_{2t}$ by deleting the cycles $C_{t+1},\dots,C_{2t}$ and by deleting the edge $v^i_jv^{i+1}_j$ for every odd $i\in[t-1]$ and every odd $j\in[8t]$ and for every even $i\in[t-1]$ and even $j\in[8t].$
Moreover, for each handle and crosscap that was added to create $\mathscr{D}^{(\mathsf{h},\mathsf{c})}_{2t}$ delete every (subdivided) edge \red{added this way and incident to} $v^1_\ell$ where $\ell$ is even.
\rev{31. P6, L8: I do not understand “delete every edge starting on $v_\ell^1$”. Do you want to make $v_\ell^1$ an isolated vertex? Or do you only want to delete edges incident to it but not in the cylinder wall? In the definition of $D^{c,h}_{k}$, edges not in the cylinder wall form paths. Do you only delete the edge in those paths incident to $v_\ell^1$, or do you delete the entire path?}
\ans{We now added some clarification on this.}
That is, we delete every second edge.
An \emph{$(\red{\mathsf{h},\mathsf{c}};t)$-Dyck-wall} is a subdivision of the elementary $(\red{\mathsf{h},\mathsf{c}};t)$-Dyck-wall.

Consider an embedding of a $(\red{\mathsf{h},\mathsf{c}};t)$-Dyck-wall $D$ in a surface $\Sigma$ of Euler-genus $2\mathsf{h}+\mathsf{c}$ \red{where\footnote{Actually this embedding can be done unique by picking $t$ big enough as a function of $\mathsf{h}$ and $\mathsf{c}$ because do the results in \cite{SeymourT96Uniqueness}.} $C_t$ bounds a disk in which no vertex of $D$ is drawn.}

\rev{32. P6, L11: “Then $C_{t}$ bounds a disk” requires a proof. First, it is false unless you assume that tis suﬃciently large. Second, you might need to cite a result stating that the embedding of a highly connected graph is unique. On the other hand, you can skip the proof if you change “Then” by choosing an embedding that satisfies this property.}
\ans{We implemented the suggested fix. We also added a footnote to  \green{\sf [P. D. Seymour \& Robin Thomas titled “Uniqueness of Highly Representative Surface Embeddings” (Journal of Graph Theory 23:4 (1996), pp 337-349) proves a uniqueness result for embeddings with large representativity.]}.}
We call this disk the \emph{simple face} of $D.$
Similarly, there exists a unique face which is incident to $C_1$ and every handle and every crosscap of $D.$
We call this face the \emph{exceptional face} of $D.$
\medskip

The next theorem is the main result of this section. It formalizes that, \red{in terms of  the equivalence  relation ``$\equiv$'' between parametric graphs,}  there is no
\rev{33. P6, L15: “not” should be “no”. The use of “asymptotically” is incorrect. “Asymptotically” is used to described a phenomenon when certain parameter tends to inﬁnite or a certain value. There is no such ingredient in Theorem 3.2.  I guess you mean “approximately”, not “asymptotically”.}
\ans{We now gave the definition of  equivalence between parametric graphs that  
precisely provides the equivalence that we prove here.}
difference between surface grids and Dyck-grids.  


\begin{theorem}\label{diskussionen}
Let $g,\mathsf{h},\mathsf{c}$ be integers such that $g=2\mathsf{h}+\mathsf{c}>0$ and ${\sf c}>0.$
Let ${\sf h}_{0}=\lceil\frac{c}{2}\rceil-1$ and ${\sf c}_{0}={\sf c}-2{\sf h}_{0}$.
\rev{34. P6, L17: What is it? I guess that you just mean $h_0=\lceil \frac{c}{2} \rceil -1$ and $c_0=c-2h_0$.}
\ans{Fixed.}
Then, \red{for every integer $k > 0$, the following hold:}
\rev{35. P6, L18: Then for every integer $k > 0$, the following ...}  
\ans{Fixed.}

\begin{itemize}
\item every mixed surface grid of order $162^{2g}k$ with $\mathsf{h}$ handles and $\mathsf{c}$ crosscaps  contains $\mathscr{D}^{(\mathsf{h}+\mathsf{h}_{0},\mathsf{c}_0)}_k$ as a minor, 
\item  $\mathscr{D}^{(\mathsf{h}+\mathsf{h}_{0},\mathsf{c}_0)}_{162^{2g}k}$ contains \red{every mixed surface} grid of order $k$ with $\mathsf{h}$ handles and $\mathsf{c}$ crosscaps as a minor.
\end{itemize}
\end{theorem}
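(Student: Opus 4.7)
The plan is to prove Theorem~\ref{diskussionen} by establishing two fundamental combinatorial operations that mirror the topological moves underlying Dyck's classification of surfaces: (i) a \emph{swap} operation that exchanges the order of two adjacent gadgets (a handle and a crosscap, or two crosscaps) in a mixed surface grid, and (ii) a \emph{Dyck move} that, in the presence of at least one extra crosscap, replaces two crosscap gadgets by a handle gadget (so three consecutive crosscaps become one crosscap plus one handle). Each of these operations will be realised as an explicit minor construction that costs only a bounded multiplicative factor in the order $k$.

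First I would set up the \emph{routing infrastructure}. Each handle or crosscap gadget occupies exactly $4k$ consecutive columns of the underlying $(k,4(\mathsf{h}+\mathsf{c}+1)k)$-cylindrical grid, and between any two gadgets there is an untouched $4k$-column strip of cylindrical grid through which arbitrary families of disjoint curves can be rerouted. Using this, I would prove a \textbf{swap lemma}: if $G$ is a mixed surface grid of order $c_1 k$ whose gadget sequence contains two adjacent gadgets $X,Y$ (each of which is a handle or a crosscap), then $G$ contains as a minor a mixed surface grid of order $k$ in which the positions of $X$ and $Y$ have been exchanged. The model is obtained by contracting $c_1-1$ out of every $c_1$ concentric cycles into a single cycle and then rerouting the ``outgoing'' ends of the four paths of the later gadget through the wide corridor of unused cylinder cells before plugging them into the earlier gadget's slot; $c_1$ can be taken to be a small absolute constant (of the order of $9$).

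Second I would prove the \textbf{Dyck move lemma}, which is the graph-theoretic analogue of the homeomorphism $N_1 \# N_2 \cong N_1 \# T$: if $\mathsf{c}\geq 3$ and we apply the swap lemma to bring three crosscap gadgets into consecutive positions, then the union of these three gadgets (together with the surrounding cylinder strip) contains as a minor a handle gadget followed by a single crosscap gadget, at the cost of a further constant blow-up of the order. The construction uses the third crosscap as a ``topological hinge'': its two paths of length $2m$ are used to reconnect the endpoints of the first two crosscap gadgets into the four handle-paths required by a handle gadget, while the geometry of the cylinder provides the orientation-flipping that, topologically, converts two crosscaps in the presence of a third one into a handle. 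Combining the two lemmas we iterate: use the swap lemma at most $\binom{\mathsf{h}+\mathsf{c}}{2}$ times to move all handles to the front, then apply the Dyck move repeatedly to bring the crosscap count from $\mathsf{c}$ down to $\mathsf{c}_0\in\{1,2\}$ (this takes $\mathsf{h}_0 = \lceil \mathsf{c}/2\rceil - 1$ moves and produces $\mathsf{h}_0$ new handles), arriving at the canonical form $\mathscr{D}^{(\mathsf{h}+\mathsf{h}_0,\mathsf{c}_0)}_{k}$. Since the total number of swap and Dyck moves is $O(g)$ and each contributes at most a factor~$c_1$ to the order, a sufficiently generous choice (e.g.\ $c_1 \leq 162$) yields the overall factor $162^{2g}$ claimed in the statement, proving both inclusions simultaneously.

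The main obstacle will be carrying out the two geometric constructions with enough care that the claimed minor models actually exist and compose without interference. In particular, the swap of a handle past a crosscap must supply four disjoint handle-paths that loop ``around'' the two crosscap-paths while respecting the cyclic structure of the cylinder, and the Dyck move must simultaneously identify the four endpoints of one crosscap pair with the two endpoints of the ``hinge'' crosscap in a way that produces the correct non-crossing pattern expected of a handle; both of these are best presented via explicit pictorial lemmas together with a short verification that the target configuration appears as a topological minor in the reserved block of cylinder columns. Once these two lemmas are in place, the theorem follows immediately by induction on the number of remaining out-of-order gadgets.
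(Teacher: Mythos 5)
Your proposal takes essentially the same route as the paper: the proof there rests on exactly the two lemmas you describe --- a swap lemma (the paper's \cref{upanishads}) exchanging an adjacent handle/crosscap pair at a multiplicative cost of $9$ in the order, and a Dyck-move lemma (the paper's \cref{classification}) converting three consecutive crosscaps into a handle followed by a crosscap (and its reverse) at a factor of $18$ --- and these are then iterated, first sorting so that all handles are consecutive and then reducing the number of crosscaps to $\mathsf{c}_0\in\{1,2\}$ via $\mathsf{h}_0$ Dyck moves. That is the paper's proof in outline.

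The one place that needs tightening is your bookkeeping for the sorting phase. You first say it takes at most $\binom{\mathsf{h}+\mathsf{c}}{2}$ swaps, but then assert the total number of moves is $O(g)$; these are inconsistent, since $\binom{\mathsf{h}+\mathsf{c}}{2}=\Theta(g^{2})$. If each swap really incurred an independent factor $c_{1}$ and you needed $\Theta(g^{2})$ of them sequentially, the final order blow-up would be $c_{1}^{\Theta(g^{2})}$, not the stated $162^{2g}$. The paper charges the sorting phase a factor $9^{\mathsf{h}+\mathsf{c}}$ and the Dyck phase a factor $18^{\mathsf{h}_{0}}$, and bounds the product by $162^{\mathsf{h}+\mathsf{c}+\mathsf{h}_{0}}\le 162^{2g}$; to obtain a matching $O(g)$-exponent bound from a swap lemma you must argue that each constant-factor ``pass'' can perform several pairwise-disjoint adjacent swaps simultaneously (transposition-sort style), so that $\mathsf{h}+\mathsf{c}$ passes suffice, rather than paying one multiplicative factor per individual swap. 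Make that batching explicit, or otherwise justify why $O(g)$ constant-cost steps suffice, and the rest of the argument goes through as you describe.
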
 

\begin{figure}[ht]
  \begin{center}
  \scalebox{.75}{\includegraphics{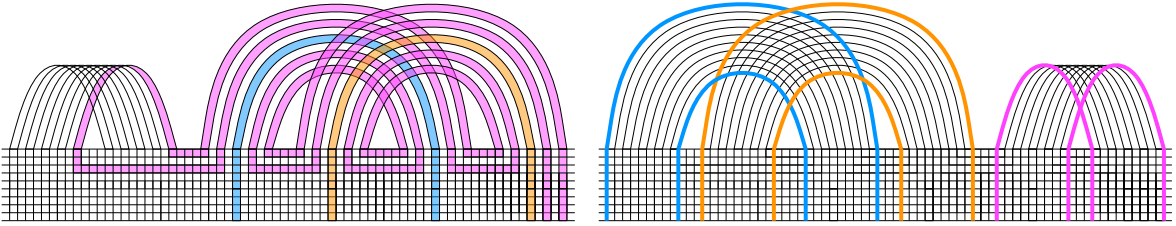}}
  \end{center}
  \caption{Swapping the position of a crosscap and a handle. The colors show how the handle and the crosscap on the right are routed through the crosscap and the handle on the left.}
  \label{dignitaries}
\end{figure}
\rev{36.  I do not read the proof of Theorem 3.2 because the theorem looks obviously true, except the correctness of the quantity $162^{2g}k$ should be carefully verified. The authors should verify the correctness on their own.}
\ans{Checked!}
At this point, we wish to mention that, for orientable Dyck-grids, the following observation can be made.
This observation reflects the fact that a non-orientable surface can never be contained in an orientable one.

\begin{observation}\label{miserliness}
For every $t\in\mathbb{N},$ $t\geq  2,$ and every $k\in\mathbb{N}$ it holds that $\mathscr{D}^{(0,1)}_t\not\leq  \mathscr{D}^{(t-1,0)}_k.$
\rev{37. P6, L-10: What do $\leq$ and  $\not\leq$ mean?}
\ans{In the preliminaries we added the definition of ``minor model'', ``minor'', and the symbol ``$≤$'' for the minor relation.}
\end{observation}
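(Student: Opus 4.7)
The plan is to argue by contradiction, combining minor-monotonicity of surface embeddings with an orientability-induced lower bound on the orientable Euler-genus of the crosscap Dyck-grid. Suppose $\mathscr{D}^{(0,1)}_{t}\leq \mathscr{D}^{(t-1,0)}_{k}$ for some $t\geq 2$ and $k\in\mathbb{N}$. First I would observe that $\mathscr{D}^{(t-1,0)}_{k}$ admits a natural embedding in the orientable surface $\Sigma$ of Euler-genus $2(t-1)$ (the sphere with $t-1$ handles), since by construction the Dyck-grid with only handles is a combinatorial encoding of such a surface. Since surface embeddings are preserved under taking minors (deletions and contractions each refine a fixed embedding), this would force $\mathscr{D}^{(0,1)}_{t}$ to embed in $\Sigma$ as well.

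The heart of the argument is then to prove that, for every $t\geq 2$, the graph $\mathscr{D}^{(0,1)}_{t}$ does not embed in any orientable surface of Euler-genus strictly less than $2t$. The crosscap of $\mathscr{D}^{(0,1)}_{t}$ manifests as $2t$ pairwise crossing chords on the innermost cycle $C_{1}$ (viewing $C_{1}$ as a convex polygon), and these chords, combined with the concentric cycles $C_{2},\ldots,C_{t}$ surrounding $C_{1}$, form a ``stacked'' non-orientable substructure whose orientable Euler-genus grows linearly with $t$. I would derive this lower bound along one of three routes: (i) by exhibiting an explicit M\"obius-ladder-type minor of width $\Theta(t)$ inside $\mathscr{D}^{(0,1)}_{t}$ and invoking the classical computation of the orientable genus of M\"obius ladders, (ii) by a $\mathbb{Z}/2$-homological argument on the cycle space, in which the crosscap induces an orientation-reversing class that must be absorbed by one handle per concentric layer, or (iii) by appealing to the Seymour--Thomas uniqueness theorem for highly representative embeddings to pin down, for $t$ large, the minimum-genus embedding of $\mathscr{D}^{(0,1)}_{t}$ to its intrinsically non-orientable projective-plane realization.

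The main technical obstacle is establishing the precise lower bound of $2t$ on the orientable Euler-genus of $\mathscr{D}^{(0,1)}_{t}$; the remaining steps are essentially bookkeeping. The base case $t=2$ (showing that $\mathscr{D}^{(0,1)}_{2}$ is not toroidal) reduces to a finite Euler-formula verification exploiting the pairwise crossing chord structure, while the general case follows by an inductive argument on $t$ (peeling off one concentric layer and one ``handle-worth'' of genus at a time) or by a uniform topological argument based on one of the routes (i)--(iii) above. Once the lower bound $2t$ is in place, the contradiction with $2(t-1)<2t$ is immediate and yields the claim.
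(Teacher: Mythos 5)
Your high-level reduction is sound and matches the paper's one-line remark: since every minor of a graph embeddable in a surface is itself embeddable there, a minor of $\mathscr{D}^{(t-1,0)}_k$ has orientable Euler-genus at most $2(t-1)$, so the observation is equivalent to showing that $\mathscr{D}^{(0,1)}_t$ has orientable Euler-genus at least $2t$, exactly the quantitative target you identify. The problem lies entirely in the three routes you offer for that lower bound, none of which actually delivers it. Route (i) is a dead end: every Möbius ladder $M_n$ with $n\geq 3$ embeds on the torus, so its orientable genus is the constant $1$, and no subdivision of a Möbius ladder can certify a bound growing in $t$. The concentric cycles $C_2,\ldots,C_t$ are indispensable to the obstruction, and passing to a ``Möbius-ladder-type'' minor that discards them destroys it. Route (iii) cites the wrong theorem: the Seymour--Thomas result on highly-representative embeddings \cite{SeymourT96Uniqueness} gives uniqueness of the projective-plane embedding, which is useful for pinning down the face-width but says nothing about orientable genus. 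The result you actually need is the theorem of Fiedler, Huneke, Richter and Robertson, which says that a projective-planar graph embedded with face-width $\rho\geq 2$ has orientable genus exactly $\lfloor\rho/2\rfloor$.

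Even after substituting the correct theorem, the substantive step remains undone: you must prove that the projective-plane embedding of $\mathscr{D}^{(0,1)}_t$ has face-width at least $2t$ (the matching upper bound is easy: enter the Möbius band through $C_1$, cross it between two chords, exit through $C_1$, route radially through $C_2,\ldots,C_t$ and the outer disc and back). This face-width bound is precisely where the factor $2$ and the parameter $t$ enter, balancing crossings with the $t$ nested cycles traversed twice against crossings with the $2t$ crosscap chords, and is the only place the observation's strict inequality $2t>2(t-1)$ can be forced. Route (ii) is too imprecise to check, and the claim that the base case $t=2$ ``reduces to a finite Euler-formula verification'' is also suspect: the faces of $\mathscr{D}^{(0,1)}_2$ are mostly quadrilaterals, so the naive Euler-formula genus bound is far too weak to rule out a toroidal embedding, and one would still need a representativity or obstruction argument. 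As written, the inequality on which your contradiction rests is asserted rather than proved.
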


The rest of this section is dedicated to prove \cref{diskussionen}.
In the following, we will say that a handle transaction is of \emph{order $k$} if each of the two transactions involved in the handle are of order $k.$

The next lemma  allows us to swap a handle with a neighboring crosscap {within}  a mixed surface grid by sacrificing a portion of its order.

\begin{lemma}\label{upanishads}
Let $k,\mathsf{h},\mathsf{c}\geq  1$ be integers.
Let $H'$ be the $(9k,4(\mathsf{h}+\mathsf{c}+1)\cdot 9k)$-cylindrical grid, let $I'_{\text{hndl}},I'_{\text{crscp}}$ be a partition of $[2,\mathsf{h}+\mathsf{c}+1]$ where $|I'_{\text{crscp}}|=\mathsf{c}$ and $|I'_{\text{hndl}}|=\mathsf{h}.$
Moreover, let $H$ be the $(k,4(\mathsf{h}+\mathsf{c}+1)\cdot k)$-cylindrical grid.
Finally, let $i\in I'_{\text{crscp}}$ and $i+1\in I'_{\text{hndl}}$ as well as $j\in I'_{\text{hndl}}$ and $j+1\in I'_\mathsf{c}$ and set 
\begin{eqnarray*}
 I_{\text{hndl}}\coloneqq (I'_{\text{hndl}}\setminus \{ i+1\})\cup\{ i\} & I_{\text{crscp}}\coloneqq (I'_{\text{crscp}}\setminus\{ i\})\cup \{ i+1\}, \\
J_{\text{hndl}} \coloneqq (I'_{\text{hndl}}\setminus\{ j\})\cup\{ j+1\} & J_{\text{crscp}}\coloneqq (I'_{\text{crscp}}\setminus\{ j+1\})\cup\{ j\}
\end{eqnarray*}
Then \red{the mixed surface} grid 
\rev{39. P11, L-6: “the mixed surface ...” should be “a mixed surface ...”. Such graphs are not unique based on your definition.}
\ans{In Theorem 3.2 it should say ``every'' mixed surface grid, neither ''a'' nor ''the''. We fixed it.
In general, we have changed the definition to no longer allow for subdivided edges so now, given the correct sets of indices and the order, the graph $H^k_{I,I'}$ is unique.}
${H'}^{k}_{I'_{\text{hndl}},I'_{\text{crscp}}}$ of order $5k$ with $\mathsf{h}$ handles and $\mathsf{c}$ crosscaps  contains as a minor 
\begin{itemize}
\item \red{the mixed surface} grid $H^k_{I_{\text{hndl}},I_{\text{crscp}}}$ of order $k$ with $\mathsf{h}$ handles and $\mathsf{c}$ crosscaps  and 
\item \red{the mixed surface} grid $H^{k}_{J_{\text{crscp}},J_{\text{hndl}}}$ of order $k$ with $\mathsf{h}$ handles and $\mathsf{c}$ crosscaps.
\end{itemize}
\end{lemma}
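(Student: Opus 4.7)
The plan is to exhibit the two claimed minor containments by an explicit rerouting of the paths forming the relevant handles and crosscaps, following the scheme depicted in Figure~\ref{dignitaries}. The two bullets are symmetric, since each amounts to swapping a handle with a crosscap at two consecutive indices; it therefore suffices to describe the routing for the first bullet (the swap at $(i,i+1)$), and the second follows by the mirrored construction in the disjoint region around $(j,j+1)$.

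Outside the swap region, the proof is routine: for every position $\ell\in\{2,\ldots,\mathsf{h}+\mathsf{c}+1\}\setminus\{i,i+1\}$, the handle or crosscap paths of the target grid $H^k_{I_{\text{hndl}},I_{\text{crscp}}}$ already appear inside ${H'}^{k}_{I'_{\text{hndl}},I'_{\text{crscp}}}$ at the correct column block, and we simply contract excess rows and columns down to the prescribed dimensions. The heart of the lemma is therefore the local swap at $(i,i+1)$: the original grid carries one twisted transaction of width $2k$ across $C_1$ (the crosscap at position $i$) plus two untwisted transactions of width $k$ each (the handle at position $i+1$), and we need the opposite arrangement as a minor. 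Using that we dispose of $9k$ rather than $k$ rows, we allocate four disjoint horizontal strips of height $k$ to serve as \emph{rerouting bridges} together with one further strip that plays the role of the new top cycle $C_1$; within this budget, the two transactions of the new handle at position $i$ are routed across columns $i$ and $i+1$ through portions of the old crosscap paths and returned via part of the old handle's first transaction, while the twisted transaction of the new crosscap at position $i+1$ is obtained by threading through part of the old handle's second transaction combined with the twist inherited from the old crosscap paths. Figure~\ref{dignitaries} shows exactly which four bundles of paths go where.

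The main obstacle is verifying pairwise vertex-disjointness of all rerouted paths and checking that they realize the correct crossing parity: the two transactions forming the new handle must each be untwisted (their paths crossing an even number of times) while the transaction forming the new crosscap must remain twisted (its paths crossing an odd number of times). Both conditions follow directly from the bundle assignment depicted in Figure~\ref{dignitaries}, since the new crosscap inherits the twist of the old crosscap paths and the two new handle transactions inherit parallel (untwisted) routings from the two old handle transactions. Combining this local swap with the routine construction outside the swap region yields the first claimed minor, and applying the analogous swap at $(j,j+1)$ yields the second.
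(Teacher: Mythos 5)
Your plan is the same as the paper's: invoke symmetry between the two bullets, reduce to the local swap at $(i,i+1)$, and exhibit the rerouting following Figure~\ref{dignitaries}. The overall shape of the argument is correct, but two concerns keep this from being a verification rather than a restatement.

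First, the resource accounting does not match the one that makes the constants work. You allocate ``four disjoint horizontal strips of height $k$'' plus one more for the top cycle, i.e.\ $5k$ of the available $9k$. The paper instead tracks the \emph{width} of the two transactions forming the old handle: routing the new crosscap (a bundle of $2k$ paths) through the handle consumes $8k$ of the $9k$ paths of each of the two handle transactions, leaving exactly $k$ of each for the new handle, and the remaining cycles $C_{8k+1},\ldots,C_{9k}$ are reserved to rebuild the annulus. This is where the factor $9$ is actually justified; your strip count does not obviously yield it, and if $5k$ really sufficed you would have strengthened the lemma, which should make you suspicious. Second, your prose description of the routing raises a twist-parity question you do not resolve: you send the two transactions of the \emph{new handle} ``through portions of the old crosscap paths and \ldots the old handle's first transaction.'' A path through a crosscap acquires one twist; a path through a handle transaction acquires none. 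If each new-handle path passes through the crosscap an odd number of times, the result is a crosscap transaction, not a handle transaction. The paper sidesteps this by letting the new handle consist of the \emph{unused} $k$-wide remnants of the two old handle transactions (no twist at all), and by routing only the new crosscap so that it inherits the odd twist. You assert the parities ``follow directly from the bundle assignment depicted in Figure~\ref{dignitaries},'' but this is exactly what needs checking; spell out, bundle by bundle, which crosscaps and which handle transactions each path traverses and how many times.
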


\begin{proof}
To see that the claim of this lemma is true consider \cref{dignitaries}.
Since the construction is completely symmetric it suffices to consider the case where $i+1\in I'_{\text{hndl}}$ and  $i\in I'_{\text{crscp}}$ and as depicted in the figure.
Notice that the handle transaction at $i+1$ partitions the set $[4\cdot 9k\cdot i+1,8\cdot 9k\cdot i]$ into four intervals, namely $[4\cdot 9k\cdot i+1,4\cdot 9k\cdot i+9k],$ $[4\cdot 9k\cdot i+9k+1,4\cdot 9k\cdot i+2\cdot 9k],$ $[4\cdot 9k\cdot i+2\cdot 9k+1,4\cdot 9k\cdot i+3\cdot 9k],$ and $[4\cdot 9k\cdot i+3\cdot 9k+1,4\cdot 9k\cdot (i+1)].$
The first and third interval form the endpoints of one of the two transactions that form the handle, the second transaction involved in the handle has its endpoints in the second and fourth interval.
We fix the two intervals $[4\cdot 9k\cdot i+32k+1,4\cdot 9k\cdot i+34k]$ and $[4\cdot 9k\cdot i+34k+1,4\cdot 9k\cdot (i+1)]$ to host the starting and endpoints of a crosscap transaction of order $2k$ which is necessary for a mixed surface grid of order $k.$
As depicted in \cref{dignitaries}, we may route the two pairs of $2k$ disjoint paths, from each of the two terminal intervals, through the handle.
By doing so, each handle is traversed two times per pair which takes up $8k$ paths of each of the two transactions of the handle in total.
Thus, as depicted in \cref{dignitaries}, for each of the two transactions involved in the handle we may keep $k$ paths that allow us to maintain a handle transaction of order $k.$
Moreover, the may route the crosscap in such a way that it avoids the cycles $C_{8k+1},\dots,C_{9k}$ since we are only routing $8k$ paths in total.
By combining these cycles with $4k$ paths from the grid-structure at position $0,$ a sub-handle, sub\red{-}crosscap\ans{added a missing dash} of each other handle or crosscap transaction of order $k$ each, and the newly routed handle and crosscap transactions, we obtain the desired mixed surface grid of order $k.$
\end{proof}

\subsection{A graph-minor analogue of Dyck’s lemma}
The next step is to convert three consecutive crosscaps into a handle followed by a crosscap.
Moreover, we show that the reversal of this exchange is possible.
The numbers in the following lemma are not necessarily optimal, they are adjusted to make for a nicer presentation.

We remark that \cref{classification} can be seen as 
a graph-theoretical analogue of  Dyck's theorem, stated in terms of graph minors.

\begin{figure}[ht]
  \begin{center}
  \scalebox{.65}{\includegraphics{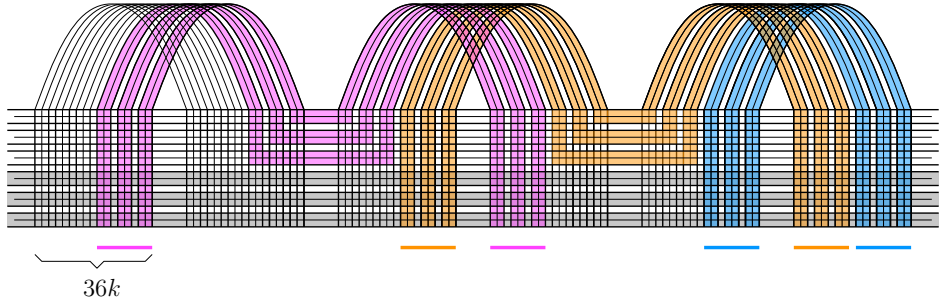}}
  \end{center}
  \caption{Three crosscaps in a row. }
  \label{functionaries}
\end{figure}

\begin{lemma}\label{classification}
Let $k,\mathsf{h},\mathsf{c}\geq  0$ be integers such that $k\geq  1$ and $c\geq  3.$
Let $H'$ be the $(18k,4(\mathsf{h}+\mathsf{c}+1)\cdot 18k)$-cylindrical grid and $H''$ be the $(18k,4(c-2+\mathsf{h}+1+1)\cdot 18k)$-cylindrical grid.
Let $I'_{\text{hndl}},I'_{\text{crscp}}$ be a partition of $[2,\mathsf{h}+\mathsf{c}+1]$ where $|I'_{\text{crscp}}|=c$ and $|I'_{\text{hndl}}|=h$ and there exists some integer $i\in I'_{\text{crscp}}$ such that $i+1,i+2\in I'_{\text{crscp}},$ moreover, let $I''_{\mathsf{c}},I''_{\mathsf{h}}$ be a partition of $[2,\mathsf{h}+\mathsf{c}]$ such that $i\in I''_{\mathsf{h}}$ and $i+1\in I''_{\mathsf{c}}.$
Finally, let $H^1$ be the $(k,4(\mathsf{h}+\mathsf{c})\cdot k)$-cylindrical grid and $H^2$ be the $(k,4(\mathsf{h}+\mathsf{c}+1)\cdot k)$-cylindrical grid.

Then the following hold:
\begin{enumerate}
\item\label{combination}  \red{the mixed surface} grid ${H'}^{18k}_{I'_{\text{hndl}},I'_{\text{crscp}}}$ of order $18k$ with $\mathsf{h}$ handles and $\mathsf{c}$ crosscaps  contains \red{the mixed surface} grid ${H^1}^{k}_{I''_{\mathsf{c}},I''_{\mathsf{h}}}$ of order $k$ with  $\mathsf{h}+1$ handles  and $\mathsf{c}-2$ crosscaps as a minor, and 
\item\label{undertaking} \red{the mixed surface} grid ${H''}^{18k}_{I''_{\mathsf{c}},I''_{\mathsf{h}}}$ of order $18k$ with $\mathsf{h}+1$ handles and $\mathsf{c}-2$ crosscaps contains \red{the mixed surface} grid ${H^{2}}^{k}_{I'_{\text{hndl}},I'_{\text{crscp}}}$ of order $k$ with $\mathsf{h}$ handles and $\mathsf{c}$ crosscaps  as a minor. 
\end{enumerate}
\end{lemma}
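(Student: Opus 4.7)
The plan is to prove both parts by explicit vertex-disjoint routings in the ambient cylindrical grid, using as topological guide Dyck's classical relation: attached to a non-orientable surface, a handle is equivalent to two crosscaps. This explains both the preservation of Euler genus, $2\mathsf{h}+\mathsf{c}=2(\mathsf{h}+1)+(\mathsf{c}-2)$, and the hypothesis $\mathsf{c}\geq 3$, which is exactly what guarantees that at least one crosscap survives on either side of the Dyck move so that the ambient surface remains non-orientable and the move is legal.

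For \cref{combination}, I would use the three consecutive crosscap transactions at positions $i,i+1,i+2$ of the large grid to realize a handle at position $i$ and a crosscap at position $i+1$ of the target order-$k$ grid. Reserve an innermost sub-band of $k$ cycles to serve as the ``grid'' part of the target minor, and use the remaining $17k$ cycles as a reservoir of disjoint paths for rerouting. In the target, the handle requires two nested transactions of order $k$ (so $2k$ paths arranged in nested pairs) and the crosscap requires $2k$ twisted paths. The handle is obtained by taking two of the three original crosscap bundles and using the third as a ``Dyck flip'' through which one bundle is passed in reversed cyclic order, thereby converting the twisted crosscap orientation of the first pair into the nested orientation that defines a handle. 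The capacity of the third crosscap left over after this detour supports the surviving order-$k$ crosscap at the new position $i+1$. All transactions at positions outside $[i,i+2]$ are preserved at order $k$ simply by restricting them to the reserved sub-band, and a direct path count shows that the factor $18$ leaves room to spare. Part \cref{undertaking} follows from the analogous reverse construction: starting from a handle at position $i$ and a crosscap at position $i+1$, both of order $18k$, the twisted crosscap is used to split the handle into two crosscaps, and together with the remainder of the original crosscap this yields three crosscap transactions of order $k$ at positions $i,i+1,i+2$, while all other positions are left untouched up to restriction.

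The main obstacle is not the topological picture, which is dictated by Dyck's relation and already visualized in \cref{functionaries} and in the spirit of \cref{dignitaries}, but the combinatorial bookkeeping: verifying that the rerouted handle/crosscap paths together with the preserved transactions of all other positions can simultaneously be realized as pairwise vertex-disjoint connected subgraphs, and that their endpoints on $C_1$ appear in exactly the cyclic order required by the definition of a mixed surface grid (namely nested for handles, parallel-with-shift for crosscaps). As with \cref{upanishads}, the constant $18$ is chosen for a clean presentation rather than optimized; its role is simply to provide enough slack in both the number of cycles and the horizontal width to host the new transactions and perform the Dyck-flipping detour through the third crosscap (resp.\ the crosscap accompanying the handle) without colliding with any other handle or crosscap position.
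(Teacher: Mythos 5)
Your high-level plan — produce the handle and surviving crosscap by explicit disjoint routings through the three consecutive crosscaps, with the factor $18$ supplying slack, and symmetrize for the converse — does match the paper's strategy. But the specific construction you describe for part~\ref{combination} has a real gap, and since the cyclic-order/disjointness verification you defer as ``bookkeeping'' \emph{is} the content of the lemma, that gap matters.

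Concretely, you say the handle is obtained by ``taking two of the three original crosscap bundles and using the third as a `Dyck flip' through which one bundle is passed in reversed cyclic order.'' A handle transaction consists of two bundles, \emph{both} in nested (orientation-preserving) order and interleaving each other. If you route only one of your two bundles through the third crosscap, you end up with one nested and one still-twisted bundle, which is neither a handle transaction nor two crosscap transactions. The paper instead routes two \emph{composite} bundles: bundle one through crosscaps $i$ and $i+1$, bundle two through crosscaps $i+1$ and $i+2$. Each passes through two crosscaps and is therefore nested, and since both pass through the shared middle crosscap $i+1$ they interleave — exactly the handle configuration. The surviving crosscap is then carried by a third bundle passing through $i+2$ alone, which is intertwined with the handle; a separate, explicit untangling step (re-routing a small crosscap transaction through the just-built handle) is needed to separate the two, and a symmetric disentanglement is needed in part~\ref{undertaking}. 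Without the shared-middle-crosscap routing and the untangling step, your sketch does not produce the transactions in the cyclic order the definition of a mixed surface grid demands, and the appeal to the figures papers over exactly the step where your description diverges from what those figures depict.
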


\begin{proof}
We divide the proof of this lemma into two steps for the proof of \ref{combination}
(visualized in \cref{functionaries}, \cref{resentative}, and \cref{instinctive})
and one for the proof of the reverse direction \ref{undertaking}
(visualized in \cref{thematically}, \cref{interference}, and \cref{celebrities}).\medskip

\begin{figure}[ht]
  \begin{center}
  \scalebox{.65}{\includegraphics{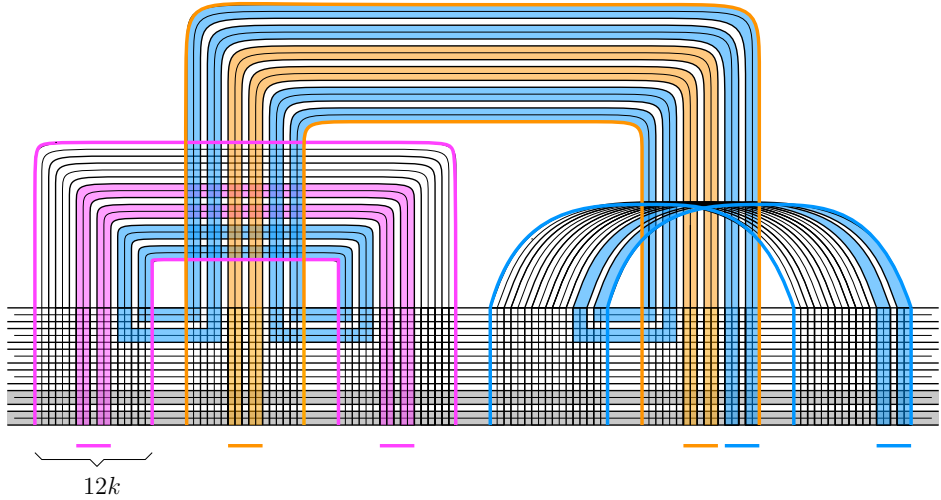}}
  \end{center}
  \caption{The result of the construction from \cref{functionaries}.}
  \label{resentative}
\end{figure}

Proof of \ref{combination}.
We begin by routing three sets of paths through the three consecutive crosscaps.
Notice that each of the three crosscap transactions contains a total of $2\cdot 18k=36k$ paths, however, each of our path-bundles will only contain $12k$ paths.
This allows us to avoid the cycles $C_{12k+1},\dots,C_{18k}$ which we need to form the annulus of the resulting mixed surface grid.
The first bundle of paths starts in the crosscap at position $i$ on the cycle $C_{18k},$ traverses this crosscap, traverses the crosscap at position $i+1$ and then ends again on the cycle $C_{18k}.$
The second bundle starts left (with respect to the ordering of the vertices on $C_{18k}$ as induced by the definition of mixed surface grids) of the endpoints of the first bundle.
It traverses the crosscap at position $i+1$ and then the crosscap at position $i+2.$
Finally, the third bundle starts left of the end of the second bundle in the crosscap at position $i+2,$ traverses only this one crosscap, and ends on the right of the second bundle.
See \cref{functionaries} for an illustration of this first construction.

Now keep these three bundles, each of order $12k,$ keep $12k$ paths of each other crosscap transaction and keep $6k$ paths of each of the two transactions involved in every other handle transaction.
Moreover, keep $12k$ paths from the grid structure at position $0$ together with the cycles $C_{12k+1},\dots,C_{18k}$
The resulting graph almost resembles a mixed surface grid with the exception of positions $i,$ $i+1,$ and $i+2,$ here we have two planar and one crosscap transaction intertwined.
See \cref{resentative} for an illustration of the situation.

\begin{figure}[ht]
  \begin{center}
  \scalebox{.65}{\includegraphics{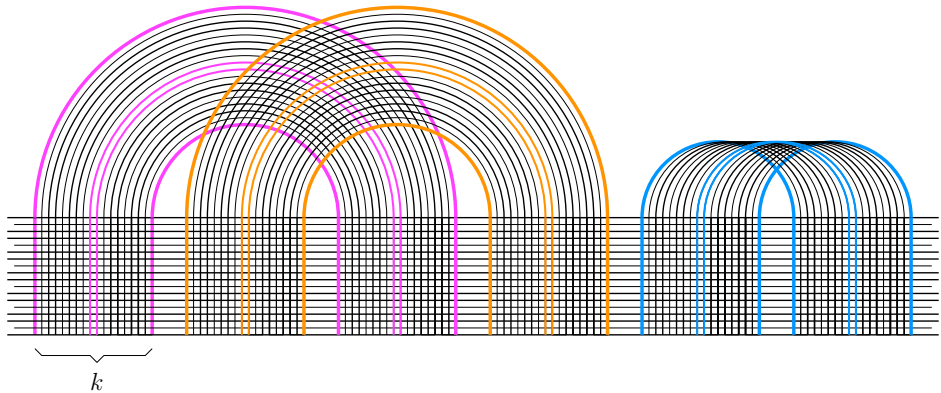}}
  \end{center}
  \caption{The result of the construction from \cref{resentative}. The colors indicate the different bundles of paths used to route the transactions.}
  \label{instinctive}
\end{figure}

The final step is to untangle the crosscap at position $i+2$ from the newly created handle.
Notice that ``position $i+2$'' refers only to the bundle of paths forming the crosscap transactions as we are not in a mixed surface grid any more.
Indeed, by merging the other two crosscaps into a single handle, we have lost a position.
Similar to the proof of \cref{upanishads} we fix two disjoint intervals of endpoints, one {within} the right most part of the crosscap transaction on the cycle $C_{18k},$ and the other in the right most part of the middle transaction that is part of the handle.
Both intervals are of order $2k.$
We then route a crosscap transaction from the left of these intervals to the right.
As depicted in \cref{resentative} we may traverse the handle transaction which allows us to traverse the crosscap exactly once.
Hence, the resulting transaction is, indeed, a crosscap.
Since each of the two transactions involved in the handle are of order $12k,$ and we only spend $4k$ of their paths to route the crosscap, there are enough paths left to choose $k$ paths each in order to form a handle transaction of order $k.$
By restricting all other handles and crosscaps accordingly and keeping the cycles $C_{17k+1},\dots,C_{18k},$ we obtain a mixed surface grid of order $k$ as desired.
See \cref{instinctive} for an illustration.\medskip

\begin{figure}[ht]
  \begin{center}
  \scalebox{.65}{\includegraphics{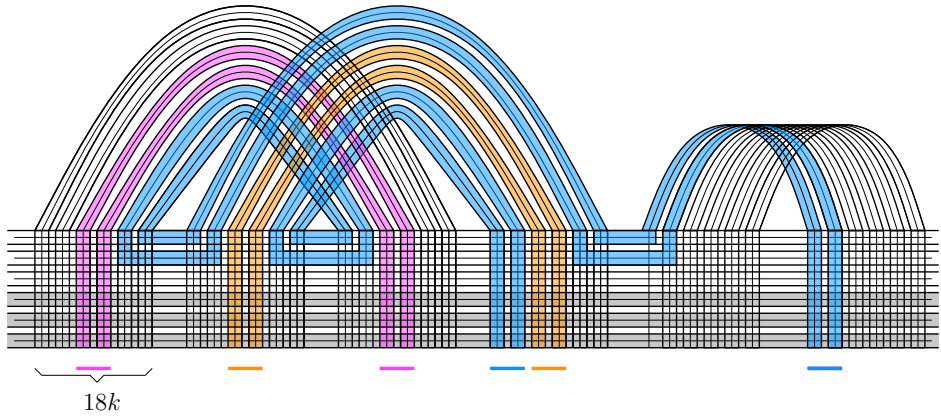}}
  \end{center}
  \caption{A handle and a crosscap in a row together with the routing for the crosscap that allows to recreate the situation from \cref{resentative}.}
  \label{thematically}
\end{figure}

Proof of \ref{undertaking}.
The reverse transformation works along similar lines but is not completely analogous.
For the sake of completeness we include illustrations of the three steps that decompose a handle and a crosscap into three distinct crosscaps.

The first step is to recreate the entangled situation from \cref{resentative} by routing the crosscap backwards through the handle as depicted in \cref{thematically}.
By doing so we divide the order of the original crosscap transaction by three as we need to route $6k$ paths which will later be divided into three disjoint crosscaps, but we also need to keep $6k$ paths from each of the two transactions of order $18k$ that form the handle.
As we only route $6k$ paths in this step, we are able to keep the cycles $C_{6k+1},\dots,C_{18k}$ for the next step.

\begin{figure}[ht]
  \begin{center}
  \scalebox{.65}{\includegraphics{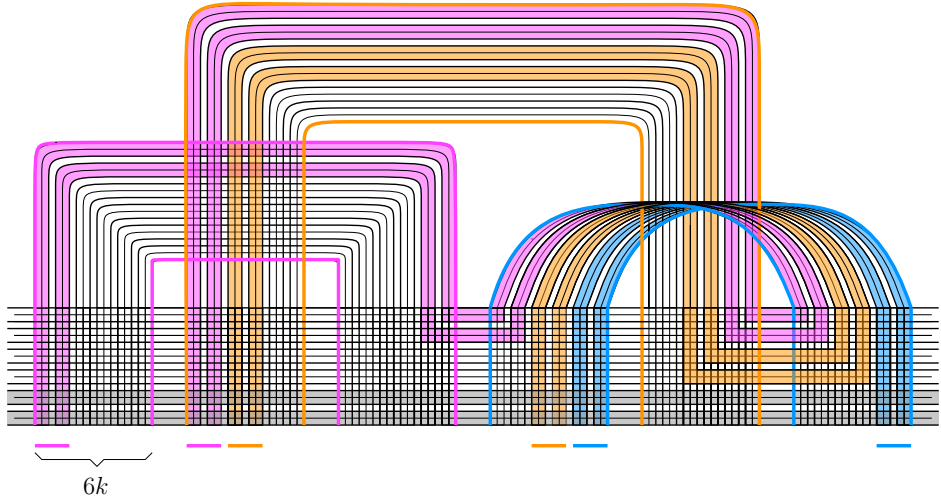}}
  \end{center}
  \caption{The result of the construction from \cref{thematically}.}
  \label{interference}
\end{figure}

With this, we have now established a situation similar to the one from \cref{resentative} as depicted in \cref{interference}.
We may now split the crosscap transaction of order $6k$ into three, each of them of order $2k.$
Finally, we make use of the entangled handle to create three distinct crosscap transactions of order $2k$ as depicted in \cref{celebrities}.

Since we route $4k$ paths through the annulus, we may, in particular, avoid the cycles $C_{17k+1},\dots,C_{18k}$ and thus, by restricting the grid structure at position $0$ and all other handle and crosscap transactions accordingly, we obtain the desired mixed surface grid and our proof is complete.
\end{proof}

\begin{figure}[ht]
  \begin{center}
  \scalebox{.65}{\includegraphics{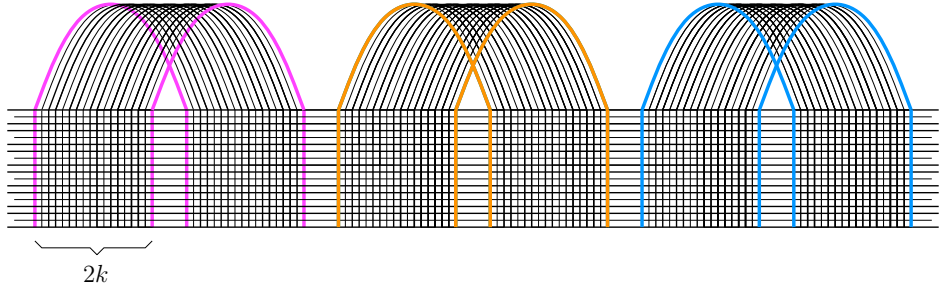}}
  \end{center}
  \caption{The result of the construction from \cref{interference}. The colors indicate the bundles of paths used to create the three consecutive crosscaps.}
  \label{celebrities}
\end{figure}

\subsection{Proof of \cref{diskussionen}}

We are finally ready for the proof of \cref{diskussionen}.

\begin{proof}[Proof of \cref{diskussionen}]
Let $H$ be a mixed surface grid of order $162^{\mathsf{h}_0+ \mathsf{c}+ \mathsf{h}}\cdot k$ with $\mathsf{h}$ handles and $\mathsf{c}$ crosscaps.

For each of the $\mathsf{c}$ crosscaps we need to call \cref{upanishads} at most $\mathsf{h}$ times to achieve a situation where all handles are consecutive and thus obtain a surface grid.
That is, if we aim for a surface grid of order $w$ with $\mathsf{h}$ handles and $\mathsf{c}$ crosscaps, we may start out with a mixed surface grid of order $9^{\mathsf{h}+\mathsf{c}}\cdot w.$
Moreover, this is enough to move any crosscap essentially anywhere along the cycle $C_1.$
That means that also the surface grid of order $9^{\mathsf{h}+\mathsf{c}}\cdot w$ contains every mixed surface grid of order $w$ with $\mathsf{h}$ handles and $\mathsf{c}$ crosscaps as a minor.

In particular, if $H'$ is the surface grid of order $18^{\mathsf{h}_0}\cdot k,$ then $H'$ is a minor of $H$ by the reasoning above.

Now, to obtain a Dyck-grid of order $w$ with $\mathsf{h}+\mathsf{h}_{0}$ handles and $\mathsf{c}_0$ crosscaps from a surface grid with $h$ handles and $\mathsf{c}$ crosscaps we may proceed as follows:
Choose three consecutive crosscaps at positions $i,$ $i+1,$ and $i+2$ such that the transaction at position $i-1$ is either a handle, or $i-1=0.$
Then apply \cref{classification} to these three crosscaps.
The resulting minor we find is still a surface grid, but now on $\mathsf{h}+1$ handles and $\mathsf{c}-2$ crosscaps and its order has decreased by a factor of $18.$
As we have to apply this exactly $\mathsf{h}_{0}$ times to reduce the number of crosscaps below three, by starting out with a surface grid of order $18^{\mathsf{h}_{0}}\cdot w$ with $\mathsf{h}$ handles and $\mathsf{c}$ crosscaps, we obtain the Dyck-grid of order $w$ and Euler-genus $2(\mathsf{h}+\mathsf{h}_{0})+\mathsf{c}_0=2\mathsf{h}+\mathsf{c}$ as a minor.
Notice that \cref{classification} states that reversing this operation also costs a factor of $18,$ that means that the (orientable in case $\mathsf{c}=0$ and otherwise non-orientable) Dyck-grid of order $18^{\mathsf{h}_{0}}\cdot w$ contains the surface grid of order $w$ with $h$ handles and $\mathsf{c}$ crosscaps as a minor.
In fact, for any $\mathsf{h}'\leq  \mathsf{h}_{0},$ it also contains the surface grid of order $w$ with $\mathsf{h}+(\mathsf{h}_0-\mathsf{h}')$ handles and $\mathsf{c}-2(\mathsf{h}_{0}-\mathsf{h}')$ crosscaps as a minor.

In particular, our graph $H'$ from above contains a Dyck-grid of order $k$ and Euler-genus $2\mathsf{h}+\mathsf{c}$ as a minor.

Now let $g\coloneqq 2\mathsf{h}+\mathsf{c}$ as in the statement of our theorem and notice the following inequalities:
\begin{align*}
  \mathsf{c} & \leq  g,\\
  2\mathsf{h} & \leq  g,\text{ and}\\
  2\mathsf{h}_{0} &\leq  \mathsf{c},
\end{align*}
It follows that
\begin{align*}
  162^{\mathsf{h}_{0}+\mathsf{h}+\mathsf{c}}\cdot k & \leq  162^{\frac{1}{2}g + g + \frac{1}{2}g}\cdot k\\
  &\leq  162^{2g}\cdot k.
\end{align*}
This is true for any choice of $\mathsf{h}'$ and $\mathsf{c}'$ such that $2\mathsf{h}'+\mathsf{c}'=g.$
Hence, by the discussion above, that any mixed surface grid of order $162^{2g}\cdot k$ with $\mathsf{h}'$ handles and $\mathsf{c}'$ crosscaps where $2\mathsf{h}'+\mathsf{c}'=g$ contains a Dyck-grid of order $k$ and Euler-genus $g$ as a minor.
Moreover, the (orientable in case $\mathsf{c}'=0$ and otherwise non-orientable) Dyck-grid of order $162^{2g}k$ contains every mixed surface grid of order $k$ with $\mathsf{h}'$ handles and $\mathsf{c}'$ crosscaps as a minor.
Hence, our proof is complete.
\end{proof}

\subsection{Universal graphs for graphs of bounded Euler-genus.}

An important consequence of \cref{diskussionen} is that Dyck-walls are indeed minor-universal for graphs of bounded Euler-genus.
Indeed, any mixed surface wall with $h$ handles and $\mathsf{c}$ crosscaps is minor-universal for graphs that embed into the surface obtained from the sphere by adding $h$ handles and $\mathsf{c}$ crosscaps. 
\red{We use notation $\Sigma^{(\mathsf{h},\mathsf{c})}$ for a surface with ${\sf h}$ handles and ${\sf c}$ crosscap.}

Let us now state the theorem of Gavoille and Hilaire.

\begin{proposition}[Gavoille and Hilaire \cite{gavoille2023minor}]\label{territorial}
For every pair $(\mathsf{h},\mathsf{c})\in\mathbb{N}\times[0,2],$ a graph $G$ is embeddable in $\Sigma^{(\mathsf{h},\mathsf{c})}$ if and only if there exists a\red{n integer $k=k(\mathsf{h},\mathsf{c},G)$} such that one of the following holds
\rev{38. P11, L-8: Delete “it is a”. $k(G)$ should be $k(h, c, G)$ because it also depends on $h$ and $c$. And you should mention that $k(G)$ is an integer.}
\ans{We deleted ``it is a'' and we made the changes.}
\begin{itemize}
\item in case $\mathsf{c}=0,$ $G$ is a minor of $\mathscr{D}^{(\mathsf{h},0)}_k,$ and
\item in case $\mathsf{c}\neq 0,$ $G$ is a minor of \red{a mixed surface} grid of order $k$ with $0$ handles and $2\mathsf{h}+\mathsf{c}$ crosscaps.
\end{itemize}
Moreover, we have that $k(G)\in \mathcal{O}((2\mathsf{h}+\mathsf{c})^2(2\mathsf{h}+\mathsf{c}+|V(G)|)^2).$
\end{proposition}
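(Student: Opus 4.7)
The plan is to prove the two directions of the biconditional separately. The reverse direction (being a minor of the grid implies embeddability in $\Sigma^{(\mathsf{h},\mathsf{c})}$) is the straightforward one: the Dyck-grid $\mathscr{D}^{(\mathsf{h},0)}_k$ and the mixed surface grid with $0$ handles and $2\mathsf{h}+\mathsf{c}$ crosscaps both embed in $\Sigma^{(\mathsf{h},\mathsf{c})}$ by their very construction (each handle/crosscap transaction corresponds topologically to attaching a handle/crosscap to a planar cylindrical base, and by the Classification of Surfaces the all-crosscap form realises the same surface whenever $\mathsf{c} \geq 1$). Since Euler genus is monotone under the minor relation, any minor of such a grid also embeds in $\Sigma^{(\mathsf{h},\mathsf{c})}$.

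For the forward direction, suppose $G$ is embedded in $\Sigma = \Sigma^{(\mathsf{h},\mathsf{c})}$. First, I would extend $G$ to a triangulation $G'$ of $\Sigma$ by adding one vertex in the interior of each face and connecting it to the vertices on the face boundary; by Euler's formula this adds $\mathcal{O}(|V(G)|+(2\mathsf{h}+\mathsf{c}))$ vertices, and it suffices to exhibit $G'$ as a minor of the appropriate grid. Next, fix a fundamental polygon representation of $\Sigma$ with standard identification pattern (a $4\mathsf{h}$-gon with the pattern $a_1b_1a_1^{-1}b_1^{-1}\cdots$ for the orientable case, and a $2(2\mathsf{h}+\mathsf{c})$-gon with the pattern $a_1a_1a_2a_2\cdots$ for the crosscap-only case). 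Using a canonical-form argument for triangulations of surfaces, adjust $G'$ by a bounded number of diagonal flips (which preserve minor-type) so that it is compatible with a radial-plus-polygonal cell decomposition aligned with the identification pattern.

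Finally, take a Dyck-grid (respectively mixed surface grid) of suitable order $k$. Identify its cycles $C_1,\dots,C_k$ with nested concentric curves in $\Sigma$ around a central disk, and its handle/crosscap paths with transverse arcs through the corresponding topological features. The grid then furnishes a dense ``coordinate system'' on $\Sigma$. Assign each vertex of $G'$ to a distinct ``cell'' (a small region bounded by grid edges) and route each edge of $G'$ along disjoint paths in the grid. A counting argument using Euler's formula --- a triangulation on $n$ vertices has $\mathcal{O}(n)$ edges, and the grid of order $k$ offers $\Theta(k^2)$ cells and $\Theta(k)$ disjoint routes across each handle/crosscap --- yields the claimed bound $k = \mathcal{O}((2\mathsf{h}+\mathsf{c})(|V(G)|+2\mathsf{h}+\mathsf{c}))$, and hence $k(G)\in\mathcal{O}((2\mathsf{h}+\mathsf{c})^2(2\mathsf{h}+\mathsf{c}+|V(G)|)^2)$ after squaring for the cell-area capacity.

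The main obstacle is the alignment step: forcing the given embedding of $G'$ to be compatible with the rigid handle-first-then-crosscap ordering of the Dyck-grid, and, in the non-orientable case, with the all-crosscap form. Topologically this is guaranteed by Dyck's theorem, and \cref{classification} above supplies its graph-theoretic analogue, but carrying out the alignment constructively while keeping $k$ polynomial requires delicate ``sliding'' of portions of the triangulation across handles and crosscaps. A cleaner alternative, likely adopted in \cite{gavoille2023minor}, is to work directly in the universal cover of $\Sigma$ and project a fine periodic square tiling down to $\Sigma$, reducing the problem to a planar routing argument modulo the boundary identifications of the fundamental polygon.
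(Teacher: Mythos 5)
This statement is not proved in the paper at all: it is imported as an external result, attributed to Gavoille and Hilaire \cite{gavoille2023minor}, and used as a black box. So there is no proof in the paper for your attempt to be compared against. The paper's own contribution here is only Corollary 3.7, which derives the Dyck-grid universality statement from Proposition 3.6 together with Theorem 3.2.

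Evaluated on its own terms, your sketch has the right high-level shape (triangulate, pass to a fundamental-polygon model, route into a fine grid) but contains genuine gaps. First, the ``adjust $G'$ by diagonal flips, which preserve minor-type'' step is wrong as stated: a diagonal flip replaces one edge by another and therefore \emph{changes} the graph, so a flipped triangulation being a minor of a grid says nothing about $G'$ (or $G$) being one. Second, the bound derivation is incoherent --- you first write $k=\mathcal{O}((2\mathsf{h}+\mathsf{c})(|V(G)|+2\mathsf{h}+\mathsf{c}))$ and then claim the quadratic bound follows ``after squaring for cell-area capacity,'' but $k$ is the quantity being bounded, not a stepping stone to a larger quantity; you cannot obtain a bound on $k$ by squaring a different bound on $k$. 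Third, the actual crux --- making the arbitrary embedding of $G'$ compatible with a fixed periodic cell structure on $\Sigma$ so that vertices and edges can be routed into disjoint branch sets with only a polynomial blow-up --- is exactly the part you acknowledge but do not carry out; neither the ``canonical-form flip argument'' nor the ``universal cover'' alternative is developed to the point where one could check the claimed bound. For a correct proof you would need to consult \cite{gavoille2023minor} itself; in the context of this paper, the authors deliberately avoid re-proving it.
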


We now obtain the fact that Dyck-walls are minor-universal for graphs of bounded Euler-genus as a direct corollary of \cref{territorial} and \cref{diskussionen}.

\begin{corollary}\label{territorial2}
For every pair $(\mathsf{h},\mathsf{c})\in\mathbb{N}\times[0,2],$ a graph $G$ is embeddable in $\Sigma^{(\mathsf{h},\mathsf{c})}$
\rev{40. P11, L-2: What is $\Sigma^{(h,c)}$?} 
\ans{We added the definition before Proposition 3.6 in Subsection 3.3.}

if and only if it is a minor of $\mathscr{D}^{(\mathsf{h},\mathsf{c})}_{k},$ for some $k=k(G)\in\mathbb{N}.$
\rev{41. P11, L-1: $k(G)$ should be $k(h,c,G)$.}
\ans{Fixed.}
Moreover, $\red{k(\mathsf{h},\mathsf{c},G)}\in 2^{\Ocal(\mathsf{h}+\mathsf{c})}.$
\end{corollary}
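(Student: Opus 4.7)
\smallskip

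\noindent\textbf{Proof plan for \cref{territorial2}.}
The statement is an equivalence, and I will establish it by combining \cref{territorial} with \cref{diskussionen}. The backward direction is the cheap one. The graph $\mathscr{D}^{(\mathsf{h},\mathsf{c})}_{k}$ is, by construction, obtained from a cylindrical grid by attaching $\mathsf{h}$ handle transactions and $\mathsf{c}$ crosscap transactions, each of which corresponds to cutting the sphere along a disk and gluing in a handle or a crosscap respectively. Hence $\mathscr{D}^{(\mathsf{h},\mathsf{c})}_k$ is embeddable in $\Sigma^{(\mathsf{h},\mathsf{c})}$, and since embeddability in a fixed surface is minor-closed, every minor of $\mathscr{D}^{(\mathsf{h},\mathsf{c})}_k$ is embeddable in $\Sigma^{(\mathsf{h},\mathsf{c})}$ as well.

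\smallskip

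\noindent For the forward direction, assume that $G$ is embeddable in $\Sigma^{(\mathsf{h},\mathsf{c})}$. I will split into the two cases guaranteed by \cref{territorial}. If $\mathsf{c}=0$, then \cref{territorial} directly provides an integer $k_0$ with $G\leq \mathscr{D}^{(\mathsf{h},0)}_{k_0}$ and we are done. If $\mathsf{c}\in\{1,2\}$, then \cref{territorial} provides an integer $k_0$ such that $G$ is a minor of some mixed surface grid $M$ of order $k_0$ with $0$ handles and $2\mathsf{h}+\mathsf{c}$ crosscaps. Here is where \cref{diskussionen} enters: applied to $M$ with parameters $\mathsf{h}'=0$ and $\mathsf{c}'=2\mathsf{h}+\mathsf{c}$, it yields that $\mathscr{D}^{(\mathsf{h}+\mathsf{h}_0,\mathsf{c}_0)}_{k_1}$ is a minor of $M$ provided $k_0 \geq 162^{2g}k_1$, where $g=2\mathsf{h}+\mathsf{c}$ and $(\mathsf{h}_0,\mathsf{c}_0)$ is the pair determined by $\mathsf{c}'$ via the formulas in \cref{diskussionen}. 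A direct check shows that $\mathsf{h}+\mathsf{h}_0$ and $\mathsf{c}_0$ coincide with the target pair $(\mathsf{h},\mathsf{c})$ (both when $\mathsf{c}=1$ and when $\mathsf{c}=2$), so the minor obtained is exactly $\mathscr{D}^{(\mathsf{h},\mathsf{c})}_{k_1}$. Actually, I need the other direction of \cref{diskussionen}: the second bullet gives that a Dyck-grid of large enough order contains $M$ as a minor, which is not what I want. Rather, I use the first bullet, which states that a sufficiently large mixed surface grid with $\mathsf{c}'$ crosscaps contains the target Dyck-grid as a minor. This is precisely the reduction needed: set $k \coloneqq \lfloor k_0/162^{2g}\rfloor$ to obtain $G \leq M \leq \mathscr{D}^{(\mathsf{h},\mathsf{c})}_{k}$ (after choosing the right Dyck-grid with matching Euler-genus).

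\smallskip

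\noindent For the quantitative bound, I plug in \cref{territorial}: the integer $k_0$ guaranteed there satisfies $k_0 \in \Ocal(g^2(g+|V(G)|)^2)$ with $g = 2\mathsf{h}+\mathsf{c}$. The reduction through \cref{diskussionen} incurs a multiplicative overhead of at most $162^{2g}$. Combining these gives a final bound of the form $k(\mathsf{h},\mathsf{c},G)\in 2^{\Ocal(\mathsf{h}+\mathsf{c})}\cdot\mathsf{poly}(\mathsf{h}+\mathsf{c}+|V(G)|)$, which is what the statement compresses into $2^{\Ocal(\mathsf{h}+\mathsf{c})}$ (hiding the polynomial dependence on $|V(G)|$).

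\smallskip

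\noindent The only minor subtlety I foresee is bookkeeping the orientability condition: I must verify that when $\mathsf{c}\in\{1,2\}$ the pair $(\mathsf{h}_0,\mathsf{c}_0)$ arising from $\mathsf{c}'=2\mathsf{h}+\mathsf{c}$ in \cref{diskussionen} satisfies $\mathsf{h}+\mathsf{h}_0=\mathsf{h}$ and $\mathsf{c}_0=\mathsf{c}$, i.e., that applying the Dyck-style reduction to an all-crosscap grid yields exactly the target Dyck-grid rather than one with a different handle/crosscap split. This is a short arithmetic check using $\mathsf{h}_0=\lceil \mathsf{c}'/2\rceil-1$ and $\mathsf{c}_0=\mathsf{c}'-2\mathsf{h}_0$, but it is the only point in the argument where a mistake could creep in. Everything else is a straightforward chaining of previously proved statements.
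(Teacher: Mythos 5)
Your proposal follows the route the paper clearly intends (the paper presents this as a ``direct corollary'' of \cref{territorial} and \cref{diskussionen} and gives no explicit proof), but you have tied yourself in a knot over which bullet of \cref{diskussionen} you actually need, and the knot produces a wrong scaling.

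You correctly observe that the second bullet says that a large enough Dyck-grid contains $M$ as a minor, i.e.\ $M\le\mathscr{D}^{(\mathsf{h}'+\mathsf{h}_0,\mathsf{c}_0)}_{162^{2g}k_0}$, and then declare this is ``not what I want'' and switch to the first bullet. But this is \emph{exactly} what you want: you have $G\le M$ and need to produce a Dyck-grid that sits \emph{above} $M$ in the minor order, so you can chain $G\le M\le\mathscr{D}^{(\mathsf{h},\mathsf{c})}_{k}$. The first bullet goes the other way ($\mathscr{D}^{(\cdots)}_k\le M$) and is useless here. The self-contradiction then surfaces in your formula: you set $k\coloneqq\lfloor k_0/162^{2g}\rfloor$, whereas the correct application of the second bullet (with $\mathsf{h}'=0$, $\mathsf{c}'=2\mathsf{h}+\mathsf{c}$, and order $k_0$) yields $k=162^{2g}\cdot k_0$. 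Asymptotically the $2^{\Ocal(\mathsf{h}+\mathsf{c})}$ bound is unaffected, but the chain you wrote down literally does not follow from the bullet you claim to use. A related cosmetic slip: after applying the theorem with $\mathsf{h}'=0$ handles, the resulting Dyck-grid is $\mathscr{D}^{(0+\mathsf{h}_0,\mathsf{c}_0)}$, so the check you owe is ``$\mathsf{h}_0=\mathsf{h}$ and $\mathsf{c}_0=\mathsf{c}$'' (which does hold by the formula $\mathsf{h}_0=\lceil\mathsf{c}'/2\rceil-1$ with $\mathsf{c}'=2\mathsf{h}+\mathsf{c}$), not ``$\mathsf{h}+\mathsf{h}_0=\mathsf{h}$''; the latter would force $\mathsf{h}_0=0$ and is generally false. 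The backward direction, the $\mathsf{c}=0$ case, and your remark about the implicit $\mathsf{poly}(|V(G)|)$ dependence hidden in the stated $2^{\Ocal(\mathsf{h}+\mathsf{c})}$ bound are all fine.
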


\section{A brief introduction to the toolset of graph minors}\label{hypocritical}

We will be using a theorem developed by Kawarabayashi, Thomas, and Wollan in their proof of the GMST \cite{KawarabayashiTW20Quicklyexcluding} which makes the extraction of some surface grid a very achievable task.
However, to be able to even state this result we need to introduce a lot of machinery.
The purpose of this section is the introduction and collection of the various technical definitions from \cite{KawarabayashiTW20Quicklyexcluding} we need.

\subsection{Walls and tangles}\label{subsec_wallsandtangles}

\paragraph{Walls.}
An \emph{$(n\times m)$-grid} is the graph $\Gamma_{n,m}$ with vertex set $[n]\times[m]$ and edge set \[\{\{(i,j),(i,j+1)\} \mid i\in[n],j\in[m-1]\}\cup\{\{(i,j),(i+1,j)\} \mid i\in[n-1],j\in[m]\}.\]
We call the path $(i,1)(i,2)\dots(i,m)$ the \emph{$i$th row} and the path $(1,j)(2,j)\dots(n,j)$ the \emph{$j$th column} of the grid.
\red{An edge $\{(i,j)(i,j+1)\}$ of the $i$th row is an \emph{even} (\emph{odd}) edge of the $i$th row if $j$ is \emph{even} (\emph{odd}).
Similarly, an edge $\{(i,j)(i+1,j)\}$ of the $j$th column is an \emph{even} (\emph{odd}) edge of the $j$th column if $i$ is \emph{even} (\emph{odd}).}
An \emph{elementary $k$-wall}~$W_k$ for~${k \geq  3},$ is obtained from the ${(k\times 2k)}$-grid $\Gamma_{k,2k}$ by deleting every odd edge in every odd column and every even edge in every even column, and then deleting all degree-one vertices.
\rev{42. P12, L10-11: What are odd edges and even edges?
}
\ans{We added a definition.}
The \emph{rows} of~$W_k$ are the subgraphs of~$W_k$ induced by the rows of~$\Gamma_{k,2k},$ while the \emph{$j$th column} of~$W_k$ is the subgraph induced by the vertices of columns~${2j-1}$ and~${2j}$ of~$\Gamma_{k,2k}.$
We define the \emph{perimeter} of $W_k$ to be the subgraph induced by
{$${\{ (i,j) \in V(W_k) \mid j \in \{1,2,2k,2k-1\} \text{ and } i \in [k] \textnormal{, or } i \in \{1,k\} \text{ and } j \in [2k] \}}.$$}
A \emph{$k$-wall}~$W$ is a graph isomorphic to a subdivision of~$W_k.$

\begin{figure}
  \centering
\includegraphics[scale=.9]{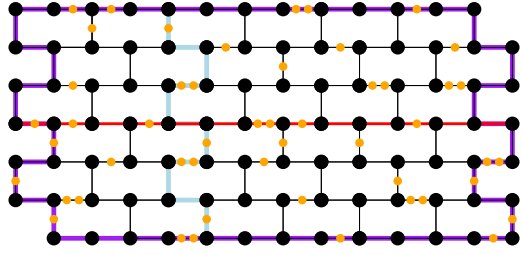}
    \caption{An illustration of a 7-wall $W.$ The perimeter is drawn in \textcolor{DarkLavender}{purple}, the 3rd column is drawn in \textcolor{CornflowerBlue}{light blue}, the 4th line is drawn in \textcolor{BostonUniversityRed}{red}, and the subdivision vertices in \textcolor{ChromeYellow}{yellow}.}
    \label{fig_overlay_d}
\end{figure}

The vertices of degree three in $W$ are called the \emph{branch vertices}.
In other words, $W$ is obtained from a graph $W'$ isomorphic to $W_k$ by subdividing each edge of~$W'$ an arbitrary (possibly zero) number of times.
The \emph{perimeter} of~$W,$ denoted by~$\Perimeter(W),$ is the subgraph isomorphic to the subgraph of~$W'$ induced by the vertices of the perimeter of~$W_k$ together with the subdivision vertices of the edges of the perimeter of~$W_k.$
We define rows and columns of $k$-walls analogously to their definition for elementary walls (see \cref{fig_overlay_d} for an example).
A \emph{wall} is a $k$-wall for some~$k.$

An $h$-wall $W'$ is a \emph{subwall} of some $k$-wall $W$ where $h\leq  k$ if every row (column) of $W'$ is contains in a row (column) of $W.$

The following is a statement of the Grid Theorem.
While we will not explicitly use the Grid Theorem, we will, later on, make use of the existence of a function that forces the existence of a large wall in graph with large enough treewidth.

\begin{proposition}[Grid Theorem \cite{chuzhoy2021towards}]\label{broadcasting}
	There exists a function $\mathsf{p}\colon\Nbbb\to\Nbbb$ with $\mathsf{p}(n)\in\mathcal{O}(n^9\operatorname{\mathsf{poly log}}(n))$ such that for every $k\in \Nbbb$ 
\rev{43. P12, L-15: You cannot cite [3] once you specify $p(n)\in O(n^9...)$ because this bound for p(n)was not proved in [3].}
	and every graph $G,$ if $\tw(G)>\mathsf{p}(k)$ then $G$ contains a $k$-wall as a subgraph.
\end{proposition}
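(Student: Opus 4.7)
The plan is to follow the general strategy underlying the polynomial grid theorem of Chuzhoy and Tan, proceeding in three conceptual stages; along the way one must repeatedly invoke flow/cut duality and carefully track the polynomial losses at each step. Since this proposition is imported verbatim from the literature, I treat the task as sketching how I would reconstruct the proof rather than shortcutting it.

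First, I would replace the treewidth hypothesis with a cleaner connectivity certificate. From $\tw(G) > \mathsf{p}(k)$ one extracts a \emph{well-linked} set $T \subseteq V(G)$ of size polynomial in $k$, meaning every separator of order less than $|T|/2$ leaves a majority of $T$ on one side. This is a classical consequence of the duality between treewidth and brambles, losing at most a constant factor. It is also convenient to reduce along the way to a bounded-degree minor of $G$ of comparable treewidth, so that subsequent Menger-type routings have bounded congestion and the combinatorics of the later routing arguments is cleaner.

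Second, I would construct a \emph{path-of-sets system} of both length and width polynomial in $k$: a sequence of pairwise disjoint connected subgraphs $S_1,\ldots,S_\ell$, together with families of vertex-disjoint paths $\mathcal{P}_i$ between $S_i$ and $S_{i+1}$, such that each $S_i$ is itself well-linked with respect to the endpoints of $\mathcal{P}_{i-1}\cup\mathcal{P}_i$. The system is built iteratively, starting from the well-linked set~$T$ and alternating between extending the current system and, whenever extension fails, extracting a small balanced separator that allows recursion on strictly better parameters. The delicate accounting of this recursion is the source of the $\mathcal{O}(n^9\operatorname{\mathsf{poly log}}(n))$ loss.

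Third, from a sufficiently long and wide path-of-sets system I would extract a wall as a topological minor and then as a subgraph. Well-linkedness inside each $S_i$ guarantees that an arbitrary matching between the endpoints of $\mathcal{P}_{i-1}$ and those of $\mathcal{P}_i$ can be routed through $S_i$ by internally disjoint paths; choosing a cyclic-shift matching produces many mutually crossing horizontal and vertical paths, which is the topological skeleton of a grid minor. A $k$-wall is then obtained as a subgraph from a sufficiently large grid minor by standard subdivision-chasing, using that $W_k$ is subcubic. The main obstacle is the second stage: avoiding a super-polynomial blow-up per level of the recursion while building the path-of-sets system. This is precisely the technical heart of the Chuzhoy--Tan argument, and I do not expect to shortcut their splitting/routing machinery.
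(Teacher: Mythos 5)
This proposition is imported as a black box: the paper does not prove it, it simply cites Chuzhoy and Tan (and, after the referee's remark about the stated bound, earlier non-polynomial grid-theorem references as well). So there is no "paper's own proof" to compare against, and the only question is whether your sketch faithfully reflects the cited argument. At the level of detail you give, it does: the lineage from Chekuri--Chuzhoy through Chuzhoy--Tan indeed runs through well-linkedness extraction from large treewidth, an iteratively constructed path-of-sets system with well-linked interfaces, and a final rerouting step that turns such a system into a grid (hence wall) minor, with the polynomial loss concentrated in the splitting/recursion bookkeeping of the second stage. Your caveat that this last stage is the technical heart and cannot be shortcut is well placed.

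Two small cautions. First, the bounded-degree reduction you mention in the first stage is a convenience used in parts of this literature but is not obviously load-bearing for the Chuzhoy--Tan argument as stated, so presenting it as a standard preparatory step is a bit stronger than the record warrants. Second, the precise exponent $\mathcal{O}(n^{9}\operatorname{\mathsf{poly log}} n)$ is exactly what the referee flagged as needing a careful source, and the authors responded by softening the citation; if you intend to defend that specific bound (rather than some polynomial bound) you would need to track the recursion constants through Chuzhoy--Tan explicitly rather than appeal to "careful accounting." Neither issue affects the overall correctness of the approach as a sketch of the external proof.
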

\ans{We removed the reference. Later we also cite earlier results with non-polynomial bounds.}

We stress that the above result holds if we replace walls by grids as these two parametric graphs are linearly equivalent (in the sense this is equivalence is defined in the begnning of \cref{sec_dycks}). See also \cite{robertson1986graph,DiestelJGT99,RobSeymT94} for earlier proofs of \cref{broadcasting} with non-polynomial dependencies.
\rev{44. P12, L-13: You cannot replace walls by grids because Proposition 4.1 was stated in terms of subgraphs, not minors. Moreover, I do not understand what “these two parametric graphs are linearly equivalent” means. Parametric graphs are undeﬁned, and it is unclear what equivalence relation you are considering.}
\ans{In the new version we introduced the concept of parametric graph and the definition of (linear) parametric equivalence.}
 
In~\cite{robertson1991graph} Robertson and Seymour introduced tangles as the max-min counterpart of the parameter of branchwidth, that is linearly equivalent to treewidth.
Tangles have been important in the proofs of the Graph Minors series of Robertson and Seymour.
Also, they play an important role in an abstract theory of connectivity, see for example~\cite{Grohe16bTangledup,DiestelK20profi,DiestelO19tangl,DiestelEW19struc,Diestel18abstr}.

\paragraph{Tangles.}
\rev{45. P12, L-11 and L-7: The titles of these two paragraphs are the same.}
\ans{fixed.}
A \emph{separation} in a graph $G$ is a pair $(A,B)$ of vertex sets such that $A\cup B=V(G)$ and there is no edge in $G$ with one endpoint in $A\setminus B$ and the other in $B\setminus A.$
The \emph{order} of $(A,B)$ is $|A\cap B|.$
Let $G$ be a graph and $k$ be a positive integer. 
We denote by $\mathcal{S}_k(G)$ the collection of all tuples $(A,B)$ where $A,B\subseteq V(G)$ and $(A,B)$ is a separation of order smaller than $k$ in $G.$ 
\rev{46. P12, L-4: $< k$ should be replaced by plain texts. }
\ans{Done.}
An \emph{orientation} of $\mathcal{S}_k(G)$ is a set $\mathcal{O}$ such that for all $(A,B)\in\mathcal{S}_k(G)$ exactly one of $(A,B)$ and $(B,A)$ belongs to $\mathcal{O}.$ 
A \emph{tangle} of order $k$ in $G$ is an orientation $\mathcal{T}$ of $\mathcal{S}_k(G)$ such that 
\begin{eqnarray}
\mbox{for all $(A_1,B_1),(A_2,B_2),(A_3,B_3)\in\mathcal{T},$ it holds that $\red{G[A_1]\cup G[A_2]\cup G[A_3]\neq G}.$}\label{tangleax}
\end{eqnarray}
Let $\mathcal{T}$ be a tangle in a graph $G.$
A tangle $\mathcal{T}'$ in $G$ is a \emph{truncation} of $\mathcal{T}$ if $\mathcal{T}'\subseteq\mathcal{T}.$

\red{Notice that our definition differs slightly from the original definition by Robertson and Seymour \cite{robertson1991graph}, however, as pointed out -- for example -- in \cite{KawarabayashiTW20Quicklyexcluding}, the two notions are equivalent.}

Let $W$ be an $r$-wall in a graph $G$ and assume that the
\red{columns and the rows} are specified.
 
\rev{47. P13, L2: Horizontal paths and vertical paths are undefined.}
\ans{We now use columns and rows. They are all defined in \cref{subsec_wallsandtangles}. We also added a new figure (\cref{fig_overlay_d}) to visualize these concepts.}
Let $\mathcal{T}_W$ be the orientation of $\mathcal{S}_r\red{(G)}$
\rev{48. P13, L3: Does $\Scal_{r}$ equal $\Scal_{r}(W)$ or $\Scal_{r}(G)$?}
\ans{Fixed}
such that for every $(A,B)\in\mathcal{T}_W,$ $B\setminus A$ contains the vertex set of both a \red{row} and a \red{column} of $W,$ we call $B$ the \emph{$W$-majority side} of $(A,B).$
\rev{49. P13, L4: You have to show that $T_W$ is indeed a tangle. Your definition of tangles is different from the one defined by Robertson and Seymour. So even though this result for Robertson and Seymour’s tangles is known in the literature, you cannot use it for granted unless you argue how to transform the proof of the result for Robertson and Seymour’s tangles to the result for your tangles or cite a result in the literature that directly applies to your tangles. Similar comments apply to P13, L7.}
\ans{The definition was indeed wrong, we updated it to the one from  \cite{KawarabayashiTW20Quicklyexcluding}. We also added a reference and commented on the equivalence between the two definition.}
Then $\mathcal{T}_W$ is the tangle \emph{induced} by $W.$

Similarly, let $h\in\mathbb{N}$ and $c\in[0,2]$ and let $D$ be an $(\red{\mathsf{h},\mathsf{c}};t)$-Dyck-wall in $G.$
Then also $D$ defines a majority side for any separation of order less than $r.$
In this case the majority side is the side that contains a full cycle and a column of the annulus wall of $D.$
In the same way as above this allows us to define the tangle $\mathcal{T}_D$ \emph{induced by} $D.$

\paragraph{Well-linked-sets and tangles.} Let $\alpha  \in [2/3, 1),$ $G$ be a graph and $S \subseteq V(G)$ be a vertex set. 
A set $X \subseteq V(G)$ is said to be an \emph{$\alpha $-balanced separator} for $S$ if for every component $C$ of $G - X$ it holds that $|V(C) \cap S| \leq \alpha |S|.$ 
\rev{50. P13, L11: $C(C)$ should be $V(C)$.}
\ans{Fixed.}
\rev{51. P13, L13: $\Scal^{q+1}$ should be $\Scal_{q+1}(G)$.}
\ans{Fixed.}
Let $q \in \Nbbb.$ We say that $S$ is a \emph{$(q, \alpha )$-well-linked set} of $G$ if there is no $\alpha $-balanced separator of size at most $q$ for $S$ in $G.$ 
Given a $(q, \alpha )$ well-linked set of $G$ we define $$\mathcal{T}_{S} \coloneqq \{ (A, B) \in \mathcal{S}_{q+1} \mid |S \cap B| > \alpha |S| \}.$$ 
It is not hard to see that $\mathcal{T}_{S}$ is a tangle of order $q$ in $G.$
  
Notice that it is easier to encode and handle  algorithmically  a well-linked set $S$ that a tangle $\Tcal,$ as the first is a set of vertices while the second is a set of pairs of sets.
\rev{52. P13, L15: I do not understand what “represent well-linked sets of unbounded size with a bounded number of \red{memory}”. What is your presentation?}
\ans{We revised this paragraph. We make clear that linked sets are easier to encode than tangles. This makes it easier to deal with the algorithmically and this justifies why we opt for presenting our results using well-linked sets instead of tangles.}
For this reason, we opt  to work with well-linked sets instead of tangles throughout this paper.
\rev{53. P13, L17: In fact, you are only allowed to use a limited amount of results of graph minors. Your definition for tangles is different from the ones deﬁned by Robertson and Seymour. So you cannot use any result in Robertson and Seymour’s graph minor series or any results in the literature that use Robertson and Seymour’s tangles.}
\ans{Our definition was indeed erroneous, the new one we give is equivalent to the one of Robertson and Seymour (as explained for example in \cite{KawarabayashiTW20Quicklyexcluding}) so this is not an issue.
We added some comments next to the definition to elaborate.}

Please note that, \red{with the observation above on tangles from well-linked sets}, this still allows us to make use of all results of graph minors that need a large order tangle as part of the input.

Let $G$ be a graph and $v\in V(G)$ be a vertex of degree at least four.
We say that $G'$ is obtained from $G$ by \emph{splitting $v$} if there exist an edge $x_1x_2\in E(G')$ such that $G-v=G'-x_1-x_2$ and $N_G(v)=\red{(N_{G'}(x_1)\cup N_{G'}(x_2))\setminus \{ x_1,x_2\}}$ where $N_{G'}(x_1)\cap N_{G'}(x_2)\red{=}\emptyset.$
If there exists a vertex $v$ such that a graph $G'$ is obtained from a graph $G$ by splitting $v,$ we say that $G'$ is obtained from $G$ by \emph{splitting a vertex}.
\rev{54. P13, L20: Based on your definition, every complete graph with arbitrarily large size can be obtained from $K_2$ by repeatedly splitting vertices. Is it what you want?}
\ans{We changed the definition to impose that the neighborhood of $v$ is partitioned by $x_1$ and $x_2$.}
A pair $(H,T)$ is said to be an \emph{expansion} of a graph $G,$ or a \emph{$G$-expansion}, if $H$ is a subdivision of a graph $G'$ obtained from $G$ by a sequence of vertex-splits, every vertex $v$ with $\deg_H(v)\geq 3$ belongs to $T\subseteq V(H),$ and $G'$ is the graph obtained from $H$ by dissolving\footnote{The \emph{dissolution} of a vertex $v$ of degree two in a graph $G$ is the operation of removing $v$ and its incident edges from $G$ and the addition of a new edge joining the two neighbors of $v$.} every vertex of $V(H)\setminus T.$
\rev{55. P13, L25: What does “dissolving” mean?}
\ans{We added the definition.}
We call $T$ the \emph{branch vertices} of $(H,T).$
\rev{56. P13, L25: ... call $T$ the set of \emph{branch vertices} ...}
\ans{Fixed.}

\subsection{Growing a wall from a well-linked set}
\label{biographies}

In this subsection we describe how, given an $(s,\alpha)$-well-linked 
set $S$ for large enough $s$ we can find a large wall $W$ in {\sf FPT}-time such that $\mathcal{T}_W$ is a truncation of $\mathcal{T}_S.$
This will give us the right to equip our local structure theorems with an additional property that will allow us to localise it with respect to some tangle which will be encoded using the set $S$ instead of the collection of separations which can be very large.
Towards this goal, we present an algorithmic version of Lemma 14.6 from \cite{KawarabayashiTW20Quicklyexcluding}.
That is, we prove the following theorem.

\begin{theorem}\label{thm_algogrid}
Let $k\geq 3$ be an integer, $\alpha\in [2/3,1).$
There exist universal constants $c_1,c_2\in\mathbb{N}\setminus\{ 0\},$ and an algorithm that, given a graph $G$ and a 
\rev{57. P13, L-12: “an” should be “a”}
\ans{Fixed everywhere.}
$(36c_1k^{20}+3,\alpha)$-well-linked set $S\subseteq V(G)$ computes in time $2^{\mathcal{O}(k^{c_2})}|V(G)|\red{|E(G)|^2}\log(|V(G)|)$ a $k$-wall $W\subseteq G$ such that $\mathcal{T}_W$ is a truncation of $\mathcal{T}_S.$\ans{fixed running time here}
\end{theorem}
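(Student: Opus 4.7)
The plan is to algorithmize the proof of Lemma 14.6 of \cite{KawarabayashiTW20Quicklyexcluding}, converting each existential step into a polynomial-time (in fact FPT in $k$) subroutine. The starting observation is that a $(36c_1k^{20}+3,\alpha)$-well-linked set $S$ certifies $\mathsf{tw}(G)=\Omega(k^{20})$, so by an algorithmic polynomial Grid Theorem (à la Chuzhoy--Tan) one can compute in time polynomial in $|V(G)|$ an $f(k)$-wall $W_0\subseteq G$ for some polynomial $f$, where the polynomial exponent is absorbed into the universal constant $c_1$. The goal is then to massage $W_0$ into a $k$-subwall $W$ whose induced tangle $\mathcal{T}_W$ is a truncation of the tangle $\mathcal{T}_S$ induced by $S$.

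The key technical step is alignment: the wall $W_0$ coming out of the grid-theorem algorithm need not sit on the $S$-majority side of every small separation, so a priori $\mathcal{T}_{W_0}\not\subseteq \mathcal{T}_S$. To enforce alignment, I would iteratively look for a ``bad'' separation $(A,B)\in \mathcal{S}_{k+1}(G)$ witnessed by the current wall (i.e.\ one whose wall-majority side is $A$ while $|S\cap B|>\alpha |S|$). If none exists, we are done. If one exists, its order is at most $k$, which is far below the well-linkedness threshold of $S$; hence, by Menger's theorem, there exist many vertex-disjoint paths from the bulk of $S$ in $B$ through the small cut $A\cap B$ into the wall-heavy region in $A$. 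These paths can be computed by standard flow in time $\mathcal{O}(|V(G)|\cdot |E(G)|)$, and can be used to reroute a sub-wall of $W_0$ so as to force its majority side to agree with the $S$-heavy side on that separation. Each such rerouting either strictly reduces a potential function counting the number of bad separations among a carefully chosen finite witnessing family (bounded polynomially in $k$ via the usual tangle-truncation monovariants in \cite{KawarabayashiTW20Quicklyexcluding}), or produces a $k$-subwall already aligned with $\mathcal{T}_S$.

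The final running time is then obtained by multiplying the number of iterations (polylogarithmic in $|V(G)|$, giving the $\log(|V(G)|)$ factor and explaining the $|E(G)|^2$ factor when combined with the per-iteration flow cost) by the cost of the per-iteration disjoint-paths and local-rerouting computations, with the $2^{\mathcal{O}(k^{c_2})}$ factor coming from at most $\mathcal{O}(k)$-disjoint-paths or $k$-linkage testing calls (Robertson--Seymour--Kawarabayashi) needed to realize each rerouting in a way that preserves the wall structure. The main obstacle will be to bound the number of rerouting steps and to verify the alignment invariant after each one: following \cite{KawarabayashiTW20Quicklyexcluding}, this is done by maintaining a decreasing lexicographic monovariant on the pair (number of bad separations of smallest order, minimum order among remaining bad separations), which forces termination within the claimed time bound. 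Once the loop terminates, the resulting $k$-wall $W$ satisfies $\mathcal{T}_W\subseteq \mathcal{T}_S$ by construction, establishing that $\mathcal{T}_W$ is a truncation of $\mathcal{T}_S$, as required.
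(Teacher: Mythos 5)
The paper's argument is structured quite differently from yours, and your proposal has a genuine gap at its core. The paper does \emph{not} iterate over ``bad separations'' of the current wall at all. Instead, it first converts $S$ into a small vertex set: Lemma~\ref{prohibiting} computes an \emph{$S$-free set} $F$ of size $\Theta(k^{20})$, and Lemmas~\ref{differentiated}--\ref{sublimated} show that $F$ is strongly linked and that the tangle $\mathcal{T}_F$ it defines is a truncation of $\mathcal{T}_S$. Alignment of the wall with $\mathcal{T}_S$ is then certified \emph{statically}, not by rerouting: after \cref{counterfeit} produces a $6k^2$-wall $W$, one tests (by max-flow) for a linkage of order $2k^2$ from $F$ to a representative set $X$ of the first column of $W$. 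If the linkage is found, a simple counting argument over the $k$ bundles of $\geq 2k$ paths each shows that a suitable $k$-subwall $W'$ has $\mathcal{T}_{W'}\subseteq\mathcal{T}_F\subseteq\mathcal{T}_S$. If the linkage is not found, the algorithm uses \cref{evaluation} to either push the separating cut towards $X$ or to delete an edge $e$ while keeping $F$ strongly linked, and recurses on $G-e$. So the iteration is \emph{edge deletion}, not wall rerouting; termination is by the finite number of edges.

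The gap in your proposal is that the ``bad-separation loop'' is not well-defined algorithmically. You never say how to detect whether the current wall's tangle is a truncation of $\mathcal{T}_S$ (there are exponentially many separations of order $\leq k$), and your termination argument --- a lexicographic monovariant on ``the number of bad separations'' --- presupposes a polynomially bounded witnessing family that you do not construct. The paper's use of the $S$-free set $F$ is precisely what replaces this: it compresses all the relevant tangle information about $S$ into an $O(k^{20})$-vertex set, so that the alignment question reduces to a single linkage/flow computation between two explicit vertex sets. The second gap is the rerouting step itself: you assert Menger's theorem yields paths that can be used to ``reroute a sub-wall so as to force its majority side to agree,'' but this would require a linkage-rerouting argument inside the wall that you do not give, and it is not clear such a local fix decreases any finite potential. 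The paper sidesteps this entirely by deleting edges and re-running the wall extraction on a smaller graph, which both simplifies the invariant (``$F$ remains strongly linked in $G_i$'') and gives the clean $|E(G)|$-bound on the number of iterations that accounts for the $|E(G)|^2$ factor in the stated running time.
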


Our proof for \autoref{thm_algogrid} combines results from three different papers.
For the sake of completeness, since we need to modify the proofs of most of these results slightly, we present a complete and self contained proof here.

The first step is to show that, given our well-linked set $S$ we can compute from it a set with slightly stronger properties which also induces a tangle and whose tangle is a truncation of $\red{\mathcal{T}_S}.$
\rev{58. P13, L-6: What is $\Scal$?}
\ans{$\Scal$ should be $S$ here. We fixed this.}

To stress that all results presented here are algorithmic, we phrase them mostly using the well-linked set $S$ and only make use of abstract properties of $\mathcal{T}_S$ without requiring the tangle to be given.

Let $\alpha\in[2/3,1)$ be fixed, $k$ be a positive integer, and $S$ be a $(k,\alpha)$-well-linked set in a graph $G.$
A set $F$ is said to be \emph{$S$-free} in $G$ if for every separation $(A_1,A_2)\in\mathcal{S}_{|F|}\red{(G)}$ it holds that, if $F\subseteq A_i$
\rev{59. P13, L-2: $\Scal_{|F|}$ should be $\Scal_{|F|}(G)$. }
\ans{Fixed.}
for $i\in[2],$ then $|A_i\cap S|>\alpha|S|.$

The proof of the following lemma is an adaptation of Lemma 14.1 from \cite{KawarabayashiTW20Quicklyexcluding}.

\begin{lemma}\label{prohibiting}
Let $k$ be a positive integer, $\alpha\in[2/3,1),$ $G$ be a graph, and $S\subseteq V(G)$ be $(k,\alpha)$-well-linked.
Then there exists an $S$-free set $F$ of size $k-1.$

Moreover, there exists an algorithm that, given $G$ and $S,$ computes $F$ in time $\mathcal{O}(k\red{|V(G)||E(G)|}).$
\end{lemma}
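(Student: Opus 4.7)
The plan is to construct $F$ greedily, one vertex at a time, starting with $F_0=\emptyset$ (which is vacuously $S$-free since $\mathcal{S}_0(G)=\emptyset$) and, at each step, extending an $S$-free set of size $j<k-1$ by a single new vertex. Iterating $k-1$ times produces the required set.

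For the inductive step I argue by contradiction. Suppose $F$ is $S$-free with $|F|=j<k-1$ yet no $v\in V(G)\setminus F$ extends it. Then for every such $v$ there is a separation $(A_v,B_v)$ of order at most $j$ with $F\cup\{v\}\subseteq A_v$ and $|A_v\cap S|\leq\alpha|S|$, and by the $S$-freeness of $F$ the order is forced to be exactly $j$. Let $\mathcal{B}$ collect the small sides of these bad separations and pick $A^*\in\mathcal{B}$ inclusion-minimal, with big side $B^*$. Since $j<k$, well-linkedness of $S$ applied to the separator $A^*\cap B^*$ forces $|B^*\cap S|>\alpha|S|$, hence $B^*\setminus A^*\neq\emptyset$; choose $u\in B^*\setminus A^*$ and let $(C,D)$ be its associated bad separation. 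The key uncrossing step uses submodularity of separation order:
\[|A^*\cap B^*|+|C\cap D|\geq|(A^*\cap C)\cap(B^*\cup D)|+|(A^*\cup C)\cap(B^*\cap D)|.\]
The first corner $(A^*\cap C,B^*\cup D)$ has $F$ on the side $A^*\cap C$ with $|(A^*\cap C)\cap S|\leq\alpha|S|$, so $S$-freeness of $F$ forces its order to be at least $j$; by submodularity the second corner then has order at most $j$. Applying well-linkedness to the second corner and analyzing which side carries more than $\alpha|S|$ vertices of $S$, I will derive either a bad separation whose small side is a strict subset of $A^*$ (contradicting the inclusion-minimality of $A^*$) or a low-order separation violating either $S$-freeness of $F$ or $(k,\alpha)$-well-linkedness of $S$. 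The main obstacle is the case where the second corner has order strictly less than $j$: there $S$-freeness of $F$ forces the $F$-side to be the $S$-majority, and extracting the contradiction requires a careful accounting of the $S$-content across all four intersections $A^*\cap C$, $A^*\cap D$, $B^*\cap C$, $B^*\cap D$ rather than appealing to any single inequality.

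For the algorithmic version, the inductive construction is directly implementable: starting from $F=\emptyset$, at each step one computes a vertex that extends $F$. Deciding whether an extension exists reduces to a vertex-capacitated Menger/max-flow computation between a super-source attached to $F$ and a super-sink attached to $S$ in a network with unit capacity on original vertices: a bad separation for $F\cup\{v\}$ corresponds to a vertex-cut of size at most $j$ leaving more than $\alpha|S|$ vertices of $S$ separated from $F\cup\{v\}$, and a single max-flow identifies the whole set of vertices that can be trapped inside some such bad small side. Any vertex outside this trapped set extends $F$; if the trapped set equals $V(G)$, the existence argument above yields a contradiction, so this situation cannot occur. Each step runs in $\mathcal{O}(|V(G)|\cdot|E(G)|)$ time via a standard Menger-style augmenting-paths routine, and the $k-1$ steps together give the claimed $\mathcal{O}(k\,|V(G)|\,|E(G)|)$ total running time.
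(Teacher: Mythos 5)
Your overall plan---grow $F$ greedily and prove the inductive step by uncrossing---matches the paper's strategy, but the step that actually matters is not carried out, and the extremal object you pick does not obviously close the argument. You assume every candidate $v$ has a bad separation $(A_v,B_v)$ and choose $A^*$ \emph{inclusion-minimal} among small sides. The paper instead fixes a separation $(A,B)$ of order $i+1$ with $F_i\subseteq A$ and $|B\cap S|>\alpha|S|$ that \emph{minimizes $|B|$}, picks an arbitrary $x_{i+1}\in B\setminus A$, and shows directly that this vertex extends $F_i$. When a hypothetical counterexample $(X,Y)$ to the $S$-freeness of $F_i\cup\{x_{i+1}\}$ is uncrossed with $(A,B)$, minimality of $|B|$ forces the corner $(A\cup X, B\cap Y)$ (which satisfies $|B\cap Y|<|B|$ because $A\subsetneq A\cup X$) to have $|(A\cup X)\cap S|>\alpha|S|$; then $(A,B),(X,Y),(B\cap Y,A\cup X)$ all lie in $\mathcal{T}_S$ with small sides covering $V(G)$, violating the tangle axiom. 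You never invoke the tangle axiom, and inclusion-minimality of $A^*$ does not yield an analogous handle: after uncrossing $(A^*,B^*)$ with $(C,D)$, the corner $(A^*\cup C, B^*\cap D)$ satisfies $A^*\cup C\supseteq A^*$, while the other corner $(A^*\cap C, B^*\cup D)$ need not satisfy $A^*\cap C\subsetneq A^*$ (for instance if $A^*\subseteq C$), so neither corner contradicts inclusion-minimality in general. You explicitly flag the problematic case (``the main obstacle is \dots extracting the contradiction requires a careful accounting''), which is an acknowledgment that the argument is unfinished, not a proof.

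The algorithmic paragraph has a related gap. The condition defining a bad small side is ``$|A\cap S|\le\alpha|S|$'', a cardinality constraint on how much of $S$ is trapped, not a fixed source--sink pair, so a single vertex-capacitated max-flow between $F$ and $S$ does not directly ``identify the whole set of vertices that can be trapped.'' The paper keeps a concrete separation $(A,B)$ and repeatedly pushes it towards $S$ by computing a minimum separator between $A\cap B$ and $B\cap S$, which after at most $|V(G)|$ pushes (each a Menger computation in $\mathcal{O}(k|E(G)|)$ time) yields the separation minimizing $|B|$ and hence the claimed $\mathcal{O}(k|V(G)||E(G)|)$ bound. To make your version correct you would need to follow that iterative pushing scheme, or explain why one flow computation genuinely produces the $|B|$-minimizer among all separations with $F$ on one side and more than $\alpha|S|$ of $S$ on the other.
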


\begin{proof}
First let us observe that
\rev{60. P14, L5: Delete “if $G$ is disconnected”. You do not need this assumption. And if you have this assumption, you do not explain how to construct an $S$-free set of size $i+ 1$ when $i=0$.}
\ans{Fixed}
there must exist some component $G'$ of $G$ such that $|V(G')\cap S|>\alpha|S|$ since $S$ is well-linked.
\rev{61. P14, L6: ... if we ﬁx ... and set $F_0\coloneqq \{x_0\}$...}
\ans{Fixed.}
Hence, if fix any vertex $x_0\in V(G')$ and set $F_0\coloneqq \{x_0\},$ then $F_0$ is $S$-free.

As $S$ is $(k,\alpha)$-well-linked it follows that $|S|\geq 3k+1$ since otherwise we could delete $k$ vertices from $S$ and any component in the resulting graph has at most $\frac{2}{3}|S|\leq \alpha|S|$ vertices of $S.$

Assume we have already constructed an $S$-free set $F_i$ of size $i+1$ for $i\in[0,k-2].$

Now let $(A,B)$ be a separation of order $i+1$ such that $F_i\subseteq A$ and $|B\cap S|>\alpha|S|.$
Such a separation exists since $(F_i,V(G))$ satisfies this property by the size of $S.$

\red{We now iteratively ``push'' $(A,B)$ towards $S$ by asking, in each step, for a separation of size at most $i+1$ between $A\cap B$ and $B\cap S.$
The goal of this process is to minimize $|B|.$
Once this minimization step concludes -- that is after at most $|V(G)|$ iterations, each of which takes $\mathcal{O}(k\cdot |V(G)|)$ time -- we select $x_{i+1}\in B\setminus A$ to be any vertex.}
\red{Indeed, let $(A,B)$ be a separation of order $i+1$ such that $F_i\subseteq A$, $|B\cap S| > \alpha|S|$, and $|B|$ is minimized.
Then let $x_{i+1}\in B\setminus A$ be any vertex.}

\rev{62. P14, L12-15: These sentences should be revised.}
\ans{We added some text in the end of this paragraph to clarify our final choice of $(A,B)$.}

Suppose, $F_{i+1}\coloneqq F_i\cup\{ x_{i+1}\}$ is not $S$-free.
Then there exists a separation $(X,Y)$ of order at most $i+1$ where $F_{i+1}\subseteq X$ and $|Y\cap S|>\alpha|S|.$
Moreover, such a separation can be found in time \red{$\mathcal{O}(k\cdot |E(G)|).$}
\rev{63. P14, L15 and L17: It seems to me that $O(k)|V(G)|$ should be $O(k)|E(G)|$. If it is the case, the running
time in this lemma should be changed.}
\ans{We added some text here for this.}
It follows from the assumption that $F_{i+1}$ is $S$-free that $|X\cap Y|=i+1.$

Now, consider the separation $(A\cap X,B\cup Y).$
Since $|Y\cap S|>\alpha|S|$ it follows that $|(B\cup Y)\cap S|>\alpha|S|.$
From the assumption that $F_i$ is $S$-free this means that $(A\cap X,B\cup Y)$ is of order at least $i+1.$
Thus, by the submodularity of separations in undirected graphs, it follows that $(A\cup X,B\cap Y)$ is of order at most $i+1.$
Sine $A$ is a proper subset of $A\cup X,$ our construction of $(A,B)$ implies that $|A\cup X|>\alpha|S|.$
\rev{64. P14, L22: $|A\cup  X|$.}
\ans{Fixed.}
However, this means that $A\cup X\cup (B\cap Y)=V(G)$ while $(A,B),(X,Y),(B\cap Y,A\cup X)\in\mathcal{T}_S.$
This is a clear violation of the tangle axioms and therefore impossible.
Hence, a separation like $(X,Y)$ cannot exist and this means that $F_{i+1}$ is, in fact, $S$-free.
\end{proof}

Free sets \red{of large order} are themselves \red{``highly connected''}.
\rev{65. P14, L26: “Free sets are themselves highly connected” does not make sense. “Connected” is used for describing graphs, not sets. Moreover, the empty set is free, and how can the empty set “highly connected”?}
\ans{We added that we assume the free set to be big. We also put highly connected in quotation marks.}
Indeed, they exhibit a slightly stronger property than just being $(k,\alpha)$-well-linked.

Let $G$ be a graph and $S\subseteq V(G)$ be a set of vertices.
We say that $S$ is \emph{strongly linked}\footnote{In \cite{KawarabayashiTW20Quicklyexcluding} this property is called ``well-linked'' but we decide to change it here to not overload the term.} if for every partition $\{S_1,S_2\}$ of $S$ into two sets there 
\rev{66. P14, L29: $\{S_1, S_2\}$ of $S$...}
\ans{Fixed.}
does not exist a separation $(A_1,A_2)$ of $G$ such that $S_i\subseteq A_i$ for both $i\in[2],$ and $|A_1\cap A_2|<\min\{ |S_1|,|S_2|\}.$
Notice that every strongly linked set of size $3k+1$ is also $(k,2/3)$-well-linked.

The following is a corollary of Lemma 14.2 from \cite{KawarabayashiTW20Quicklyexcluding} phrased in terms of well-linked set instead of tangles.

\begin{lemma}[\!\! \cite{KawarabayashiTW20Quicklyexcluding}]\label{differentiated}
Let $\alpha\in[2/3,1),$ $k\geq 1$ be an integer, $G$ be a graph and $S\subseteq V(G)$ be $(k,\alpha)$-well-linked.

If $F\subseteq V(G)$ is $S$-free with $|F|<k$ then $F$ is strongly linked.
\end{lemma}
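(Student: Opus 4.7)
The plan is to proceed by contradiction. I will assume $F$ is not strongly linked, which gives a partition $F=F_1\sqcup F_2$ into two nonempty sets and a separation $(A_1,A_2)$ with $F_i\subseteq A_i$ and $s\coloneqq|A_1\cap A_2|<\min\{|F_1|,|F_2|\}$. Write $C\coloneqq A_1\cap A_2$. The key first move is to construct two auxiliary separations from $(A_1,A_2)$, namely $(A_1,A_2\cup F_1)$ and $(A_1\cup F_2,A_2)$. Each is easily checked to be a valid separation (since $F_i\subseteq A_i$ no new crossing edges are introduced). Crucially, their orders are
\[
|C\cup F_1|\leq s+|F_1|<|F_1|+|F_2|=|F|
\]
and symmetrically $|C\cup F_2|<|F|$, using the strict inequalities $s<|F_2|$ and $s<|F_1|$ respectively. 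Since $|F|<k$ and $F\subseteq A_2\cup F_1$ as well as $F\subseteq A_1\cup F_2$, the $S$-freeness of $F$ will yield
\[
|(A_2\cup F_1)\cap S|>\alpha|S|\quad\text{and}\quad|(A_1\cup F_2)\cap S|>\alpha|S|.
\]

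Next I will invoke the $(k,\alpha)$-well-linkedness of $S$. Since $|C|=s<k$, the set $C$ cannot be an $\alpha$-balanced separator, so some component of $G-C$ contains more than $\alpha|S|$ vertices of $S$, and that component lies entirely in $A_1\setminus C$ or entirely in $A_2\setminus C$. Without loss of generality $|A_1\cap S|>\alpha|S|$ (the symmetric case will use the other auxiliary separation). Adding this to the first $S$-free bound gives $|A_1\cap S|+|(A_2\cup F_1)\cap S|>2\alpha|S|$; inclusion–exclusion, using $A_1\cup(A_2\cup F_1)=V(G)$ and $A_1\cap(A_2\cup F_1)=C\cup F_1$, rewrites the left-hand side as $|S|+|(C\cup F_1)\cap S|$. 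Hence $|(C\cup F_1)\cap S|>(2\alpha-1)|S|$. On the other hand, $|(C\cup F_1)\cap S|\leq|C|+|F_1|<2|F_1|\leq|F|<k$, so $|S|<k/(2\alpha-1)$. The same balanced-separator argument as in the proof of \cref{prohibiting} simultaneously forces $|S|>k/(1-\alpha)$, for otherwise removing any $k$ vertices of $S$ would be an $\alpha$-balanced separator. For $\alpha\in[2/3,1)$ one has $k/(2\alpha-1)\leq k/(1-\alpha)$, so the two bounds on $|S|$ are incompatible and yield the required contradiction.

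The main obstacle is the tight order bookkeeping in the first step: both strict inequalities $s<|F_1|$ and $s<|F_2|$ are needed to push the corresponding auxiliary orders strictly below $|F|$, which is precisely what makes $S$-freeness applicable. A tempting shortcut would be to sum the two $S$-free bounds directly to obtain $|(C\cup F)\cap S|>(2\alpha-1)|S|$; however, one can only bound $|C\cup F|$ by roughly $3|F|/2$, which is not small enough to contradict well-linkedness at $\alpha=2/3$. The correct recipe is therefore to combine a single $S$-free bound with the direct balanced-separator consequence of well-linkedness applied to $C$ itself, which delivers the sharper bound $|C\cup F_1|<2|F_1|\leq|F|$.
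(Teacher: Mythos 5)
Your argument is correct, and since the paper only cites this statement (Lemma~14.2 of \cite{KawarabayashiTW20Quicklyexcluding}) without reproducing a proof, there is nothing to compare against verbatim. Your route is, however, visibly different from what the paper does in the adjacent Lemma~\ref{sublimated}: there the authors reduce to the tangle axiom for $\mathcal{T}_S$, and the same move closes this lemma in a few lines. Concretely, by $S$-freeness both $(A_1,A_2\cup F_1)$ and $(A_2,A_1\cup F_2)$ lie in $\mathcal{T}_S$, and since $(A_1,A_2)$ is a separation we have $G[A_1]\cup G[A_2]=G$, contradicting the tangle axiom~\eqref{tangleax}. Your proof instead re-derives the relevant piece of this from first principles: you apply the balanced-separator definition of well-linkedness directly to $C=A_1\cap A_2$ to get one of $|A_1\cap S|,|A_2\cap S|>\alpha|S|$, and then run inclusion--exclusion against one $S$-free bound. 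This avoids invoking the (unproved in this paper) claim that $\mathcal{T}_S$ is a tangle and is hence more self-contained, at the price of more bookkeeping. Both routes are legitimate; your final observation about why the ``sum both $S$-free bounds'' shortcut fails at $\alpha=2/3$ is accurate and shows you understand where the slack is.

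One small slip: the displayed chain $|C|+|F_1|<2|F_1|\leq|F|$ uses $|F_1|\leq|F_2|$, which you have not arranged and cannot in general arrange simultaneously with the ``without loss of generality $|A_1\cap S|>\alpha|S|$'' choice. It is harmless because the bound you actually need, $|C\cup F_1|\leq s+|F_1|<|F_1|+|F_2|=|F|$, follows from $s<|F_2|$ alone and is already established in your opening paragraph; just replace the chain with that estimate (and symmetrically $s+|F_2|<|F|$ from $s<|F_1|$ in the mirror case).
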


We believe that it is possible to prove the following sequence of lemmas purely in terms of $(k,\alpha)$-well-linked sets, but for the sake of brevity we stick to the setting of Kawarabayashi, Thomas, and Wollan \cite{KawarabayashiTW20Quicklyexcluding}.

\begin{lemma}\label{sublimated}
Let $\alpha\in[2/3,1),$ $k\geq 1$ be an integer, $G$ be a graph and $S\subseteq V(G)$ be $(3k+1,\alpha)$-well-linked.

If $F\subseteq V(G)$ is $S$-free with $|F|=3k$ then $\mathcal{T}_F\coloneqq \{ (A,B)\in\mathcal{S}_k \mid |B\cap F|>2k \}$ is a tangle of order $k$ which is a truncation of $\mathcal{T}_S.$
\end{lemma}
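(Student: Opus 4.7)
The proof has three parts: (i) $\mathcal{T}_F$ is an orientation of $\mathcal{S}_k(G)$, (ii) $\mathcal{T}_F\subseteq\mathcal{T}_S$, and (iii) $\mathcal{T}_F$ satisfies the tangle axiom. Since $|F|=3k<3k+1$, \cref{differentiated} ensures that $F$ is strongly linked; this property, combined with $S$-freeness, drives (i) and (ii), and (iii) then follows by inheriting the axiom from $\mathcal{T}_S$ through the inclusion in (ii).

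For (i), fix $(A,B)\in\mathcal{S}_k(G)$ and set $a=|F\cap(A\setminus B)|$, $b=|F\cap(B\setminus A)|$, $c=|F\cap(A\cap B)|$, so that $a+b+c=3k$ and $c\leq k-1$. Since $|A\cap F|+|B\cap F|=3k+c\leq 4k-1$, at most one side can exceed $2k$. If neither did, then $a+c\leq 2k$ and $b+c\leq 2k$ together with $a+b+c=3k$ would force $a,b\geq k$, and the bipartition $\{F\cap A,\,F\cap(B\setminus A)\}$ of $F$ would be separated by $(A,B)$ with order at most $k-1<\min(a+c,b)$, contradicting strong linkage. Hence exactly one of $|A\cap F|,|B\cap F|$ exceeds $2k$.

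For (ii), assume toward a contradiction that $(A,B)\in\mathcal{T}_F$ while $(B,A)\in\mathcal{T}_S$, i.e., $|A\cap S|>\alpha|S|$. Let $F_1=F\cap(A\setminus B)$. If $F_1=\emptyset$, then $F\subseteq B$ and $S$-freeness applied to $(A,B)$ yields $|B\cap S|>\alpha|S|$; set $B^\star=B$. Otherwise strong linkage on the bipartition $\{F_1,\,F\cap B\}$ (with $|F\cap B|>2k>|A\cap B|$) forces $|F_1|\leq|A\cap B|\leq k-1$, and one checks directly that $(A,\,B\cup F_1)$ is a valid separation of order $|A\cap B|+|F_1|\leq 2k-2<|F|$ with $F\subseteq B\cup F_1$ (the only edges one needs to rule out are between $(A\setminus B)\setminus F_1$ and $B\setminus A$, which are forbidden by the original separation); $S$-freeness then gives $|(B\cup F_1)\cap S|>\alpha|S|$, and we set $B^\star=B\cup F_1$. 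In either case $|A\cap S|,|B^\star\cap S|>\alpha|S|$ and $|A\cap B^\star|\leq 2k-2$, so
\[
  2\alpha|S| < |A\cap S|+|B^\star\cap S| = |S|+|A\cap B^\star\cap S| \leq |S|+2k-2,
\]
yielding $(2\alpha-1)|S|<2k-2$. But $(3k+1,\alpha)$-well-linkedness, applied to any $3k+1$ vertices of $S$ as a candidate $\alpha$-balanced separator, forces $|S|>(3k+1)/(1-\alpha)$; since $\alpha\geq 2/3$ ensures $(2\alpha-1)/(1-\alpha)\geq 1$, we obtain $(2\alpha-1)|S|>3k+1>2k-2$, a contradiction. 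Hence $(A,B)\in\mathcal{T}_S$, proving (ii), and (iii) is then immediate from the tangle axiom of $\mathcal{T}_S$. The delicate part of the plan is the asymmetric truncation step: when $F$ straddles $(A,B)$, the enlarged separation $(A,B\cup F_1)$ is what lets $S$-freeness apply uniformly, and closing the numerical inequality relies both on the well-linkedness lower bound on $|S|$ and on the assumption $\alpha\geq 2/3$.
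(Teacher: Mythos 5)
Your proof is correct, and it shares the paper's key ingredients — $F$ is strongly linked by \cref{differentiated}, orientation of $\mathcal{S}_k$ follows from strong linkedness, and the truncation argument passes to the separation $(A,B\cup F)$ to invoke $S$-freeness — but it closes the argument differently in two places. For the truncation step the paper, having established $(A,B\cup F)\in\mathcal{T}_S$, immediately observes that together with $(B,A)\in\mathcal{T}_S$ this violates the tangle axiom of $\mathcal{T}_S$ (two small sides cover $G$), whereas you derive a self-contained numerical contradiction: a lower bound $|S|>(3k+1)/(1-\alpha)$ extracted from $(3k+1,\alpha)$-well-linkedness plays against $(2\alpha-1)|S|<2k-2$, using $\alpha\geq 2/3$. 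Your route essentially re-proves the relevant instance of the tangle axiom of $\mathcal{T}_S$ rather than citing it, which is longer but makes the dependence on $\alpha\geq 2/3$ explicit. For the tangle axiom of $\mathcal{T}_F$, the paper proves it directly by a count on $F$ (the small side of any $(A_i,B_i)\in\mathcal{T}_F$ meets $F$ in at most $k-1$ vertices — a fact that itself relies on strong linkedness — so three small sides miss $F$), whereas you inherit it from the inclusion $\mathcal{T}_F\subseteq\mathcal{T}_S$; since your proof of the inclusion does not circularly rely on $\mathcal{T}_F$ being a tangle, this is sound and is arguably the cleanest way to get the axiom. Two minor simplifications you could make: the case split on $F_1=\emptyset$ is cosmetic since $B\cup F_1=B\cup F$ in all cases, and the bound $|F_1|\leq k-1$ does not need a second appeal to strong linkage — it follows directly from $|F\cap B|>2k$ and $|F|=3k$.
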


\begin{proof}
First let us show that for every $(A,B)\in\mathcal{S}_k$ we have $\{ (A,B),(B,A)\}\cap\mathcal{T}_F\neq\emptyset.$
Since $|F|=3k$ this implies that $\FFcal$ contains exactly one of $(A,B)$ and $(B,A)$ for every $(A,B)\in\mathcal{S}_k.$

So let $(A,B)\in\mathcal{S}_k.$
This means $|A\cap B|<k.$
Hence, since $F$ is strongly linked by \cref{differentiated}, this means that $\min\{ |A\cap F|,|B\cap F| \}<k.$
With $|F|=3k$ this implies that $\max\{ |A\cap F|,|B\cap F|\}>2k.$

Moreover, let $(A_1,B_1),(A_2,B_2),(A_3,B_3)\in\mathcal{T}_F.$
Then $|A_i\cap F|\leq k-1$ for all $i\in[3].$
Therefore, $|(A_1\cup A_2\cup A_3)\cap F|\leq 3k-3<3k$ and thus, the union of three small sides cannot cover all of $G$ and so $\mathcal{T}_F$ is indeed a tangle of order $k.$

Finally, we have to show that $\mathcal{T}_F$ is a truncation of $\mathcal{T}_S.$
To see this suppose there is some $(A,B)\in\mathcal{T}_F$ such that $(B,A)\in\mathcal{T}_S.$
Notice that this means that $|A\cap B|<k$ and $|B\cap F|>2k$ by definition of $\mathcal{T}_F.$
Hence, $|A\cap F|\leq k-1.$
Thus, $(A,B\cup F)$ is a separation of order less than $2k.$
\rev{68. P15, L-15: $< k$ should be “less than $k$”.}
\ans{Fixed.}
As $F\subseteq B\cup F$ it follows from the definition of $S$-free sets that $|(B\cup F)\cap S|>\alpha|S|$ and thus, by definition of $\mathcal{T}_S$ we have $(A,B\cup F)\in\mathcal{T}_S.$
However, as $(B,A)\in\mathcal{T}_S$ by assumption, we now have that two small sides, namely $A$ and $B,$ of separations in $\mathcal{T}_S$ cover all of $G.$
This is clearly a contradiction to $\mathcal{T}_S$ being a tangle and thus we must have that $\mathcal{T}_F\subseteq \mathcal{T}_S.$
\end{proof}

An interesting, separate corollary of the proof of \cref{sublimated} which is reflected in Lemma 14.4 from \cite{KawarabayashiTW20Quicklyexcluding} is the following.

\begin{corollary}[\!\! \cite{KawarabayashiTW20Quicklyexcluding}]\label{meaninglessness}
Let $k\geq 1$ be an integer, $G$ be a graph and $F\subseteq V(G)$ be a strongly linked set of size $3k$ in $G.$
Then $\mathsf{tw}(G)\geq k.$
\end{corollary}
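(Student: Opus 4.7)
\medskip
\noindent\textbf{Proof proposal for \cref{meaninglessness}.}
The plan is to recycle the tangle construction from \cref{sublimated}. Inspecting that proof, the hypothesis that $F$ is $S$-free is used \emph{only} to invoke \cref{differentiated} and conclude that $F$ is strongly linked; every subsequent line of the argument (checking that $\mathcal{T}_F$ orients all of $\mathcal{S}_k$ via strong linkedness applied to the partition $\{F\cap(A\setminus B),\, F\cap B\}$, and checking the three-separation tangle axiom by bounding $|F\cap A_i\setminus B_i|\leq k-1$) uses only strong linkedness of $F$ and the equality $|F|=3k$. Since both these properties are hypotheses of \cref{meaninglessness}, the same argument applies verbatim and produces a tangle
$$\mathcal{T}_F \;=\; \{(A,B)\in\mathcal{S}_k(G)\mid |F\cap B|>2k\}$$
of order $k$ in $G$.

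Having a tangle of order $k$, the bound $\mathsf{tw}(G)\geq k$ follows from the standard Robertson--Seymour tangle/tree-decomposition duality. Concretely, I would argue by contradiction: suppose $(T,\beta)$ is a tree decomposition of $G$ of width strictly less than $k$, so every adhesion set $\beta(a)\cap\beta(b)$ has size at most $k-1$, and hence every edge $e=ab$ of $T$ induces a separation $(A_e,B_e)\in\mathcal{S}_k(G)$ that is oriented by $\mathcal{T}_F$. Orient each edge of $T$ toward its $\mathcal{T}_F$-large side; since the resulting orientation of the tree admits a sink $t^\ast$, the three (or more) separations incident to $t^\ast$ all lie in $\mathcal{T}_F$, and the tangle axiom together with the bound $|\beta(t^\ast)|\leq k$ and strong linkedness applied at $t^\ast$ yields a contradiction with $|F|=3k$.

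The main obstacle is this second step, and specifically the clean bookkeeping of the off-by-one between ``tangle of order $k$'' (orienting separations of order $<k$) and the treewidth bound. A naive invocation of the tangle--branch-width correspondence combined with $\mathsf{bw}(G)\leq\mathsf{tw}(G)+1$ gives only $\mathsf{tw}(G)\geq k-1$; to obtain the stated bound $\mathsf{tw}(G)\geq k$ one either needs the sharper centroid argument sketched above (using strong linkedness twice: once to bound each small side $|F\cap(A_{e}\setminus B_{e})|\leq k-1$, and once more to bound $|F\cap \beta(t^\ast)|$), or a direct appeal to the fact that strongly linked sets of size $3k$ force a bramble-like structure of order $k+1$. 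Step~1 is essentially routine once the proof of \cref{sublimated} is in hand, so the whole argument is short.
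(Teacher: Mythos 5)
The paper itself does not give a proof of \cref{meaninglessness}: it is attributed to the proof of \cref{sublimated} and to Lemma~14.4 of \cite{KawarabayashiTW20Quicklyexcluding}, so there is no explicit argument in the paper to compare against. Your Step~1 --- extracting from the proof of \cref{sublimated} that $\mathcal{T}_F=\{(A,B)\in\mathcal{S}_k(G)\mid |F\cap B|>2k\}$ is a tangle of order $k$, using only strong linkedness and $|F|=3k$ --- is correct and is clearly the route the authors have in mind. You also correctly flag the real subtlety: a tangle of order $k$ by itself, via branchwidth duality, only yields $\mathsf{tw}(G)\geq k-1$.

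The gap is in your sketch of the fix. First, the tangle axiom does not produce the contradiction you claim at the sink $t^{\ast}$: for every edge $e$ incident to $t^{\ast}$ oriented inwards, $\beta(t^{\ast})$ lies on the \emph{large} side $B_e$, so $\beta(t^{\ast})\not\subseteq A_{e_1}\cup A_{e_2}\cup A_{e_3}$ and the inequality $G[A_{e_1}]\cup G[A_{e_2}]\cup G[A_{e_3}]\neq G$ holds trivially rather than producing a contradiction. Second, the accounting ``bound each small side by $k-1$ and bound $|F\cap\beta(t^{\ast})|$'' does not close: with $d$ branches at $t^{\ast}$ one only gets $|F|\leq k+d(k-1)$, which is not $<3k$ once $d\geq 3$. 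The missing idea is a \emph{grouping} step. After the standard reduction ensuring no bag contains an adjacent bag (so all adhesion sets have size $\leq k-1$, and the edge separations really do lie in $\mathcal{S}_k(G)$), one has $|F\cap(A_e\setminus B_e)|\leq k-1$ for every branch $e$ at the sink and $|F\cap\beta(t^{\ast})|\leq k$. Now greedily collect branches into a set $\mathcal{G}_1$ until $F_1'\coloneqq\bigcup_{e\in\mathcal{G}_1}\bigl(F\cap(A_e\setminus B_e)\bigr)$ first satisfies $|F_1'|>k$; since each increment is at most $k-1$, this lands in the window $k<|F_1'|\leq 2k-1$, whence $|F_2'|\coloneqq|F\setminus F_1'|\geq k+1$. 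The separation $(A_1',A_2')$, with $A_1'$ the union of the $\mathcal{G}_1$-branch bags and $A_2'$ the rest together with $\beta(t^{\ast})$, has order at most $|\beta(t^{\ast})|\leq k<\min\{|F_1'|,|F_2'|\}$ and separates the partition $\{F_1',F_2'\}$ of $F$, contradicting strong linkedness. This is the single clean application of strong linkedness that your sketch is missing; without it (and without the adhesion reduction that eliminates the exact-$2k$ corner case) the argument does not reach $\mathsf{tw}(G)\geq k$. The alternative you mention --- ``strongly linked sets of size $3k$ force a bramble-like structure of order $k+1$'' --- is not a fact you can appeal to; it is exactly what needs proving here.
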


We will also need the following observation on grid-minors.
This observation will be helpful in determining if a grid we have found yields a tangle that agrees with $\mathcal{T}_S.$

\begin{observation}\label{penitentiary}
Let $k\geq 2$ be an integer, $G$ be a graph and $(M,T)$ be an expansion of the $(k\times k)$-grid in $G.$
\red{Let $\Gamma_k$ denote the $(k \times k)$-grid and let $\mathcal{X} = \{ X_v\}_{v\in V(\Gamma_k)}$ be a minor model of $\Gamma_k$ in $M$.
Notice that, by the fact that $(M,T)$ is an expansion of $\Gamma_k$, we may choose $\mathcal{X}$ such that $X_v$ is a tree for every $v\in V(\Gamma)_k$.
Let $Q$ be any row or column of $\Gamma_k$ and let $S$ be obtained by selecting a vertex $s_v \in V(X_v) \cap T$ for each $v\in V(Q)$ (in case $\mathsf{deg}_{\Gamma_k}(v)=2$ select $s_v$ to be an arbitrary vertex of $X_v$).
Then $S$ is strongly linked in $G$.}
\rev{67. P15, L19-21: This paragraph does not seem correct. What is $X_v$ ? Where is $X_v$ used?}
\ans{We have now defined the notion of a minor model and updated the statement accordingly.}
\end{observation}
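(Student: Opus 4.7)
The plan is to reduce strong linkage of $S$ in $G$ to a routing statement inside the abstract $(k\times k)$-grid $\Gamma_k$, and then to lift the routing back to $G$ via the minor model $\mathcal{X}$. Suppose, for contradiction, that some partition $\{S_1,S_2\}$ of $S$ and some separation $(A_1,A_2)$ of $G$ witness that $S$ fails to be strongly linked, i.e.\ $S_i\subseteq A_i$ and $|A_1\cap A_2|<s$, where $s\coloneqq\min(|S_1|,|S_2|)$. By Menger's theorem, to derive a contradiction it is enough to exhibit $s$ pairwise vertex-disjoint $(S_1,S_2)$-paths in $G$.

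The core combinatorial step is the following grid routing lemma: for every row or column $Q$ of $\Gamma_k$ and every partition $\{Q_1,Q_2\}$ of $V(Q)$, there are $\min(|Q_1|,|Q_2|)$ pairwise vertex-disjoint paths between $Q_1$ and $Q_2$ in $\Gamma_k$. I would prove this again by contradiction via Menger. Assume $Q$ is a row (the column case is symmetric) and let $C\subseteq V(\Gamma_k)$ be a vertex cut separating $Q_1\setminus C$ from $Q_2\setminus C$ with $|C|=c<s$. Since $c<s\leq k/2$, $C$ hits at most $c$ of the $k$ columns and at most $c$ of the $k$ rows of $\Gamma_k$; in particular at least one row $R^*$ of $\Gamma_k$ is entirely disjoint from $C$, and at least $k-c>k-s=\max(|Q_1|,|Q_2|)$ columns are entirely disjoint from $C$. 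A double count on the top vertices of the surviving columns then produces at least one such column whose top lies in $Q_1\setminus C$ and at least one whose top lies in $Q_2\setminus C$; concatenating these two columns through $R^*$ yields a $(Q_1\setminus C, Q_2\setminus C)$-path in $\Gamma_k-C$, contradicting the choice of $C$.

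Given the grid lemma, the partition $\{S_1,S_2\}$ of $S$ induces the partition $Q_i\coloneqq\{v\in V(Q):s_v\in S_i\}$ of $V(Q)$ with $|Q_i|=|S_i|$, and the lemma supplies $s$ pairwise vertex-disjoint $(Q_1,Q_2)$-paths $P_1,\dots,P_s$ in $\Gamma_k$. Each $P_j=v_0 v_1\cdots v_{\ell_j}$ lifts to a $(S_1,S_2)$-path $\tilde P_j$ in $G$ as follows: start at $s_{v_0}\in X_{v_0}\cap S_1$; for each consecutive pair $v_i,v_{i+1}$, traverse inside the connected bag $X_{v_i}$ from the current vertex to an endpoint in $X_{v_i}$ of an edge of $G$ joining $X_{v_i}$ and $X_{v_{i+1}}$ (such an edge exists because $\mathcal{X}$ is a minor model), and cross this edge into $X_{v_{i+1}}$; finally, traverse $X_{v_{\ell_j}}$ to $s_{v_{\ell_j}}\in X_{v_{\ell_j}}\cap S_2$. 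Since the $P_j$ are pairwise vertex-disjoint in $\Gamma_k$ and the bags $\{X_v\}$ are pairwise vertex-disjoint in $G$, the lifts $\tilde P_1,\dots,\tilde P_s$ are pairwise vertex-disjoint $(S_1,S_2)$-paths in $G$, contradicting $|A_1\cap A_2|<s$.

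The main obstacle is the grid routing lemma itself: $\Gamma_k$ is only $2$-connected at its corners, so no global connectivity bound applies. The argument must exploit that both $Q_1$ and $Q_2$ lie inside a single row (or column), which is what allows a cut of size less than $\min(|Q_1|,|Q_2|)$ to be certified small in rows and columns simultaneously and hence forced to admit a detour through one surviving row and two surviving columns. Once this lemma is in hand, the lift to $G$ is essentially mechanical from the minor-model axioms.
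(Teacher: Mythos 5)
Your argument is correct. The paper states this as an \texttt{observation} and supplies no proof of its own, so there is nothing to compare against; what you give is a complete and valid argument. Both layers check out: the grid routing lemma is sound (with $c<s\le k/2$ one has $k-c>|Q_1|$ and $k-c>|Q_2|$, so the surviving column tops cannot all fall into one side, giving the two escape columns, and at least one row survives to concatenate them), and the lift through the branch sets is straightforward because the $X_v$ are connected and pairwise disjoint. One cosmetic remark: the outer invocation of Menger is unnecessary --- once you have $s$ pairwise disjoint $S_1$--$S_2$ paths in $G$, each must pass through $A_1\cap A_2$ since its endpoints lie on opposite sides, so $|A_1\cap A_2|\ge s$ follows directly; Menger is only genuinely needed inside the grid routing lemma, where you go from the lower bound on the cut to the existence of the $s$ disjoint paths.
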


If the set $S$ as above is taken from the $i$th row (column), we call $S$ a \emph{representative set} of the $i$th row (column) of $(M,T).$
Since every $k$-wall naturally correspondonds to an expansion of the $(k\times k)$-grid.

Next, we adapt Lemma 14.5 from \cite{KawarabayashiTW20Quicklyexcluding} for our purposes.
That is, we present almost the same proof as the one found in the work of Kawarabayashi, Thomas, and Wollan while stressing that it is algorithmic.

\begin{lemma}\label{evaluation}
Let $k\geq 1$ be an integer, $G$ be a graph and $X,Y\subseteq V(G)$ be strongly linked sets such that $|X|\geq k$ and $|Y|\geq 3k.$
Exactly one of the following two statement holds.
\begin{enumerate}[itemsep=-2pt]
\item There exist $k$ pairwise vertex-disjoint $X$-$Y$-paths in $G,$ or
\item if $(A,B)$ is a separation of order less that $k$ such that $X\subseteq A$ and $|Y\cap B|\geq k,$ then either
\begin{enumerate}[itemsep=-2pt]
  \item there exists a separation $(A',B')$ of order $<k$ such that $X\subseteq A',$ $|Y\cap B'|\geq k,$ $A\subseteq A',$ and $B'\subsetneq B,$ or
  \item there exists an edge $e$ of $G[B]$ such that $X$ is strongly linked in $G-e.$ 
\rev{69. P15, L-12: Change “it holds” to “such”.}
\ans{Fixed.}
\end{enumerate}
\end{enumerate}
Moreover, there exists an algorithm that, in time $2^{\mathcal{O}(k)}\cdot\red{|E(G)|},$ finds either the paths, the separation $(A',B'),$ or the edge $e$ as above.
\end{lemma}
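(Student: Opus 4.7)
The plan is an enhanced Menger-type argument proceeding in three algorithmic stages. In \emph{Stage 1} we run vertex-capacitated max-flow from $X$ to $Y$ in $O(k|E(G)|)$ time; if the flow reaches value $k$, we return the $k$ vertex-disjoint $X$-$Y$ paths as outcome (1). Otherwise the min-cut yields a separation $(A,B)$ of order $\ell<k$ with $X\subseteq A$ and $|Y\cap B|\geq |Y|-\ell\geq 2k$, verifying the hypothesis of (2). In \emph{Stage 2}, given any separation $(A,B)$ satisfying the hypothesis of (2), we compute a maximum family of vertex-disjoint paths from $A\cap B$ to $Y\cap B$ in $G[B]$; its value is at most $|A\cap B|<k$. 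If the value is strictly less than $|A\cap B|$, the corresponding min-cut $Z\subseteq V(G[B])$ gives a strictly pushed separation $(A',B')$ with $A'=A\cup(\text{source side of }G[B]-Z)$ and $B'=(\text{sink side})\cup Z$; using $|Y\cap B|\geq 2k$ one verifies $B'\subsetneq B$, $|A'\cap B'|\leq |Z|<k$, and $|Y\cap B'|\geq k$, and we return outcome (2a).

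If instead the max-flow in $G[B]$ equals $|A\cap B|$, the separation is \emph{tight}: there are $|A\cap B|$ pairwise vertex-disjoint $(A\cap B)$-to-$(Y\cap B)$ paths in $G[B]$ and no strict push is possible. \emph{Stage 3} then searches for an edge $e\in E(G[B])$ such that $X$ is strongly linked in $G-e$. For existence, suppose for contradiction that no such edge exists: for each $e=uv\in E(G[B])$ there is a partition $X=X_1^e\sqcup X_2^e$ with $|X_1^e|\leq|X_2^e|$ and a separation $(P_e,Q_e)$ of $G-e$ with $X_1^e\subseteq P_e$, $X_2^e\subseteq Q_e$, and $|P_e\cap Q_e|<|X_1^e|$. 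Since $X$ is strongly linked in $G$, $(P_e,Q_e)$ cannot be a separation of $G$, so we must have $u\in P_e\setminus Q_e$ and $v\in Q_e\setminus P_e$. Choose $e$ minimizing $|P_e\cap Q_e|$ and uncross $(P_e,Q_e)$ with $(A,B)$ via the submodular inequality on cut orders; the tight family of $|A\cap B|$ disjoint paths in $G[B]$ reroutes enough connectivity around $P_e\cap Q_e$ to force one of the two uncrossed separations to either violate strong linkedness of $X$ in $G$ (contradicting the hypothesis) or to provide a strict push of $(A,B)$ of order $<k$ (contradicting tightness). Algorithmically, we iterate over $e\in E(G[B])$ and test strong linkedness of $X$ in $G-e$ by enumerating only the $2^{O(k)}$ partitions of $X$ whose smaller side has size at most $k-1$ (larger-side partitions impose no constraint from separations of order $<k$) and running max-flow; with Gomory-Hu-style preprocessing of the $X$-$X$ cut structure in $G$ to amortize across edges, the total time is $2^{O(k)}|E(G)|$.

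The principal obstacle is the uncrossing structural argument in Stage 3. The submodular step produces two candidate separations, and one must carefully verify that the tight routing through $G[B]$, combined with the strong linkedness of $X$ in $G$, forces at least one candidate to yield a contradiction, either by separating $X$ too severely in $G$ or by strictly shrinking the $B$-side of $(A,B)$. The strong linkedness hypothesis on $X$ is used essentially here; ordinary well-linkedness would be insufficient because it does not control the partition-respecting separations $(P_e,Q_e)$ arising from the supposed non-removable edges.
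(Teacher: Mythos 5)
Your Stage 1 matches the paper's opening Menger step, and your Stage 2 observation that $|Y\cap B|$ must in fact exceed $2k$ is correct (this follows from the strong linkedness of $Y$: if $|A\cap B|<k$ and $|Y\cap B|\geq k$, then $\min\{|Y\setminus B|,|Y\cap B|\}<k$ forces $|Y\setminus B|<k$ and hence $|Y\cap B|>2k$), though you do not say you are using that hypothesis. The genuine gap is precisely the step you flag yourself as ``the principal obstacle'': the uncrossing in Stage 3 is never carried out, and without it nothing has been proved. The paper's argument at this point is quite concrete and does not require your tight/slack dichotomy or the iteration over all edges at all. Fix any single edge $e=xy\in E(G[B])$. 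If $X$ is strongly linked in $G-e$ we are done with (2b). Otherwise there is a partition $X=X_1\sqcup X_2$ and a separation $(L,R)$ of $G-e$ with $X_i\subseteq$ the respective side and $|L\cap R|<p\coloneqq\min\{|X_1|,|X_2|\}$. Since $X$ is strongly linked in $G$ itself, $(L,R)$ cannot be a separation of $G$, which forces $x\in L\setminus R$ and $y\in R\setminus L$. Now $(A\cap L,B\cup R)$ is a separation of $G$ (both endpoints of $e$ lie in $B\cup R$) separating $X_1$ from $X_2$, so it has order at least $p$; submodularity against $|A\cap B|=q<k$ and $|L\cap R|\leq p-1$ gives $|(A\cup L)\cap(B\cap R)|\leq q-1$. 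Adding the single vertex $x$ repairs the crossing of $e$, yielding a separation $(A\cup L,(B\cap R)\cup\{x\})$ of $G$ of order at most $q<k$ with $A\subsetneq A\cup L$ and $(B\cap R)\cup\{x\}\subsetneq B$; symmetrically one gets $(A\cup R,(B\cap L)\cup\{y\})$. If neither candidate has at least $k$ vertices of $Y$ on its $B$-side, then $|B\cap Y|\leq 2k-2$, contradicting $|Y\cap B|>2k$. So one of the two candidates is the desired $(A',B')$.

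Because of this, your overall organisation is also weaker: you search for a good edge by contradiction across \emph{all} edges, picking a minimizer and invoking an unspecified rerouting, whereas the paper shows the dichotomy (2a)/(2b) holds for \emph{every} fixed edge of $G[B]$, so the algorithm simply picks one edge and runs the $2^{\mathcal{O}(k)}|E(G)|$ strong-linkedness test once. This also removes the need for the Gomory--Hu--style amortization you invoke but do not justify. In short: the decomposition into stages and the Menger/submodularity toolkit are the right ingredients, but the actual uncrossing construction --- including the delicate placement of $x$ and $y$ on opposite sides and the $\cup\{x\}$ correction to lift a separation of $G-e$ back to a separation of $G$ --- is missing, and that is exactly what the lemma hinges on.
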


\begin{proof}
Given the sets $X$ and $Y$ we may determine in time $\mathcal{O}(k\cdot |\red{E}(G)|)$
\rev{70. P15, L-9: It seems to me that $O(k|V(G)|)$ should be $O(k|E(G)|)$.}
\ans{Fixed.}
if there exist $k$ pairwise vertex-disjoint paths between $X$ and $Y$ or find some separation $(A,B)$ of order less than $k$ such that $X\subseteq A$ and $Y\subseteq B.$
In particular, it holds that $|B\cap Y|\geq k.$

So from now on we assume that we are given some separation $(A,B)$ of order at most $k-1$ such that $X\subseteq A,$ and $|Y\cap B|\geq k.$

Fix an edge $xy=e\in E(G[B]).$
We can test in time $2^{\mathcal{O}(k)}|\red{E}(G)|$ if there exists a partition of $X$ into sets $X_1$ and $X_2$ such that there exists a separation $(L,R)$ of $G-e$ with $X_1\subseteq L,$ $X_2\subseteq R,$ and $|L\cap R|<p\coloneqq \min\{|X_1|,|X_2|\}.$
If such a partition cannot be found we have successfully determined that $X$ is strongly linked in $G-e$ and can output the $e.$
Otherwise we continue to work with the separation $(L,R).$

Without loss of generality we may assume $x\in L.$
Indeed, it must now hold that $x\in L\setminus R,$ and $y\in R\setminus L$ since otherwise $(L,R)$ is witness that $X$ is not strongly linked in $G$ which contradicts our assumption.
Moreover, $x$ has at least one neighbour in $L\setminus R$ as otherwise it can be moved to $R$ to form a separation of $G$ which violates the strong linkedness of $X.$
Similarly, $y$ has some neighbour in $R\setminus L.$

Observe that $(A\cap L,B\cup R)$ is a separation of $G$ with $X_1\subseteq A\cap L$ and $X_2\subseteq B\cup R.$
Hence, it must be at least of order $p+1.$
Let $q\coloneqq |A\cap B|<k.$
It follows from the submodularity of separations in undirected graphs that $(A\cup L,B\cap R)$ has order at most $q-1.$
Notice that, by choice of $e,$ we have $x\notin B\cap R.$
Hence, $(A\cup L,(B\cap R)\cup\{x\})$ is a separation of $G$ of order at most $q<k.$
Moreover, $(B\cap R)\cup \{ x\}\subsetneq B.$

In the case where $|(B\cap R)\cup \{ x\}\cap Y|\geq k.$
We have found our desired separation and may terminate.
Hence, we may assume $|(B\cap R)\cup \{ x\}\cap Y|\leq k-1.$

Following a symmetric argument we may analyse the separations $(A\cap R,B\cup L)$ and $(A\cup R,B\cap L)$ and reach the same two possible outcomes.
Either $(A\cup R, (B\cap L)\cup\{y\})$ is the separation we were looking for, or we also obtain $|(B\cap L)\cup \{y\}|\leq k-1.$

Now if none of the two separations we have found is suitable for our needs we have $|(B\cap R)\cup \{ x\}\cap Y|\leq k-1$ and $|(B\cap L)\cup \{y\}|\leq k-1.$
It now follows that $|B\cap Y|\leq 2k-2.$
This, however, would imply that $|A\cap Y|\geq k+2$ which contradicts our initial assumption of $(A,B).$
This completes the proof.
\end{proof}

The last piece towards the proof of \cref{thm_algogrid} is an adaptation of a theorem of Fomin, Lokshtanov, Panolan, Saurabh, and Zehavi \cite{fomin2020hitting1,fomin2019hitting2}, that is Proposition 3.6 in \cite{fomin2019hitting2}.

Moreover, we need a way to compute a tree decomposition of bounded width if one exists.
To this end, we use Korhonen's \cite{korhonen2022single} recent result.

\begin{proposition}[\!\! \cite{korhonen2022single}]\label{conquerors}
There exists an algorithm that, given a graph $G$ and an integer $k$ outputs, in time $2^{\mathcal{O}(k)}|V(G)|,$ a tree decomposition for $G$ of width at most $2k+1$ or concludes that $\mathsf{tw}(G)>k.$
\end{proposition}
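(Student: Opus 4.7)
The plan is to adapt the iterative refinement strategy that underlies Korhonen's approach: maintain a current tree decomposition of $G$ and repeatedly apply local modifications that strictly decrease its width, until either the target width of $2k+1$ is achieved or the algorithm uncovers an obstruction certifying $\mathsf{tw}(G) > k$. An initial decomposition can be taken to be the trivial one-bag decomposition containing all of $V(G)$, or the output of a faster but coarser approximation (e.g., a constant-factor approximation obtained via balanced separator recursion as in Bodlaender, Drange, Dregi, Fomin, Lokshtanov, and Pilipczuk).

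The main subroutine examines each bag $B$ of the current decomposition whose size exceeds $2k+2$. Within the torso on $B$, it searches for a balanced separator of order at most $k+1$ that splits $B$ into two parts whose sizes, together with the separator, are both strictly smaller than $|B|$. If such a separator is found, the bag $B$ is replaced by two smaller bags linked through a new adhesion equal to the separator, strictly reducing the maximum bag size. If no such separator exists for a bag of size exceeding $2k+2$, then $B$ contains a well-linked set of size $3(k+1)$, and by \cref{meaninglessness} together with the characterization of treewidth via balanced separators this certifies $\mathsf{tw}(G) > k$, so the algorithm terminates negatively. The separator search itself can be carried out in time $2^{\mathcal{O}(k)}|V(G)|$ per invocation via flow-based or color-coding techniques.

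The main obstacle is bounding the total number of refinement operations to preserve linear dependence on $|V(G)|$. A potential function based on the lexicographically ordered multiset of bag sizes shows that each refinement strictly decreases the potential, but a naive count of refinements is only polynomial. The key technical innovation is an amortized charging scheme: each vertex participates in at most $2^{\mathcal{O}(k)}$ refinement operations over the entire execution, yielding the overall $2^{\mathcal{O}(k)}|V(G)|$ bound. Designing this charging argument and proving that the refinement operations never increase the total number of bags beyond $\mathcal{O}(|V(G)|)$ is the technical heart of the argument, and is what distinguishes Korhonen's linear-time $2$-approximation from the easier quadratic-time variants that follow more directly from the balanced-separator recursion.
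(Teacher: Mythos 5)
This statement is not proved in the paper at all: \cref{conquerors} is stated as a Proposition with a direct citation to Korhonen's FOCS~2021 paper, and the authors simply invoke it as a black box inside \cref{counterfeit}. So there is no ``paper's own proof'' to compare your attempt against; the relevant question is whether your proposal would stand as a proof, and it would not.

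Your outline correctly identifies the shape of Korhonen's algorithm (start from some decomposition, repeatedly improve overly large bags, certify $\mathsf{tw}(G)>k$ when no local improvement exists), but it stops precisely at the point where the actual content lives. You write that ``each vertex participates in at most $2^{\mathcal{O}(k)}$ refinement operations,'' and then concede that designing the charging scheme and bounding the number of bags ``is the technical heart of the argument.'' That sentence is the gap: you are asserting the conclusion of the amortization rather than proving it. Naively splitting an oversized bag by a balanced separator gives, as you say, only a polynomial bound on the number of refinement steps; getting down to $2^{\mathcal{O}(k)}|V(G)|$ total work requires a carefully designed potential and a specific notion of ``improvement'' (Korhonen's improvement operation is not a mere bag split, and the analysis hinges on controlling how bags overlap across steps), none of which is supplied here. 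In addition, the step where you appeal to \cref{meaninglessness} to certify $\mathsf{tw}(G)>k$ elides the work of actually producing a \emph{strongly linked} set from the failure to find a balanced separator; in this paper that extraction itself requires \cref{prohibiting} and \cref{differentiated}, and in Korhonen's paper the corresponding certificate is obtained by a nontrivial argument. As written, the proposal is a plausible summary of the strategy but not a proof, and since the paper deliberately outsources this result to \cite{korhonen2022single}, reproducing it would require reconstructing that paper's core lemmas rather than gesturing at them.
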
 

\begin{lemma}\label{counterfeit}
There exists a universal constant $c\geq 1$ and an algorithm that, given a graph $G$ and a $(3\cdot c\cdot k^{10}+1,\alpha)$-well-linked set $S\subseteq V(G)$ for some integer $k$ and some $\alpha\in[2/3,1)$ computes in time $2^{\mathcal{O}(k^{10}\log k)}|V(G)|\log(|V(G)|)$ a $k$-wall $G.$
\rev{71. P17, L2: $O(\log |V(G)|)$.}
\ans{Fixed.}
\end{lemma}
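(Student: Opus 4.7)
The plan is to combine the machinery developed in \cref{prohibiting,differentiated,sublimated,evaluation,penitentiary} with Korhonen's tree-decomposition algorithm (\cref{conquerors}) and the grid-extraction framework of Fomin, Lokshtanov, Panolan, Saurabh, and Zehavi \cite{fomin2019hitting2}. The idea is to first trade our well-linked set for a smaller but ``better-behaved'' strongly linked set that already carries a tangle truncating $\mathcal{T}_S$, and then build the wall row by row, using \cref{evaluation} at each step to either produce the next row or make provable combinatorial progress.

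First, I would apply \cref{prohibiting} to the $(3ck^{10}+1,\alpha)$-well-linked set $S$ to compute an $S$-free set $F\subseteq V(G)$ of size $3ck^{10}$. By \cref{differentiated} this set $F$ is strongly linked, and by \cref{sublimated} the collection $\mathcal{T}_F=\{(A,B)\in\mathcal{S}_{ck^{10}+1}(G)\mid |B\cap F|>2ck^{10}\}$ is a tangle of order $ck^{10}$ that is a truncation of $\mathcal{T}_S$. Any $k$-wall $W$ we later extract whose induced tangle truncates $\mathcal{T}_F$ will automatically satisfy $\mathcal{T}_W\subseteq\mathcal{T}_S$, so it suffices to produce a $k$-wall whose representative sets (in the sense of \cref{penitentiary}) are strongly linked subsets of $F$.

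Second, I would construct the wall iteratively, maintaining an expansion $(M_i,T_i)$ of a $(k\times i)$-partial grid whose last row has a representative set $X_i\subseteq F$ with $|X_i|\geq k$. To extend by one row, apply \cref{evaluation} to $X_i$ and $Y\coloneqq F\setminus V(M_i)$ (of size at least $3ck^{10}-\mathcal{O}(k^2)\geq 3k$ since only $\mathcal{O}(k^2)$ vertices of $F$ are ever absorbed into the partial wall). Case (1) of the lemma gives $k$ vertex-disjoint $X_i$-$Y$-paths, which together with a traversal through $Y$ (obtained by applying Menger between consecutive endpoints inside the strongly linked set $F$, as in Proposition 3.6 of \cite{fomin2019hitting2}) yields the next row and the new representative set $X_{i+1}$. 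Cases (2a) and (2b) either shrink the working side $B$ strictly or delete an edge $e$ whose removal preserves strong-linkedness of $F$; both reduce a potential function bounded by $|E(G)|$, so they can happen at most polynomially often before case (1) must succeed. To achieve the target $2^{\mathcal{O}(k^{10}\log k)}|V(G)|\log|V(G)|$ running time, I would run \cref{conquerors} in the background: whenever the working subgraph has treewidth below $ck^{10}$ we can terminate, so the algorithm only needs to manipulate a subgraph certifying high treewidth, and each invocation of \cref{evaluation} costs $2^{\mathcal{O}(k^{10}\log k)}|E(G)|$ on this subgraph; a standard balanced-separator recursion on Korhonen's decomposition localises the work and contributes the $\log|V(G)|$ factor.

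The main obstacle is bookkeeping rather than novelty: one must verify that after each invocation of \cref{evaluation} the invariants are preserved, namely that (i) $F$ remains strongly linked in the (modified) graph, (ii) the induced tangle $\mathcal{T}_F$ remains a truncation of $\mathcal{T}_S$ (which follows because deleting edges in a majority side and restricting to a smaller majority side only shrinks $\mathcal{T}_F$), and (iii) the representative set of the most recent row stays of size at least $k$ inside $F$. Once these invariants are maintained through the at most $\mathrm{poly}(k)$ successful extensions needed to produce the $k$-wall, \cref{penitentiary} applied to the final expansion shows that $\mathcal{T}_W$ truncates $\mathcal{T}_F$, and hence $\mathcal{T}_S$, completing the construction.
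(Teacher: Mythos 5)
Your proof attempt conflates \cref{counterfeit} with \cref{thm_algogrid}. \cref{counterfeit} only asks for a $k$-wall in $G$; the requirement that $\mathcal{T}_W$ truncate $\mathcal{T}_S$ appears in \cref{thm_algogrid}, which the paper proves \emph{afterwards} using \cref{counterfeit} as a subroutine. Because you attempt to build the wall so that its tangle agrees with $\mathcal{T}_F$, you end up importing the machinery of \cref{prohibiting}, \cref{differentiated}, \cref{sublimated}, and \cref{evaluation}, none of which is used in the paper's proof of \cref{counterfeit}. The paper's proof is considerably simpler and avoids all tangle manipulation: set $t\coloneqq ck^{10}$ with $c$ the grid-theorem constant; bisect $V(G)$ via a pair of sets $(A_i,F_i)$ with the invariants $\mathsf{tw}(G[F_i])\leq 2t$ and $\mathsf{tw}(G[A_i\cup F_i])>t$, halving $|A_i|$ at each step by checking treewidth with \cref{conquerors}; after $O(\log|V(G)|)$ steps one has a subgraph $G'$ with $t<\mathsf{tw}(G')\leq 2t$; then extract the $k$-wall by dynamic programming over Korhonen's decomposition. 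The well-linked set only serves to guarantee the process never detects small treewidth. Note also that the bisection \emph{is} what Proposition 3.6 of \cite{fomin2019hitting2} does; it is not a row-by-row wall construction as your sketch suggests.

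Setting the conflation aside, there is a genuine gap in your running-time analysis. You observe that cases (2a) and (2b) of \cref{evaluation} ``reduce a potential function bounded by $|E(G)|$, so they can happen at most polynomially often before case (1) must succeed,'' and each invocation of \cref{evaluation} costs $2^{\mathcal{O}(k^{10}\log k)}|E(G)|$. That yields $\Omega(|E(G)|^2)$ total work from the edge-deletion/separation-shrinking loop alone, far exceeding the claimed $2^{\mathcal{O}(k^{10}\log k)}|V(G)|\log|V(G)|$. The appeal to ``a standard balanced-separator recursion on Korhonen's decomposition'' to ``localise the work'' is not a mechanism that fixes this; in the paper's proof the $\log|V(G)|$ factor arises from the $O(\log|V(G)|)$ rounds of bisection, each of which is a single near-linear pass, not from a recursion over the tree decomposition. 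Finally, your handling of the row extension (``a traversal through $Y$... obtained by applying Menger between consecutive endpoints inside the strongly linked set $F$'') is not developed enough to certify that the new row is vertex-disjoint from the partial grid built so far and intersects each previous column correctly, which is precisely the part of such constructions that is genuinely delicate.
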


\begin{proof}
Our goal is to find a subgraph $G'$ of $G$ whose treewidth is at least $c_1k^{10}$ for some constant $c_1$ and at most $c_2k^{10}$ for another constant $c_2\geq c_1.$
Once we have found this subgraph, we may use \cref{conquerors} to compute a tree decomposition of bounded width for $G$ and then proceed with dynamic programming to check if $G'$ contains the $(k\times k)$-grid as a minor (using i.e., the algorithm of \cite{AdlerDFST11Faster}).
Since $G'$ satisfies $\mathsf{tw}(G')\geq c_1k^{10},$ by choosing $c_1$ to be the constant  from \cref{broadcasting}, we are guaranteed to find the desired $k$-wall.

In the following we describe how to find the graph $G'.$
This idea is taken directly from \cite{fomin2019hitting2} and we only modify two details.
Note that the highly well-linked set ensures that we can never run into the case where we determine that $G$ has small treewidth.

Let us fix $c$ to be the constant hidden in the $\mathcal{O}$-notation of \cref{broadcasting} and set $t\coloneqq c\cdot k^{10}.$
We initialise as follows:
Set $A_0\coloneqq V(G)$ and $F_0\coloneqq \emptyset.$
Notice that the pair $(A_0,F_0)$ satisfies the following conditions for $i=0,$ in particular since $S$ is $(3t+1,\alpha)$-well linked:
\begin{itemize}[itemsep=-2pt]
  \item $\mathsf{tw}(G[F_i])\leq 2t,$ and
  \item $\mathsf{tw}(G[A_i\cup F_i])>t.$
\end{itemize}
The process will stop as soon as it determines that $\mathsf{tw}(G[A_i\cup F_i])\leq 2t.$

Suppose we have already constructed the pair $(A_{i-1},F_{i-1})$ for some $i\geq 1$ satisfying the two conditions above while $\mathsf{tw}(G[A_{i-1}\cup F_{i-1}])>2t.$
\red{Notice that we make use of the algorithm from \cref{conquerors} to ensure that those bounds on the treewidth hold.}
Let $S_i\subseteq A_{i-1}$ be an arbitrary set of size $\left\lceil \frac{|A_{i-1}|}{2}\right\rceil.$
There are two cases to consider.
\begin{description}
  \item[Case 1:] \textit{$\mathsf{tw}(G[F_{i-1}\cup S_i])\leq 2t$}, in this case we set $F_i\coloneqq F_{i-1}\cup S_i$ and $A_i\coloneqq A_{i-1}\setminus S_i.$
  Notice that this preserves the two conditions above.
  \item[Case 2:] \textit{$\mathsf{tw}(G[F_{i-1}\cup S_i])>t$}, here we set $F_i\coloneqq F_{i-1}$ and $A_i\coloneqq S_i$ and still satisfy our conditions.  
\end{description}
Clearly, this procedure must stop after at most $O(\log(|V(G)|))$ many steps as either we reach a situation where $G[A_i\cup F_i]$ has treewidth at most $2t,$ or $A_i=\emptyset.$
It follows from our conditions above that in the later case we also have that $t< \mathsf{tw}(G[F_{i}\cup A_i]) \leq 2t.$
Hence, this algorithm is always successful.
Moreover, in each step we call \cref{conquerors}, 
\rev{72. P17, L4: I do not understand “in each step we call Proposition 4.9”. It seems to me that you only call
 \cref{conquerors} once.}
\ans{We added a sentence to explain where the algorithm is called.}
 along with minor checking on bounded treewidth graphs which can be done in time
$2^{\Ocal(t)}(k^2)^{2t}2^{\Ocal(k^2)}\cdot |V(G)|,$
 using, e.g., the minor-checking algorithm of \cite{AdlerDFST11Faster}. This leads to the final running time.
\end{proof}

Notice that \cref{counterfeit} can be combined with Reed's algorithm for finding either a tree decomposition of boundd width or a $(k,2/3)$-well-linked set \cite{reed1992finding} to produce an updated version of the original result in \cite[Proposition 3.6]{fomin2019hitting2}.

\begin{corollary} \label{contribution}
There exists a universal constant $c>1$ and an algorithm that, given an integer $k$ and a graph $G,$ either computes a tree decomposition of width at most $ck^{10}$ for $G,$ or a $k$-wall in $G$ in time $2^{\mathsf{poly}(k)}|V(G)|\log(|V(G)|).$ 
\end{corollary}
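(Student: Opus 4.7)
The plan is to combine Reed's well-linked set finder \cite{reed1992finding} with \cref{counterfeit} in a straightforward win/win argument. Let $c'$ denote the constant supplied by \cref{counterfeit}, and set $c \coloneqq 3c'$ (adjusting as necessary to absorb an additive constant). First I would invoke Reed's algorithm with target parameter $q \coloneqq 3c'k^{10}+1$ on the input graph $G$. In time $2^{\mathsf{poly}(k)}|V(G)|$ this either returns a tree-decomposition of $G$ of width $\mathcal{O}(q) = \mathcal{O}(k^{10})$, in which case we are done with the first outcome (up to adjusting $c$), or it returns a set $S \subseteq V(G)$ which is $(q,\alpha)$-well-linked for some $\alpha \in [2/3,1)$.

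In the second case I would feed $S$ directly into the algorithm of \cref{counterfeit}. Because $|S|$ is at least $3c' k^{10}+1$, the hypothesis of \cref{counterfeit} is satisfied, and so in time $2^{\mathcal{O}(k^{10}\log k)}|V(G)|\log(|V(G)|) = 2^{\mathsf{poly}(k)}|V(G)|\log(|V(G)|)$ we obtain a $k$-wall in $G$, which is the second outcome.

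For the running time, the two calls are executed sequentially, so the total running time is dominated by the $2^{\mathsf{poly}(k)}|V(G)|\log(|V(G)|)$ bound coming from \cref{counterfeit}. I do not anticipate any genuine obstacle here; the corollary is essentially a book-keeping combination of two previous black boxes. The only mild care needed is to make sure the constants line up: the value of $c$ in the conclusion must dominate both the constant hidden in the width bound returned by Reed's algorithm and the constant $c'$ from \cref{counterfeit}, so one picks $c$ as the maximum of those and possibly inflates it slightly to account for the $+1$ in the well-linkedness threshold. Once this is done, the stated dichotomy is immediate.
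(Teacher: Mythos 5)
Your proposal matches the paper's own proof: the paper explicitly notes that \cref{contribution} follows by combining \cref{counterfeit} with Reed's algorithm, which either yields a bounded-width tree decomposition or a well-linked set that can be fed into \cref{counterfeit}. One small imprecision in your write-up — the hypothesis of \cref{counterfeit} concerns the well-linkedness parameter of $S$, not its cardinality, so ``because $|S|$ is at least $3c'k^{10}+1$'' should instead read ``because $S$ is $(3c'k^{10}+1,2/3)$-well-linked'' — but the substance and the running-time book-keeping are correct and identical in spirit to the paper.
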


We are now ready for the proof of \cref{thm_algogrid}.
This is an algorithmic version of Lemma 14.6 from \cite{KawarabayashiTW20Quicklyexcluding}.

\begin{proof}[Proof of \cref{thm_algogrid}]
Let us set $c_1$ to be the constant from \cref{counterfeit} 
\rev{73. P17, L15: “from Theorem 4.2” should be “from Lemma 4.10”.}
\ans{fixed}
and set $t\coloneqq 6^{\red{10}}c_1k^{20}.$
Then $S$ is a $(3t+3,\alpha)$-well-linked set in $G.$
By using \cref{prohibiting} we can find an $S$-free set $F$ of size $3t+1$ in time $\mathcal{O}(t\red{|V(G)||E(G)|}).$\ans{fixed running time here}
Moreover, by \cref{differentiated} $F$ is strongly linked, and by \cref{sublimated} $\mathcal{T}_F$ is a tangle of order $3t+1$ which is a truncation of $\mathcal{T}_S.$
Indeed, we can also observe that $F$ is $(3t+1,2/3)$-well-linked in $G.$ 

We will now start iterating the following process working on the graph $G_i,$ where $G_0\coloneqq G$ while maintaining the invariant that $F$ is strongly linked in $G_i$:
\begin{enumerate}[itemsep=-2pt]
\item We use \cref{counterfeit} to find a $6k^2$-wall $W$ in $G_i.$
\rev{74. P17, L21: Delete .}
\ans{Done.}
\rev{75. P17, L21:  If you want a $6k^2$-wall, you have to define $t$ to be $6^{10}c_1k^{20}$.}
\ans{fixed}
\item We then select a representative set $X$ of the first column of $W,$ let us denote \red{this column by}  
\rev{76. P17, : What is “this path”?} 
\ans{We now defined and use lines and columns. We updated the text accordingly.}
by $P.$
Indeed, we may make sure to select $X$ to only come from the first column of $W.$
Then, we ask for a linkage of size $2k^2$ from $F$ to $X$ in $G_i.$
This request has one of two initial outcomes:
\begin{enumerate}[itemsep=-2pt]
  \item We find the linkage, or
  \item we find a separation $(A,B)$ of order at most $2k^2-1$ such that $F\subseteq A$ and $X\subseteq B.$ 
\end{enumerate}
In the first outcome we are happy, return $W$ and terminate, but in the second outcome we may call for \cref{evaluation} to either ``push'' $(A,B)$ closer to $X,$ or find an edge $e$ of \red{$G_i-e$}
\rev{77. P17, L28: $G_i-e$.} 
\ans{Fixed.}
such that $F$ is still strongly linked in $G_i-e.$
By iterating at most $|E(G_i)|$
many times, \cref{evaluation} will eventually produce such an edge.
We now set $G_{i+1}\coloneqq G_i-e$ and start the next iteration.
\rev{78.  P17, L29: $|E(G_i)|$ should be $|V(G_i)|$.}
\ans{Each iteration deletes an edge, so we have at most as many iterations as there are edges in $G_i$.}
\end{enumerate}
Notice that the above procedure stops after at most $|E(G)|$ iterations, justifying our running time.

Next, we show that, if the above process produces a $2k^2$-linkage from $F$ to $X$ this certifies that the tangle $\mathcal{T}_W$ is indeed a truncation of $\mathcal{T}_F$ which, in turn, is a truncation of $\mathcal{T}_S$ as desired.

Let $\mathcal{P}$ be the $F$-$X$-linkage of order $2k^2$ and let $X'\subseteq X$ be the set of endpoints of the paths in $\mathcal{P}.$
Recall that $P$ is the first column of $W$ and $X'\subseteq X\subseteq V(P).$

We now partition $L_1\coloneqq P$ into $k$ pairwise disjoint sub-paths $R_1,\dots,R_k$ such that $|V(R_i)\cap X'|\geq 2k$ for each $i\in[k].$
For each $i\in[k]$ let $r_i$ be the first vertex of $R_i$ encountered when traversing along $L_1$ and let $Q_i$ be the unique row of $W$ containing $r_i.$
Finally we choose $L_2,\dots,L_k$ to be $k-1$ arbitrary and distinct columns of $W$ which are all distinct from $L_1.$
Let $W'$ be the $k$-subwall of $W$ with rows $Q_1,\dots,Q_k$ and columns $L_1,\dots,L_k.$

Now all we have to do is to show that $\mathcal{T}_{W'}$ is a truncation of $\mathcal{T}_F.$
Let $(A,B)\in\mathcal{T}_F$ be a separation of order at most $k-1.$
It follows that $|A\cap F|\leq |A\cap B|\leq k-1.$
Moreover, there exists some $j\in[k]$ such that $Q_j\cup R_j$ is vertex-disjoint from $A\cap B.$
Since there are $2k$ pairwise vertex-disjoint paths from $R_j$ to $F$ by the choice of $R_j$ it follows from $(A,B)\in\mathcal{T}_F$ that $V(Q_j\cup R_j)\subseteq B\setminus A.$
Moreover, there exists some $j'\in[k]$ such that $L_{j'}$ is vertex-disjoint from $A\cap B.$
Since $L_{j'}$ has a non-empty intersection with $Q_j$ we must have that $V(Q_j\cup L_{j'})\subseteq B\setminus A$ and thus it follows that $(A,B)\in \mathcal{T}_{W'}.$

To complete the proof, observe that eventually the process above must find such a linkage as otherwise we run out of edges to delete.
\end{proof}

\subsection{$\Sigma$-decompositions}\label{subsec_sigmadec}
In this subsection we introduce the core concept that we will use to describe graphs that ``almost'' embed onto some surface.
All of these definitions can be found in \cite{KawarabayashiTW20Quicklyexcluding}, also see \cite{ThilikosW22killi} for additional explanations and illustrations.
We start by introducing the notions we will be using to describe ``almost embeddings'' in surfaces.

\red{If $\Omega$ is a cyclic permutation of some set, we denote this set by $V(\Omega).$}

\begin{definition}[Society]\label{republicans}
\rev{79. P18, L11-12: $\Omega$ is deﬁned twice.}
\ans{fixed.}
A \emph{society} is a pair $(G,\Omega),$ where $G$ is a graph and $\Omega$ is a cyclic permutation with $V(\Omega)\subseteq V(G).$
A \emph{cross} in a society $(G,\Omega)$ is a pair $(P_1,P_2)$ of disjoint paths\footnote{When we say two paths are \emph{disjoint} we mean that their vertex sets are disjoint.} in $G$ such that $P_i$ has endpoints $s_i,t_i\in V(\Omega)$ and is otherwise disjoint from $V(\Omega),$ and the vertices $s_1,s_2,t_1,t_2$ occur in $\Omega$ in the order listed.
\end{definition}
  
\begin{definition}(Drawing in a surface)\label{interchangeable}
  A \emph{drawing} \emph{in a surface $\Sigma$}  (with crossings) is a triple $\Gamma=(U,V,E)$ such that
\rev{80. P18, L-10: in $\Sigma$ with crossings ...}
\ans{Fixed.}
  \begin{itemize} 
  \item $V$ and $E$ are finite, 
  \item $V\subseteq U\subseteq\Sigma,$ 
  \item $V\cup\bigcup_{e\in E}e=U$ and $V\cap (\bigcup_{e\in E}e)=\emptyset,$ 
  \item for every $e\in E,$  $e=h((0,1)),$ where $h\colon[0,1]_{\mathbb{R}}\to U$ is a homeomorphism onto its image with $h(0),h(1)\in V,$ and
  \item if $e,e'\in E$ are distinct, then $|e\cap e'|$ is finite.
  \end{itemize}
  We call the set $V,$ sometimes referred to by $V(\Gamma),$ the \emph{vertices of $\Gamma$} and the set $E,$ referred to by $E(\Gamma),$ the \emph{edges of $\Gamma$}.
  If $G$ is graph and $\Gamma=(U,V,E)$ is a drawing with crossings in a surface $\Sigma$ such that $V$ and $E$ naturally correspond to $V(G)$ and $E(G)$ respectively, we say that $\Gamma$ is a \emph{drawing of $G$ in $\Sigma$ (possibly with crossings)}. In the case where no two edges in $E(\Gamma)$ have a common point, we say that $\Gamma$ is a \emph{drawing of $G$ in $\Sigma$ without crossings}. In this last case, the connected components of $\Sigma\setminus U,$ are the \emph{faces} of $\Gamma.$
\end{definition}
  
\begin{definition}[$\Sigma$-decomposition]\label{crystallize} 
  Let $\Sigma$ be a surface. 
  A \emph{$\Sigma$-decomposition} of a graph $G$ is a pair $\delta =(\Gamma,\mathscr{D}),$ where $\Gamma$ is a drawing of $G$ is $\Sigma$ with crossings, and $\mathscr{D}$ is a collection of closed disks, each a subset of $\Sigma$ such that 
  \begin{enumerate}
  \item the disks in $\mathscr{D}$ have pairwise disjoint interiors, 
  \item the boundary of each disk in $\mathscr{D}$ intersects $\Gamma$ in vertices only, 
  \item if $\Delta_1,\Delta_2\in\mathscr{D}$ are distinct, then $\Delta_1\cap\Delta_2\subseteq V(\Gamma),$ and 
  \item every edge of $\Gamma$ belongs to the interior of one of the disks in $\mathscr{D}.$ 
  \end{enumerate} 
  Let $N$ be the set of all vertices of $\Gamma$ that do not belong to the interior of the disks in $\mathscr{D}.$ 
  We refer to the elements of $N$ as the \emph{nodes} of $\delta .$ 
  If $\Delta\in\mathscr{D},$ then we refer to the set $\Delta\setminus N$ as a \emph{cell} of $\delta.$
  \rev{81. P18, L-2: I do not see why at most 3 nodes on the boundary are removed.}
  \ans{Indeed, at this stage the number of nodes on the boundary of a cell is unbounded, we removed the sentence.}
  We denote the set of nodes of $\delta $ by $N(\delta )$ and the set of cells by $C(\delta ).$ 
  For a cell $c\in C(\delta )$ the set of nodes that belong to the closure of $c$ is denoted by $\widetilde{c}.$ 
  Notice that this means that the cells $c$ of $\delta $ with $\widetilde{c}\neq\emptyset$ form the edges of a hypergraph with vertex set $N(\delta )$ where $\widetilde{c}$ is the set of vertices incident with $c.$ 

  For a cell $c\in C(\delta )$ we define the graph $\sigma_{\delta }(c),$ or $\sigma(c)$ if $\delta $ is clear from the context, to be the subgraph of $G$ consisting of all vertices and edges drawn in the closure of $c.$ 
 
  We define $\pi_{\delta }\colon N(\delta )\to V(G)$ to be the mapping that assigns to every node in $N(\delta )$ the corresponding vertex of $G.$
\end{definition}

\begin{definition}[Vortex]\label{conspicuous}
  Let $G$ be a graph, $\Sigma$ be a surface and $\delta =(\Gamma,\mathscr{D})$ be a $\Sigma$-decomposition of $G.$
  A cell $c\in C(\delta )$ is called a \emph{vortex} if $|\widetilde{c}|\geq  4.$
  Moreover, we call $\delta $ \emph{vortex-free} if no cell in $C(\delta )$ is a vortex.
\end{definition}
  
\begin{definition}[Rendition]\label{designation}
  Let $(G,\Omega)$ be a society, and let $\Sigma$ be a surface with one boundary component $B$ \red{homeomorphic to a circle}.
  A \emph{rendition} in $\Sigma$ of $G$ is a $\Sigma$-decomposition $\rho$ of $G$ such that the image under $\pi_{\rho}$ of $N(\rho)\cap B$ is $V(\Omega),$ mapping one of the two cyclic orders of $B$ to the order of $\Omega.$
  \rev{82. P19, L10: I do not see why there are two cyclic orders of $B$. $B$ is possibly not homeomorphic to a circle. For example, $B$ can be a union of infinitely many line segments sharing a common point. You probably have to formally define what a surface is in your paper to exclude this situation.}
  \ans{In our applications we only consider boundary components $B$ homeomorphic to circles, we added this assumption.}

  Let $\rho=(\Gamma,\mathscr{D})$ be a rendition of $(G,\Omega)$ in a disk.
If there exists a cell $c_0\in C(\red{\rho})$ such that no cell in $C(\red{\rho})\setminus \{ c_0\}$ is a vortex we call the triple 
\rev{84. P19, L14: Should “tripe” be “triple”?}
\ans{Fixed.}
$(\Gamma,\mathscr{D},c_0)$ a
\rev{83. P19, L13-14: What is $\delta$?}
\ans{fixed}
\emph{cylindrical rendition} of $(G,\Omega).$
\end{definition}

Let $\rho=(\Gamma,\mathscr{D})$ be a rendition of a society $(G,\Omega)$ in a surface $\Sigma.$
For every cell $c\in C(\rho)$ with $|\widetilde{c}|=2$ we select one of the components of $\mathsf{bd}(c)-\widetilde{c}.$
This selection is called a \emph{tie-breaker in $\rho$}, and we assume every rendition to come equipped with a tie-breaker.
Let $Q\subseteq G$ be either a cycle or a path that uses no edge of $\sigma(c)$ for every vortex $c\in C(\rho).$
We say that $Q$ is \emph{grounded} in $\rho$ if either $Q$ is a non-zero length path with both endpoints in $\pi_{\rho}(N(\rho)),$ or $Q$ is a cycle, and \red{there exist at least two distinct cells $c_1,c_2\in C(\rho)$ such that $Q$ uses edges of $\sigma(c_1)$ and $\sigma(c_2)$.}
\rev{85. P19, L19: Do you mean exactly two distinct cells or at least two distinct cells?}
\ans{We changed the phrasing to explain that we mean ``at least two''.}
A $2$-connected subgraph $H$ of $G$ is said to be \emph{grounded} if every cycle in $H$ is grounded.

If $Q$ is grounded we define the \emph{trace}\footnote{Please notice that this is called the ``track'' in \cite{KawarabayashiTW20Quicklyexcluding}.} of $Q$ as follows.
Let $P_1,\dots,P_k$ be distinct maximal sub-paths of $Q$ such that $P_i$ is a subgraph of $\sigma(\red{c_i})$ for some cell $\red{c_i}$ and $Q=\bigcup_{i\in[k]}P_k.$
Fix an index $i.$
\rev{86. P19, L22: I believe that the cell $c$ is diﬀerent for diﬀerent $i$. So you should use $c_i$ or something else to address it.}
\ans{Fixed.}
The maximality of $P_i$ implies that its endpoints are $\pi(n_1)$ and $\pi(n_2)$ for distinct nodes $n_1,n_2\in N(\rho).$
If $|\widetilde{\red{c_i}}|=2,$ define $L_i$ to be the component of $\mathsf{bd}(\red{c_i})-\{n_1,n_2\}$ selected by the tie-breaker, and if $|\widetilde{\red{c_i}}|=3,$ define $L_i$ to be the component of $\mathsf{bd}(\red{c_i})-\{n_1,n_2\}$ that is disjoint from $\widetilde{\red{c_i}}.$
Finally, we define $L_i'$ by slightly pushing $L_i$ to make it disjoint from all cells in $C(\rho).$

We define such a curve $L_i'$ for all $i$ while ensuring that the curves intersect only at a common endpoint \red{or in both endpoints in the special case where $Q$ is a cycle and $k=2$.}
\rev{87. P19, L-20: I do not understand “intersecting only at a common end- point”. If $k= 2,$ $L'_1$ and $L'_{2}$ intersect in at least two points.}
\ans{We added a remark catching this special case.}

The \emph{trace} of $Q$ is defined to be $\bigcup_{i\in[k]}L_i'.$
So the trace of a cycle is the homeomorphic image of the unit circle, and the trace of a path is an arc in $\Sigma$ with both endpoints in $N(\rho).$
  
\paragraph{Paths, linkages and transactions.}
If~$P$ is a path and~$x$ and~$y$ are vertices on~$P,$ we denote by~${xPy}$ the sub-path of~$P$ with endpoints~$x$ and~$y.$
Moreover, if~$s$ and~$t$ are the endpoints of~$P,$ and we order the vertices of~$P$ by traversing~$P$ from~$s$ to~$t,$ then~${xP}$ denotes the path~${xPt}$ and~${Px}$ denotes the path~${sPx}.$
Let~$P$ be a path from~$s$ to~$t$ and~$Q$ be a path from~$q$ to~$p.$
If~$x$ is a vertex in~${V(P) \cap V(Q)}$ such that~$Px$ and~$xQ$ intersect only in $x$ then~${PxQ}$ is the path obtained from the union of~$Px$ and~$xQ.$
\rev{88. P19, L-12: , should be .}
\ans{Fixed.}
Let~${X,Y \subseteq V(G)}.$
A path $P$ is an \emph{$X$-$Y$-path} if it has one endpoint in $X$ and the other in $Y$ and is internally disjoint from~${X \cup Y}$\red{, or $P$ consists precisely of one vertex contained in $X\cap Y$.}
\rev{89. P19, L-12: In your deﬁnition of an $X-Y$ path, the endpoints must be distinct. Do you really mean it? And if $P$ is between two distinct vertices in $X\cap Y$, is it an $X-Y$ path?}
\ans{We added a comment explaining  the special case.
}
Whenever we consider~$X$-$Y$-paths we implicitly assume them to be ordered starting in $X$ and ending in $Y,$ except if stated otherwise.
An \emph{$X$-path} is an $X$\nobreakdash-$X$\nobreakdash-path of length at least one.
In a society~$(G,\Omega),$ we write~$\Omega$-path as a shorthand for a~$V(\Omega)$-path.

Let~$(G,\Omega)$ be a society.
A \emph{segment} of~$\Omega$ is a set~${S \subseteq V(\Omega)}$ such that there do not exist~${s_1,s_2 \in S}$ and~${t_1,t_2 \in V(\Omega) \setminus S}$ such that~${s_1,t_1,s_2,t_2}$ occur in~$\Omega$ in the order listed. 
A vertex~${s \in S}$ is an \emph{endpoint} of the segment~$S$ if there is a vertex~${t \in V(\Omega) \setminus S}$ which immediately precedes or immediately succeeds~$s$ in the order~$\Omega.$
For vertices~${s,t\in V(\Omega)},$ if~$t$ immediately precedes~$s$ we define~$s\Omega t$ to be the \emph{trivial segment}~$V(\Omega),$
and otherwise we define~$s\Omega t$ to be the uniquely determined segment with first vertex~$s$ and last vertex~$t.$
\medskip

Let~$G$ be a graph.
A \emph{linkage} in~$G$ is a set of pairwise vertex-disjoint paths.
In slight abuse of notation, if~$\mathcal{L}$ is a linkage, we use~$V(\mathcal{L})$ and~$E(\mathcal{L})$ to denote~${\bigcup_{L\in\mathcal{L}}V(L)}$ and~${\bigcup_{L\in\mathcal{L}}E(L)}$ respectively.
Given two sets~$A$ and~$B$ we say that a linkage~$\mathcal{L}$ is an \emph{$A$-$B$-linkage} if every path in~$\mathcal{L}$ has one endpoint in~$A$ and one endpoint in~$B.$ 
  
We call $\abs{\mathcal{L}}$ the \emph{order} of $\mathcal{L}.$
\red{We say that two linkages $\mathcal{L}$ and $\mathcal{R}$ are \emph{vertex disjoint} if $V(\mathcal{L})\cap V(\mathcal{R})=\emptyset$.}
\medskip

Let~${(G,\Omega)}$ be a society. 
A \emph{transaction} in~${(G,\Omega)}$ is an $A$-$B$-linkage for disjoint segments~$A,B$ of~$\Omega.$ 
We define the \emph{depth} of~${(G,\Omega)}$ as the maximum order of a transaction in~${(G,\Omega)}.$
  
Let $(G,\Omega)$ be a society and let $\mathcal{P}$ be a transaction on $(G,\Omega).$
We say that $\mathcal{P}$ is a 
\begin{itemize}
\item \emph{crosscap transaction} if the members of $\mathcal{P}$ can be numbered $P_1,P_2,\dots,P_n$ and the endpoints of $P_i$ denoted by $u_i$ and $v_i$ in such a way that $$u_1, u_2,\dots, u_n, v_1, v_2,\dots,v_n$$ appear in $\Omega$ in the order listed.
We say that $\mathcal{P}$ is a crosscap transaction of \emph{thickness $n$}.
\item  \emph{handle transaction} on $(G,\Omega)$ if the members of $\mathcal{P}$ can be numbered $P_1,P_2,\dots,P_{2n}$ and the endpoints of $P_i$ can be denoted by $u_i$ and $v_i$ in such a way that 
\begin{align*}
u_1,u_2,\dots,u_{2n}v_n,v_{n-1},\dots,v_1,v_{2n},v_{2n-1},\dots,v_{n+1}
\end{align*}
appear in $\Omega$ in the order listed.
We say that $\mathcal{P}$ is a handle transaction of \emph{thickness $n$}.
\end{itemize}\medskip

Notice that a handle transaction of thickness $1$ is also a crosscap transaction of thickness $1.$
We call such a transaction a \emph{cross}.

\begin{proposition}[Two Paths Theorem,~\cite{jung1970verallgemeinerung,seymour1980disjoint,shiloach1980polynomial,thomassen19802,robertson1990graph}]\label{thm_twopaths}
  A society $(G,\Omega)$ has no cross if and only if it has a vortex-free rendition in a disk.
\end{proposition}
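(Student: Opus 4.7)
The easy direction ($\Leftarrow$) should be a standard topological argument. Suppose $\rho = (\Gamma, \mathscr{D})$ is a vortex-free rendition of $(G,\Omega)$ in a disk $\Sigma$ with boundary $B$. Every cell has at most three nodes and the images of $V(\Omega)$ lie on $B$ in the cyclic order prescribed by $\Omega$. Given any pair $(P_1,P_2)$ of disjoint $\Omega$-paths with endpoints $s_i,t_i$, I would form their traces $T_1,T_2 \subseteq \Sigma$; vortex-freeness together with vertex-disjointness of $P_1,P_2$ forces $T_1,T_2$ to be disjoint simple arcs in the interior of $\Sigma$ with endpoints on $B$. Applying the Jordan curve theorem to $T_1$ closed off by an arc of $B$, the endpoints $s_2,t_2$ of $T_2$ must lie on the same component of $B \setminus \{s_1,t_1\}$, so $s_1,s_2,t_1,t_2$ cannot appear in $\Omega$ in the cyclic order required by a cross.

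For the hard direction ($\Rightarrow$), I would proceed by induction on $|V(G)|+|E(G)|$ via a standard chain of connectivity reductions. Components and attachments of $G$ disjoint from $V(\Omega)$ can be deleted, since they affect neither the existence of a cross nor of a rendition. If $G$ has a separator of size at most $2$ splitting $V(\Omega)$ nontrivially, I would decompose $(G,\Omega)$ into two smaller societies, each obtained by inserting the separator vertices at the appropriate positions in the cyclic order, verify that neither side has a cross (any cross in a side would lift to one in $G$), apply induction to each, and glue the two renditions inside the disk along the separator. Degree-two vertices outside $V(\Omega)$ can be dissolved, parallel edges reduced, and so on. After these reductions I may assume $(G,\Omega)$ is essentially $3$-connected.

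The core of the argument, and the principal obstacle, is the $3$-connected case. My plan is to append a virtual cycle $C_\Omega$ through the vertices of $V(\Omega)$ in the cyclic order prescribed by $\Omega$, obtaining a supergraph $G^+$. Absence of a cross in $(G,\Omega)$ is then equivalent to the non-existence in $G^+$ of a $2$-linkage connecting the pairs $(s_1,t_1),(s_2,t_2)$ for any interleaving quadruple on $C_\Omega$. Invoking the classical two-disjoint-paths theorem (Seymour, Shiloach, Thomassen, and Robertson--Seymour), this non-existence forces $G^+$ to admit a planar embedding with $C_\Omega$ bounding a face. Translating this planar embedding back to $G$ yields a drawing in a disk with $V(\Omega)$ on the boundary in the correct order and with each bounded face having at most three boundary nodes, i.e., a vortex-free rendition.

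The hard step is the combinatorial heart of the two-disjoint-paths theorem itself: to show that in a $3$-connected graph with a $C_\Omega$-cycle and no $2$-linkage of the forbidden type, one must have planarity with $C_\Omega$ bounding a face. The standard way to handle it is a further induction inside the $3$-connected regime, using operations such as contracting non-crossing edges and a careful analysis of how $3$-cuts interact with putative planar embeddings, eventually reducing to a base case where the graph is small enough to verify planarity by inspection (or by a Kuratowski-style obstruction argument). This is precisely where all the delicate combinatorial work is concentrated, and where referring to the classical proofs of Jung, Seymour, Shiloach, Thomassen, and Robertson--Seymour is appropriate.
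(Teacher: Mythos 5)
The paper does not prove \cref{thm_twopaths}: it is stated as a known result with citations to the classical two-disjoint-paths literature, so there is no internal proof to compare against. Your easy direction is essentially right -- vortex-freeness forces two disjoint $\Omega$-paths to pass through disjoint sets of cells, so their traces are disjoint arcs and the Jordan curve argument closes the case.

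For the hard direction there is a genuine gap at the last step. After reducing to the $3$-connected case you assert that the classical theorem forces $G^+$ to admit a planar embedding with $C_\Omega$ bounding a face, and that ``translating this planar embedding back to $G$ yields \ldots a vortex-free rendition.'' That is false as stated. Take $G=K_5$ with $V(\Omega)$ consisting of three of its vertices: there is no cross (a cross needs four distinct endpoints in $V(\Omega)$), the graph is $3$-connected and non-planar, and yet $(G,\Omega)$ does admit a vortex-free rendition in a disk -- a single disk containing all of $K_5$ with exactly the three vertices of $V(\Omega)$ on its boundary, giving one cell $c$ with $\widetilde{c}=V(\Omega)$ and $|\widetilde{c}|=3$. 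The classical two-disjoint-paths theorem does not deliver planarity of the ambient graph; it delivers planarity of the graph obtained after collapsing the pieces hanging behind $3$-separations (and discarding pieces behind separations of order at most two), and those collapsed pieces are precisely what become the cells with up to three boundary nodes in a rendition. So $3$-connectedness is not a strong enough reduction: you must either carry ``up to $3$-cut reduction'' through the whole argument, or absorb the pieces behind $3$-cuts into cells \emph{before} invoking planarity, not after. Since the paper itself treats this proposition as a citation, the pragmatic move here is to do the same rather than to re-derive it.
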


\paragraph{Linear decompositions.}
In the original Graph Minors Series by Robertson and Seymour, the depth of vortices was not described in terms of their largest transaction, but in terms of another type of decomposition that allows to break the graph inside a vortex into smaller pieces, all of which attach only via a small set of vertices.

Let $(G,\Omega)$ be a society.
A \emph{linear decomposition} of $(G,\Omega)$ is a sequence  $$\mathcal{L}= \langle X_1,X_2,\dots,X_n,v_1,v_2,\dots,v_n \rangle$$ such that \red{the vertices}
\rev{90. P20, L-12: $n=|V(\Omega)|$? Are $v_1, v_2, \ldots, v_n$ distinct?}
\ans{Indeed, they should be. We now specify this.}
$v_1,v_2,\dots,v_n\in V(\Omega)$ \red{are pairwise distinct and} occur in that order on $\Omega$ and $X_1,X_2,\dots,X_n$ are subsets of $V(G)$ such that
  \begin{itemize}[itemsep=-2pt]
  \setlength\itemsep{0em}
  \item  for all $i\in[n],$ $v_i\in X_i,$
  \item $\bigcup_{i\in[n]}X_i=V(G),$ 
  \item for every $uv\in E(G),$ there exists $i\in [n]$ such that $u,v\in X_i,$ and 
  \item for all $x\in V(G),$ the set $\{ i\in[n] \mid x\in X_i \}$ forms an interval in $[n].$
  \end{itemize}
The \emph{adhesion} of a linear decomposition is $\max_{i\in [n-1]}|X_i\cap X_{i+1}|,$ its \emph{width} is $\max_{i\in [n]}|X_i|.$

\begin{proposition}[\!\! \cite{KawarabayashiTW20Quicklyexcluding}]
\label{prefascist}
If a society $(G,\Omega)$ has depth at most $θ,$ then it has 
a linear decomposition of adhesion at most $θ.$
Moreover there exists an algorithm that, given  $(G,\Omega),$ either finds a transaction of order $>θ$ or outputs such a decomposition in time $\Ocal(θ|V(G)|^2).$
\end{proposition}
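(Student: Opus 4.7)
The plan is to build the linear decomposition bag by bag along $\Omega$, using a sequence of minimum vertex separators whose sizes are controlled by Menger's theorem together with the depth hypothesis. Enumerate $V(\Omega)=\{v_1,\dots,v_n\}$ in the cyclic order induced by $\Omega$, and for every $i\in[n-1]$ consider the disjoint segments $A_i=\{v_1,\dots,v_i\}$ and $B_i=\{v_{i+1},\dots,v_n\}$. By Menger's theorem the minimum order of an $A_i$-$B_i$ vertex separator in $G$ equals the maximum number of pairwise vertex-disjoint $A_i$-$B_i$-paths, and such a family is exactly an $A_i$-$B_i$-transaction in $(G,\Omega)$. Thus either some step exhibits more than $\theta$ such disjoint paths, which we output as the desired transaction and halt, or we obtain separators $S_i$ of size at most $\theta$ for every $i$ (with $S_0=S_n=\emptyset$).

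To keep successive separators compatible I always take $S_i$ to be the \emph{rightmost} minimum $A_i$-$B_i$-cut, i.e.\ the one whose $B_i$-side is inclusion-minimal. The standard uncrossing argument for vertex cuts then forces the $A_i$-sides $U_i$ (defined as $A_i$ together with $S_i$ and every connected component of $G-S_i$ meeting $A_i$) to form a monotone chain $\emptyset=U_0\subseteq U_1\subseteq\dots\subseteq U_n=V(G)$. I then define the bags by
\[
X_i\coloneqq S_{i-1}\cup S_i\cup (U_i\setminus U_{i-1})\cup\{v_i\}
\]
and verify the axioms of a linear decomposition: the order condition on $v_1,\dots,v_n$ and the membership $v_i\in X_i$ are built in; every edge of $G$ falls in some $X_i$ because no edge can straddle a separator $S_i$, so both endpoints end up in a common strip or share one of the adhesion sets $S_{j-1},S_j$; the adhesion bound $|X_i\cap X_{i+1}|\le|S_i|\le\theta$ is immediate; and the contiguity axiom follows from the monotonicity of the $U_i$'s combined with the rightmost-cut choice.

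For the algorithmic part I would compute the $S_i$ incrementally: given the $(A_i,B_i)$-max-flow (of value at most $\theta$), moving to step $i+1$ shifts one vertex from $B$ to $A$ and triggers at most one augmenting-path search, each costing $\Ocal(|V(G)|)$ on the residual graph once the flow value is bounded by $\theta$. If at any step the flow value reaches $\theta+1$, the corresponding $\theta+1$ vertex-disjoint paths form a transaction of order $>\theta$ which we output and halt. Summed over the $n-1\le|V(G)|$ cut positions, and absorbing an extra factor of $\theta$ for rebuilding the cut and assembling the bag at each step, this yields the claimed $\Ocal(\theta|V(G)|^2)$ running time.

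I expect the main obstacle to be the contiguity axiom: a priori a vertex $u$ could first enter $U_{j^*}$ inside a non-separator component and later reappear in some $S_j$ with $j>j^*+1$, placing $u$ in non-consecutive bags. The rightmost-cut convention combined with a careful submodularity argument is the key point that rules this out, and is, to my understanding, the only delicate step in the proof of this classical lemma from \cite{KawarabayashiTW20Quicklyexcluding}.
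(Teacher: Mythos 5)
The paper does not prove \autoref{prefascist}; it is cited from \cite{KawarabayashiTW20Quicklyexcluding} (and again as \autoref{resembling}), so there is no in-paper proof to compare against. Your approach — sweep along $\Omega$, use Menger plus the depth hypothesis to get separators $S_i$ of order $\le\theta$ between the complementary segments $A_i$ and $B_i$, take the ``rightmost'' such separator to get a nested chain of left sides $U_i$, and set the bags to be the strips $U_i\setminus U_{i-1}$ augmented by the two flanking adhesion sets — is indeed the standard way this lemma is proved and is sound in outline.

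The gap is that the two things you defer are exactly the content of the lemma. For both the contiguity axiom \emph{and} the adhesion bound $X_i\cap X_{i+1}\subseteq S_i$ you need not just that the $A$-sides $U_i$ are increasing but also that the $B$-sides $W_i$ (with $U_i\cup W_i=V(G)$, $S_i=U_i\cap W_i$) are decreasing; then $S_i$ is the intersection of an increasing and a decreasing chain, from which the interval property of $\{i: x\in X_i\}$ and $X_i\cap X_{i+1}\subseteq S_i$ both follow cleanly. The submodular uncrossing you invoke for $U_i\subseteq U_{i+1}$ gives, via $(U_i\cap U_{i+1},\,W_i\cup W_{i+1})$ and $(U_i\cup U_{i+1},\,W_i\cap W_{i+1})$, \emph{both} nestings simultaneously once you fix the rightmost convention — but you assert contiguity ``follows'' in one sentence and then in your last paragraph say you ``expect'' it to be the main obstacle, which amounts to leaving the key step unproved. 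A second, smaller point: the boundary $\partial U_i$ need not equal $S_i$, because $B_i\cap U_i$ can contribute vertices interior to $U_i$ that are nonetheless forced into $W_i$; your contiguity argument must accommodate these (they drop out of $S_j$ exactly once the corresponding $v_m$ passes into $A_j$, so the interval property still holds, but this deserves a sentence). On the algorithmic side, the claim that shifting one $v_{i+1}$ from $B$ to $A$ ``triggers at most one augmenting-path search'' is not literally true — both source and sink change and the max-flow value can decrease as well as increase — so one needs the usual parametric/monotone max-flow argument rather than a single augmentation, and each search costs $\mathcal{O}(|E(G)|)$ rather than $\mathcal{O}(|V(G)|)$; the stated $\mathcal{O}(\theta|V(G)|^2)$ bound is still reachable, but not for the reason you give.
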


\begin{definition}[Vortex societies and breadth and depth of a $\Sigma$-decomposition]\label{upholstering}
  Let~$\Sigma$ be a surface and~$G$ be a graph.
  Let~${\delta  = (\Gamma,\mathscr{D})}$ be a $\Sigma$-decomposition of~$G.$
  Every vortex~$c$ defines a society~${(\sigma(c),\Omega)},$ called the \emph{vortex society} of~$c,$ by saying that~$\Omega$ consists of the vertices~$\pi_{\delta }(n)$ for~${n \in \widetilde{c}}$ in the order given by~$\Gamma.$
  (There are two possible choices of~$\Omega,$ namely~$\Omega$ and its reversal. 
  Either choice gives a valid vortex society.)
  The \emph{breadth} of~$\delta $ is the number of cells~${c \in C(\delta )}$ which are a vortex and the \emph{depth} of~$\delta $ is the maximum {depth} of the vortex societies~${(\sigma(c),\Omega)}$ over all vortex cells~${c \in C(\delta )}.$
\end{definition}

\vspace{-0mm}\subsection{Structure with respect to a wall}
To proceed, we need to combine our definition of flat walls with the idea of $\Sigma$-decompositions.
This is a necessary step to be able to relate flat renditions of walls to the drawings provided by $\Sigma$-decompositions and thus to impose additional structure onto these drawings.
Using this, we will introduce the general concept of a local structure theorem.

Let~$G$ be a graph and~${W}$ be a wall in~$G.$
We say that~$W$ is \emph{flat in a $\Sigma$-decomposition~$\delta $ of~$G$} if there exists a closed disk~${\Delta \subseteq \Sigma}$ such that

\begin{itemize} 
  \item the boundary of~$\Delta$ does not intersect any cell of~$\delta ,$ 
  \item ${\pi(N(\delta )\cap\Boundary{\Delta})\subseteq V(\Perimeter(W))},$ 
  \item if $u$ and $v$ are distinct vertices, both of which are either corners or branch vertices of $W,$ such that \red{neither corresponds to a node of $\delta$} then $u,v$ belong to distinct cells of $\delta,$
  \rev{91. 21, L14: How can $\pi^{-1}(u)\not\in N(\delta)$? The domain of $\pi$ is $N(\delta)$. Maybe you mean that $u$ is not in the image of $\pi$. But what you wrote does not suggest that.}
  \ans{Yes that's correct. We changed the phrasing.}
  \item no cell~${c \in C(\delta )}$ with~${c \subseteq \Delta}$ is a vortex, and 
  \item ${W - \Perimeter(W)}$ is a subgraph of~${\bigcup \{ \sigma(c) \mid c \subseteq \Delta \}}.$
\end{itemize}

\begin{definition}[$W$-central $\Sigma$-decomposition]\label{def:Wcentral} 
  Let~$G$ be a graph, let ${r \geq  3}$ be an integer and let~$W$ be an ${r}$-wall in~$G.$ 
  If~${(A,B)}$ is a separation of~$G$ of order at most~${r-1},$ then exactly one of the sets~${A \setminus B}$ and~${B \setminus A}$ includes the vertex set of a column and a row of~$W.$ 
  If it is the set~$A\setminus B,$ we say that~$A$ is the \emph{$W$-majority side of the separation~${(A,B)}$}; otherwise, we say that~$B$ is the $W$-majority side. 
  
  Let~$\Sigma$ be a surface and~${\delta  = (\Gamma,\mathscr{D})}$ be a $\Sigma$-decomposition of~$G.$ 
  We say that~$\delta $ is \emph{$W$-central} if there is no cell~${c \in C(\delta )}$ such that~${V(\sigma(c))}$ includes the $W$-majority side of a separation of~$G$ of order at most~${r-1}.$ 
  
  More generally, let~${Z \subseteq V(G)}$ with~${\abs{Z} \leq  r-1},$ let~$\Sigma'$ be a surface, and let~$\delta '$ be a $\Sigma'$-decomposition of~${G-Z}.$ 
  Then~$\delta '$ is a \emph{$W$-central decomposition} of~${G-Z}$ if for all separations~${(A,B)}$ of~${G-Z}$ of order at most~${r-|Z|-1}$ such that~${B \cup Z}$ is the majority side of the separation~${(A\cup Z,Z\cup B)}$ of~$G,$ there is no cell~${c \in C(\delta ')}$ such that~${V(\sigma_{\delta '}(c))}$ contains~$B.$
\end{definition}

\section{The local structure of a surface}\label{configuration}
The final piece of technology from \cite{KawarabayashiTW20Quicklyexcluding} we will need is to describe how Kawarabayashi, Thomas, and Wollan  capture a surface in their version of the local structure theorem~\cite{KawarabayashiTW20Quicklyexcluding}.

The purpose of the following definitions is  to introduce the concept of \textit{$\Sigma$-configurations} which are meant as a strengthening of $\Sigma$-decompositions in a way that makes it possible to describe how a clique minor might appear during the inductive process of constructing the surface in the proof of \cref{assemblies} below.

\begin{definition}[Nest]\label{generating} 
  Let $\rho=(\Gamma,\mathscr{D})$ be a rendition of a society $(G,\Omega)$ in a surface $\Sigma$ and let $\Delta\subseteq\Sigma$ be an arcwise connected set. 
  A \emph{nest in $\rho$ around $\Delta$ of order $s$} is a sequence $\mathcal{C}=(C_1,C_2,\dots,C_s)$ of disjoint cycles in $G$ such that each of them is grounded in $\rho,$ and the trace of $C_i$ bounds a closed disk $\Delta_i$ in such a way that $\Delta\subseteq\Delta_1\subsetneq\Delta_2\subsetneq\dots\subsetneq\Delta_s\subseteq\Sigma.$
\end{definition}

Let $(G,\Omega)$ be a society \red{with $V(\Omega)\neq\emptyset$} and $\rho=(\Gamma,\mathscr{D})$ be a rendition of $(G,\Omega)$ in a surface $\Sigma.$
Let $C$ be a cycle in $G$ that is grounded in $\rho$ such that the trace $T$ of $C$ bounds a closed disk $D$ in $\Sigma.$
Let $L$ be the subgraph of $G$ with vertex set $\pi_{\rho}(N(\rho)\cap T)$ and no edges.
We define the \emph{outer graph of $C$ in $\rho$} as the graph
\begin{align*}
  L\cup\bigcup_{c\in C(\rho)\text{ and }c\not\subseteq D}\sigma(c). 
\end{align*}
We define the \emph{inner graph of $C$ in $\rho$} as the graph
\begin{align*}
  L\cup\bigcup_{c\in C(\rho)\text{ and }c\subseteq D}\sigma(c).
\end{align*}
Finally, we define the inner society of $C$ in $\rho$ as follows.
Let $\Omega'$ be the cyclic permutation of $V(L)$ obtained by traversing $T$ clockwise.
If $G'$ is the inner graph of $C$ in $\rho,$ then the \emph{inner society of $C$ in $\rho$} is the society $(G',\Omega').$

Let $\mathcal{C}=\{ C_1,\dots,C_s\}$ be a collection of pairwise vertex-disjoint cycles and $\mathcal{P}=\{ P_1,\dots,P_k\}$ be a linkage.
We say that $\mathcal{P}$ is \emph{orthogonal} to $\mathcal{C}$ if every $P_i$ has a non-empty intersection with each cycle of $\mathcal{C}$ such that there exists an endpoint $p_i$ of $P_i$ for which the cycles $C_1,\dots,C_s$ appear in the order listed when traversing along $P_i$ starting from $p_i,$ and $P_i\cap C_j$ is a (possibly trivial) sub-path of $P_i$ for all $i\in[k],$ $j\in[s].$

\begin{figure}
  \centering
\includegraphics[scale=1]{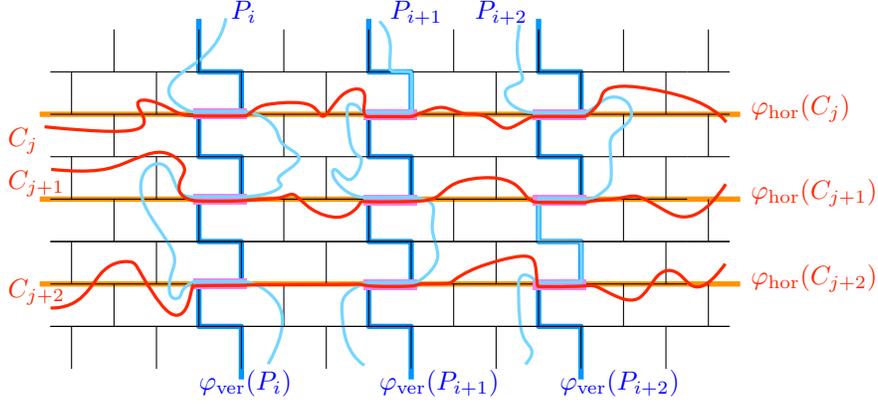}
    \caption{An illustration of a pair $(\mathcal{C},\mathcal{P})$ overlaying a wall $W.$}
    \label{fig_overlay}
\end{figure}

Let $G$ be a graph, $W$ a wall in $G,$ $\mathcal{C}$ a nest in $G,$ and $\mathcal{P}$ a linkage which is orthogonal to $\mathcal{C}.$

We say that $(\mathcal{P},\mathcal{C})$ \emph{overlays $W$} if we can fix the rows of 
$W$ to be $H_1,\dots, H_k$ and the columns $V_1,\dots, V_{\ell}$ such that there exist injective maps $\phi_{\text{hor}}\colon\mathcal{C}\to\{ H_1,\dots, H_k\}$ and $\phi_{\text{vert}}\colon\mathcal{P}\to\{ V_1,\dots, V_{\ell}\}$ such that for all $P\in\mathcal{P}$ and $C\in\mathcal{C},$ $\phi_{\text{hor}}(C)\cap\phi_{\text{vert}}(P)\subseteq P\cap C.$
See \cref{fig_overlay} for an illustration.

Let $C$ be some cycle grounded in a $\Sigma$-decomposition $\delta $ of a graph $G.$
We say that a linkage $\mathcal{L}$ is \emph{coterminal with a linkage $\mathcal{P}$ up to level $C$} if there exists $\mathcal{P}'\subseteq \mathcal{P}$ such that $H\cap \red{\bigcup}\mathcal{L}=H\cap \mathcal{P'}$ where $H$ denotes the \red{outer graph} of $C.$
\rev{92. 22, L-9: I do not understand the notation $H\cap \Lcal$. $H$ is a graph and $\Lcal$ is a set of paths.}
\ans{ We now specify that we mean $\bigcup\mathcal{L}$, i.e., the 
 union of the paths of the linkage. }
\rev{93. P22, L-9: The outer graph of $C_i$ s ambiguous when $\Sigma$ is a sphere with no boundary. }
\ans{This only happens if $V(\Omega)=\emptyset$, otherwise by the definition of renditions, $\Sigma$ is required to have a boundary homeomorphic to a circle. We added the non-empty assumption to the definition.}
When $\mathcal{C}=\{ C_1,\dots,C_{\ell}\}$ is a nest in $\delta ,$ and it is clear from the context to which nest we are referring, we will abbreviate ``coterminal with $\mathcal{P}$ up to level $C_i$'' by ``coterminal with $\mathcal{P}$ up to level $i$''.

\begin{definition}[$\Sigma$-Configuration]
  Let $G$ be a graph, let $\Sigma=\Sigma^{(\mathsf{h},\mathsf{c})}$ be a surface, for some $\mathsf{h},\mathsf{c}\in \Nbbb,$ and let $\delta =(\Gamma,\mathscr{D})$ be a $\Sigma$-decomposition of $G.$
  Let $W$ be a wall in $G,$ $W_1$ a subwall of $W,$ and $W_0$ an $r$-subwall of $W_1.$
  Assume that there exist cells $c_0,c_1\in C(\delta )$ and a closed disk $\Delta_0\subseteq\Sigma$ such that:
  \begin{itemize}
  \item No cell in $C(\delta )\setminus\{ c_0,c_1\}$ is a vortex.
  \item The vortex society $(G_0,\Omega_0)$ of $c_0$ in $\delta $ has a vortex-free rendition, $W_0\subseteq G_0,$ and there exists a vortex free rendition of $(G_0,\Omega_0)$ for which $W_0$ is flat.
  \item The cell $c_0\subseteq\Delta_0\subseteq \Sigma\setminus c_1$ and the boundary of $\Delta_0$ intersect no cell of $\delta .$
  \item The complementary society $(G_2,\Omega_2)$ to the vortex society of $c_0$ has a cylindrical rendition $\rho_2=(\Gamma_2,\mathscr{D}_2,c_2)$ with a nest $\mathcal{C}_2,$ where $\Gamma_2=\Gamma,$ $C(\rho_2)\setminus c_2=\{ c\in C(\delta ) \mid c\subseteq\Delta_0 \}\setminus\{ c_0\},$ $N(\rho_2)=N(\delta )\cap\Delta_0,$ $\sigma_{\rho_2}(c_2)=\bigcup_{c\not\subseteq\Delta_0}\sigma_{\delta }(c),$ and $\sigma_{\delta }(c)=\sigma_{\rho_2}(c)$ for every $c\in C(\rho_2)\setminus\{ c_2\}.$
  \item The vortex society of $c_1$ in $\delta $ has a cylindrical rendition $\rho_1=(\Gamma_1,\mathscr{D}_1,c_1')$ with a nest $\mathcal{C}_1.$
  \end{itemize}
  Let $\widetilde{\Delta_0}$ denote the set of vertices of $N(\rho)$ on the boundary of $\Delta_0.$
  Let $X_1,X_2$ be two complementary segments of $\Omega_2.$
  Assume that there exists a set $\gamma=\{ (\mathcal{L}_i,X_i^2) \mid i\in[\mathsf{h}+\mathsf{c}] \}$ and \red{vertex-}disjoint linkages
  \rev{94.  t P23, L7: What does “disjoint linkages $P_1$ and $P_2$” mean? A linkage is a set. So two linkages are disjoint mean that no path in $P_1$ is a path in $P$. But I guess you mean that every path in $P_1$ is disjoint from every path in $P$.}
  \ans{We added a definition for vertex-disjoint linkages to the definition of linkages and changed the quantification to requiring the linkages to be vertex disjoint.}
  $\mathcal{P}_1,\mathcal{P}_2$ in $(G_2,\Omega_2)$ such that:
  \begin{itemize}
  \item $\mathcal{P}_1$ is a linkage from $X_1$ to $\sigma_{\rho_1}(c_1')$ which is orthogonal to both $\mathcal{C}_1$ and $\mathcal{C}_2$ and $(\mathcal{P}_1,\mathcal{C}_1)$ {overlays} $W_1.$ 
  \item The linkage $\mathcal{P}_2$ is a linkage from $X_2$ to $\widetilde{\Delta_0}$ which is orthogonal to $\mathcal{C}_2$ and $(\mathcal{P}_1\cup\mathcal{P}_2,\mathcal{C}_2)$ overlays $W_1.$
  \item For $i\in[\mathsf{h}+\mathsf{c}],$ $\mathcal{L}_i$ is a transaction on $(G_2,\Omega_2)$ and $X^2_i\subseteq X_2$ is a segment of $\Omega_2.$
  The transactions $\mathcal{L}_1,\dots,\mathcal{L}_{\mathsf{h}+\mathsf{c}}$ are pairwise disjoint with endpoints in $X_i^2$  {and},  {moreover,}  $X_1^2,\dots,X^2_{\mathsf{h}+\mathsf{c}}$ are also pairwise disjoint.
  For all $i,$ $\mathcal{L}_i$ is coterminal with $\mathcal{P}_2$ up to level $1$ of $\mathcal{C}_2$ in $\rho_2$ and $\mathcal{L}_i$ is disjoint from $\sigma_{\delta }(c_1)$ and disjoint from $\mathcal{P}_1.$
  \item The transactions $\mathcal{L}_1,\dots,\mathcal{L}_{\mathsf{h}}$ are handle transactions, which are all of thickness $θ,$ and  the transactions $\mathcal{L}_{\mathsf{h}+1},\dots,\mathcal{L}_{\mathsf{h}+\mathsf{c}}$ are crosscap transactions, each of thickness $θ.$
  \item Let $\mathcal{P}_1=\{ P_1,\dots,P_m\},$ for $i\in[m]$ let $x_i$ be the endpoint of $P_i$ in $X_1$ and let $y_i$ be the last vertex of $P_i$ in $\widetilde{c_1}.$
  If $x_1,\dots,x_m$ appear in $\Omega_2$ in the order listed, then $y_1,\dots,y_m$ appear on $\widetilde{c_1}$ in the order listed.
  \end{itemize}
  In those circumstances (neglecting the wall $W$) we say that $(W_1,W_0,\delta ,c_0,c_1,\Delta_0,\mathcal{C}_1,\mathcal{C}_2,\mathcal{P}_1,\mathcal{P}_2,\gamma)$ is a \emph{$\Sigma$-configuration} with parameters $(r,|\mathcal{C}_1|,|\mathcal{C}_2|,|\mathcal{P}_1|,θ,\mathsf{h},\mathsf{c}).$
  The cell $c_1$ is called the \emph{exceptional cell of $\delta $}.
\end{definition}

For an illustration of a $\Sigma$-configuration consider (a), (b), and (c) in \cref{autoritarer}.

\begin{figure}[ht]
  \begin{center}
  \scalebox{.75}{\includegraphics{Figures/configuration.pdf}}
  \end{center}
  \captionsetup{singlelinecheck=off}
  \caption[]{A $\Sigma$-configuration of a graph (a), (b), and (c)) together with a rendition of bounded breadth and depth of its exceptional cell (d)).
  \begin{enumerate}
    \item[(a)] The vortex-free {rendition} of the vortex society of $c_0$ together with the wall $W_0$ which is grounded in this rendition. On the boundary of $c_0$ one can find one endpoint of every path in $\mathcal{P}_1.$ Notice that each such endpoint coincides with a branch vertex of $W_0.$
   \item[(b)] A rendition of the complementary society $(G_2,\Omega_2)$ to the vortex society of $c_0.$ This rendition contains the nest $\mathcal{C}_2$ of the special cell $c_2,$ a part of the linkage $\mathcal{P}_1$ which connects $W_0$ to the boundary of $c_1',$ the transactions $\mathcal{L}_i$ which are either handle or crosscap transactions on $(G_2,\Omega_2)$ and reach into the interior of $c_2.$ The linkage $\mathcal{P}_2$ ({not highlighted}) {consists} of the sub-paths of the members of the $\mathcal{L}_i$ connecting the boundary of $c_1$ to the boundary of $c_2.$
  Finally, the figure depicts the exceptional cell $c_1.$
  \item[(c)] The cylindrical rendition of the vortex society of $c_1,$ its vortex $c_1',$ its nest $\mathcal{C}_1,$ and the remaining part of $\mathcal{P}_1.$
  \item[(d)] An alternative rendition of the vortex society of $c_1,$ this time a rendition of bounded breadth and depth where each vortex is surrounded by a private nest and each such nest together with its vortex lives in a private disk. This rendition is guaranteed to exist due to \cref{assemblies}.
  \end{enumerate}}
  \label{autoritarer}  
\end{figure}\smallskip

Let $G$ be a graph and $W$ be a $n$-wall in $G.$
we say that $G$ has a \emph{$K_t$-minor grasped by $W$} if there exist connected and pairwise vertex-disjoint subgraphs $X_1,\dots X_t$ such that for all $i\neq j\in[t]$ there exists an edge with one endpoint in $X_i$ and the other in $X_j$ and for every $i\in[t],$ there exist $t$ distinct rows $R_{x^i_1},\dots,R_{x^i_t}$ and columns $C_{y^i_1},\dots,C_{y^i_t},$ $x^i_j,y^i_j\in[n]$ for all $j\in[t]$ such that $R_{x^i_j}\cap C_{y^i_j}\subseteq X_i$ for all $j\in[t].$

\begin{proposition}[Theorem 11.4, \cite{KawarabayashiTW20Quicklyexcluding}]\label{assemblies}
  Let $r,s_2,M,t\geq  1$ be integers with $M\geq  t,$ $s_2\geq  2t.$
  Let $\mu =\Floor{\frac{1}{10^5t^{26}}t^{10^7t^{26}}\cdot M}.$
  Let
  \begin{align*}
  R=49152t^{24}(r+2s_2)+\mu .
  \end{align*}
  Let $G$ be a graph, and let $W$ be an $R$-wall in $G.$
  Then either $G$ contains a $K_t$-minor grasped by $W,$ or there exist integers $\mathsf{h},\mathsf{c}\geq  0$ with $\mathsf{h}+\mathsf{c}\leq \Choose{t+1}{2},$ a set $A\subseteq V(G)$ of size at most $\mu ,$ a surface $\Sigma = \Sigma^{(\mathsf{h},\mathsf{c})},$ and a $\Sigma$-configuration $(W_1,W_0,\delta ,c_0,c_1,\Delta_0,\mathcal{C}_1,\mathcal{C}_2,\mathcal{P}_1,\mathcal{P}_2,\gamma)$ of $G-A$ with parameters $(r,M,s_2,M,M,\mathsf{h},\mathsf{c}).$

  Moreover, the vortex society of $c_1$ has a rendition $\rho_1=(\Gamma_1,\mathscr{D}_1)$ of breadth at most $2t^2$ and depth at most $\mu $ and such that
  \begin{itemize}
  \item for each vortex cell $c\in C(\rho_1)$ there exists a nest $\mathcal{C}_c$ of order $M$ in $\rho_1$ around the unique disk $\Delta\in\mathscr{D}$ corresponding to $c,$ and 
  \item for each vortex cell $c\in C(\rho_1)$ let $\Delta_c\subseteq\Sigma$ be the disk bounded by the trace of $C_M\in\mathcal{C}_c,$ then for each pair of distinct vortex cells $c,c'\in C(\rho_1)$ we have that $\Delta_c\cap \Delta_{c'}=\emptyset.$
\end{itemize}
Moreover, there exists a function $g\colon\mathbb{N}^3\to\mathbb{N}$ and an algorithm that finds one of these outcomes in time $g(t,r,s_2)|V(G)|^2,$ where $g(t,r,s_2)\in 2^{\poly(t+r+s_2)}.$
\end{proposition}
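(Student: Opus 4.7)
The plan is to prove the statement by an inductive extraction procedure, where the induction is on the quantity $\mathsf{h}+\mathsf{c}$ of handles and crosscaps that have been ``built'' so far. The strategy is to maintain a partial $\Sigma'$-configuration (for some smaller surface $\Sigma'$) together with a large residual wall $W_1^{(i)}$ and a complementary society, and in each round either (i) produce the final $\Sigma$-configuration, (ii) extract a new handle or crosscap transaction, or (iii) discover a $K_t$-minor grasped by $W$. Since each handle/crosscap extraction is paid for by a factor $\mathcal{O}(t^{24})$ loss in the wall order, and since at most $\binom{t+1}{2}$ such extractions are possible before the accumulated genus forces a $K_t$-minor (using a standard Euler-genus lower bound for $K_t$), the initial wall of order $R=49152t^{24}(r+2s_2)+\mu$ is large enough to sustain the full process and still leave a final residual wall of order $r+2s_2$.

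First, the initialization: apply the quantitative Flat Wall Theorem from \cite{KawarabayashiTW20Quicklyexcluding} to the input wall $W$. This yields an apex set $A$ of size at most $\mu$ and, inside a large subwall of $W$, an $R_0$-flat subwall $W_0$ together with a vortex-free rendition of a disk-neighborhood of $W_0$ in $G-A$. This provides the initial cell $c_0$, its disk $\Delta_0$, and the first ``layer'' of the nest $\mathcal{C}_2$ around $c_0$. If instead the Flat Wall Theorem returns a bramble/grid minor model realizing $K_t$, we are done via outcome one.

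The iteration step: given the current partial $\Sigma^{(i)}$-configuration, consider the complementary society $(G_2^{(i)}, \Omega_2^{(i)})$. Apply the Two Paths Theorem (\cref{thm_twopaths}). If this society has no cross, then it admits a vortex-free rendition extending $\delta$, which after absorbing the extra nest layers and the existing linkages $\mathcal{P}_1,\mathcal{P}_2$ gives precisely the final $\Sigma$-configuration and we stop. If a cross exists, detect whether it lies ``parallel'' (giving a crosscap transaction) or ``antiparallel'' (giving a handle transaction) with respect to the orientation on $\Omega_2^{(i)}$; push it until it becomes a transaction $\mathcal{L}_{i+1}$ of thickness $\theta=M$ by combining it with a large bundle of segment-to-segment paths guaranteed by the nest structure and by the well-linkedness provided by $W$. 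The segment $X_{i+1}^2$ is chosen in a previously unused region of $X_2$, ensuring the $X_j^2$ remain pairwise disjoint, and the coterminality condition with $\mathcal{P}_2$ up to level $1$ of $\mathcal{C}_2$ is enforced by rerouting through the innermost cycle. Update the surface $\Sigma^{(i+1)}$ to $\Sigma^{(i)}$ with one extra handle or crosscap, and absorb the ``used'' part of the wall into the new exceptional cell.

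The main obstacle is keeping the overlay of $(\mathcal{P}_1, \mathcal{C}_1)$ with $W_1$ and of $(\mathcal{P}_1\cup\mathcal{P}_2,\mathcal{C}_2)$ with $W_1$ intact while the surface and the exceptional cell grow. This is handled by always routing newly created transactions through outer layers of the nest $\mathcal{C}_2$, never touching the inner nest $\mathcal{C}_1$ that certifies $\mathcal{P}_1$'s overlay with $W_1$, and by dedicating a fresh band of $W$ (of width proportional to $t^{24}$) to each new transaction. The final claim about the rendition $\rho_1$ of the vortex society of $c_1$ having breadth at most $2t^2$ and depth at most $\mu$ is obtained by a separate post-processing: apply the Two Paths Theorem iteratively inside $c_1$ to split off up to $2t^2$ independent vortices, and bound the depth of each by $\mu$, since any deeper transaction inside $c_1$ could be pushed outside to produce yet another handle or crosscap, which is disallowed by $\mathsf{h}+\mathsf{c}\le\binom{t+1}{2}$. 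The algorithmic claim follows from the fact that every step (Flat Wall, Two Paths, nest routing) admits a polynomial implementation, so the total running time is $g(t,r,s_2)|V(G)|^2$ with $g(t,r,s_2)\in 2^{\mathsf{poly}(t+r+s_2)}$.
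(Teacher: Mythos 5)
The paper does not give a proof of Proposition~\ref{assemblies}: it is imported as a black box, being Theorem~11.4 of Kawarabayashi, Thomas, and Wollan \cite{KawarabayashiTW20Quicklyexcluding}, and the paper explicitly remarks after the statement that the nest structure of $\rho_1$ is not made explicit in \cite{KawarabayashiTW20Quicklyexcluding} but is extracted from their proofs as explained in \cite{ThilikosW22killi}. So your sketch is not comparable to a ``paper's own proof''; it is an attempt to reconstruct, from scratch, a deep external result that occupies a substantial part of a long monograph.

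Judged on its own terms, the top-level skeleton of your sketch (an induction on $\mathsf{h}+\mathsf{c}$ that terminates in a $\Sigma$-configuration, an extra handle/crosscap, or a $K_t$-minor) matches the spirit of the KTW argument, but it asserts rather than proves the steps that carry the real weight. In particular, the pivotal step --- deducing from a cross in the complementary society a \emph{thick} handle or crosscap transaction of order $M$ that is coterminal with $\mathcal{P}_2$ up to level one of $\mathcal{C}_2$ and disjoint from $\mathcal{P}_1$ --- is not an application of the Two Paths Theorem; a single cross gives a transaction of thickness one, and the ``thickening'' together with the coterminality and disjointness constraints is a long linkage/untangling argument that forms the technical core of the KTW construction. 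The apex set $A$ also does not arise from one initial Flat Wall step: apices are accumulated across the iteration, and bounding $|A|\le \mu$ at the end requires tracking that accumulation. Finally, the bounds $\mathsf{h}+\mathsf{c}\le\binom{t+1}{2}$ on the number of iterations and $2t^2$ on the breadth of $\rho_1$ come from counting arguments tied to partial clique-minor obstructions, not from a generic Euler-genus lower bound for $K_t$ or a Two-Paths post-processing inside $c_1$. These are genuine gaps; the sketch is a plausible summary of the shape of the KTW proof rather than a proof.
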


For an illustration of the outcome of \cref{assemblies} see (a), (b), and (d) in \cref{autoritarer}.
Please notice that the structure of the nests for the vortices of $\rho_1$ is not made explicit in \cite{KawarabayashiTW20Quicklyexcluding}.
However, the existence of these nests follows from the proofs as explained, for example, in \cite{ThilikosW22killi}.

\section{The local structure theorem}\label{quintessence}
We are finally ready to prove the local version, that is a theorem with respect to a large wall, of our structure theorem.
To do this, we first show how to extract a Dyck-wall of chosen order from a $\Sigma$-configuration.
Then we use this to derive a variant of the local Graph Minors Structure Theorem in the case where we exclude Dyck-grid for some fixed surface as a minor.

\subsection{Extracting a Dyck-wall}

This section is dedicated to the extraction of a Dyck-wall from a given $\Sigma$-configuration.

\begin{lemma}\label{pindarischen}
  Let $\mathsf{h},\mathsf{c},t,r,s_2$ be non-negative integers.
  In case $\mathsf{c}=0$ let $\mathsf{c}_0=\mathsf{h}_{0}=0,$ if $\mathsf{c}=2\red{p}+1$ \red{for some integer $p$}
  \rev{95. P25, L1: What is $p$? Is $p$ an integer?}
  \ans{Yes, we now clarify this.}
  let $\mathsf{c}_0=1$ and $\mathsf{h}_{0}=p,$ and if $\mathsf{c}=2p\neq 0$ let $\mathsf{c}_0=2$ and $\mathsf{h}_{0}=p-1.$
  Finally, let $g=2\mathsf{h}+\mathsf{c}.$
  Assume that
  \begin{align*}
  r\geq  16t\text{, }M \geq  (162^{2g}+4)t,\text{ and }s_2\geq  (162^{2g}+4)t.
  \end{align*}
  Let $G$ be a graph that has a $\Sigma$-configuration $(W_1,W_0,\delta ,c_0,c_1,\Delta_0,\mathcal{C}_1,\mathcal{C}_2,\mathcal{P}_1,\mathcal{P}_2,\gamma)$ where the parameters are $(r,M,s_2,M,M,\mathsf{h},\mathsf{c})$ and where $\Sigma=\Sigma^{(\mathsf{h},\mathsf{c})}$ then $G$ contains an $\mathscr{D}^{(\mathsf{h}+\mathsf{h}_{0},\mathsf{c}_0)}_t$-expansion which is grounded in $\delta .$
\end{lemma}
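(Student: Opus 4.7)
The plan is to convert the given $\Sigma$-configuration into an expansion of a mixed surface grid of sufficiently large order $w$ with $\mathsf{h}$ handles and $\mathsf{c}$ crosscaps inside $G$, and then invoke \cref{diskussionen} to obtain the required $\mathscr{D}^{(\mathsf{h}+\mathsf{h}_0,\mathsf{c}_0)}_t$-expansion. The target order is $w = 162^{2g}t$, so that \cref{diskussionen} delivers a Dyck-grid of order $t$; the hypothesis $M, s_2 \geq (162^{2g}+4)t$ is exactly what is needed to supply the transactions, nest cycles, and vertical linkages of such a mixed surface grid, with the additive slack $4t$ absorbing boundary effects.

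First I would read off the cylindrical backbone of the mixed surface grid from the configuration. The nest $\mathcal{C}_2$ (of order $s_2 \geq w+4t$) provides $w$ concentric grounded cycles, while the portions of the transactions $\mathcal{L}_i$ lying outside the innermost level of $\mathcal{C}_2$, together with the linkage $\mathcal{P}_2$, provide the $4(\mathsf{h}+\mathsf{c}+1)w$ vertical paths orthogonal to those cycles. The coterminality of each $\mathcal{L}_i$ with $\mathcal{P}_2$ up to level $1$ of $\mathcal{C}_2$ guarantees that, outside a small disk around $c_1$, every transaction path coincides with a distinct path of $\mathcal{P}_2$, so the pieces glue into a well-defined orthogonal grid. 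Since the segments $X_1^2,\ldots,X_{\mathsf{h}+\mathsf{c}}^2$ are pairwise disjoint and appear in a fixed cyclic order on $\Omega_2$, the endpoints of the $\mathcal{L}_i$ on the outermost selected cycle occupy the correct intervals to play the roles of positions $2,\ldots,\mathsf{h}+\mathsf{c}+1$ of a mixed surface grid of order $w$; position $1$ is filled by the residual columns of $\mathcal{P}_1\cup\mathcal{P}_2$ lying outside these intervals.

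Second, within the innermost level of $\mathcal{C}_2$, each handle transaction $\mathcal{L}_i$ ($i\in[\mathsf{h}]$) realises the handle-paths at position $i+1$ of the mixed surface grid, and each crosscap transaction $\mathcal{L}_i$ ($i\in[\mathsf{h}+1,\mathsf{h}+\mathsf{c}]$) realises the crosscap-paths at position $i+1$. Because each $\mathcal{L}_i$ has thickness $\theta = M$, is pairwise disjoint from every other transaction, from $\mathcal{P}_1$, and from $\sigma_\delta(c_1)$, the composite subgraph is, after suitable subdivision and identification of degree-$2$ vertices, isomorphic to a subdivision of a vertex-split of $H^w_{[2,\mathsf{h}+1],[\mathsf{h}+2,\mathsf{h}+\mathsf{c}+1]}$ (after permuting the transaction positions via \cref{upanishads} so that handles precede crosscaps), and hence an expansion of it in the sense of \cref{hypocritical}. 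It is grounded in $\delta$ because every constituent path and cycle is grounded in $\rho_2$ and avoids the exceptional cell $c_1$.

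Finally I would apply \cref{diskussionen} to this mixed surface grid of order $w=162^{2g}t$ with $\mathsf{h}$ handles and $\mathsf{c}$ crosscaps: it contains $\mathscr{D}^{(\mathsf{h}+\mathsf{h}_0,\mathsf{c}_0)}_t$ as a minor. Tracing through the proofs of \cref{upanishads} and \cref{classification}, the required contractions and reroutings can be carried out inside the subdivision already constructed, without ever entering $\sigma_\delta(c_1)$, so the resulting minor model refines to an expansion that remains grounded in $\delta$. The main obstacle is the careful bookkeeping in the swap-and-conversion steps: one must verify that each handle/crosscap swap and each three-to-one crosscap-handle conversion preserves both the branch-vertex structure of an expansion and the grounding in $\delta$. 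This reduces to the observation that every auxiliary path introduced in \cref{upanishads} and \cref{classification} travels along cycles and columns of the mixed surface grid we have already embedded, all of which live inside the disk bounded by the outermost chosen cycle of $\mathcal{C}_2$.
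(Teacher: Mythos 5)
Your proposal follows the same route as the paper's proof: read off a grounded mixed surface grid of order $162^{2g}t$ with $\mathsf{h}$ handles and $\mathsf{c}$ crosscaps from the nest $\mathcal{C}_2$, the transactions $\mathcal{L}_i$, and the linkages $\mathcal{P}_1,\mathcal{P}_2$ (glued together via the coterminality condition), then invoke \cref{diskussionen} to extract the Dyck-grid, observing that the whole construction stays grounded because every constituent path and cycle is. Two small remarks: the parenthetical appeal to \cref{upanishads} to ``permute the transaction positions so that handles precede crosscaps'' is unnecessary (and would cost order) -- what you build is an expansion of a mixed surface grid whose handle/crosscap positions follow the cyclic layout of the segments $X_i^2$, and \cref{diskussionen} already applies to \emph{every} mixed surface grid regardless of ordering; and the degenerate case $\mathsf{h}+\mathsf{c}=0$ (no transactions $\mathcal{L}_i$) deserves an explicit word, which the paper dispatches directly from the flat wall $W_0$ using $r\geq 16t$.
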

  
\begin{proof}
  The claim follows quickly from the assumptions on the numbers and the definition of $\Sigma$-configurations.
  
  Let us first describe how to find a mixed surface grid in the $\Sigma$-configuration. Consider the $\Sigma$-configuration $(W_1,W_0,\delta ,c_0,c_1,\Delta_0,\mathcal{C}_1,\mathcal{C}_2,\mathcal{P}_1,\mathcal{P}_2,\gamma).$
  \rev{96. P25, L8: Should $o_1$ and $o_2$ be $c_1$ and $c_2$?}
  \ans{Indeed! We changed it.}

  In case $\mathsf{h}+\mathsf{c}=0$ we may conclude immediately as $W_0$ is an $r$-wall where $r\geq  16t.$
  Hence, $W_0$ contains the $(t,4t)$-cylindrical grid, and therefore $\mathscr{D}_{t}^{(0,0)},$ as a minor.
  Moreover, $W_0$ is grounded in $\delta .$

  So from now on we may assume that $\mathsf{h}+\mathsf{c}\geq  1.$
  We first collect the cycles necessary for the cylindrical grid which will host the crosscap and handle transactions.
  Let us number the cycles of $\mathcal{C}_2$ as $C_1,C_2,\dots,C_{s_2}$ as in the definition of a nest.
  Let $H_0$ be the subgraph of $G$ consisting of the union of the cycles $C_1,\dots,C_{162^{2g}t}.$
  For each $i\in[\mathsf{h}+\mathsf{c}]$ let $Q_i$ be the subgraph of $G$ obtained as follows.
  For each $L\in\mathcal{L}_i$ there exist paths $P_1,P_2\in\mathcal{P}_1$ such that $L$ is coterminal with $\{ P_1,P_2\}$ up to level $162^{2g}t.$
  For each $j\in[2],$ let $x_j$ be the last vertex of $P_j$ on $C_{162^{2g}t}$ when traversing along $P_j$ starting on $V(\Omega_2).$
  Moreover, let $y_j$ be the last vertex of $P_j$ in $C_1\cap L$ when traversing along $P_j$ starting on $x_j$ and moving towards $\sigma_{\rho_1}(c_1').$
  Let $L'$ be the sub-path of $L$ with endpoints $y_1$ and $y_2.$
  Finally, we define $\hat{L}$ to be the path $x_1P_1y_1L'y_2P_2x_2.$
  Observe, that both $P_1$ and $P_2$ are orthogonal with $\{ C_1,\dots,C_{162^{2g}t}\},$ and $L'$ is internally disjoint from $H_0.$
  Then $Q_i$ is defined as the union of the paths $\hat{L}$ for all $L\in\mathcal{L}_i.$
  Now, let $H_1$ be the union of $H_0$ with the graphs $Q_i$ for all $i\in[\mathsf{h}+\mathsf{c}].$

  Observe that $H_1$ contains a mixed surface grid of order $162^{2g}t$ with $\mathsf{h}$ handles and $\mathsf{c}$ crosscaps  as a minor since each of the linkages $\mathcal{L}_i$ has order at least $(162^{2g}+4)t$ which allows forgetting $4t$ of the consecutive $L'$-paths.

  Finally, notice that every cycle in $\mathcal{C}_2$ is grounded in $\delta .$
  Moreover, any path $\hat{L}$ as above closes two cycles with $C_{162^{2g}t},$ each of which is grounded in $\delta .$
  It follows that $H_1$ is grounded in $\delta $ and thus, the claim now follows from \cref{diskussionen}.
\end{proof}

As a direct corollary of \cref{pindarischen}, if one applies the lemma with parameter $t=2k,$ then inside the $\mathscr{D}_{2k}^{\mathsf{h}+\mathsf{h}_{0},\mathsf{c}_0}$-expansion one can find a corresponding Dyck-wall of order $k$ as a subgraph.
  
We now have everything in place to prove the local structure theorem for excluding Dyck-grids.
The statement we prove here is more general than the one required for our main theorem as we want to be able to have more fine-tuned control over the excluded surfaces.
That is, in the following we fix an Euler-genus $g_1=2\mathsf{h}_1$ and an Euler-genus $g_2=\mathsf{c}_2+2\mathsf{h}_2$ where $\mathsf{c}_2\in[2]$ and $g_1$ represents a surface of orientable genus $\mathsf{h}_1$ while $g_2$ represents a non-orientable surface.
To ensure that the two surfaces are incomparable, we also require $0\leq  \mathsf{h}_2<\mathsf{h}_1$ since otherwise we could find a Dyck-grid with $\mathsf{h}_1$ handles as a minor in any Dyck-grid with $\mathsf{h}_2$ handles and $\mathsf{c}_2$ crosscaps.

\begin{theorem}\label{transposition}
  Let $k,\mathsf{h}_1,\mathsf{h}_2,c_2$ be non-negative integers where $\mathsf{c}_2\in[0,2]$ and $0\leq  \mathsf{h}_2<\mathsf{h}_1.$
  \rev{97.  P25, L-6: Is “if $c_{2}\neq 0$” only used when deﬁning $g_2$ or is it also used when deﬁning $g_1$?}
  \ans{This only matters for $g_2$, we slightly rephrased to make it more clear.}
  Let $g_1\coloneqq 2\mathsf{h}_1$.
  \red{Also, if $\mathsf{c}_2\neq 0$ let $g_2\coloneqq \mathsf{c}_2+2\mathsf{h}_2$ and let $g_2\coloneqq 0$ otherwise.}
  Moreover, let $g\coloneqq\max\{ g_1,g_2\}$ and fix $t\coloneqq (162^{2g}+4)k.$
  
  Define
  \begin{align*}
  \mathbf{a}_{g_1,g_2} &\coloneqq  t^{10^7t^{26}}\\
  \mathbf{v}_{\mathsf{b},{g_1,g_2}} &\coloneqq 2t^2\\
  \mathbf{v}_{\mathsf{d},{g_1,g_2}} &\coloneqq t^{10^7t^{26}}\\
  \mathbf{R}_{g_1,g_2} &\coloneqq 49152t^{24}(r+16t)+\left\lceil\frac{5}{2}t^{10^7t^{26}}\right\rceil.
  \end{align*}

Then, given an integer $r\geq t,$ graph $G,$ and an $R$-wall $W$ with $R\geq \mathbf{R}_{g_1,g_2},$ one of the following holds.
\begin{enumerate}
  \item \red{If $g_2 \neq 0$ then $G$ contains a $\mathscr{D}^{(\mathsf{h}_1,0)}_k$\red{-expansion} or a $\mathscr{D}^{(\mathsf{h}_2,\mathsf{c}_2)}_k$-expansion $D$ such that $\mathcal{T}_D\subseteq \mathcal{T}_W$.
  If otherwise $g_2=0$ and $G$ contains a $\mathscr{D}^{(\mathsf{h}_1,0)}_k$\red{-expansion} $D$ such that $\mathcal{T}_D\subseteq \mathcal{T}_W$.}
  \item \red{Otherwise there exist non-negative integers $\mathsf{h},\mathsf{c}$ where $\mathsf{c}\in[0,2]$ and a surface $\Sigma=\Sigma^{(\mathsf{h},\mathsf{c})}$ such that there exists a positive integer $i$ for which neither $\mathscr{D}^{(\mathsf{h}_1,0)}_i$ nor, in case $\mathsf{c}_2\neq0,$ $\mathscr{D}^{(\mathsf{h}_2,\mathsf{c}_2)}_i$ is embeddable in $\Sigma$.
  Moreover, there exists a set $A\subseteq V(G)$ with $|A|\leq \mathbf{a}_{g_1,g_2},$ a $W$-central $\Sigma$-decomposition $\delta$ of $G-A$ of breadth at most $\mathbf{v}_{\mathsf{b},{g_1,g_2}}$ and depth at most $\mathbf{v}_{\mathsf{d},{g_1,g_2}},$ and an $r$-subwall $W'$ of $W-A$ which is flat in $\delta.$}
\end{enumerate}
Moreover, there exists a function $g\colon\mathbb{N}^4\to\mathbb{N}$ and an algorithm that finds one of these outcomes in time $g(k,\mathsf{h}_1,r)|V(G)|^2)$ where $g(k,\mathsf{h}_1,r)=O(r\cdot 2^{2^{\Ocal(\mathsf{h}_1)}\poly(k)}).$
\end{theorem}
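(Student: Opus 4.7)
The plan relies on Proposition~\ref{assemblies} as the main engine. I would apply it to $G$ equipped with $W$, setting the parameters so that the required $R$ becomes exactly $\mathbf{R}_{g_1,g_2}$: concretely, take the assemblies parameter $t$ to be our $t = (162^{2g}+4)k$, set $s_2 := 8t$ (so that $2s_2 = 16t$ matches the $+16t$ term in $\mathbf{R}_{g_1,g_2}$), and choose $M$ so that $\mu$ coincides with $\mathbf{v}_{\mathsf{d},g_1,g_2} = t^{10^7 t^{26}}$. The inequality $R\leq \mathbf{R}_{g_1,g_2}$ then follows directly from matching the formulas. Proposition~\ref{assemblies} returns one of two outcomes.

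In the first outcome, $G$ contains a $K_t$-minor grasped by $W$. Since $t=(162^{2g}+4)k$ easily exceeds the vertex count of both $\mathscr{D}^{(\mathsf{h}_1,0)}_k$ and (when $\mathsf{c}_2>0$) $\mathscr{D}^{(\mathsf{h}_2,\mathsf{c}_2)}_k$, the clique minor contains the desired Dyck-grid as a minor; the grasping condition translates into the tangle containment $\mathcal{T}_D\subseteq\mathcal{T}_W$ via a standard argument using the definition of ``grasped by $W$'' and the fact that $\mathcal{T}_W$ is the tangle induced by the majority side of $W$. This gives outcome~1 of the theorem.

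In the second outcome, one obtains an apex set $A$ with $|A|\leq\mu$ and a $\Sigma$-configuration of $G-A$ for some $\Sigma=\Sigma^{(\mathsf{h},\mathsf{c})}$ with $\mathsf{h}+\mathsf{c}\leq\binom{t+1}{2}$; let $g'=2\mathsf{h}+\mathsf{c}$. I split on whether either target Dyck-grid embeds in $\Sigma$. \textbf{(a)} If $g'\geq g_1$, or if $\mathsf{c}>0$ and $g'\geq g_2$, then by Corollary~\ref{territorial2} and Theorem~\ref{diskussionen} the appropriate target Dyck-grid is a minor of any sufficiently high-order canonical Dyck-grid of $\Sigma$, so applying Lemma~\ref{pindarischen} to the configuration with output order $2k$ produces a $\mathscr{D}^{(\mathsf{h}+\mathsf{h}_0,\mathsf{c}_0)}_{2k}$-expansion $D'$ grounded in $\delta$; a further application of Theorem~\ref{diskussionen} to $D'$ extracts the required target Dyck-grid expansion $D$. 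The groundedness in $\delta$, together with $W_0$ being flat in $\delta$ and $W_0\subseteq W_1\subseteq W$, gives $\mathcal{T}_D\subseteq\mathcal{T}_W$, yielding outcome~1. \textbf{(b)} If neither Dyck-grid embeds in $\Sigma$, then one is in outcome~2: the $\Sigma$-decomposition $\delta$ of $G-A$, enhanced with the rendition of the exceptional cell $c_1$ of breadth at most $2t^2$ and depth at most $\mu$ guaranteed by Proposition~\ref{assemblies}, constitutes a $W$-central $\Sigma$-decomposition of $G-A$ with the required breadth and depth bounds $\mathbf{v}_{\mathsf{b},g_1,g_2}$ and $\mathbf{v}_{\mathsf{d},g_1,g_2}$; the subwall $W_0$ (or any $r$-subwall thereof, obtained by trimming rows and columns) serves as the flat $r$-subwall $W'$.

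The main obstacle will be managing the parameter arithmetic when $g'$ can range up to $2\binom{t+1}{2}$, in particular verifying Lemma~\ref{pindarischen}'s hypotheses $s_2,M\geq(162^{2g'}+4)\cdot 2k$ in sub-case~(a): the towered bound $t^{10^7 t^{26}}$ in $\mathbf{v}_{\mathsf{d},g_1,g_2}$ provides ample room for $M$, while the condition on $s_2$ is handled by applying Lemma~\ref{pindarischen} with a reduced output order $2k'\leq 2k$ scaled by the ratio $162^{2(g-g')}$ and then re-inflating via Theorem~\ref{diskussionen} to recover the target of order $k$. The algorithmic statement follows from the constructive nature of Proposition~\ref{assemblies} (which is polynomial in $|V(G)|$ with exponential dependence only on $t$), of Lemma~\ref{pindarischen}, and of the routing arguments in Theorem~\ref{diskussionen}, giving the claimed running time $g(k,\mathsf{h}_1,r)\cdot|V(G)|^2$ with $g(k,\mathsf{h}_1,r)=2^{2^{\mathcal{O}(\mathsf{h}_1)}\poly(k)}\cdot r$.
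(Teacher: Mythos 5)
Your plan reaches the correct case split and the correct sources, but it contains a genuine gap: the $\Sigma$-decomposition that falls out of Proposition~\ref{assemblies} (or, equivalently, the $\Sigma$-configuration it provides) is \emph{not} automatically $W$-central, and you simply assert that it is. $W$-centrality requires that no cell $c$ of the decomposition satisfies the property that $V(\sigma(c))$ contains the $W$-majority side of a low-order separation of $G-A$; the vortex cells of the configuration can still swallow a majority side, because their path decomposition has unbounded bag size. The paper's proof of Theorem~\ref{transposition} spends its entire second half on exactly this point: after obtaining the decomposition (via the intermediate Lemma~\ref{efficiency}, itself built from Lemma~\ref{intermingling}), it proves a Claim saying that for each vortex $(G_i,\Omega_i)$ there is an additional set $A_i$ of at most $2\mu+1$ vertices inside the vortex whose removal destroys every $P\cup Q$ (row plus column of $W$) drawn inside that vortex. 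The final apex set is $\hat A = A\cup\bigcup_i A_i$, and only after restricting the decomposition to $G-\hat A$ does one obtain $W$-centrality. Your proposal does not contain this augmentation step, so outcome~2 as you state it is unproved.

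This also exposes a parameter problem in your plan: you fix $\mu$ so that it already equals $\mathbf{v}_{\mathsf{d},g_1,g_2}=t^{10^7t^{26}}$, matching the apex bound $\mathbf{a}_{g_1,g_2}$ exactly. But to carry out the augmentation, the paper deliberately takes $\mu=\lfloor\frac{1}{10^5 t^{25}}t^{10^7t^{26}}\rfloor$, a factor of roughly $10^5 t^{25}$ smaller, precisely so that $|\hat A|\le 2t^2(2\mu+1)+\mu$ still fits below $\mathbf{a}_{g_1,g_2}$. With your choice of $\mu$ there is no slack for the $A_i$'s, so even if you added the centrality argument the counting would fail. You should reduce $\mu$ (and consequently inflate $r'$ to $r + 16t + \lfloor\frac{1}{10^5 t^{24}}t^{10^7t^{26}}\rfloor$ as in the paper) so that the $R$-value arithmetic and the apex budget both work out. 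The rest of your sketch — the case analysis on $g'$, the extraction of a grounded Dyck-grid via Lemma~\ref{pindarischen} and Theorem~\ref{diskussionen}, and the argument that groundedness plus the relations $W_0\subseteq W_1\subseteq W$ yield $\mathcal{T}_D\subseteq\mathcal{T}_W$ — is the same as in the paper, essentially compressing Lemma~\ref{intermingling} and Lemma~\ref{efficiency} into a single pass over Proposition~\ref{assemblies}.
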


  \rev{98. P26, L2: Do you mean $D^{\mathsf{h}_1,0}_k$  or $D^{\mathsf{h}_1,0}_k$-expansion?}
  \ans{In outcome 1. we mean expansions, we clarified this now.}
\rev{99. P26, L2-3: This sentence should be split into two sentences, such as “if $g_2\neq 0$, then $G$ contains ..., and if $g_2= 0$, then $G$ contains ...”.}
\ans{We split the sentence.}
\rev{100. The structure of the sentence in Outcome 2 is too complicated. It should be split into more sentences. For example, I do not understand “neither $D^{h_1,0}_{i}$ nor, in case ...”. And “embeds” might be “be embeddable in $\Sigma$}
\ans{We split the sentences in an attempt to make it more readable.}

To prove \cref{transposition} we start by showing that we can always either find a Dyck-grid of order $k$ which corresponds to one of the two fixed Euler-genus $g_1$ and $g_2,$ or we find a $\Sigma$-configuration whose parameters depend only on $k,$ $g_1,$ and $g_2$ where the Euler-genus of $\Sigma$ is strictly less than $g_1$ if $\Sigma$ is orientable, and strictly less than $g_2$ if $\Sigma$ is non-orientable and $g_2\neq 0$ or at most $g_1$ if $\Sigma$ is non-orientable but $g_2=0.$

\begin{lemma}\label{intermingling}
  Let $k,\mathsf{h}_1,\mathsf{h}_2,\mathsf{c}_2$ be non-negative integers where $\mathsf{c}_2\in[0,2]$ and $0\leq  \mathsf{h}_2<\mathsf{h}_1.$
  Let $g_1\coloneqq 2\mathsf{h}_1.$
  \red{Also, if $\mathsf{c}_2\neq 0$ let $g_2\coloneqq \mathsf{c}_2+2\mathsf{h}_2$ and let $g_2\coloneqq 0$ otherwise.}
  Moreover, \red{let $g\coloneqq\max\{ g_1,g_2\}$ and} let $t\geq  1$ be an integer.

  Finally, let $r,s_2,M\geq  1$ be integers with $r\geq  16t,$ $s_2\geq  2M\geq \max\{ 2(162^{2g}+4)k, 2t\},$ and set $\mu \coloneqq \left\lfloor \frac{1}{10^5t^{26}}t^{10^7t^{26}}\cdot M\right\rfloor.$
  Fix
  \begin{align*}
  R\coloneqq 49152t^{24}(r+2s_2)+\mu.
  \end{align*}

  Let $G$ be a graph, and let $W$ be an $R$-wall in $G.$
  
  Then one of the following is true.
  \begin{enumerate}
  \item $G$ contains a $K_t$-minor grasped by $W,$

  \item $G$ contains an expansion $D$ of \textsl{(a)} $\mathscr{D}^{(\mathsf{h}_1,0)}_k$ or \textsl{(b)} $\mathscr{D}^{(\mathsf{h}_2,\mathsf{c}_2)}_k,$ where outcome (b) is allowed if and only if $\mathsf{c}_2\neq 0,$ such that $\mathcal{T}_D\subseteq \mathcal{T}_W,$ or
  
  \item there exist integers $\mathsf{h},\mathsf{c}\geq  0$ and a surface $\Sigma=\Sigma^{(\mathsf{h},\mathsf{c})}$ where neither $\mathscr{D}^{(\mathsf{h}_1,0)}_i$ nor, in case $\mathsf{c}_2\neq0,$ $\mathscr{D}^{(\mathsf{h}_2,\mathsf{c}_2)}_i$ embeds for all $i\in\mathbb{N},$ such that there is a set $A\subseteq V(G)$ of size at most $\mu $ and a $\Sigma$-configuration $(W_1,W_0,\delta ,o_0,o_1,\Delta_0,\mathcal{C}_1,\mathcal{C}_2,\mathcal{P}_1,\mathcal{P}_2,\gamma)$ of $G-A$ with parameters $(r,M,s_2,M,M,\mathsf{h},\mathsf{c}).$

  Moreover, the vortex society of $o_1$ has a rendition $\rho_1$ of breadth at most $2t^2$ and depth at most $\mu$ such that
  \begin{itemize}
  \item for each vortex cell $c\in C(\rho_1)$ there exists a nest $\mathcal{C}_c$ of order $M$ in $\rho_1$ around the unique disk $\Delta\in\mathscr{D}$ corresponding to $c,$ and 
  \item for each vortex cell $c\in C(\rho_1)$ let $\Delta_c\subseteq\Sigma$ be the disk bounded by the trace of $C_M\in\mathcal{C}_c,$ then for each pair of distinct vortex cells $c,c'\in C(\rho_1)$ we have that $\Delta_c\cap \Delta_{c'}=\emptyset.$
\end{itemize}
  \end{enumerate}
Moreover, there exists a function $g\colon\mathbb{N}^4\to\mathbb{N}$ and an algorithm that finds one of these outcomes in time $g(k,\mathsf{h}_1,r)\cdot |V(G)|^2)$ where $g(k,\mathsf{h}_1,r)=O(r\cdot 2^{2^{\Ocal(\mathsf{h}_1)}\poly(k)}).$
\end{lemma}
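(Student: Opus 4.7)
The plan is to reduce to \cref{assemblies} and then classify the surface that comes out. First I would apply \cref{assemblies} to $G$ and $W$, using $t$ as the clique parameter, $r$ as given, and $s_2, M$ as in the hypothesis (which exceed the $s_2\ge 2t$, $M\ge t$ thresholds of \cref{assemblies}). Our choice of $R$ matches exactly the $R=49152t^{24}(r+2s_2)+\mu$ of \cref{assemblies}, so the proposition applies. If it yields a $K_t$-minor grasped by $W$, we are done with outcome 1. Otherwise, we obtain a set $A\subseteq V(G)$ of size at most $\mu$, integers $\mathsf{h},\mathsf{c}\ge 0$ with $\mathsf{h}+\mathsf{c}\le\binom{t+1}{2}$, a surface $\Sigma=\Sigma^{(\mathsf{h},\mathsf{c})}$, and a $\Sigma$-configuration of $G-A$ with parameters $(r,M,s_2,M,M,\mathsf{h},\mathsf{c})$, together with the claimed nest structure around the vortices of $c_1$. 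This directly provides the object described in outcome 3, \emph{apart from} the constraint that neither forbidden Dyck-grid embeds in $\Sigma$.

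Next I would dispatch the extracted $(\mathsf{h},\mathsf{c})$ by a case analysis on the Euler-genus and orientability of $\Sigma$. Set $g^\ast:=2\mathsf{h}+\mathsf{c}$ and note that $t=(162^{2g}+4)k$ meets the numerical hypotheses of \cref{pindarischen} whenever $M\ge (162^{2g^\ast}+4)t$ and $s_2\ge (162^{2g^\ast}+4)t$; these hold since $g^\ast\le g$. The case split is:
\begin{enumerate}
  \item If $\mathsf{c}=0$ and $\mathsf{h}\ge\mathsf{h}_1$, apply \cref{pindarischen} to obtain an expansion of the orientable Dyck-grid of genus $g^\ast\ge g_1$ grounded in $\delta$; by \cref{diskussionen} this expansion contains $\mathscr{D}^{(\mathsf{h}_1,0)}_k$ as a minor, giving outcome 2(a).
  \item If $\mathsf{c}_2\neq 0$, $\mathsf{c}\neq 0$, and $g^\ast\ge g_2$, apply \cref{pindarischen} to obtain an expansion of a non-orientable Dyck-grid $\mathscr{D}^{(\mathsf{h}+\mathsf{h}_0,\mathsf{c}_0)}_t$ grounded in $\delta$; again by \cref{diskussionen} this contains $\mathscr{D}^{(\mathsf{h}_2,\mathsf{c}_2)}_k$ as a minor (both Dyck-grids being non-orientable of comparable Euler-genus), giving outcome 2(b).
  \item Otherwise, $\Sigma$ is ``too small'' for both forbidden Dyck-grids in the sense required by outcome 3. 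Specifically, $\mathscr{D}^{(\mathsf{h}_1,0)}_i$ requires a surface containing $\Sigma^{(\mathsf{h}_1,0)}$, which rules out $\mathsf{c}=0,\mathsf{h}<\mathsf{h}_1$ as well as every non-orientable $\Sigma$ of Euler-genus $<2\mathsf{h}_1+1$; combined with the dual constraint for $\mathscr{D}^{(\mathsf{h}_2,\mathsf{c}_2)}_i$ (when $\mathsf{c}_2\neq 0$), the surviving $(\mathsf{h},\mathsf{c})$ in this branch are precisely those in which neither forbidden Dyck-grid embeds for any order $i$, using the Classification of Surfaces (\cref{diskussionen}). Here we output outcome 3 directly from the $\Sigma$-configuration produced by \cref{assemblies}.
\end{enumerate}

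For the tangle condition $\mathcal{T}_D\subseteq\mathcal{T}_W$ in outcome 2, I would use that the Dyck-grid expansion produced by \cref{pindarischen} is grounded in the decomposition $\delta$, and $\delta$ (modulo the small apex set $A$) is built around the wall $W_0\subseteq W$ in a $W$-central manner by \cref{assemblies}. Consequently every column/row of the extracted expansion lies on the $W$-majority side of every separation of order less than the order of $D$, so the tangle of $D$ is a truncation of $\mathcal{T}_W$. The algorithmic statement follows by plugging in the algorithmic form of \cref{assemblies}, which runs in $g(t,r,s_2)|V(G)|^2$ time with $g(t,r,s_2)\in 2^{\mathrm{poly}(t+r+s_2)}$; substituting $t=(162^{2g}+4)k$ and $g\le 2\mathsf{h}_1+2$ yields the claimed $O(r\cdot 2^{2^{\mathcal{O}(\mathsf{h}_1)}\mathrm{poly}(k)})$ factor, the $r$ arising from the Dyck-grid extraction step.

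The main obstacle is the third branch of the case analysis: verifying that when neither of the first two conditions holds, the extracted $\Sigma$ genuinely prevents both forbidden Dyck-grids from embedding \emph{for every order $i$}. This requires tracking orientability and Euler-genus carefully (handling the orientable-into-non-orientable and vice versa containments via \cref{diskussionen} and \cref{miserliness}) and ensuring the thresholds $\mathsf{h}\ge\mathsf{h}_1$ and $g^\ast\ge g_2$ are exactly the right boundary between the extractable and the non-extractable regimes.
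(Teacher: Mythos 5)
Your overall architecture matches the paper's: invoke \cref{assemblies}, extract a Dyck grid via \cref{pindarischen}, and then classify the extracted surface $\Sigma^{(\mathsf{h},\mathsf{c})}$ to decide which outcome to return. But the case split you set up has a genuine gap.

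Your case (a) fires only when $\mathsf{c}=0$ and $\mathsf{h}\ge\mathsf{h}_1$, i.e.\ only when $\Sigma$ is \emph{orientable} of genus at least $\mathsf{h}_1$. Your case (b) fires only when $\mathsf{c}_2\neq 0$. Now suppose $\mathsf{c}_2 = 0$ (so $\mathscr{D}^{(\mathsf{h}_2,\mathsf{c}_2)}$ is not a forbidden target) and \cref{assemblies} hands you a \emph{non-orientable} surface, say $\mathsf{c}\neq 0$ with $2\mathsf{h}+\mathsf{c}\ge 2\mathsf{h}_1+1$. Neither (a) nor (b) fires, so you default to outcome~3. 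But outcome~3 is false here: a non-orientable surface of Euler genus at least $2\mathsf{h}_1+1$ embeds the orientable surface $\Sigma^{(\mathsf{h}_1,0)}$ (minus a disk) as a subsurface, hence embeds $\mathscr{D}^{(\mathsf{h}_1,0)}_i$ for every $i$. You even acknowledge this fact in your own discussion ("rules out\ldots\ every non-orientable $\Sigma$ of Euler-genus $<2\mathsf{h}_1+1$"), but your three cases do not account for it. In this regime the correct behaviour is to return outcome~2(a), extracting $\mathscr{D}^{(\mathsf{h}_1,0)}_k$ from the non-orientable Dyck grid.

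The paper's proof avoids this by first normalizing $(\mathsf{h},\mathsf{c})$ to the Dyck canonical form $(\mathsf{h}',\mathsf{c}')$ with $\mathsf{c}'\in\{0,1,2\}$ and $2\mathsf{h}'+\mathsf{c}'=2\mathsf{h}+\mathsf{c}$, and then testing $\mathsf{h}'\ge\mathsf{h}_1$ rather than your pair of conditions. Since $\mathscr{D}^{(\mathsf{h}',\mathsf{c}')}_k$ contains $\mathscr{D}^{(\mathsf{h}_1,0)}_k$ as a minor as soon as $\mathsf{h}'\ge\mathsf{h}_1$ (simply by discarding crosscap transactions and surplus handle transactions), this single test handles orientable and non-orientable $\Sigma$ uniformly. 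Only afterwards, having ruled out $\mathsf{h}'\ge\mathsf{h}_1$, do they analyse $\mathsf{c}'$ vs.\ $\mathsf{c}_2$ (including the subtle parity case $\mathsf{c}'=1$, $\mathsf{c}_2=2$ where one more application of \cref{classification} is needed). Replacing your condition in case (a) with the canonicalized test $\mathsf{h}'\ge\mathsf{h}_1$ and reordering your cases would close the gap. A secondary remark: your justification for $\mathcal{T}_D\subseteq\mathcal{T}_W$ appeals to $\delta$ being ``$W$-central,'' which is not among the guarantees of \cref{assemblies}; the paper's own proof glosses over this as well, so this is a soft spot rather than a fatal flaw, but it should not be presented as following from the cited proposition.
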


\begin{proof}
We use \cref{assemblies} to either obtain a $K_t$-minor grasped by $W$ or a set $A\subseteq V(G)$ of size at most $\mu $ together with integers $\mathsf{h},\mathsf{c}\leq  \Choose{t+1}{2},$ a surface $\Sigma$ of Euler-genus $2\mathsf{h}+\mathsf{c},$ and a $\Sigma$-configuration $(W_1,W_2,\delta ,o_0,o_1,\Delta_0,\mathcal{C}_1,\mathcal{C}_2,\gamma)$ of $G-A$ with parameters $(r,M,s_2,M,M,\mathsf{h},\mathsf{c})$ such that the vortex society of $c_1$ in $\delta $ has depth at most $\mu $ and breadth at most $2t^2$ together with the desired properties for the nests.

In case we find the $K_t$-minor, we are done immediately.
\medskip

In case $\mathsf{c}=2n\neq 0,$ let $\mathsf{h}'\coloneqq \mathsf{h}+n-1$ and $\mathsf{c}'\coloneqq 2,$ similarly, if $\mathsf{c}=0$ we set $\mathsf{h}'\coloneqq \mathsf{h}$ and $\mathsf{c}'\coloneqq \mathsf{c}=0.$
Moreover, if $\mathsf{c}=2n+1,$ let $\mathsf{h}'\coloneqq \mathsf{h}+n$ and $\mathsf{c}'\coloneqq 1.$
By \cref{pindarischen} we find $D\coloneqq\mathscr{D}^{(\mathsf{h}',\mathsf{c}')}_{k}$ as a minor of $G.$

Observe that, by \cref{territorial2}, if $K_t$ embeds in $\Sigma^{(\mathsf{h},\mathsf{c})},$ then we find our $K_t$-minor grasped by $W$ within $D.$
Moreover, if $\mathscr{D}^{(\mathsf{h}_1,0)}_k$ embeds in $\Sigma^{(\mathsf{h},\mathsf{c})},$ then $D$ contains $\mathscr{D}^{(\mathsf{h}_1,0)}_k$ as a minor in a way that lets their tangles agree.
So the case where $\mathsf{c}'\neq 0$ but $\mathsf{c}_2=0$ is closed.

What follows is an analysis on the numbers $\mathsf{h}', \mathsf{c}',\mathsf{c},$ and $\mathsf{h}$ in relation to $g_1=2\mathsf{h}_1$ and $g_2=2\mathsf{h}_2+\mathsf{c}_2.$

Suppose $\mathsf{h}'\geq  \mathsf{h}_1,$ then $D$ contains $\mathscr{D}_k^{(\mathsf{h}_1,0)}$ as a minor, and we are done.
So we may assume $\mathsf{h}'<\mathsf{h}_1$ in the following.
This settles the cases where $\mathsf{c}=0$ or $\mathsf{c}_2=0$ since both situations imply that $2\mathsf{h}'+\mathsf{c}' \leq 2\mathsf{h} < g_1.$

From now on we may assume $\mathsf{c},\mathsf{c}_2\neq 0$ which implies that $\mathsf{c}',\mathsf{c}_2\in [2].$

Let us first assume that $\mathsf{c}'=1$ and $2\mathsf{h}'+\mathsf{c}'\geq  2\mathsf{h}_2+\mathsf{c}_2.$
If $\mathsf{c}_2=1$ we immediately obtain $\mathsf{h}'\geq  \mathsf{h}_2.$
Hence, we may find $\mathscr{D}^{(\mathsf{h}_2,\mathsf{c}_2)}_k$ as a minor in $D$ as desired.
So we may further assume that $\mathsf{c}_2=2.$
Since the two sums have different parity, we obtain $\mathsf{h}'>\mathsf{h}_2.$
We now use \cref{classification} to obtain the surface grid $D'$ of order $k$ with $\mathsf{h}'-1$ handles and $\mathsf{c}'+2$ crosscaps as a minor {within} $D.$ 
As $\mathsf{h}'-1\geq  \mathsf{h}_2$ and certainly $\mathsf{c}'+2>\mathsf{c}_2$ we find $\mathscr{D}^{(\mathsf{h}_2,\mathsf{c}_2)}_k$ as a minor in $D'$ as desired.

For the final case we assume $\mathsf{c}'=2$ and again $2\mathsf{h}'+\mathsf{c}'\geq  2\mathsf{h}_2+\mathsf{c}_2.$
Since this means $\mathsf{c}'\geq 2 \geq \mathsf{c}_2\geq 1$ we also obtain $\mathsf{h}'\geq \mathsf{h}_2$ which immediately implies the presence of a $\mathscr{D}^{(\mathsf{h}_2,\mathsf{c}_2)}_k$-minor in $D$ as desired.\label{o6yut7}\ans{Removed a sentence about centrality of $W$ which is not required by the statement.}
\end{proof}

\subsection{The proof of \cref{transposition}}
With \cref{intermingling} at hand our proof of \cref{transposition} becomes an analogue of the proof for Theorem 2.11 from \cite{KawarabayashiTW20Quicklyexcluding}.
However, we split the proof in half to make an intermediate structure accessible for future applications.
The reason for this is that we require our final $\Sigma$-decomposition to be $W$-central with respect to a large wall $W.$
The way $W$-centrality is achieved in \cite{KawarabayashiTW20Quicklyexcluding} is by removing additional vertex sets from the vortices which potentially damages the surrounding nests.
Some applications might need access to the nests around vortices after the fact, and thus we include an intermediate step here.

Moreover, we will need the following seminal result of Grohe.
We say that a graph $G$ is \emph{quasi-$4$-connected} if for every separation $(A_1,A_2)$ of order at most three in $G$ there exists a unique $i\in[2]$ such that $|A_i\setminus A_{3-i}|\leq  1.$

\begin{proposition}[\!\!\cite{grohe2016quasi}]\label{enthroning}
Every graph $G$ has a tree-decomposition $(T,\beta)$ of adhesion at most three such that for all $t\in V(T)$ the torso $G_t$ of $G$ at $t$ is a minor of $G$ that is either quasi-$4$-connected or a complete graph of order at most four.
\end{proposition}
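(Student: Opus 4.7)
The plan is to construct the tree-decomposition by a recursive splitting process, dividing $G$ along small separations until every remaining piece is either quasi-$4$-connected or has at most four vertices. I would proceed by induction on $|V(G)|$. If $G$ is quasi-$4$-connected, or if $|V(G)| \leq 4$, return the trivial one-node tree-decomposition with bag $V(G)$; this is already of the required form, since the unique torso coincides with $G$ itself.

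Otherwise, by failure of quasi-$4$-connectivity, there exists a separation $(A, B)$ of $G$ of order at most three with $\min\{|A \setminus B|,\, |B \setminus A|\} \geq 2$. I would choose such a separation minimizing $|A \cap B|$. Set $S \coloneqq A \cap B$, and recursively decompose the two graphs $G_A$ and $G_B$ obtained from $G[A]$ and $G[B]$ respectively by adding all edges on $S$, so that $S$ becomes a clique in each. Since $S$ is a clique in both $G_A$ and $G_B$, every tree-decomposition of either graph (satisfying the usual connectivity axiom) contains a bag including $S$; glue the two inductively obtained decompositions by adding an edge between any two such bags. The resulting tree-decomposition of $G$ has adhesion at most three by construction.

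The main obstacle is showing that each torso in the final decomposition is a minor of $G$. Concretely, when we recurse on $G_A$, any torso appearing in its sub-decomposition is, by induction, a minor of $G_A$; we need to lift this to a minor of $G$. The only edges of $G_A$ not present in $G$ are those of the triangle on $S$, so it suffices to realize that triangle as a minor using $G[B]$. The minimality of the chosen separation implies, via a Menger-type argument inside $G[B]$, the existence of $|S|$ internally disjoint connected subgraphs meeting all of $S$ whose contraction yields the missing edges; this is the classical realizability argument for torsos, and it must be tracked carefully through the recursion.

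For termination, the condition $\min\{|A \setminus B|, |B \setminus A|\} \geq 2$ forces both $|V(G_A)|$ and $|V(G_B)|$ to be strictly smaller than $|V(G)|$, so the induction goes through. The remaining point, and the technical heart of the proof, is that at every non-leaf node the resulting torso is actually quasi-$4$-connected rather than merely non-trivial; this relies on the careful choice of separation at each step. The cleanest route is via the theory of $4$-tangles together with the Robertson--Seymour tangle-tree theorem, which directly produces a canonical tree-decomposition whose torsos are indexed by the maximal $4$-tangles of $G$ and inherit their high connectivity; the small-torso case then corresponds to pieces of $G$ supporting no $4$-tangle, which must embed in $K_4$ or smaller.
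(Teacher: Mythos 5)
This proposition is not proved in the paper; it is cited directly from Grohe \cite{grohe2016quasi}, where the argument runs to roughly thirty pages, so there is no ``paper's proof'' to compare against. Evaluated on its own, your sketch identifies the right obstacles but does not overcome them. The torso-realizability step is the first genuine gap: in your recursion $G_A$ is $G[A]$ together with a clique on $S=A\cap B$, and to conclude that the eventual torsos are minors of $G$ you need $G_A$ itself to be a minor of $G$. Minimizing $|A\cap B|$ does not deliver this; for the Menger-style argument you invoke one would need, roughly, $|S|$ pairwise-disjoint connected subgraphs of $G[B]$ each adjacent to all of $S$, and your hypothesis $\min\{|A\setminus B|,|B\setminus A|\}\geq 2$ does not supply them (it permits $|B\setminus A|=2$, in which case three such disjoint nonempty pieces cannot exist). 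Maintaining a realizability/linkedness invariant of the separations through the recursion is in fact one of the core technical difficulties in Grohe's argument, not a routine Menger application.

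The second gap is the pivot to the tangle--tree theorem. The tree decomposition distinguishing the tangles of order four does \emph{not} automatically have quasi-$4$-connected torsos --- Grohe uses it only as a starting point and then performs a lengthy case analysis on how $3$-separations of a $3$-connected graph interact in order to repair the decomposition. Moreover, your final claim that a part supporting no $4$-tangle ``must embed in $K_4$ or smaller'' is false: such a part merely has branchwidth below four (equivalently, treewidth at most three) and can be arbitrarily large; obtaining bags of size at most four from there is a further, separate step that you do not address. So your strategy points in the same general direction as Grohe's, but as written neither the greedy recursion nor the tangle appeal constitutes a proof.
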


We call a graph $H$ that appears as the torso of a node in a tree-decomposition of $G,$ as in \cref{enthroning}, a \emph{quasi-$4$-connected component} of $G.$

We now provide the following consequence of \cref{intermingling} that reveals 
several features of the local structure of a graph where a Dyck-grid is excluded. 
\cref{efficiency} is of independent interest and may serve as the departure point of future developments.

\begin{lemma}\label{efficiency}
  Let $\mathsf{h}_1,\mathsf{h}_2,\mathsf{c}_2$ be non-negative integers where $\mathsf{c}_2\in[0,2]$ and $0\leq  \mathsf{h}_2<\mathsf{h}_1.$  We set 
 \begin{eqnarray*} 
g_1 & \coloneqq & 2\mathsf{h}_1,\\ 
g_2 & \coloneqq & 
\begin{dcases*}
2\mathsf{h}_2+\mathsf{c}_2
 & if $\mathsf{c}_2\neq 0$\,, \\[1ex]
0
 &otherwise\,  
\end{dcases*}, and\\
g & \coloneqq & \max\{ g_1,g_2\}.
\end{eqnarray*}
Let  also $t\geq 1,$ $k,$ $r,$ and $q$ be non-negative integers, where, $r\geq  16\cdot (162^{2g}+4)k$ and $q\geq \max\{k,t\},$ and set 
  \begin{eqnarray*}
  M& \coloneqq &  2(162^{2g}+4)q,\\
 \mu & \coloneqq & \left\lfloor \frac{1}{10^5t ^{26}}t^{10^7t^{26}}\cdot M\right\rfloor, \mbox{~and~}\\
 R & \coloneqq & 49152\cdot t^{24}(r+4M)+\mu.
  \end{eqnarray*}

\noindent  If a graph $G$ contains an $R$-wall $W,$
  then either, 
  \begin{enumerate}
  \item $H$ contains a $K_t$-minor grasped by $W,$
  \item $G$ contains an expansion $D$ of \textsl{(a)} $\mathscr{D}^{(\mathsf{h}_1,0)}_k$ or \textsl{(b)} $\mathscr{D}^{(\mathsf{h}_2,\mathsf{c}_2)}_k,$ where outcome (b) is allowed if and only if $\mathsf{c}_2\neq 0,$  such that $\mathcal{T}_D\subseteq \mathcal{T}_W,$ or
  \item there exist 
  \begin{itemize}
  \item a surface $\Sigma\coloneqq\Sigma^{(\mathsf{h},\mathsf{c})}$ where $2\mathsf{h} < 2\mathsf{h}_1$ in case $c=0,$ and $2\mathsf{h}+\mathsf{c} < 2\mathsf{h}_2+\mathsf{c}_2$ if $\mathsf{c}_2\neq 0.$
  \item a set $A\subseteq V(G)$ of size at most $\mu,$ and 
  \item a $\Sigma$-decomposition $\delta =(\Gamma,\mathscr{D})$ of $G-A$ with the following properties:
  \begin{enumerate}
  \item $\delta $ is of breadth at most $2t^2$ and depth at most $\mu,$
  \item if we define $\mathsf{c}_0$ and $\mathsf{h}_{0}$ as follows: 
  \begin{itemize}
 \item if $\mathsf{c}=0,$ let $\mathsf{c}_0\coloneqq 0$ and $\mathsf{h}_{0}\coloneqq \mathsf{h},$
 \item  if $\mathsf{c}>0$ and $\mathsf{c}+2\mathsf{h}$ is even, let $\mathsf{c}_0\coloneqq 2$ and $\mathsf{h}_{0}\coloneqq \frac{\mathsf{c}+2\mathsf{h}}{2}-1,$ and 
 \item otherwise, let $\mathsf{c}_0\coloneqq 1$ and $\mathsf{h}_{0}\coloneqq \frac{\mathsf{c}+2\mathsf{h}-1}{2},$ 
 \end{itemize}
 then there exists an $(\mathsf{h}_{0},\mathsf{c}_0;q)$-Dyck-wall $D$ in $G-A$ such that $D$ is grounded in $\delta$ and $\mathcal{T}_D\subseteq \mathcal{T}_W$
  \item there exists an $r$-subwall $W'$ of $W$ such that $W'$ is flat in $\delta,$ and $W'$ is drawn in the interior of the disk defined by the trace of the boundary of the simple face of $D,$
  \item for each vortex cell $v\in C(\delta)$ there exists a nest $\mathcal{C}_v$ of order $M$ in $\delta $ around the unique disk $\Delta\in\mathscr{D}$ corresponding to $v,$
  \item for each vortex cell $v\in C(\delta )$ let $\Delta_v\subseteq\Sigma$ be the disk bounded by the trace of $C_M\in\mathcal{C}_v,$ then for each pair of distinct vortex cells $v,v'\in C(\delta )$ we have that $\Delta_v\cap \Delta_{v'}=\emptyset,$
  \item let $\widetilde{\Delta}$ be the disk in $\Sigma$ which is bounded by the trace of the exceptional face of $D$ and does not contain any vertex of $W',$ for every every vortex $v \in C(\delta )$ we have $\Delta_v\subseteq \widetilde{\Delta},$
  \item let $X$ be the set of all vertices of $G$ drawn in the interior of cells of $\delta $ and let $G_{\delta }$ be the torso of the set $V(G)\setminus(X\cup A)$ in $G-A,$ then $G_{\delta }$ is a quasi-$4$-connected component of $G-A,$ and
  \item finally if holds that $\mathcal{T}_{W'}\subseteq \mathcal{T}_W$ and $\mathcal{T}_D\subseteq \mathcal{T}_W.$
  \end{enumerate}
  \end{itemize}
  \end{enumerate}
\end{lemma}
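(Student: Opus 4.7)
The plan is to apply \cref{intermingling} as the main workhorse and then to post-process its third outcome to obtain the additional decorations required here. First I would check that the choices of $R$, $M$ (with $s_2\coloneqq 2M$), and $\mu$ satisfy the hypotheses of \cref{intermingling}. The three outcomes of \cref{intermingling} essentially match the three outcomes of our statement: outcome (1) transfers directly, outcome (2) transfers directly with the tangle containment $\mathcal{T}_D\subseteq\mathcal{T}_W$, and the remaining work concerns outcome (3). There we obtain an apex set $A$ of size at most $\mu$, integers $\mathsf{h},\mathsf{c}$ with $2\mathsf{h}<2\mathsf{h}_1$ if $\mathsf{c}=0$ and $2\mathsf{h}+\mathsf{c}<2\mathsf{h}_2+\mathsf{c}_2$ if $\mathsf{c}_2\neq 0$ (these inequalities are forced by the fact that otherwise \cref{pindarischen} together with \cref{diskussionen} would produce the excluded Dyck-grid inside the configuration, pushing us into outcome (2)), a surface $\Sigma=\Sigma^{(\mathsf{h},\mathsf{c})}$, and a $\Sigma$-configuration $\Xi=(W_1,W_0,\delta,o_0,o_1,\Delta_0,\mathcal{C}_1,\mathcal{C}_2,\mathcal{P}_1,\mathcal{P}_2,\gamma)$ of $G-A$ with parameters $(r,M,2M,M,M,\mathsf{h},\mathsf{c})$, including the rendition $\rho_1$ of the exceptional vortex society with the pairwise disjoint order-$M$ nests around each of its at most $2t^{2}$ vortices.

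Next I would extract the Dyck-wall $D$ by invoking \cref{pindarischen} on $\Xi$ with parameter $2q$ in place of $t$ and with the specific values $\mathsf{h}_0,\mathsf{c}_0$ prescribed in the statement. This yields a $\mathscr{D}^{(\mathsf{h}_0,\mathsf{c}_0)}_{2q}$-expansion grounded in $\delta$, inside which a $(\mathsf{h}_0,\mathsf{c}_0;q)$-Dyck-wall appears as a subgraph by the same contraction pattern used to define elementary Dyck-walls. By construction in \cref{pindarischen}, the wall $D$ is built from the outer cycles of $\mathcal{C}_2$ together with the transactions $\mathcal{L}_1,\dots,\mathcal{L}_{\mathsf{h}+\mathsf{c}}$ and (portions of) $\mathcal{P}_1$; hence it lives entirely in the complementary society of $o_0$. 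The simple face of $D$ therefore contains $o_0$ and, in particular, the subwall $W_0$; that gives property (c), taking $W'$ to be $W_0$ (an $r$-wall since $r\geq 16t$). Property (f) follows because the exceptional face of $D$ contains $o_1$, and $o_1$ in turn contains every vortex of $\rho_1$ together with the prescribed nest $\mathcal{C}_v$. Properties (a), (d), (e) are inherited verbatim from \cref{intermingling}.

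To obtain property (g)---quasi-$4$-connectedness of the torso $G_\delta$---I would apply \cref{enthroning} to $G-A$ and use the tangle $\mathcal{T}_W$ (restricted to $G-A$) to select a unique node $t^{\star}$ of the Grohe tree-decomposition whose torso is quasi-$4$-connected and to which $\mathcal{T}_W$ points. Each separation of $G-A$ of order at most $3$ that does not already respect $\delta$ has its $\mathcal{T}_W$-small side avoiding both the wall and (because $R$ is large) the Dyck-wall and its nests; such a small side can be absorbed into an existing non-vortex cell of $\delta$, at the cost only of enlarging $\sigma(c)$ for a single planar cell $c$ while preserving the embedding, the breadth bound $2t^2$, the depth bound $\mu$, the nests, the flat subwall $W'$ and the Dyck-wall $D$. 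After exhausting all such absorptions, the resulting $\Sigma$-decomposition is a decomposition of the torso $G_\delta$ at $t^{\star}$. Finally, property (h), $\mathcal{T}_{W'}\subseteq\mathcal{T}_W$ and $\mathcal{T}_D\subseteq\mathcal{T}_W$, follows from the facts that $W'=W_0$ is a subwall of a subwall of $W$ (so every $W'$-majority separation is a $W$-majority separation) and that $D$ and $W$ share the same ``core direction'' through the orthogonality of $(\mathcal{P}_1,\mathcal{C}_1)$ with $W_1$, guaranteed by \cref{assemblies}.

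The main obstacle I anticipate is the interplay between the Grohe quasi-$4$-connected reduction and the geometric data of the $\Sigma$-configuration. One must ensure that collapsing adhesion-$3$ separations into cells of $\delta$ does not merge two vortex nests, does not cross the trace of any cycle of $\mathcal{C}_1\cup\mathcal{C}_2$, and does not damage the handle/crosscap transactions that realise $D$. The resolution relies on the order-$M$ nests chosen large enough (via $M\geq 2(162^{2g}+4)q$) so that each critical cycle of $\mathcal{C}_1\cup\mathcal{C}_2$ has many pairwise vertex-disjoint paths crossing it on the $\mathcal{T}_W$-majority side; a separation of order at most $3$ cannot separate the wall from any such cycle, so its small side is confined to the interior of an already existing cell of $\delta$. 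This confines all absorption to strictly local modifications and completes the verification of all properties (a)--(h) of outcome (3).
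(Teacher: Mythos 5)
Your overall architecture matches the paper's: invoke \cref{intermingling} with $s_2\coloneqq 2M$, pass outcomes (1) and (2) through directly, and in the third outcome extract the Dyck-wall via \cref{pindarischen} (with order $2q$, then pass to a subdivision) and afterwards refine the $\Sigma$-decomposition by absorbing order-$\le 3$ separations into cells, finally invoking \cref{enthroning} to certify quasi-$4$-connectedness of the torso. The verification of properties (a)--(f) and (h) is also along the same lines.

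The one place where your reasoning is not quite right is in the quasi-$4$-connected step. You assert that the $\mathcal{T}_W$-small side of an order-$\le 3$ separation is ``confined to the interior of an already existing cell of $\delta$,'' justified by the many disjoint paths crossing the nest cycles. That last claim (the wall cannot be separated from a nest cycle by three vertices) does show that the small side avoids the Dyck-wall and the nests, but it does \emph{not} show that the small side lies inside a single cell: a small side can perfectly well straddle several adjacent planar cells, for instance across a shared $2$- or $3$-node boundary. The paper handles this by \emph{merging} all cells $C_{(Y_1,Y_2)}$ that the small side touches into one new cell $c_{(Y_1,Y_2)}$, after first ensuring (via submodularity and by treating orders $1$, $2$, $3$ in turn, and by taking $(Y_1,Y_2)$ uncrossed with the cell-induced separations) that the cells being merged are ``laminar'' with respect to $(Y_1,Y_2)$. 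If you insist on placing the small side inside a single existing cell, the absorption step will not terminate with a quasi-$4$-connected torso, because the same separation will remain a nontrivial order-$\le 3$ cut of the would-be torso. So you should replace ``absorbed into an existing non-vortex cell'' by the merge construction, and add the uncrossing/ordering caveats to make the iteration well-defined.

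Two smaller remarks: first, you choose to apply \cref{enthroning} \emph{before} the cell absorption and argue the modified $\delta$ matches the selected torso $t^{\star}$; the paper instead performs all absorptions first and then uses \cref{enthroning} only to observe that the final ground-vertex set $S$ must equal a bag of some Grohe decomposition. Your ordering is workable but requires the extra step of checking that $t^\star$ is indeed the bag produced by the absorption, so the paper's ordering is cleaner. Second, for the tangle containment $\mathcal{T}_{W'}\subseteq\mathcal{T}_W$ and $\mathcal{T}_D\subseteq\mathcal{T}_W$ in (h), you should make explicit (as the paper does) that it is precisely $r,q<M$ and the overlay of $(\mathcal{P}_1\cup\mathcal{P}_2,\mathcal{C}_2)$ onto $W_1$ that force the $W$-majority side of any small separation to coincide with the $W_0$- and $D$-majority sides.
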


\begin{proof}
By our choice of parameters we may immediately call upon \cref{intermingling}.
The first possible outcome of \cref{intermingling} yields a $K_t$-minor grasped by $W$ as desired.
The second outcome of \cref{intermingling} provides us with a minor model of $\mathscr{D}^{(\mathsf{h}_1,0)}_k$ or, in case ${\sf c}_2\neq 0,$ of $\mathscr{D}^{(\mathsf{h}_2,\mathsf{c}_2)}_k.$

The third outcome provides us with integers $\mathsf{h},\mathsf{c}\geq  0$ and s surface $\Sigma=\Sigma^{(\mathsf{h},\mathsf{c})}$ where none of the graphs $K_t,$ $\mathscr{D}^{(\mathsf{h}_1,0)}_k,$ and, if ${\sf c}_2\neq 0,$ $\mathscr{D}^{(\mathsf{h}_2,\mathsf{c}_2)}_k$ embeds.
Moreover, we obtain a set $A\subseteq V(G)$ of size at most $\mu $ and a $\Sigma$-configuration $(W_1,W_0,\delta ,o_0,o_1,\Delta_0,\mathcal{C}_1,\mathcal{C}_2,\mathcal{P}_1,\mathcal{P}_2,\gamma)$ of $G-A$ with parameters $(r,M,2M,M,M,\mathsf{h},\mathsf{c}).$

Additionally, we have that the vortex society of $o_1$ has a rendition $\rho_1=(\Gamma_1,\mathscr{D}_1)$ of breadth at most $2t^2$ and depth at most $\mu $ such that
\begin{itemize}
  \item for each vortex cell $c\in C(\rho_1)$ there exists a nest $\mathcal{C}_c$ of order $M$ in $\rho_1$ around the unique disk $\Delta\in\mathscr{D}$ corresponding to $c,$ and 
  \item for each vortex cell $c\in C(\rho_1)$ let $\Delta_c\subseteq\Sigma$ be the disk bounded by the trace of $C_M\in\mathcal{C}_c,$ then for each pair of distinct vortex cells $c,c'\in C(\rho_1)$ we have that $\Delta_c\cap \Delta_{c'}=\emptyset.$
\end{itemize}

We extend $\delta $ to a $\Sigma$-decomposition $\delta '=(\Gamma',\mathscr{D})$ of $G-A$ by fixing a vortex free rendition of the vortex society of $o_0$ such that the wall $W_0$ is flat in the rendition and fixing a rendition of breath at most $2t^2$ and depth at most $\mu $ of the vortex society of $o_1.$
Thus, $\delta '$ has breath at most $2t^2$ and depth at most $\mu .$

Next, \cref{pindarischen} provides us with a minor-model $D'$ of $\mathscr{D}^{(\mathsf{h}_{0},\mathsf{c}_0)}_{2q}$ which is grounded in $\delta '.$
Notice that $D'$ contains an $(\mathsf{h}_{0},\mathsf{c}_0;q)$-Dyck-wall $D$ as a subgraph which still is grounded in $\delta '.$
In particular, $D'$ is also grounded in $\delta ,$ implying that $D$ is as well.
From the construction of $D'$ from \red{the mixed surface} grid in \cref{pindarischen} it follows that $o_0$ sits in the disk bounded by the trace of the simple face of $D$ while $o_1$ belongs to the disk bounded by the exceptional face of $D.$

We now start to further modify $\delta '$ by iterating the following operation exhaustively.
Let $(Y_1,Y_2)$ be a separation of order at most three in $G-A$ such that $Y_1\cap Y_2$ consists of nodes of $\delta '$ and assume that $(Y_1,Y_2)$ does not \textit{cross} any separation induced by a non-vortex cell of $\delta'.$
That is, we assume that there does not exist a non-vertex cell $x\in C(\delta')$ such that $$V(\sigma'(x))\cap (Y_1\setminus Y_2)\neq\emptyset \text{~and~} V(\sigma'(x))\cap (Y_2\setminus Y_1)\neq\emptyset.$$
By the submodularity of separations we may always find such a separation $(Y_1,Y_2)$ if some separation of order at most three with only nodes in its separator exists.
We start with separations of order one, then consider separations of order two, and finally separations of order three.
Without loss of generality we may assume that $Y_1$ contains a row and a column of $W_0.$
Moreover, we assume that $Y_2$ is maximal with respect to set containment.
This implies that $Y_1\setminus Y_2$ is connected.
Indeed, it follows from the definition of $\Sigma$-configurations and the $3$-connectivity of Dyck-grids that also at most one branch vertex of $D$ may be contained in $Y_2\setminus Y_1.$
Now suppose $Y_2\setminus Y_1$ contains a node of $\delta '.$
Let $C_{(Y_1,Y_2)}$ be the collection of all cells $x$ of $\delta '$ such that $V(\Sigma'(x))\cap (Y_2\setminus Y_1)\neq\emptyset.$
It follows that $V(\sigma'(x))\subseteq Y_2$ for all $x\in C_{(Y_1,Y_2)}$ by our choice of $(Y_1,Y_2).$
We may now define a new cell $c_{(Y_1,Y_2)}$ replacing all cells in $C_{(Y_1,Y_2)}$ as follows.
Let $\Delta_{(Y_1,Y_2)}$ be the disk which is the closure of $\bigcup_{c\in C_{(Y_1,Y_2)}}c.$
Now let $c_{(Y_1,Y_2)}$ be the cell obtained by removing the points $\pi'^{-1}(y),$ $y\in Y_1\cap Y_2,$ from $\Delta_{(Y_1,Y_2)},$ and notice that all vertices of $Y_2\setminus Y_1$ are drawn into the interior of $c_{(Y_1,Y_2)}.$
Then forget all cells in $C_{(Y_1,Y_2)}.$
Let $\delta ''=(\Gamma'',\mathscr{D}'')$ be the $\Sigma$-decomposition obtained once the above operation cannot be applied any more.
Notice that $D$ is still grounded in $\delta ''$ and so is $W_0.$
The same is true for all cycles from the nests of the vortices of $\delta ''.$
This implies that the torso $G_{\delta''}$ contains a Dyck-wall $D''$ of the same order as $D$ which can be obtained by contracting some edges of $D$ and is quasi-$4$-connected.

If follows that the set $S$ of vertices of $G-A$ corresponding to the nodes of $\delta ''$ cannot be split further by separations of order at most three and $S$ is minimal with this property.
Hence, by applying \cref{enthroning} there must exist a bag of the resulting tree-decomposition whose vertex set is exactly $S.$

So far we can observe that $\delta ''$ satisfies condition a) since it is of breadth at most $2t^2$ and depth at most $\mu .$
It also holds that $D$ is grounded in $\delta ''$ and none of its vertices are drawn in $\Delta_c$ for any vortex of $\delta '',$ hence property b) is satisfied.
The wall $W_0$ is an $r$-subwall of $W$ which is drawn in the disk bounded by the trace of the simple face of $D$ and rooted in $\delta ''$ which implies property c).
Moreover, every vortex of $\delta ''$ is contained in the disk bounded by the exceptional face of $D$ as required by d) and all vortices are equipped with nests of order $M$ which is property e).

These nests are pairwise disjoint and belong to their private disks, hence f) is satisfied.
Finally, the construction above ensures that $G_{\delta ''}$ is quasi-$4$-connected and so we also get property g).

So we are left with discussing the tangles.
For this observe that, by the definition of $\Sigma$-configurations, $W_0$ is a subwall of $W_1,$ $W_1$ is a subwall of $W,$ and $(\mathcal{P}_1\cup\mathcal{P}_2,\mathcal{C}_2)$ overlays $W_1.$
Recall that $D$ consists entirely of cycles from $\mathcal{C}_2$ and the paths from $\mathcal{P}_1$ and $\mathcal{P}_2.$
Since $r,q< M$ it follows that the $W$-majority side of any separation of order less than $r$ must also be the $W_0$-majority side and the $W$-majority side of any separation of order less than $q$ must also be the $D$-majority side.
Hence, also property h) holds as desired and our proof is complete.
\end{proof}

An advantage of \cref{efficiency} over \cref{transposition} is that here the parameter $M$ is freely choosable, as long as it is at least  $t.$
This means that one may increase the order of the Dyck-wall and the nests around the vortices arbitrarily and independent of the excluded minors.
We continue with the final step in the proof of \cref{transposition}.

\paragraph{Deriving \cref{thm_GMST3}.}
With regards to improving the bound on the Euler-genus from \cite{KawarabayashiTW20Quicklyexcluding} let us fix some graph $H.$
We set $g_1$ to be the smallest Euler genus of an orientable surface where $H$ embeds and $g_2$ to be the smallest Euler genus of a non-orientable surface where $H$ embeds. 
We then set $t\coloneqq |V(H)|.$
Then, one needs to set $k$ to be $\Theta\big( 162^{2\max\{ g_1,g_2\}} (\max\{g_1,g_2 \})+t^2 \big).$
This ensures that \cref{territorial2} witnesses the presence of an $H$-minor, even if the surface resulting from \cref{assemblies} has an Euler genus below the bound guaranteed there.
This concludes the proof of \cref{thm_GMST3}.
\medskip

The proof of \cref{efficiency} can be turned into an algorithm
that given a graph $G,$ $k,\mathsf{h}_1,\mathsf{h}_{2},\mathsf{c}_{2}, r, M$ and an $R$-wall as above, computes the structures implied by \cref{efficiency}
in time $$\poly(r+q)\cdot  2^{\cdot 2^{\Ocal(\mathsf{h}_1)} \poly(t+k)}|V(G)|^{2}+\Ocal(|V(G)|^3).$$
This running time comes from two different sources.
The quadratic, in $|V(G)|,$ part is due to us calling \cref{intermingling} which in turn calls \cref{pindarischen} from \cite{KawarabayashiTW20Quicklyexcluding}. Here the only additional work that needs to be done is the extraction of the Dyck-wall as described in the proof of \cref{intermingling}.
Since this is purely handling a bounded size object, the main contribution stems indeed from \cref{pindarischen}.
The second term, cubic in $|V(H)|,$ comes from the refinement of our $\Sigma$-decomposition.
In particular from the requirement that $G_{\delta''}$ is quasi-$4$-connected in the end which makes it necessary to consider all possible separations of order at most three.

\begin{proposition}[\!\!\cite{robertson1990graph,KawarabayashiTW20Quicklyexcluding}]\label{resembling}
  Let $d\geq  0$ be an integer.
  Every society of depth at most $d$ has a linear decomposition of adhesion at most $d.$
\end{proposition}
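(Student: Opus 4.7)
The plan is to construct the linear decomposition directly from a nested family of minimum cuts, one for each ``prefix'' of $\Omega$. Enumerate $V(\Omega)=\{v_{1},\dots,v_{n}\}$ in the cyclic order and, for $i\in[n-1]$, set $A_{i}:=\{v_{1},\dots,v_{i}\}$ and $B_{i}:=V(\Omega)\setminus A_{i}$; these are disjoint segments of $\Omega$. Since any family of pairwise vertex-disjoint $A_{i}$-$B_{i}$-paths in $G$ is, by definition, a transaction, the depth hypothesis together with Menger's theorem yields a separation $(X_{i}',Y_{i}')$ with $A_{i}\subseteq X_{i}'$, $B_{i}\subseteq Y_{i}'$ and $|S_{i}|\leq d$, where $S_{i}:=X_{i}'\cap Y_{i}'$. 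Among all admissible choices I pick one with $|X_{i}'|$ minimum; together with the conventions $X_{0}'=\emptyset$, $X_{n}'=V(G)$ and $S_{0}=S_{n}=\emptyset$, this is the raw data for the construction.

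Next I would use submodularity to force a nested chain $\emptyset=X_{0}'\subseteq X_{1}'\subseteq\cdots\subseteq X_{n}'=V(G)$: since $(X_{i}'\cap X_{i+1}',Y_{i}'\cup Y_{i+1}')$ remains admissible for the $i$-th Menger instance (it still contains $A_{i}$ and $B_{i}$ on the correct sides), the standard submodular inequality combined with the minimality of $|X_{i}'|$ forces $X_{i}'\subseteq X_{i+1}'$. If I additionally take the tightest closure $Y_{i}':=(V(G)\setminus X_{i}')\cup S_{i}$, the dual chain $Y_{i}'\supseteq Y_{i+1}'$ drops out automatically. With these in hand, I define the bags $X_{i}:=(X_{i}'\setminus X_{i-1}')\cup S_{i-1}\cup S_{i}$ for $i\in[n]$, with the distinguished vertex of $X_{i}$ being $v_{i}$.

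Verification is then a direct bookkeeping exercise: $v_{i}\in X_{i}'\cap Y_{i-1}'$ and hence $v_{i}$ lies either in the new layer $X_{i}'\setminus X_{i-1}'$ or in $X_{i-1}'\cap Y_{i-1}'=S_{i-1}$; the layers $X_{i}'\setminus X_{i-1}'$ partition $V(G)$, supplying vertex-covering; for an edge $uv$ with $e(u)<e(v)=:k$ (where $e(w):=\min\{i : w\in X_{i}'\}$), the non-edge property of $(X_{k-1}',Y_{k-1}')$ forces $u\in S_{k-1}\subseteq X_{k}$, so $\{u,v\}\subseteq X_{k}$; and the interval property reduces to $\{i : v\in X_{i}\}=[e(v),\ell(v)+1]$ where $\ell(v):=\max\{i : v\in Y_{i}'\}$. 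Finally, for any $w\in X_{i}\cap X_{i+1}$, every piece of $X_{i}$ lies in $X_{i}'$ while every piece of $X_{i+1}$ lies in $Y_{i}'$ (this is exactly where the dual chain $Y_{i+1}'\subseteq Y_{i}'$ is essential), so $w\in S_{i}$ and the adhesion is at most $d$. The main obstacle is the submodularity step together with the parallel monotonicity of the $Y_{i}'$'s: without both chains the adhesion bound collapses. Once both are in place the remaining axioms fall out mechanically, and the algorithmic time bound claimed in \cref{prefascist} follows by realising each minimum separation via a single augmenting-path routine of width at most $d+1$, iterated over the $\mathcal{O}(|V(G)|)$ choices of $i$.
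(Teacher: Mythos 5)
Your argument is correct, and it is essentially \emph{the} proof of this fact (it is not proved in the paper: \cref{resembling} is cited from Robertson--Seymour's Graph Minors IX and from Kawarabayashi--Thomas--Wollan, and both of those sources use precisely this Menger-plus-uncrossing construction). A few remarks on the steps you leave implicit, all of which do go through.

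The nesting of the $X_i'$ is obtained exactly as you say: with $(X_i',Y_i')$ a minimum-order separation over the $i$-th prefix instance and, among those, one with $|X_i'|$ smallest, the submodular inequality
\[
\mathrm{ord}(X_i'\cap X_{i+1}',\,Y_i'\cup Y_{i+1}')+\mathrm{ord}(X_i'\cup X_{i+1}',\,Y_i'\cap Y_{i+1}')\;\le\;|S_i|+|S_{i+1}|
\]
together with the two trivial lower bounds (each uncrossed separation is admissible for one of the two instances) forces both summands to be tight, and minimality of $|X_i'|$ then gives $X_i'\subseteq X_{i+1}'$. Your remark that the dual chain ``drops out automatically'' deserves a word of caution, because $X_i'\subseteq X_{i+1}'$ alone does \emph{not} imply $Y_{i+1}'\subseteq Y_i'$ for arbitrary separations; what you actually need is that the \emph{second} equality in the submodular computation, namely $\mathrm{ord}(X_{i+1}',\,Y_i'\cap Y_{i+1}')=|S_{i+1}|$, forces $S_{i+1}\subseteq Y_i'$, and only then does $Y_{i+1}'=(V(G)\setminus X_{i+1}')\cup S_{i+1}\subseteq Y_i'$ follow. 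So the dual chain is a consequence of the same submodularity computation rather than of the closure convention; since you do invoke this chain crucially in the adhesion bound, it is worth making this explicit. With that in place, the rest is the bookkeeping you describe: $X_i\subseteq X_i'$, $X_{i+1}\subseteq Y_i'$, hence $X_i\cap X_{i+1}\subseteq S_i$; the layers $X_i'\setminus X_{i-1}'$ partition $V(G)$; for an edge $uv$ with $e(u)<e(v)=k$ the separation $(X_{k-1}',Y_{k-1}')$ pins $u$ into $S_{k-1}$; and $\{i:v\in X_i\}=[e(v),\ell(v)+1]$ (with $e(v)\le\ell(v)+1$ because $v\notin X_{e(v)-1}'$ forces $v\in Y_{e(v)-1}'$). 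Finally, you should say a word about the application of Menger's theorem: depth $\le d$ bounds the maximum order of an $A_i$-$B_i$-\emph{linkage}, and since $A_i,B_i$ are disjoint segments this is exactly the hypothesis of vertex-Menger, which then yields the separation $(X_i',Y_i')$ of order $\le d$. Taken together the proposal is a complete and correct proof that coincides with the one in the cited references.
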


\begin{proof}[Proof of \cref{transposition}]
Let us start by discussing the numbers.
We set
\begin{align*}
  M & \coloneqq t,\\
  s_2 & \coloneqq 2t,\\
  r' & \coloneqq r + 16t + \left\lfloor \frac{1}{10^5t^{24}}t^{10^7t^{26}} \right\rfloor\text{, and}\\
  \mu  & \coloneqq \left\lfloor \frac{1}{10^5t^{25}}t^{10^7t^{26}} \right\rfloor.
\end{align*}
Let $R\coloneqq 49152t^{24}(r'+2s_2)+\mu $ be the constant from \cref{intermingling} and notice that
\begin{align*}
R=49152t^{24}(r'+2s_2)+\mu  & = 49152t^{24}(r'+4t) + \mu \\
& \leq  49152t^{24}\left(r + 16t + \left\lfloor \frac{1}{10^5t^{24}}t^{10^7t^{26}} \right\rfloor + 4t\right)+\left\lfloor \frac{1}{10^5t^{25}}t^{10^7t^{26}} \right\rfloor\\
& \leq  49152t^{24}(r + 16t)+\frac{49152t^{24}}{10^5t^{24}}t^{10^7t^{26}} + 2\cdot 10^5t^{25} + \left\lfloor \frac{1}{10^5t^{25}}t^{10^7t^{26}} \right\rfloor\\
& \leq  49152t^{24}(r +16t)+\frac{1}{2}t^{10^7t^{26}}+2\cdot\frac{10^5t^{25}}{10^5t^{25}}t^{10^7t^{26}}\\
& \leq  49152t^{24}(r +16t)+\left\lceil \frac{5}{2}t^{10^7t^{26}}\right\rceil = \mathbf{R}_{g_1,g_2}.
\end{align*}
Moreover, we have ensured that $r'\geq  16t.$
Let $W$ be an $\mathbf{R}_{g_1,g_2}$-wall in a graph $G.$

We may call upon \cref{efficiency} with the numbers $k,\mathsf{h}_1,\mathsf{h}_2,\mathsf{c}_2,$ $r',s_2, M$ and $\mu $ and the wall $W.$
In case we find $K_t,$ $\mathscr{D}^{(\mathsf{h}_1,0)}_k,$ or $\mathscr{D}^{(\mathsf{h}_2,\mathsf{c}_2)}_k$ as a minor we are done.
This is particularly true by our choice of $t$ as in case we find the $K_t$-minor, it will contain both $\mathscr{D}^{(\mathsf{h}_1,0)}_k$ and $\mathscr{D}^{(\mathsf{h}_2,\mathsf{c}_2)}_k$ as minors.
So we may assume the final outcome of \cref{efficiency} to hold.

This means we find a set $A\subseteq V(G)$ of size at most $\mu $ and $\Sigma$-decomposition $\delta =(\Gamma,\mathscr{D})$ of $G-A$ satisfying properties a) to h) from \cref{efficiency}.
What remains is to ensure that $\delta $ is $W$-central.

Let $y_1,\dots,y_{n}\in C(\delta )$ be the vortices of the $\Sigma$-decomposition $\delta .$
Let $P_1,\dots,P_{\mathbf{R}_{g_1,g_2}}$ be the rows and $Q_1,\dots,Q_{\mathbf{R}_{g_1,g_2}}$ be the columns of $W.$
Moreover, let 
\begin{align*}
\mathcal{W}\coloneqq\{ P_i\cup Q_j \mid i,j\in[\mathbf{R}_{g_1,g_2}]\text{ where }(V(P_i)\cup V(Q_j))\cap A=\emptyset\}.
\end{align*}
Observe that for every $T\in\mathcal{W}$ there exist at least $r'-\mu $ vertex disjoint paths in $W-A$ that link $T$ and $W_0.$
Thus, the only cells $y\in C(\delta ')$ such that an element of $\mathcal{W}$ is a subgraph of $\sigma_{\delta '}(y)$ are the vortices $y_i,$ $i\in[n].$
\bigskip

\textbf{Claim:} \textit{Let $i\in[n]$ and let $(G_i,\Omega_i)$ be the vortex society of $y_i.$
There exists a set $A_i\subseteq V(G_i)$ with $|A_i|\leq  2\mu +1$ such that $G_i-A_i$ does not contain any element of $\mathcal{W}$ as a subgraph.}

\smallskip
By \cref{resembling} there exists a linear decomposition $\langle X_1,\dots,X_m,v_1,\dots,v_m\rangle$ of $(G_i,\Omega_i)$ which has adhesion at most $\mu .$
For every $T\in\mathcal{W}$ with $T\subseteq G_i$ let $a_T\in[m]$ be the smallest integer such that $V(T)\cap X_{a_T}\neq\emptyset$ and let $b_T\in[m]$ be the largest integer with $V(T)\cap X_{b_T}\neq\emptyset$
Notice that the fact that each $T\in\mathcal{W}$ with $T\subseteq G_i$ is connected in $G_i$ implies that $V(T)\cap X_j\neq\emptyset$ for all $j\in[a_T,b_T].$
Let $b$ be the smallest value in the set $\{ b_T \mid T\in\mathcal{W}\text{ and }T\subseteq G_i \}.$
It follows from the fact that $T\cup T'$ is connected for all $T',T\in\mathcal{W}$ that $V(T)\cap X_b\neq \emptyset$ for every $T\in\mathcal{W}$ with $T\subseteq G_i.$

Let $A_i\coloneqq (X_b\cap X_{b-1})\cup(X_b\cap X_{b+1})\cup\{ v_b\}.$
Since the linear decomposition is of adhesion at most $\mu $ we obtain that $|A_i|\leq  2\mu +1.$

Moreover, $G_i-A_i$ does not contain any member of $\mathcal{W}$ as a subgraph as desired.
To see this let us assume, towards a contradiction, that there exists some $T\in\mathcal{W}$ with $T\subseteq G_i$ such that $T\subseteq G_i-A_i.$
Since $A_i$ separates the vertices of $X_b\setminus A_i$ from the rest of $G$ and $T$ must intersect $X_b,$ the only way how $T$ could avoid $A_i$ is if $V(T)\subseteq X_b\setminus A_i.$
Recall that there exist $r'-\mu $ pairwise vertex disjoint paths in $W-A$ that that link $T$ to $W_0$ where $V(W_0)\cap V(G_i)=\emptyset.$
Furthermore, we have
\begin{align*}
r'-\mu  & = r + 16t + \left\lfloor\frac{1}{10^5t^{24}}t^{10^7t^{26}}\right\rfloor-\left\lfloor\frac{1}{10^5t^{25}}t^{10^7t^{26}} \right\rfloor\\
& \geq  \frac{t}{10^5t^{25}}t^{10^7t^{26}}-\frac{1}{10^5t^{25}}t^{10^7t^{26}}-1\\
& = \frac{t-1}{10^5t^{25}}t^{10^7t^{26}}-1\\
& > 3\mu  > |A_i|.
\end{align*}
So $A_i$ cannot separate $T$ from $W_0,$ and thus we have reached a contradiction and the proof of the claim is complete.
\bigskip

Set $\hat{A}\coloneqq A\cup\bigcup_{i\in[n]}A_i$ and observe that
\begin{align*}
  |\hat{A}| & \leq  2t^2(2\mu +1)+\mu \\
  & \leq  \frac{5t^2}{10^7t^{25}}t^{10^7t^{26}}\\
  & \leq  t^{10^7t^{26}}.
\end{align*}
Moreover, from the construction of $\hat{A}$ we obtain that $\hat{A}\setminus A\subseteq \bigcup_{i\in n}V(\sigma_{\delta '}(y_i)),$ that is, every vertex of $\hat{A}$ that does not belong to $A$ belongs to some vortex of $\delta .$
In particular, this means that $\hat{A}$ is disjoint from $W_0.$
Finally, let $\delta '$ be the restriction of $\delta $ to the graph $G-\hat{A}.$
Now, $\delta '$ is a $\Sigma$-decomposition of $G-\hat{A}$ which is $W$-central, has breadth at most $2t^2$ and depth at most $\mu \leq  t^{10^7t^{26}}.$ 
Moreover, $W_0\subseteq W-\hat{A}$ contains an $r$-wall which is flat in $\delta '$ and thus, our proof is complete.
\end{proof}

\section{From local to global structure theorems}\label{locglob}

Finally, we have everything in place to prove a \red{global}\ans{used to say ``local''} version of our, so far local, structural theorems.
To close the gap in the Euler-genus left by Kawarabayashi, Thomas, and Wollan \cite{KawarabayashiTW20Quicklyexcluding} we would need to provide a global version of \cref{efficiency}.
However, the main goal of this paper is to understand the resulting structure when the universal graphs of some orientable and some non-orientable surface are excluded.
Since the general strategy of the proof \red{--} we follow the proof of \cite{KawarabayashiTW20Quicklyexcluding}, an earlier version of this had appeared in \cite{robertson1991graph} \red{--}\ans{made a slight change to the formatting to improve readability} stays the same for all such ``globali\red{z}ation%
-type arguments, we only present here the global version of \cref{transposition}.
Proving a global version of \cref{efficiency} follows along the same line of arguments.

Let $k,$ $\mathsf{h}_1,$ $\mathsf{h}_2,$ $\mathsf{c}_2,$ and $r$ be non-negative integers with $\mathsf{c}_2\in[0,2]$ and $0\leq \mathsf{h}_2<\mathsf{h}_1.$
Let $g_1\coloneqq 2\mathsf{h}_1,$ $g_2\coloneqq 2\mathsf{h}_2+\mathsf{c}_2$ if $\mathsf{c}_2\neq 0$ and $g_2\coloneqq 0$ otherwise.
Moreover, let $g\coloneqq\max\{ g_1,g_2\}$ and fix $t\coloneqq (162^{2g}+4)k.$
Recall the definitions of the functions $\mathbf{a}_{g_1,g_2}=t^{10^7t^{26}},$ $\mathbf{v}_{\mathsf{b},g_1,g_2}=2t^2,$ and $\mathbf{v}_{\mathsf{d},g_1,g_2}=t^{10^7t^{26}}$ from \cref{transposition}.
We define the following function.
Let $c_1$ be the constant from \cref{thm_algogrid}.
We set
\begin{align*}
  \mathbf{a}(g_1,g_2,k,r) &\coloneqq 36c_1\Big( 49152t^{24}(r+16t)+\left\lceil\frac{5}{2}t^{10^7t^{26}}\right\rceil\Big)^{20}+4\red{,}\\
  \red{\text{where }} & \red{g\coloneqq\max\{ g_1,g_2\} \text{ and }t\coloneqq (162^{2g}+4)k.}
\end{align*}
\ans{$g$ and $t$ used to be undefined here}

\begin{theorem}\label{thm_globalstructure}
Let $k,$ $\mathsf{h}_1,$ $\mathsf{h}_2,$ $\mathsf{c}_2,$ and $r$ be non-negative integers with $\mathsf{c}_2\in[0,2]$ and $0\leq \mathsf{h}_2<\mathsf{h}_1.$
Let $g_1\coloneqq 2\mathsf{h}_1,$ $g_2\coloneqq 2\mathsf{h}_2+\mathsf{c}_2$ if $\mathsf{c}_2\neq 0$ and $g_2\coloneqq 0$ otherwise.
Moreover, let $g\coloneqq\max\{ g_1,g_2\}$ and fix $t\coloneqq (162^{2g}+4)k.$

Then for every graph $G$ and every integer $r\geq 3$ one of the following is true.
\begin{enumerate}
  \item $G$ contains a \textsl{(a)} $\mathscr{D}^{(\mathsf{h}_1,0)}_k$ or a \textsl{(b)} $\mathscr{D}^{(\mathsf{h}_2,\mathsf{c}_2)}_k$-expansion $D$ where case \textsl{(b)} is allowed as an outcome if and only if $g_2\neq 0,$ or
 
  \item $G$ has a tree decomposition $(T,\beta)$ of adhesion less than $\mathbf{a}(g_1,g_2,k,r)$ such that for every $d\in V(T),$ either $|\beta(d)|\leq 4\mathbf{a}(g_1,g_2,k,r),$ or there exists a set $A_d\subseteq \beta(d)$ of size at most $4\mathbf{a}(g_1,g_2,k,r)$ and integers $\mathsf{h}_d,\mathsf{c}_d,$ $\mathsf{c}_d\in[0,2],$ together with a surface $\Sigma=\Sigma^{(\mathsf{h}_d,\mathsf{c}_d)}$ where neither $\mathscr{D}^{(\mathsf{h}_1,0)}_i$ nor, in case $\mathsf{c}_2\neq0,$ $\mathscr{D}^{(\mathsf{h}_2,\mathsf{c}_2)}_i$ embeds for all $i\in\mathbb{N},$ such that for the torso $G_d$ of $G$ at $d,$ the graph $G_d-A_d$ has a $\Sigma$-decomposition $\delta$ of width at most $\mathbf{a}(g_1,g_2,k,r)$ and breadth at most $\mathbf{v}_{\mathsf{b},g_1,g_2}$ such that
  \begin{enumerate}
    \item there exists an $r$-wall $W_d$ which is flat in $\delta,$
    \item there is no vertex of $G_d-A_d$ which is drawn in the interior of a non-vortex cell of $\delta,$
    \item $G_d-A_d$ is a minor of $G,$ and
    \item for every neighbour $d'$ of $d$ in $T$ it holds that $|\big(\beta(d)\cap\beta(d')\big)\setminus A_d|\leq 3.$
  \end{enumerate}
\end{enumerate}
Moreover, there exists a function $g\colon\mathbb{N}^4\to\mathbb{N}$ and an algorithm that finds one of these outcomes in time $g(k,\mathsf{h}_1,r)\red{|V(G)|^2|E(G)|^2}\log(|V(G)|))$ where $g(k,\mathsf{h}_1,r)=2^{2^{2^{\Ocal(\mathsf{h}_1)}\poly(k+r)}}.$\ans{fixed the running time}
\end{theorem}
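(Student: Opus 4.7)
The plan is to bootstrap the local structure theorem \cref{transposition} into a global one by following the classical ``local-to-global'' recipe of Robertson and Seymour~\cite{robertson1991graph,robertson2003graph}, in the algorithmic form used by Kawarabayashi, Thomas and Wollan~\cite{KawarabayashiTW20Quicklyexcluding}. Set $R\coloneqq\mathbf{R}_{g_1,g_2}$ and let $\mathbf{a}\coloneqq \mathbf{a}(g_1,g_2,k,r)$, which by construction is at least $36c_1R^{20}+4$, where $c_1$ is the constant from \cref{thm_algogrid}. The proof proceeds by induction on $|V(G)|$. First, we try to locate an $(\mathbf{a}+3,2/3)$-well-linked set $S$ in $G$ using Reed's algorithm; if no such set exists, then $\mathsf{tw}(G)\leq \mathbf{a}$ and \cref{conquerors} provides a tree-decomposition of width at most $2\mathbf{a}+1$ satisfying outcome~2 trivially (every bag is small, every adhesion is small, $A_d=\beta(d)$ is irrelevant). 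Otherwise, \cref{thm_algogrid} produces an $R$-wall $W\subseteq G$ whose induced tangle $\mathcal{T}_W$ is a truncation of $\mathcal{T}_S$.

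Applying \cref{transposition} to $W$ yields either outcome~1 of the present theorem (a $\mathscr{D}^{(\mathsf{h}_1,0)}_k$- or $\mathscr{D}^{(\mathsf{h}_2,\mathsf{c}_2)}_k$-expansion, and we are done), or we obtain an apex set $A\subseteq V(G)$ with $|A|\leq \mathbf{v}_{\mathsf{d},g_1,g_2}$, a surface $\Sigma=\Sigma^{(\mathsf{h}_d,\mathsf{c}_d)}$ avoiding both Dyck-grids, a $W$-central $\Sigma$-decomposition $\delta$ of $G-A$ of breadth at most $\mathbf{v}_{\mathsf{b},g_1,g_2}$ and depth at most $\mathbf{v}_{\mathsf{d},g_1,g_2}$, and an $r$-subwall $W_d\subseteq W-A$ flat in $\delta$. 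Form the candidate bag $\beta(d)$ as the union of $A$, all vertices of $G-A$ that correspond to nodes of $\delta$, and all vertices drawn in the interior of vortex cells of $\delta$; this is exactly $V(G_\delta)\cup A$ in the notation of \cref{efficiency}(g). Every cell of $\delta$ that is \emph{not} a vortex contributes, together with its interior vertices, a separation of $G-A$ of order at most $3$ between its interior and $\beta(d)\setminus A$, because cells of $\delta$ have at most three boundary nodes once we further refine $\delta$ by the quasi-$4$-connected-component construction used in the proof of \cref{efficiency}. Those interior pieces are the graphs we recurse on.

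For every non-vortex cell $c$ of $\delta$, let $H_c$ be the subgraph of $G$ induced by the interior vertices of $c$ together with the at most three nodes on its boundary. By $W$-centrality of $\delta$ each such $H_c$ is strictly smaller than $G$, so induction applies and returns a tree-decomposition of $H_c$. Either some recursive call returns outcome~1, in which case the Dyck-grid minor lifts to $G$ and we are done, or each call returns a tree-decomposition satisfying outcome~2. We glue each such decomposition to the bag $\beta(d)$ along the (at most three) shared boundary nodes, which are common to $\beta(d)$ and a bag of the recursive decomposition; this yields a tree-decomposition of $G$ in which the adhesion between $\beta(d)$ and its neighbours in $T$ equals the size of the relevant separator and is therefore at most three in the non-apex part, establishing property (d). The interior-vertex-free condition (b) follows directly from our choice of $\beta(d)$. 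Property (c) holds because the torso $G_d$, viewed modulo the apex set $A_d\coloneqq A$, is obtained from $G$ by contracting each $H_c\setminus\beta(d)$ to the subset of boundary nodes it connects, which is a minor operation.

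The principal technical obstacle is the interplay between the adhesion bound and the requirement that the bag $\beta(d)$ be simultaneously (i) small enough to be controlled by $\mathbf{a}(g_1,g_2,k,r)$, (ii) large enough to contain the entire structured piece (nodes of $\delta$, all vortex interiors, and $A$), and (iii) such that $G_d-A_d$ is a \emph{minor} of $G$. Handling (i) requires bounding the adhesion of $(T,\beta)$ by the maximum of the separator sizes arising in the recursion (which are at most three) and the order of the well-linked sets used to stop the recursion; this is where the factor $36c_1R^{20}+4$ in $\mathbf{a}(g_1,g_2,k,r)$ comes from, matching \cref{thm_algogrid}. Handling (iii) is done by iterating the refinement step in the proof of \cref{efficiency}: separations of order at most three inside vortices are absorbed into the cells before choosing $\beta(d)$, so the resulting torso is obtained from $G-A$ by contractions only. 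Algorithmically, each recursive step applies Reed's algorithm, \cref{thm_algogrid}, \cref{transposition}, and a bounded number of contractions/separator-searches; recursing at most $|V(G)|$ times yields the claimed running time $g(k,\mathsf{h}_1,r)\cdot|V(G)|^2|E(G)|^2\log|V(G)|$ with $g(k,\mathsf{h}_1,r)=2^{2^{2^{\mathcal{O}(\mathsf{h}_1)}\poly(k+r)}}$, the triple exponential stemming from the combination of the double-exponential dependence of $\mathbf{R}_{g_1,g_2}$ on $(g,k)$ with the single-exponential overhead in the wall-extraction of \cref{thm_algogrid}.
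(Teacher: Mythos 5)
Your overall strategy is the right one: take the local structure from \cref{transposition}, form a central bag, recurse on the pieces hanging off, and glue. But the plan has several gaps that are exactly the places where the local-to-global argument of Robertson--Seymour and Kawarabayashi--Thomas--Wollan requires care, and the paper's proof handles them with machinery you have omitted.

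The main missing ingredient is a \emph{strengthened claim}: the induction is not on $|V(G)|$ alone but on a pair $(G,Z)$, where $Z$ is a prescribed vertex set of size at most $3\,\mathbf{a}(g_1,g_2,k,r)$ that the tree-decomposition is required to place entirely into the root bag. The measure is $|V(G)|+|V(G)\setminus Z|$, not $|V(G)|$; this is needed because in some steps you do not shrink $G$ but you enlarge $Z$, and because minimality of the counterexample is what forces $|Z|$ to equal $3\,\mathbf{a}(g_1,g_2,k,r)$, which in turn is what forces $Z$ to be $(\mathbf{a}(g_1,g_2,k,r),2/3)$-well-linked. You instead ask Reed's algorithm to produce some fresh well-linked set $S$ unrelated to the boundary of the current recursive subproblem. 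Without the inherited $Z\subseteq\beta(f)$ requirement, you have no way to justify your assertion that the recursive decompositions contain the ``at most three shared boundary nodes'' in a bag that can be glued to $\beta(d)$ --- that containment is not automatic, it must be demanded as part of the inductive hypothesis.

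The treatment of vortices is also off. You form $\beta(d)$ as $A$ together with all ground nodes \emph{and} all vortex-interior vertices, and you then only recurse on non-vortex cells. But the width bound $\mathbf{a}(g_1,g_2,k,r)$ on the $\Sigma$-decomposition of the torso $G_d-A_d$ would then fail in general: bounded depth bounds only the \emph{adhesion} of a vortex's linear decomposition, not the size of its bags. The paper instead fixes a linear decomposition $\langle L^c_1,\dots,L^c_{n_c},v^c_1,\dots,v^c_{n_c}\rangle$ of each vortex society, keeps only the adhesion sets $Y^c_i$ in $\beta(f)$, and recurses on each $H^c_i = G[L^c_i\cup A]$ with prescribed set $Z_{c,i}\coloneqq A\cup(L^c_i\cap Z)\cup Y^c_i$; it is precisely the bound $|Z_{c,i}|<\mathbf{a}(g_1,g_2,k,r)$, derived from the $W$-centrality of $\delta$ and the balance of $Z$ across separators of order at most $a+d+3$, that yields the width bound on the $\Sigma$-decomposition of the torso. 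Keeping the full vortex interiors in $\beta(d)$ does not give you that bound. (Your parenthetical ``this is exactly $V(G_\delta)\cup A$ in the notation of \cref{efficiency}(g)'' is also inconsistent with itself, since $G_\delta$ there excludes all vertices drawn in cell interiors, including vortex interiors.)

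Finally, your claim that property (c) --- the torso being a minor of $G$ --- follows from ``the refinement step in the proof of \cref{efficiency}'' misattributes the mechanism. The paper achieves this in the global proof not through the quasi-$4$-connectivity refinement of \cref{efficiency}, but by applying the Two Paths Theorem (\cref{thm_twopaths}) to each non-vortex cell's society $(H^c-Z,\Omega^c)$ to split it into cells of boundary size at most three before forming the torso; that is the step that guarantees the contraction yields a minor. You should replace the reference to \cref{efficiency} with an explicit invocation of \cref{thm_twopaths} on the non-vortex cells after removing $Z$ and $A$, and you should carry the set $Z$ through the recursion so that the gluing is justified.
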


  \begin{proof}
  We prove that the following, slightly stronger, version of the first claim holds by induction on the number of vertices in $G.$
  
  \textsl{Strengthened claim:} \textsl{Let $G$ be a graph which does not have a minor isomorphic to $\mathscr{H}^{(j)}_{t_j}$ for any $j\in[k]$ and let $Z\subseteq V(G)$ be a set of size at most $3\alpha.$
  Then $G$ has a tree decomposition $(T,\beta)$ with root $f,$ where $Z\subseteq \beta(f),$ and which is of adhesion less than $\mathbf{a}(g_1,g_2,k,r)$ such that for every $d\in V(T)$ either
  \begin{itemize}
  \item[1.] $|\beta(d)|\leq  4\mathbf{a}(g_1,g_2,k,r),$ or
  \item[2.] there exists a set $A_d\subseteq \beta(d)$ of size at most $4\mathbf{a}(g_1,g_2,k,r)$ and integers $\mathsf{h}_d,\mathsf{c}_d,$ $\mathsf{c}_d\in[0,2],$ together with a surface $\Sigma=\Sigma^{(\mathsf{h}_d,\mathsf{c}_d)}$ where $\mathscr{D}^{(\mathsf{h}_1,0)}_k$ does not embed and, if $\mathsf{c}_2\neq 0,$ where also $\mathscr{D}^{(\mathsf{h}_2,\mathsf{c}_2)}_k$ does not embed such that for the torso $G_d$ of $G_d$ at $d,$ the graph $G-A_d$ has a $\Sigma$-decomposition $\delta$ of width at most $\mathbf{a}(g_1,g_2,k,r)$ and breadth at most $\mathbf{v}_{\mathsf{b},g_1,g_2}$ such that, if $X_d$ is the set of all vertices of $G_d-A_d$ drawn in the interior of the vortices of $\delta$ together with the vertices in $A_d,$
  \begin{enumerate}
    \item there exists an $r$-wall $W_d$ which is flat in $\delta,$
    \item there is no vertex of $G_d-A_d$ which is drawn in the interior of a non-vortex cell of $\delta,$
    \item $G_d-X_d$ is a minor of $G,$ and
    \item for every neighbour $d'$ of $d$ in $T$ it holds that $|\big(\beta(d)\cap\beta(d')\big)\setminus\big(A_d\cup X_d\big)|\leq 3.$
  \end{enumerate}
  \end{itemize}}
  
  Assume towards a contradiction that the \textsl{strengthened claim} is false.
  We choose $G,$ $Z,$ and $k,\mathsf{h}_1,\mathsf{h}_2,\mathsf{c}_2,r$ to form a counterexample minimizing $|V(G)|+|V(G)\setminus Z|.$
  It follows that $|V(G)|>4\mathbf{a}(g_1,g_2,k,r)$ since otherwise we could choose $(T,\beta)$ to be a trivial tree decomposition with a single node and a unique bag containing all of $V(G).$
  Moreover, we have $|Z|=3\mathbf{a}(g_1,g_2,k,r)$ since otherwise we could add an arbitrary vertex of $V(G)\setminus Z$ to $Z$ and thereby contradict our choice for the counterexample.

  Suppose there is a set $X\subseteq V(G)$ of order less than $\mathbf{a}(g_1,g_2,k,r)$ such that $|V(J)\cap Z|\leq \lfloor\frac{2}{3}|Z|\rfloor\leq\mathbf{a}(g_1,g_2,k,r).$
  In this case let us introduce a node $f$ and set $\beta(f)\coloneqq X\cup Z.$
  Now, for each component $J$ of $G-X$ let $G_J\coloneqq G[V(J)\cup X]$ and $Z_J'\coloneqq Z\cap V(J).$
  Notice that $|Z_J'|<3\mathbf{a}(g_1,g_2,k,r).$
  In case $V(G_J)=Z_J'$ there is nothing to show here, otherwise we may pick an arbitrary vertex $v_J\in V(G_J)\setminus Z_J'$ and set $Z_J\coloneqq Z_J'\cup \{ v_J\}.$
  Observe that $G_J,$ $Z_J,$ and $k,\mathsf{h}_1,\mathsf{h}_2,\mathsf{c}_2,r$ do not form a counterexample by our minimality assumption.
  Hence, there exists a tree decomposition $(T_i,\beta_i)$ with root $f_i$ and $Z_J\subseteq \beta(f_i)$ which satisfies all requirements of the \textsl{strengthened claim}.
  We may form a tree decomposition $(T,\beta)$ by joining $f$ to the vertices $f_1$ and $f_2,$ thereby creating the tree $T.$
  It is straight forward to check that $(T,\beta)$ meets all requirements of the \textsl{strengthened claim} which is a direct contradiction to the choice of our counterexample.
  Observe that it is possible to determine if such a decomposition exists, at least with a slightly worse approximation factor, by using the techniques from \cite{Bodlaender96aline}.

  Hence, it follows that $Z$ is $(\mathbf{a}(g_1,g_2,k,r),\frac{2}{3})$-well-linked.
  We may now use \cref{thm_algogrid} to find a large wall whose associated tangle is a truncation of $\mathcal{T}_Z$ in time $2^{\mathcal{O}(\mathsf{poly}(t)^{c_2})}\red{|V(G)||E(G)|^2}\log(|V(G)|).$
  \ans{fixed the running time here.}

With $G$ being a counterexample to the \textsl{strengthened claim} we know that $G$ cannot contain $\mathscr{D}^{(\mathsf{h}_1,0)}_k$ as a minor.
Moreover, if $\mathsf{c}_2\neq 0$ $G$ may also not contain $\mathscr{D}^{(\mathsf{h}_2,\mathsf{c}_2)}_k$ as a minor.
Hence, an application of \cref{transposition} provides us with non-negative integers $\mathsf{h},\mathsf{c}$ where $\mathsf{c}\in[0,2]$ and a surface $\Sigma=\Sigma^{(\mathsf{h},\mathsf{c})}$ where $\mathscr{D}^{(\mathsf{h}_1,0)}_k$ does not embed and, in case $\mathsf{c}_2\neq 0,$ $\mathscr{D}^{(\mathsf{h}_2,\mathsf{c}_2)}_k$ does not embed such that there is a set $A\subseteq V(G)$ with $|A|\leq \mathbf{a}_{g_1,g_2},$ a $W$-central $\Sigma$-decomposition $\delta$ of $G-A$ of breadth at most $\mathbf{v}_{\mathsf{b},{g_1,g_2}}$ and depth at most $\mathbf{v}_{\mathsf{d},{g_1,g_2}},$ and an $r$-subwall $W'$ of $W-A$ which is flat in $\delta.$
  
What remains is to show how, for each cell $c$ of $\delta $ which has more than two vertices on its boundary and at least one vertex in its interior, we obtain a smaller graph $G_c$ with a bounded size set that will take the place of $Z.$
For each such graph, as it does not form a counterexample, we then obtain a decomposition meeting the requirements of our \textsl{strengthened claim}.
By combining all of these tree decompositions we will finally find a decomposition for $G$ and thereby contradict the existence of a counterexample.
  
Before we do this, however, let $d\coloneqq 2\mathbf{v}_{\mathsf{d},{g_1,g_2}}+1$ and $a\coloneqq \mathbf{a}_{g_1,g_2}.$
Notice, that $a+d+3< \mathbf{R}_{g_1,g_2} < \mathbf{a}(g_1,g_2,k,r).$
Hence, for \textsl{any} set $Y\subseteq V(G)$ of size at most $a+d+3$ there is a unique component $J$ of $G-Y$ such that $|V(J)\cap Z|>\frac{1}{3}|Z|.$
Moreover, this component $J$ is exactly the component that contains the vertex set of an entire row and an entire column of $W$ since $\mathcal{T}_W$ is a truncation of $\mathcal{T}_Z.$
This means, in particular, that for any non-vortex cell $c$ of $\delta,$ at most $\frac{2}{3}|Z|$ vertices of $Z$ can be drawn in the interior of $Z.$
Moreover, by \cref{prefascist} each vortex of $\delta$ has a linear decomposition of adhesion at most $\frac{1}{2}d.$
Hence, also the interior of any bag of any such linear decomposition for any vortex of $\delta$ cannot contain more than $\frac{2}{3}|Z|$ vertices of $Z.$

These observations will allow us to obtain tree decompositions as desired for all parts of the graph we are about to separate from the graph drawn on the boundaries of the cells of $\delta .$

From now on let $X$ be the collection of all vertices of $G-A$ which are drawn in some vortex of $\delta$ together with the vertices of $A.$
 
Let $\mathcal{V}$ be the collection of all vortices of $\delta .$
For each $c\in\mathcal{V}$ let $(G_c,\Omega_c)$ be the corresponding vortex society, and let $n_c\coloneqq |V(\Omega_c)|.$
Let us fix a linear order $v^c_1,\dots,v^c_{n_c}$ of $V(\Omega_c)$ such that there exists a linear decomposition $\langle L^c_1,\dots,L^c_{n_c},v^c_1,\dots,v^c_{n_c}\rangle$ of adhesion at most $\mathbf{v}_{\mathsf{d},{g_1,g_2}}$ of $(G_c,\Omega_c).$
We set $Y^c_i\coloneqq (L^c_i\cap L^c_{i-1})\cup(L^c_i\cap L^c_{i+1})\cup\{ v^c_i\}$ where $L^c_0=L^c_{i+1}=\emptyset.$
Observe that every vertex drawn on the closure of $c$ must belong to some $L^c_i,$ moreover, we have that $\bigcup_{i=1}^{n_c}L^c_i\subseteq X_f.$

For every $c\in\mathcal{V}$ and $i\in[n_c]$ let $H^c_i\coloneqq G[L^c_i\cup A]$ and let $Y^c_i\subseteq L^c_i$ consisting of all vertices in $L^c_i$ that appear in at least one of the two adhesion sets of $L^c_i.$
Notice that $\big(V(G)\setminus(L^c_i\setminus Y^c_i)\big)\cap\big(L^c_i\cup A\big)$ is a set of order at most $d+a+1.$
Hence, it follows from the discussion above that $|Z_{c,i}|<\mathbf{a}(g_1,g_2,k,r)$ where $Z_{c,i}\coloneqq A\cup (L^c_i\cap Z)\cup Y^c_i.$
Notice that this argument also yields the, bound $\alpha$ on the width of the linear decomposition $\langle L^c_1,\dots,L^c_{n_c},v^c_1,\dots,v^c_{n_c}\rangle.$

By the minimality of our counterexample, we obtain a tree decomposition $(T^c_i,\beta^c_i)$ with root $f^c_i$ satisfying the requirements of the \textsl{strengthened claim} for every $H^c_i,$ $Z_{c,i},$ and $k,\mathsf{h}_1,\mathsf{h}_2,\mathsf{c}_2,r.$

Now let $\mathcal{C}$ be the collection of all non-vortex cells of $\delta .$
For every $c\in\mathcal{C}$ let $H^c$ be the subgraph of $G$ with vertex set $V(\sigma(c))\cup A$ and edge set $E(\sigma(c))$ together with all edges that have at least one endpoint in $A$ and the other in $V(H^c).$
Let $N^c$ be the set of vertices of $G$ which are drawn on the boundary of the closure of $c$ together with the vertices in $A$ and the vertices in $Z\cap V(H^c).$
Let $\Omega^c$ be the cyclic permutation of the vertices of $N^c\setminus A$ obtained by following along the boundary of $c$ clockwise.
By deleting $Z$ in addition to $A$ it is possible that we slightly changed the way $H^c$ attaches to the rest of $G.$
To fix this, we apply \cref{thm_twopaths} to $(H^c-Z,\Omega^c)$ and enhance $\delta $ by possibly refining $c$ into smaller cells wherever possible.
Note that the application of \cref{thm_twopaths} guarantees that the formation of a torso by removing the interiors of non-vortex cells yields a minor of $G.$
From now on, we will assume that all cells have been refined in this way.

Observe that $|N^c\setminus (Z\setminus A)|\leq a+3\leq a+d+3.$
By the discussion above this implies that $|Z\cap V(H^c-A)|\leq\frac{2}{3}|Z|.$
Hence, $|N^c|<3\mathbf{a}(g_1,g_2,k,r)$

Let $H^c_1,\dots H^c_{n_c}$ be the components of $H^c-Z-A.$
Let $Z^c_i\coloneqq N^c\cap V(H^c_i)$ for each $i\in[n_c].$
For each $c\in\mathcal{C}$ and $i\in[n_c]$ we obtain, from the minimality of our counterexample, a tree decomposition $(T^c_i,\beta^c_i)$ with root $f^c_i$ for $H^c_i,$ $Z^c_i,$ and $k,\mathsf{h}_1,\mathsf{h}_2,c_2,r,$ which satisfies the requirements of the \textsl{strengthened claim}.

We may now construct a tree decomposition for $G$ as follows.
Let $f$ be a vertex, let $$X_f\coloneqq Z\cup A\cup\bigcup_{c\in\mathcal{V}}\bigcup_{i=1}^{n_c}Y^c_i$$ and set $\beta(f)$ to be the collection of all ground vertices of $\delta$ together with the set $X_f.$
It follows from the construction of the $Z^c_i$ and the definition of $\delta $ that, for the torso $G_f$ of $G$ at $f,$ $G_f-A-A_f$ has a $\Sigma$-decomposition $\delta '$ of width at most $\mathbf{a}(g_1,g_2,k,r)$ and breadth at most $\mathbf{v}_{\mathsf{b},g_1,g_2}.$
Moreover, $G_f-X_f$ is a minor of $G$ since $X_f$ contains all vertices which are drawn in the vortices of $\delta '$ together with the apex set $A\cup A_f$ and anything that attaches to this graph does so in separations of order at most three which are obtained from applications of \cref{thm_twopaths}.
  
For every cell $c$ of $\delta$ and every $i\in[n_c]$ introduce the edge $f^c_{i}f.$
Enhance $\beta$ by $\beta^c_i$ and finally, let $(T,\beta)$ be the resulting tree decomposition with root $f.$
We conclude by observing that $(T,\beta)$ satisfies the requirements of the \textsl{strengthened claim} and thus contradicts the choice of our counterexample. 
\end{proof}

\paragraph{Acknowledgements:} We are thankful to Laure Morelle, Christophe Paul, Evangelos Protopapas,  and \label{jkhsfxv}Giannos Stamoulis, for their helpful comments on the containments and the presentation of this paper.

\red{Moreover, we are grateful to an anonymous\ans{we thank the reviewer for all the help in improving this manuscript.} referee for  many detailed and sharp remarks, greatly improving the writing and presentation of the paper.}

\bibliographystyle{plainurl}
\bibliography{literature}

@article {robertson2003graph,
    AUTHOR = {Robertson, Neil and Seymour, P. D.},
     TITLE = {Graph minors. {XVI}. {E}xcluding a non-planar graph},
   JOURNAL = {J. Combin. Theory Ser. B},
  FJOURNAL = {Journal of Combinatorial Theory. Series B},
    VOLUME = {89},
      YEAR = {2003},
    NUMBER = {1},
     PAGES = {43--76},
      ISSN = {0095-8956},
   MRCLASS = {05C83 (05C10 05C75)},
  MRNUMBER = {1999736},
MRREVIEWER = {D. S. Archdeacon},
       DOI = {10.1016/S0095-8956(03)00042-X},
       URL = {https://doi.org/10.1016/S0095-8956(03)00042-X},
}

@article{SeymourT96Uniqueness,
  author       = {Paul D. Seymour and
                  Robin Thomas},
  title        = {Uniqueness of highly representative surface embeddings},
  journal      = {J. Graph Theory},
  volume       = {23},
  number       = {4},
  pages        = {337--349},
  year         = {1996},
  biburl       = {https://dblp.org/rec/journals/jgt/SeymourT96.bib},
  bibsource    = {dblp computer science bibliography, https://dblp.org}
}

@article{RobSeymT94,
	Author = {Robertson, Neil and Seymour, Paul D. and Thomas, Robin},
	Journal = {J. Combin. Theory Ser. B},
	Number = {2},
	Pages = {323--348},
	Title = {Quickly excluding a planar graph},
	Volume = {62},
	Year = {1994}}

@article{DiestelJGT99,
	Author = {Diestel, Reinhard and Jensen, Tommy R. and Gorbunov, Konstantin Yu. and Thomassen, Carsten},
	Date-Added = {2011-03-25 09:44:48 +0200},
	Date-Modified = {2011-03-25 09:44:50 +0200},
	Journal = {J. Comb. Theory Ser. B},
	Number = {1},
	Pages = {61--73},
	Title = {Highly connected sets and the excluded grid theorem},
	Volume = {75},
	Year = {1999}}

@article{KawarabayashiTW20Quicklyexcluding,
  author    = {Ken{-}ichi Kawarabayashi and
               Robin Thomas and
               Paul Wollan},
  title     = {Quickly excluding a non-planar graph},
  journal   = {CoRR},
  volume    = {abs/2010.12397},
  year      = {2020},
  url       = {https://arxiv.org/abs/2010.12397},
  eprinttype = {arXiv},
  eprint    = {2010.12397},
  timestamp = {Tue, 27 Oct 2020 11:22:08 +0100},
  biburl    = {https://dblp.org/rec/journals/corr/abs-2010-12397.bib},
  bibsource = {dblp computer science bibliography, https://dblp.org}
}

@article {robertson1991graph,
    AUTHOR = {Robertson, Neil and Seymour, P. D.},
     TITLE = {Graph minors. {X}. {O}bstructions to tree-decomposition},
   JOURNAL = {J. Combin. Theory Ser. B},
  FJOURNAL = {Journal of Combinatorial Theory. Series B},
    VOLUME = {52},
      YEAR = {1991},
    NUMBER = {2},
     PAGES = {153--190},
      ISSN = {0095-8956},
   MRCLASS = {05C75 (05C10 05C40)},
  MRNUMBER = {1110468},
MRREVIEWER = {D. S. Archdeacon},
       DOI = {10.1016/0095-8956(91)90061-N},
       URL = {https://doi.org/10.1016/0095-8956(91)90061-N},
}

@article{gavoille2023minor,
  title={Minor-Universal Graph for Graphs on Surfaces},
  author={Gavoille, Cyril and Hilaire, Claire},
  journal={arXiv preprint arXiv:2305.06673},
  year={2023}
}

@article {Dyck1888Beitrage,
    AUTHOR = {Dyck, Walther},
     TITLE = {Beitr\"{a}ge zur {A}nalysis situs},
   JOURNAL = {Math. Ann.},
  FJOURNAL = {Mathematische Annalen},
    VOLUME = {32},
      YEAR = {1888},
    NUMBER = {4},
     PAGES = {457--512},
      ISSN = {0025-5831},
   MRCLASS = {DML},
  MRNUMBER = {1510522},
       DOI = {10.1007/BF01443580},
       URL = {https://doi.org/10.1007/BF01443580},
}

@article {Francis99ConwayZIP,
    AUTHOR = {Francis, George K. and Weeks, Jeffrey R.},
     TITLE = {Conway's {ZIP} proof},
   JOURNAL = {Amer. Math. Monthly},
  FJOURNAL = {American Mathematical Monthly},
    VOLUME = {106},
      YEAR = {1999},
    NUMBER = {5},
     PAGES = {393--399},
      ISSN = {0002-9890},
   MRCLASS = {57N05 (57M99)},
  MRNUMBER = {1699257},
MRREVIEWER = {Patrick Gilmer},
       DOI = {10.2307/2589143},
       URL = {https://doi.org/10.2307/2589143},
}

@book {diestel2016graph,
    AUTHOR = {Diestel, Reinhard},
     TITLE = {Graph theory},
    SERIES = {Graduate Texts in Mathematics},
    VOLUME = {173},
   EDITION = {Fifth},
 PUBLISHER = {Springer, Berlin},
      YEAR = {2017},
     PAGES = {xviii+428},
      ISBN = {978-3-662-53621-6},
   MRCLASS = {05-01 (05Cxx)},
  MRNUMBER = {3644391},
       DOI = {10.1007/978-3-662-53622-3},
       URL = {https://doi.org/10.1007/978-3-662-53622-3},
}

@article {robertson1986graph,
    AUTHOR = {Robertson, Neil and Seymour, P. D.},
     TITLE = {Graph minors. {V}. {E}xcluding a planar graph},
   JOURNAL = {J. Combin. Theory Ser. B},
  FJOURNAL = {Journal of Combinatorial Theory. Series B},
    VOLUME = {41},
      YEAR = {1986},
    NUMBER = {1},
     PAGES = {92--114},
      ISSN = {0095-8956},
   MRCLASS = {05C38 (68R10)},
  MRNUMBER = {854606},
       DOI = {10.1016/0095-8956(86)90030-4},
       URL = {https://doi.org/10.1016/0095-8956(86)90030-4},
}

@article {chuzhoy2021towards,
    AUTHOR = {Chuzhoy, Julia and Tan, Zihan},
     TITLE = {Towards tight(er) bounds for the excluded grid theorem},
   JOURNAL = {J. Combin. Theory Ser. B},
  FJOURNAL = {Journal of Combinatorial Theory. Series B},
    VOLUME = {146},
      YEAR = {2021},
     PAGES = {219--265},
      ISSN = {0095-8956},
   MRCLASS = {05C75 (05C83)},
  MRNUMBER = {4155282},
MRREVIEWER = {Benjamin Moore},
       DOI = {10.1016/j.jctb.2020.09.010},
       URL = {https://doi.org/10.1016/j.jctb.2020.09.010},
}

@article {DiestelK20profi,
    AUTHOR = {Diestel, Reinhard and Kneip, Jakob},
     TITLE = {Profinite separation systems},
   JOURNAL = {Order},
  FJOURNAL = {Order. A Journal on the Theory of Ordered Sets and its
              Applications},
    VOLUME = {37},
      YEAR = {2020},
    NUMBER = {1},
     PAGES = {179--205},
      ISSN = {0167-8094},
   MRCLASS = {06A07 (05B35 54F05)},
  MRNUMBER = {4102489},
MRREVIEWER = {Mohamed Bekkali},
       DOI = {10.1007/s11083-019-09499-y},
       URL = {https://doi.org/10.1007/s11083-019-09499-y},
}

@article {DiestelO19tangl,
    AUTHOR = {Diestel, Reinhard and Oum, Sang-il},
     TITLE = {Tangle-tree duality: in graphs, matroids and beyond},
   JOURNAL = {Combinatorica},
  FJOURNAL = {Combinatorica. An International Journal on Combinatorics and
              the Theory of Computing},
    VOLUME = {39},
      YEAR = {2019},
    NUMBER = {4},
     PAGES = {879--910},
      ISSN = {0209-9683},
   MRCLASS = {05C75 (05C40 05C83)},
  MRNUMBER = {4015355},
MRREVIEWER = {Maruti M. Shikare},
       DOI = {10.1007/s00493-019-3798-5},
       URL = {https://doi.org/10.1007/s00493-019-3798-5},
}

@article {DiestelEW19struc,
    AUTHOR = {Diestel, Reinhard and Erde, Joshua and Wei\ss auer, Daniel},
     TITLE = {Structural submodularity and tangles in abstract separation
              systems},
   JOURNAL = {J. Combin. Theory Ser. A},
  FJOURNAL = {Journal of Combinatorial Theory. Series A},
    VOLUME = {167},
      YEAR = {2019},
     PAGES = {155--180},
      ISSN = {0097-3165},
   MRCLASS = {06A07 (05B35 05C90)},
  MRNUMBER = {3950083},
MRREVIEWER = {Maruti M. Shikare},
       DOI = {10.1016/j.jcta.2019.05.001},
       URL = {https://doi.org/10.1016/j.jcta.2019.05.001},
}

@article {Diestel18abstr,
    AUTHOR = {Diestel, Reinhard},
     TITLE = {Abstract separation systems},
   JOURNAL = {Order},
  FJOURNAL = {Order. A Journal on the Theory of Ordered Sets and its
              Applications},
    VOLUME = {35},
      YEAR = {2018},
    NUMBER = {1},
     PAGES = {157--170},
      ISSN = {0167-8094},
   MRCLASS = {06A07 (05C90 05E45)},
  MRNUMBER = {3774512},
MRREVIEWER = {Uma kant Sahoo},
       DOI = {10.1007/s11083-017-9424-5},
       URL = {https://doi.org/10.1007/s11083-017-9424-5},
}

@inproceedings{fomin2020hitting1,
	author = {Fomin, Fedor V and Lokshtanov, Daniel and Panolan, Fahad and Saurabh, Saket and Zehavi, Meirav},
	booktitle = {ACM SIGACT Symposium on Theory of Computing (STOC)},
	date-modified = {2023-07-12 15:57:59 +0200},
	doi = {10.1145/3357713.3384318},
	pages = {1317--1326},
	title = {Hitting topological minors is {FPT}},
	year = {2020}}

@misc{fomin2019hitting2,
	archiveprefix = {arXiv},
	author = {Fedor V. Fomin and Daniel Lokshtanov and Fahad Panolan and Saket Saurabh and Meirav Zehavi},
	date-added = {2023-07-12 15:53:46 +0200},
	date-modified = {2023-07-12 15:57:33 +0200},
	eprint = {1904.02944},
	primaryclass = {cs.DS},
	title = {Hitting Topological Minors is {FPT}},
	year = {2020}}

@inproceedings{korhonen2022single,
	author = {Korhonen, Tuukka},
	booktitle = {IEEE Annual Symposium on Foundations of Computer Science (FOCS)},
	date-modified = {2023-07-12 16:34:31 +0200},
	doi = {10.1109/FOCS52979.2021.00026},
	organization = {IEEE},
	pages = {184--192},
	title = {A single-exponential time 2-approximation algorithm for treewidth},
	year = {2022}}

@article{AdlerDFST11Faster,
	author = {Adler, Isolde and Dorn, Frederic and Fomin, Fedor V. and Sau, Ignasi and Thilikos, Dimitrios M.},
	date-modified = {2023-07-12 14:29:11 +0200},
	doi = {10.1016/j.tcs.2011.09.015},
	journal = {Theoretical Computer Science},
	number = {50},
	pages = {7018--7028},
	title = {Faster parameterized algorithms for minor containment},
	volume = {412},
	year = {2011},
	bdsk-url-1 = {https://doi.org/10.1016/j.tcs.2011.09.015}}

@inproceedings{reed1992finding,
	author = {Reed, Bruce A},
	booktitle = {ACM Symposium on Theory of Computing (STOC)},
	date-modified = {2023-07-12 16:45:32 +0200},
	doi = {10.1145/129712.129734},
	pages = {221--228},
	title = {Finding approximate separators and computing tree width quickly},
	year = {1992}}

@article{ThilikosW22killi,
  author       = {Dimitrios M. Thilikos and
                  Sebastian Wiederrecht},
  title        = {Killing a Vortex},
  journal      = {J. {ACM}},
  volume       = {71},
  number       = {4},
  pages        = {27:1--27:56},
  year         = {2024},
  url          = {https://doi.org/10.1145/3664648},
  doi          = {10.1145/3664648},
  timestamp    = {Sun, 08 Sep 2024 16:06:25 +0200},
  biburl       = {https://dblp.org/rec/journals/jacm/ThilikosW24.bib},
  bibsource    = {dblp computer science bibliography, https://dblp.org}
}

@article {jung1970verallgemeinerung,
    AUTHOR = {Jung, H. A.},
     TITLE = {Eine {V}erallgemeinerung des {$n$}-fachen {Z}usammenhangs f\"{u}r {G}raphen},
   JOURNAL = {Math. Ann.},
  FJOURNAL = {Mathematische Annalen},
    VOLUME = {187},
      YEAR = {1970},
     PAGES = {95--103},
      ISSN = {0025-5831},
   MRCLASS = {05.40},
  MRNUMBER = {268067},
MRREVIEWER = {H. Fleischner},
       DOI = {10.1007/BF01350174},
       URL = {https://doi.org/10.1007/BF01350174},
}

@article {robertson1990graph,
    AUTHOR = {Robertson, Neil and Seymour, P. D.},
     TITLE = {Graph minors. {IX}. {D}isjoint crossed paths},
   JOURNAL = {J. Combin. Theory Ser. B},
  FJOURNAL = {Journal of Combinatorial Theory. Series B},
    VOLUME = {49},
      YEAR = {1990},
    NUMBER = {1},
     PAGES = {40--77},
      ISSN = {0095-8956},
   MRCLASS = {05C10},
  MRNUMBER = {1056819},
MRREVIEWER = {D. S. Archdeacon},
       DOI = {10.1016/0095-8956(90)90063-6},
       URL = {https://doi.org/10.1016/0095-8956(90)90063-6},
}

@article {seymour1980disjoint,
    AUTHOR = {Seymour, P. D.},
     TITLE = {Disjoint paths in graphs},
   JOURNAL = {Discrete Math.},
  FJOURNAL = {Discrete Mathematics},
    VOLUME = {29},
      YEAR = {1980},
    NUMBER = {3},
     PAGES = {293--309},
      ISSN = {0012-365X},
   MRCLASS = {05C38},
  MRNUMBER = {560773},
       DOI = {10.1016/0012-365X(80)90158-2},
       URL = {https://doi.org/10.1016/0012-365X(80)90158-2},
}

@article {shiloach1980polynomial,
    AUTHOR = {Shiloach, Yossi},
     TITLE = {A polynomial solution to the undirected two paths problem},
   JOURNAL = {J. Assoc. Comput. Mach.},
  FJOURNAL = {Journal of the Association for Computing Machinery},
    VOLUME = {27},
      YEAR = {1980},
    NUMBER = {3},
     PAGES = {445--456},
      ISSN = {0004-5411},
   MRCLASS = {05C38 (68C25 68E10)},
  MRNUMBER = {581977},
MRREVIEWER = {H.-J. Voss},
       DOI = {10.1145/322203.322207},
       URL = {https://doi.org/10.1145/322203.322207},
}

@article {thomassen19802,
    AUTHOR = {Thomassen, Carsten},
     TITLE = {{$2$}-linked graphs},
   JOURNAL = {European J. Combin.},
  FJOURNAL = {European Journal of Combinatorics},
    VOLUME = {1},
      YEAR = {1980},
    NUMBER = {4},
     PAGES = {371--378},
      ISSN = {0195-6698},
   MRCLASS = {05C99 (05C40)},
  MRNUMBER = {595938},
MRREVIEWER = {Maria Ferro},
       DOI = {10.1016/S0195-6698(80)80039-4},
       URL = {https://doi.org/10.1016/S0195-6698(80)80039-4},
}

@inproceedings{grohe2016quasi,
  author       = {Martin Grohe},
  editor       = {Ioannis Chatzigiannakis and
                  Michael Mitzenmacher and
                  Yuval Rabani and
                  Davide Sangiorgi},
  title        = {Quasi-4-Connected Components},
  booktitle    = {43rd International Colloquium on Automata, Languages, and Programming,
                  {ICALP} 2016, July 11-15, 2016, Rome, Italy},
  series       = {LIPIcs},
  volume       = {55},
  pages        = {8:1--8:13},
  publisher    = {Schloss Dagstuhl - Leibniz-Zentrum f{\"{u}}r Informatik},
  year         = {2016},
  url          = {https://doi.org/10.4230/LIPIcs.ICALP.2016.8},
  doi          = {10.4230/LIPICS.ICALP.2016.8},
  timestamp    = {Tue, 11 Feb 2020 15:52:14 +0100},
  biburl       = {https://dblp.org/rec/conf/icalp/Grohe16.bib},
  bibsource    = {dblp computer science bibliography, https://dblp.org}
}

@article {Bodlaender96aline,
    AUTHOR = {Bodlaender, Hans L.},
     TITLE = {A linear-time algorithm for finding tree-decompositions of
              small treewidth},
   JOURNAL = {SIAM J. Comput.},
  FJOURNAL = {SIAM Journal on Computing},
    VOLUME = {25},
      YEAR = {1996},
    NUMBER = {6},
     PAGES = {1305--1317},
      ISSN = {0097-5397},
   MRCLASS = {68R10 (05C10 05C85 68Q25)},
  MRNUMBER = {1417901},
MRREVIEWER = {Miros\l aw Truszczy\'{n}ski},
       DOI = {10.1137/S0097539793251219},
       URL = {https://doi.org/10.1137/S0097539793251219},
}

@article{Grohe16bTangledup,
  author    = {Martin Grohe},
  title     = {Tangled up in Blue {(A} Survey on Connectivity, Decompositions, and Tangles)},
  journal   = {CoRR},
  volume    = {abs/1605.06704},
  year      = {2016},
  url       = {http://arxiv.org/abs/1605.06704},
  eprinttype = {arXiv},
  eprint    = {1605.06704},
  timestamp = {Mon, 13 Aug 2018 16:49:08 +0200},
  biburl    = {https://dblp.org/rec/journals/corr/Grohe16b.bib},
  bibsource = {dblp computer science bibliography, https://dblp.org}
}

@article{surfex2,
  title={Excluding surfaces as minors in graphs},
  author={Thilikos, Dimitrios M and Wiederrecht, Sebastian},
  journal={unpublished manuscript},
  year={2024}
}

@article {Kawarabayashi2018NewProof,
    AUTHOR = {Kawarabayashi, Ken-ichi and Thomas, Robin and Wollan, Paul},
     TITLE = {A new proof of the flat wall theorem},
   JOURNAL = {J. Combin. Theory Ser. B},
  FJOURNAL = {Journal of Combinatorial Theory. Series B},
    VOLUME = {129},
      YEAR = {2018},
     PAGES = {204--238},
      ISSN = {0095-8956,1096-0902},
   MRCLASS = {05C83 (05C69 05C75)},
  MRNUMBER = {3758246},
MRREVIEWER = {Fugang\ Chao},
       DOI = {10.1016/j.jctb.2017.09.006},
       URL = {https://doi.org/10.1016/j.jctb.2017.09.006},
}

@article {Robertson1995GMXIII,
    AUTHOR = {Robertson, Neil and Seymour, P. D.},
     TITLE = {Graph minors. {XIII}. {T}he disjoint paths problem},
   JOURNAL = {J. Combin. Theory Ser. B},
  FJOURNAL = {Journal of Combinatorial Theory. Series B},
    VOLUME = {63},
      YEAR = {1995},
    NUMBER = {1},
     PAGES = {65--110},
      ISSN = {0095-8956,1096-0902},
   MRCLASS = {05C38 (05C85)},
  MRNUMBER = {1309358},
       DOI = {10.1006/jctb.1995.1006},
       URL = {https://doi.org/10.1006/jctb.1995.1006},
}

\end{document}